\documentclass[11pt,reqno]{amsart}
%%for arXiv
\pdfoutput=1
%\usepackage[pagewise]{lineno}\linenumbers
%% To fix lineno compilation error
% \makeatletter
% \providecommand{\@LN}[2]{}
% \makeatother

\usepackage{amsmath}
\usepackage{amssymb}
\usepackage{amsthm}
\usepackage[foot]{amsaddr}
\usepackage[margin=1in]{geometry}
\usepackage[hidelinks]{hyperref}
\usepackage{bm}
\usepackage[Symbol]{upgreek}
\usepackage{mathtools}

\usepackage{lmodern}
\usepackage[T1]{fontenc}
\usepackage{microtype}

\DeclareMathOperator{\ord}{ord}

\DeclareMathOperator{\dmul}{dmul}
\DeclareMathOperator{\mul}{mul}

\DeclareMathOperator{\ddeg}{ddeg}

\def\d {\operatorname{d}}
\def\dh {\operatorname{dh}}
\def\ac {\operatorname{ac}}

\newcommand{\ca}{\mathcal}

\newcommand{\pconv}{\rightsquigarrow}
\newcommand{\prece}{\preccurlyeq}
\newcommand{\succe}{\succcurlyeq}
\newcommand{\x}{\times}
\newcommand{\f}{\mathfrak}
\newcommand{\xf}{\times\mathfrak}
\newcommand{\0}{\emptyset}
\newcommand{\wh}{\hat}
\newcommand{\hs}{\hspace{1cm}}
\newcommand{\ges}{\geqslant}
\newcommand{\les}{\leqslant}

\newcommand{\N}{\mathbb{N}}

\newcommand{\Q}{\mathbb{Q}}

% Definition of symbol \der for derivation:
\DeclareFontFamily{U}{fsy}{}
\DeclareFontShape{U}{fsy}{m}{n}{<->s*[.9]psyr}{}
\DeclareSymbolFont{der@m}{U}{fsy}{m}{n}
\DeclareMathSymbol{\der}{\mathord}{der@m}{182}
% Definition of symbol \cao for small calligraphic "o":
\DeclareFontFamily{OMS}{smallo}{}
\DeclareFontShape{OMS}{smallo}{m}{n}{<->s*[.65]cmsy10}{}
\DeclareSymbolFont{smallo@m}{OMS}{smallo}{m}{n}
\DeclareMathSymbol{\cao}{\mathord}{smallo@m}{79}

\newtheorem{lem}{Lemma}[section]
\newtheorem{prop}[lem]{Proposition}
\newtheorem{cor}[lem]{Corollary}
\newtheorem{thm}[lem]{Theorem}

\newtheorem{claim}{Claim}

%results from rDH
\newtheorem*{maintechprop}{Proposition~\ref{maintechprop}}
\newtheorem*{mainlemmadiv}{Proposition~\ref{mainlemmadiv}}
\newtheorem*{maximmed}{Theorem~\ref{maximmed}}
\newtheorem*{dhdalgmax}{Theorem~\ref{dhdalgmax}}
\newtheorem*{dhensel}{Theorem~\ref{dhensel}}

\theoremstyle{definition}

\newtheorem*{defn}{Definition}
\newtheorem*{ass}{Assumption}

\numberwithin{claim}{lem}
\numberwithin{equation}{section}

%Lowercase roman enumerate

%No indent after section heading
\usepackage{etoolbox}

\makeatletter
\patchcmd{\@startsection}
  {\@afterindenttrue}
  {\@afterindentfalse}
  {}{}
\makeatother

\title[Differential-henselianity and maximality]{Differential-henselianity and maximality of\\ asymptotic valued differential fields}
\author{Nigel Pynn-Coates}
\address{Department of Mathematics, The Ohio State University, Columbus, OH, United States}
\email{\href{mailto:pynn-coates.1@osu.edu}{pynn-coates.1@osu.edu}}
%\urladdr{\url{https://people.math.osu.edu/pynn-coates.1}}
%\date{\today}

\begin{document}

\begin{abstract}
We show that asymptotic (valued differential) fields have unique maximal immediate extensions.
Connecting this to differential-henselianity, we prove that any differential-henselian asymptotic field is differential-algebraically maximal, removing the assumption of monotonicity from a theorem of Aschenbrenner, van den Dries, and van der Hoeven \cite[Theorem~7.0.3]{adamtt}.
Finally, we use this result to show the existence and uniqueness of differential-henselizations of asymptotic fields.
\end{abstract}

\maketitle
%\tableofcontents

\section{Introduction}

The basic objects of this paper are valued differential fields (assumed throughout to have equicharacteristic~$0$) with small derivation and their \emph{extensions}, by which we mean valued differential field extensions with small derivation.
By Zorn, any valued differential field $K$ with small derivation has an immediate extension that is \emph{maximal} in the sense that it has no proper immediate extension.
By the main result of \cite{maximext}, such extensions are spherically complete, and thus maximal as valued fields in the usual sense.
Hence by Kaplansky \cite{kaplansky1} any two maximal immediate extensions of $K$ are isomorphic as valued fields over $K$.
If $K$ is additionally assumed to be monotone and to have linearly surjective differential residue field, then Aschenbrenner, van den Dries, and van der Hoeven showed in \cite[Theorem~7.4.3]{adamtt} that they are isomorphic as valued \emph{differential} fields over $K$ and conjectured in \cite{maximext} that monotonicity could be removed from this result.
Van den Dries and the present author showed that this conjecture holds when the value group is the union of its convex subgroups of finite archimedean rank \cite{dhfinrank}.
Here, we prove the conjecture when the valued differential field is asymptotic, a condition opposite in spirit to monotonicity.

\begin{maximmed}
If an asymptotic valued differential field $K$ has small derivation and linearly surjective differential residue field, then any two maximal immediate extensions of $K$ are isomorphic over~$K$.
\end{maximmed}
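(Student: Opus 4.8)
The plan is to run the Kaplansky-style argument underlying \cite[Theorem~7.4.3]{adamtt} (cf.\ \cite{dhfinrank}), with asymptoticity taking over the structural role that monotonicity plays there. Let $M_1$ and $M_2$ be maximal immediate extensions of $K$. By the main theorem of \cite{maximext} both are spherically complete, hence every pc-sequence in $M_1$ or $M_2$ has a pseudo-limit there; both are asymptotic, since immediate extensions of asymptotic fields with small derivation are again asymptotic; and both have the same differential residue field as $K$, which is linearly surjective. It suffices to produce an embedding $\iota\colon M_1 \hookrightarrow M_2$ over $K$ of valued differential fields: then $\iota(M_1)$ is a spherically complete, hence maximal, immediate extension of $K$ inside $M_2$, and since $M_2$ is immediate over $\iota(M_1)$ we get $M_2 = \iota(M_1)$.

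We build $\iota$ by transfinite recursion, at each stage extending a partial embedding $j\colon E \hookrightarrow M_2$ over $K$, with $K \subseteq E \subsetneq M_1$, to one with strictly larger domain. Since $M_1$ is a proper immediate extension of $E$, there is a pc-sequence $(a_\rho)$ in $E$ with no pseudo-limit in $E$; fix one, and let $a \in M_1$ be one of its pseudo-limits, so $a \notin E$. Following \cite{adamtt}, we distinguish whether $(a_\rho)$ is of $\mathrm{d}$-transcendental type or of $\mathrm{d}$-algebraic type over $E$. If $(a_\rho)$ is of $\mathrm{d}$-transcendental type, then $a$ is $\mathrm{d}$-transcendental over $E$ and $E\langle a\rangle$ is determined up to isomorphism over $E$ by $(a_\rho)$ — facts that hold without any special hypothesis. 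The image pc-sequence $j(a_\rho)$ has $\mathrm{d}$-transcendental type and no pseudo-limit in $j(E)$; spherical completeness gives it a pseudo-limit $b \in M_2$, necessarily $\mathrm{d}$-transcendental over $j(E)$, and $j$ extends to an isomorphism $E\langle a\rangle \cong j(E)\langle b\rangle$, enlarging the domain as required.

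The case where $(a_\rho)$ is of $\mathrm{d}$-algebraic type over $E$ is the crux. Let $P$ be a minimal differential polynomial of $(a_\rho)$ over $E$. By the uniqueness theory for such extensions — which, as in \cite{adamtt}, uses linear surjectivity of the differential residue field — the immediate extension of $E$ generated by a pseudo-limit of $(a_\rho)$ that is a zero of $P$ is determined up to isomorphism over $E$, and likewise over $j(E)$ for the image sequence $j(a_\rho)$ and the image $P^{j}$ of $P$. So it suffices to find a pseudo-limit of $(a_\rho)$ in $M_1$ that is a zero of $P$, and a pseudo-limit of $j(a_\rho)$ in $M_2$ that is a zero of $P^{j}$; adjoining these and matching them via the cited uniqueness extends $j$ with strictly larger domain. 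Everything therefore reduces to the following statement, the asymptotic analogue of the lemma behind \cite[Theorem~7.4.3]{adamtt} and the main obstacle: \textbf{in a spherically complete asymptotic valued differential field with small derivation and linearly surjective differential residue field, every pc-sequence of $\mathrm{d}$-algebraic type has a pseudo-limit that is a zero of its minimal differential polynomial.}

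I expect to prove this by the familiar successive-approximation method: starting from some pseudo-limit supplied by spherical completeness, one tracks the valuation of $P$ evaluated at successive approximations and corrects the approximation at each step — factoring out the dominant monomial of the relevant linearized (Newton--Hensel type) equation and passing to the differential residue field, where linear surjectivity lets one solve for the correction — so that this valuation strictly increases; the approximations form a pc-sequence whose pseudo-limit, again given by spherical completeness, is a genuine zero of $P$ still pseudo-converging to the original data. Asymptoticity enters exactly where \cite{adamtt} invokes monotonicity: it controls the valuations appearing in these linearized equations, equivalently the behaviour of the derivation along the sequence, which is what makes the correction step valid. Granting this technical proposition, the recursion closes and $\iota$ is constructed, completing the proof.
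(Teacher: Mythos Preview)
Your overall architecture matches the paper's: a maximal partial isomorphism between intermediate subfields, with the $\mathrm{d}$-transcendental case handled exactly as you say, and the $\mathrm{d}$-algebraic case reduced to finding, inside the spherically complete extension, a pseudolimit of $(a_\rho)$ that is a zero of the minimal $\mathrm{d}$-polynomial $P$ over $E$. There is one small gap in the framework: the paper first replaces the intermediate field $E$ by its henselization inside $M_i$ (this is needed because the key technical input, Proposition~\ref{mainlemmah}, is only proved under a henselianity hypothesis on the base, used to pass to the algebraic closure and keep $P$ minimal there). This is easy to fix and you should add it.

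The genuine gap is your last paragraph. You assert that the key lemma follows from ``the familiar successive-approximation method,'' solving a linearized correction in the residue field via linear surjectivity. But the correction equation one obtains in the residue field is $D_{P_{+a,\times g}}$ evaluated at a residue variable, and this is \emph{linear} only if $\ddeg P_{+a,\times g}=1$, i.e., only if $\ddeg_{\bm a}P=1$. A priori the dominant degree in the cut could be any $d\ges 2$, and then linear surjectivity of $\bm k$ gives you nothing: you would need $\bm k$ to be differentially closed, which is not assumed. Establishing $\ddeg_{\bm a}P=1$ for asymptotic $K$ is precisely Proposition~\ref{mainlemmadiv}, and it is the entire technical content of the paper: it requires a differential Newton diagram method (\S\ref{sec:ndiag}), a theory of unravellers for asymptotic differential equations (\S\ref{sec:ade}), and a delicate complexity-reduction argument involving Tschirnhaus refinements and a Slowdown Lemma (\S\ref{sec:redcomplex}), all of which rely on the Equalizer Theorem and on asymptoticity in ways that are far from the one-line invocation you give. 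Saying ``asymptoticity enters exactly where \cite{adamtt} invokes monotonicity'' is not a proof; in the monotone case the argument is much shorter, and the point of the present paper is that the asymptotic case needs a substantially different and longer route. Until you supply a proof of $\ddeg_{\bm a}P=1$ (or an alternative mechanism that handles $d\ges 2$ with only linear surjectivity), the approximation step does not go through.
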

%\begin{thmint}\label{int:umie}
%If an asymptotic valued differential field $K$ has small derivation and linearly surjective differential residue field, then any two maximal immediate extensions of $K$ are isomorphic over $K$.
%\end{thmint}

We also show the uniqueness of differentially algebraic immediate extensions of $K$ that are differential-algebraically maximal.
This was likewise proved earlier in the monotone setting \cite[Theorem~7.4.3]{adamtt}, with the case of many constants essentially going back to Scanlon \cite{scanlon}.

Theorem~\ref{maximmed} has a consequence related to the topics discussed in Matusinski's survey \cite{matusinski} transposed to this setting.
Let $K$ be an asymptotic valued differential field with small derivation.
By \cite{maximext}, $K$ has an immediate extension $L$ that is asymptotic and spherically complete, so $L$ is isomorphic as a valued differential field to a Hahn field (also called a ``generalized series field'' in \cite{matusinski}) endowed with a derivation making it an asymptotic valued differential field with small derivation.
Hence if $K$ has linearly surjective differential residue field, then any other immediate differential Hahn field extension of $K$ that is asymptotic is also isomorphic to $L$ over $K$ by Theorem~\ref{maximmed}.

Since there is an equivalence between algebraic maximality and henselianity for valued fields of equicharacteristic 0, one might hope for a similar relationship in the setting of valued differential fields after adding the prefix ``differential,'' but it turns out to depend on the interaction between the valuation and the derivation.
In fact, any differential-algebraically maximal valued differential field with small derivation and linearly surjective differential residue field is differential-henselian \cite[Theorem~7.0.1]{adamtt}, but the converse fails, even in the setting of monotone fields with many constants \cite[example after Corollary~7.4.5]{adamtt}.
In contrast, we show that the converse holds in the case of few constants, removing entirely the monotonicity assumption in \cite[Theorem~7.0.3]{adamtt}:

\begin{dhdalgmax}
If $K$ is a valued differential field with small derivation and few constants that is differential-henselian, then $K$ is differential-algebraically maximal.
\end{dhdalgmax}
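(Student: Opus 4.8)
The plan is to run the pseudo-Cauchy sequence argument behind the proofs of differential-algebraic maximality in \cite{adamtt}, with the few-constants hypothesis taking over the role played there by monotonicity in \cite[Theorem~7.0.3]{adamtt}. The hypothesis cannot simply be deleted: differential-henselianity does not imply differential-algebraic maximality in the presence of many constants \cite[example after Corollary~7.4.5]{adamtt}, so few-constants-ness has to be used in an essential way. Suppose toward a contradiction that $K$ is not differential-algebraically maximal. Then there is a pseudo-Cauchy sequence $(a_\rho)$ from $K$, of differential-algebraic type over $K$, with no pseudolimit in $K$; fix its minimal differential polynomial $G \in K\{Y\}$. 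By the pseudo-Cauchy sequence theory of \cite{adamtt}, $(a_\rho)$ has a pseudolimit $y$ in an immediate extension of $K$ with $G(y) = 0$; since $(a_\rho)$ has no pseudolimit in $K$ we have $y \notin K$, and $v(y - a_\rho) = \gamma_\rho$ for all $\rho$, where $\gamma_\rho := v(a_{\rho+1} - a_\rho)$ is the width of $(a_\rho)$ at $\rho$. It then suffices to show $y \in K$.

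Fix a large index $\rho$, write $a := a_\rho$, and set $P := G_{+a}$, a differential polynomial over $K$ of the same complexity as $G$: it retains the minimality property for the shifted sequence $(a_\sigma - a)_\sigma$, has constant term $P(0) = G(a)$, and has $u := y - a$ as a zero with $v(u) = \gamma_\rho$. Since each $\partial P/\partial Y^{(i)}$ has complexity strictly below that of $G$, the values $v\bigl((\partial G/\partial Y^{(i)})(a_\rho)\bigr)$ are eventually constant in $\rho$; call their minimum $\delta$. On the other hand $v(G(a_\rho))$ strictly increases in $\rho$, so $v(G(a_\rho)) > \delta$ for large $\rho$. Using this one checks that, after multiplicatively conjugating $P$ by a suitable element of $K^\times$, the resulting differential polynomial has Newton degree $1$ over the infinitesimals, hence a unique zero $\prec 1$; transported back, this means $u$ is the only zero of $P$ lying in the corresponding small annulus.

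It remains to invoke differential-henselianity for the conjugated polynomial and conclude that its sole infinitesimal zero lies in $K$; then $u \in K$, so $y = a + u \in K$, contradicting $y \notin K$. This last step is where I expect the main difficulty to lie, and where the few-constants hypothesis is used. In the monotone case of \cite[Theorem~7.0.3]{adamtt}, the derivative of an infinitesimal never has smaller valuation than the infinitesimal itself, so the order-zero coefficient of the linear part $L_P$ of $P$ dominates and the conjugated polynomial is at once of the shape to which differential-henselianity applies. Under small derivation with few constants this fails: the derivative of an infinitesimal may have strictly smaller valuation, so a higher-order coefficient of $L_P$ can dominate, and one must instead bring in the structure theory of linear differential operators over valued differential fields with few constants developed in this paper, which under that hypothesis supplies the control that monotonicity supplies in \cite{adamtt} and which is the circle of ideas also yielding Theorem~\ref{maximmed}. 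Concretely, one conjugates $L_P$ by an element realizing its dominant monomial and shows the resulting operator is in the normal form to which differential-henselianity applies, producing a zero of the conjugated polynomial in $K$; feeding this back forces $u \in K$, and the contradiction follows. Therefore $K$ is differential-algebraically maximal.
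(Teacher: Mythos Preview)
Your proposal has two genuine gaps, one of which conceals the entire technical content of the paper.

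The first and main gap is the sentence ``Using this one checks that, after multiplicatively conjugating $P$ by a suitable element of $K^\times$, the resulting differential polynomial has Newton degree $1$ over the infinitesimals.'' Your preceding argument compares only the constant term $G(a_\rho)$ to the linear part (via the partial derivatives): it shows $(G_{+a_\rho})_0 \prec (G_{+a_\rho})_1$ eventually. It says nothing about the homogeneous parts of degree $\ges 2$. To obtain dominant degree $1$ for $G_{+a_\rho,\times g_\rho}$ you must also show that $(G_{+a_\rho,\times g_\rho})_i \prec (G_{+a_\rho,\times g_\rho})_1$ for all $i \ges 2$, and this is \emph{not} automatic: it is precisely the assertion $\ddeg_{\bm a} G = 1$, which is Proposition~\ref{mainlemmah}. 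Its proof (via Proposition~\ref{mainlemmadiv}) occupies \S\S\ref{sec:ndiag}--\ref{sec:mainlemmadiv} and relies on the Equalizer Theorem, a differential Newton diagram method, the construction of unravellers, the Slowdown Lemma, and a reduction-of-complexity argument. The few-constants hypothesis is used to ensure asymptoticity (via \cite[Lemmas~7.1.8 and 9.1.1]{adamtt}), which is the standing assumption throughout that machinery; it does not enter as a simple dominance statement about the linear part of a single operator.

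The second gap is the endgame. You assert that Newton degree $1$ gives a \emph{unique} infinitesimal zero and then that differential-henselianity forces this zero into $K$. Neither step is justified: differential-henselianity only produces \emph{some} zero in $K$ satisfying the asymptotic constraint, not the specific zero $u$ from the extension, and uniqueness of zeros is not a consequence of dominant degree $1$. The paper's argument is different: from $\ddeg_{\bm a} G = 1$ one obtains, for each large $\rho$, a zero $z_\rho \in K$ with $a_\rho - z_\rho \prece g_\rho$; one then selects $r+2$ such zeros at strictly decreasing distances from the pseudolimit and invokes \cite[Lemma~7.5.5]{adamtt} to reach a contradiction with the bound on how many zeros of an order-$r$ differential polynomial can be so arranged.
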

%\begin{thmint}\label{int:dhdam}
%If $K$ is a valued differential field with small derivation and few constants that is differential-henselian, then $K$ is differential-algebraically maximal.
%\end{thmint}

Note that by \cite[Lemmas~7.1.8 and 9.1.1]{adamtt}, any $K$ as in the theorem above is in fact asymptotic.
Using Theorem~\ref{dhdalgmax} we then prove the following generalization of \cite[Theorem~1.3]{dhfinrank}.

\begin{dhensel}
If $K$ is an asymptotic valued differential field with small derivation and linearly surjective differential residue field, then $K$ has a differential-henselization which is minimal, and so any two differential-henselizations of $K$ are isomorphic over $K$.
\end{dhensel}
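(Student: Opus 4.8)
The plan is to build the differential-henselization inside a maximal immediate extension of $K$ and to extract minimality and the universal mapping property from Theorems~\ref{maximmed} and \ref{dhdalgmax}. Two harmless reductions first. If the valuation on $K$ is trivial, then $K$ is already differential-henselian (its differential residue field is $K$ itself, which is linearly surjective) and is its own differential-henselization; so assume the valuation is nontrivial. Since $K$ is asymptotic, it then has few constants, and so does every asymptotic valued differential field containing it, so Theorem~\ref{dhdalgmax} will apply to all asymptotic fields we meet. Recall also that being asymptotic passes to differential subfields, and that small derivation, linear surjectivity of the differential residue field, and the isomorphism type of the differential residue field are unchanged in passing to immediate extensions and to intermediate fields of such.

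Using \cite{maximext}, fix an asymptotic, spherically complete immediate extension $K^*$ of $K$. It has no proper immediate extension, hence is differential-algebraically maximal, hence differential-henselian by \cite[Theorem~7.0.1]{adamtt}; by Theorem~\ref{maximmed} it is, up to isomorphism over $K$, the only maximal immediate extension of $K$, and likewise of each intermediate asymptotic differential field. Let $K^{\mathrm{dh}}$ be the set of elements of $K^*$ differentially algebraic over $K$; it is a differential subfield, immediate and differentially algebraic over $K$. I claim it is a minimal differential-henselization. It is differential-algebraically maximal: a proper differentially algebraic immediate extension $F$ of $K^{\mathrm{dh}}$ would, by uniqueness of maximal immediate extensions applied to $K^{\mathrm{dh}}$, embed over $K^{\mathrm{dh}}$ into $K^*$, and its image would consist of elements differentially algebraic over $K^{\mathrm{dh}}$, hence over $K$, hence already contained in $K^{\mathrm{dh}}$, a contradiction. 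So $K^{\mathrm{dh}}$ is differential-henselian by \cite[Theorem~7.0.1]{adamtt}. For minimality, if $K \subseteq E \subseteq K^{\mathrm{dh}}$ with $E$ differential-henselian, then $E$ is asymptotic and immediate over $K$, hence differential-algebraically maximal by Theorem~\ref{dhdalgmax}, while $K^{\mathrm{dh}}$ is a differentially algebraic immediate extension of $E$; therefore $E = K^{\mathrm{dh}}$.

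The remaining point, which I expect to be the real work, is the universal property: every differential-henselian extension $L$ of $K$ admits an embedding of $K^{\mathrm{dh}}$ over $K$. Since $K^{\mathrm{dh}}/K$ is immediate and differentially algebraic, by the structure theory of immediate extensions it is the union of a well-ordered tower $K = E_0 \subseteq E_1 \subseteq \cdots$ in which $E_{\lambda+1} = E_\lambda\langle a_\lambda\rangle$ is generated over $E_\lambda$ by a pseudolimit $a_\lambda$ of a divergent pseudo-Cauchy sequence in $E_\lambda$ of differentially algebraic type, with $a_\lambda$ annihilating the associated minimal differential polynomial. I would construct embeddings $E_\lambda \hookrightarrow L$ over $K$ by transfinite recursion, taking unions at limit stages and, at a successor stage, mapping the pseudo-Cauchy sequence into $L$ and invoking the fact that a divergent pseudo-Cauchy sequence of differentially algebraic type over a differential subfield of a differential-henselian field has a pseudolimit in that field satisfying the minimal differential polynomial, thereby producing an immediate differentially algebraic extension of the image of $E_\lambda$ inside $L$ into which $E_{\lambda+1}$ embeds. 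This absorption property of differential-henselian fields is exactly the engine behind \cite[Theorem~7.4.3]{adamtt} in the monotone case, and in the asymptotic (non-monotone) setting it is precisely the sort of statement that the pseudo-Cauchy sequence and differential-polynomial analysis underlying Theorems~\ref{maximmed} and \ref{dhdalgmax} is designed to provide; assembling it in the exact form needed here, for pseudo-Cauchy sequences lying over proper subfields of $L$, is the main obstacle.

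Uniqueness is then formal: any differential-henselization $K_1$ of $K$ embeds over $K$ into the differential-henselian field $K^{\mathrm{dh}}$ by its universal property; the image is a differential-henselian subextension of $K^{\mathrm{dh}}$, hence equals $K^{\mathrm{dh}}$ by minimality, so $K_1 \cong K^{\mathrm{dh}}$ over $K$. Thus any two differential-henselizations of $K$ are isomorphic over $K$.
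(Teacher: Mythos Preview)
Your overall strategy is correct and close to the paper's, but you have misidentified where the difficulty lies. The ``absorption property'' you call the main obstacle is not an obstacle: it is precisely Lemma~\ref{dalgmaxplimroot}. Given a divergent pc-sequence in (the image of) $E_\lambda$ with minimal $\d$-polynomial $G$, that lemma produces a pseudolimit $b\in L$ with $G(b)=0$ (since $L$, being immediate over $K$, is asymptotic, and $\d$-algebraically maximal by Theorem~\ref{dhdalgmax}); Lemma~\ref{adh6.9.3} then extends the embedding. The genuine gap is that Lemma~\ref{dalgmaxplimroot} requires the base field to be \emph{henselian}, and you have not arranged this for your $E_\lambda$. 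The paper handles this by using a Zorn/maximal-embedding argument (as in the proof of Theorem~\ref{maximmed}) rather than a tower: the domain of a maximal partial embedding is automatically henselian because its henselization inside $K^{\dh}$ (or $L$) is algebraic over it. You could equally well interleave henselizations into your tower.

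Your construction of $K^{\dh}$ as the differential-algebraic closure of $K$ inside a maximal immediate extension $K^*$ is a nice alternative to the paper's, which first reduces to $K$ henselian and then simply cites \cite[Corollary~9.4.11]{adamtt} for both existence and minimality of $K^{\dh}$. Two small points: (i) in your argument that $K^{\dh}$ is $\d$-algebraically maximal, Theorem~\ref{maximmed} gives isomorphism of \emph{maximal} immediate extensions, not embeddings of arbitrary ones---you need the extra step of passing from $F$ to a maximal immediate extension of $F$ before invoking it; (ii) ``asymptotic passes to differential subfields'' is not true in general; what you actually need (and what holds) is that intermediate fields of an immediate extension of an asymptotic field are asymptotic, by \cite[Lemmas~9.4.2 and 9.4.5]{adamtt}.
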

%\begin{thmint}\label{int:dh}
%Suppose $K$ is an asymptotic valued differential field with small derivation and linearly surjective differential residue field.
%Then $K$ has a differential-henselization.
%Moreover, any two differential-henselizations of $K$ are isomorphic over $K$.
%\end{thmint}

\subsection{Basic definitions and notation}
To understand these results, we define the necessary conditions after setting up basic notation, which we keep close to that of \cite{adamtt}.
We let $d$, $i$, $m$, $n$, and $r$ range over $\N = \{0, 1, 2, \dots\}$.
Throughout, $K$ is a \emph{valued differential field}, that is, a field of characteristic $0$ together with a surjective map $v \colon K^{\x} \to \Gamma$, where $\Gamma$ is a (totally) ordered abelian group called the \emph{value group} of $K$, and a map $\der \colon K \to K$ satisfying $v(\Q^{\x})=\{0\}$ and, for all $x$, $y$ in their domain:
\begin{enumerate}
	\item[(V1)] $v(xy) = v(x)+v(y)$;
	\item[(V2)] $v(x+y) \ges \min\{v(x), v(y)\}$ whenever $x+y \neq 0$;
	\item[(D1)] $\der(x+y)=\der(x)+\der(y)$;
	\item[(D2)] $\der(xy)=x\der(y)+\der(x)y$.
\end{enumerate}
We introduce a symbol $\infty \notin \Gamma$ and extend the ordering to $\Gamma \cup \{\infty\}$ by $\infty>\Gamma$.
We also set $\infty+\gamma=\gamma+\infty = \infty+\infty \coloneqq \infty$ for all $\gamma \in \Gamma$, allowing us to extend $v$ to $K$ by setting $v(0) \coloneqq \infty$.
We also set $\Gamma^{\neq} \coloneqq \Gamma \setminus \{0\}$.
We let $\ca O \coloneqq \{ a \in K : v(a) \ges 0 \}$ be the \emph{valuation ring} of $K$ and $\cao \coloneqq \{ a \in K : v(a) > 0 \}$ its (unique) maximal ideal.
We also set $\ca O^{\neq} \coloneqq \ca O \setminus \{0\}$ and $\cao^{\neq} \coloneqq \cao \setminus \{0\}$.
Then $\bm k \coloneqq \ca O/\cao$ denotes the \emph{residue field} of $K$.
As it is often more intuitive, we define for $a, b \in K$:
\[\begin{array}{lc}
a \prece b\ \Leftrightarrow\ v(a)\ges v(b),\qquad a \prec b\ \Leftrightarrow\ v(a)> v(b),\\
a \asymp b\ \Leftrightarrow\ v(a)=v(b),\qquad  
  a\sim b\ \Leftrightarrow\ a-b \prec b.
\end{array}\]
Regarding the derivation, we usually write $a'$ for $\der(a)$ if the derivation is clear from context, and the \emph{field of constants} of $K$ is denoted by $C \coloneqq \{ a \in K : \der(a) = 0 \}$.
For another valued differential field $L$, we apply the subscript $L$ to these symbols; for example, $\ca O_L$ denotes the valuation ring of $L$.
Recall that a valued field extension $L$ of $K$ is said to be \emph{immediate} if $\bm k_L = \bm k$ and $\Gamma_L = \Gamma$, where we identify $\bm k$ with a subfield of $\bm k_L$ and $\Gamma$ with an ordered subgroup of $\Gamma_L$ in the usual way.
We call a pseudocauchy sequence a \emph{pc-sequence} and refer the reader to \cite[\S2.2 and \S3.2]{adamtt} for definitions and basic facts about them.

We let $K\{Y\} \coloneqq K[Y, Y', Y'', \dots]$ denote the ring of differential polynomials over $K$ (we hereinafter shorten ``differential-polynomial'' to ``$\d$-polynomial'') and set $K\{Y\}^{\neq} \coloneqq K\{Y\} \setminus \{0\}$.
Let $P$ range over $K\{Y\}^{\neq}$.
The \emph{order} of $P$ is the smallest $r$ such that $P \in K[Y, Y', \dots, Y^{(r)}]$.
Its \emph{degree} is its total degree.
If $r$ is the order of $P$, $m$ its degree in $Y^{(r)}$, and $n$ its (total) degree, then the \emph{complexity} of $P$ is the triple $c(P) \coloneqq (r, m, n)$;
complexities are ordered lexicographically.
For $\bm i = (i_0, \dots, i_r) \in \N^{1+r}$, we set $Y^{\bm i} \coloneqq Y^{i_0} (Y')^{i_1} \dots (Y^{(r)})^{i_r}$.
If $P$ has order at most $r$, then we decompose $P$ as $\sum_{\bm i} P_{\bm i} Y^{\bm i}$, where $\bm i$ ranges over $\N^{1+r}$.
We also sometimes decompose $P$ into its homogeneous parts, so let $P_d$ denote the homogeneous part of $P$ of degree $d$ and set $P_{\les d} \coloneqq \sum_{i \les d} P_i$ and $P_{>d} \coloneqq \sum_{i>d} P_i$.
Letting $|\bm i| \coloneqq i_0 + \dots + i_r$, we note that $P_d = \sum_{|\bm i| = d} P_{\bm i} Y^{\bm i}$.
The \emph{multiplicity of $P$ at $0$}, denoted by $\mul P$, is the least $d$ with $P_d \neq 0$.
We often use, for $a \in K$, the additive and multiplicative conjugates of $P$ by $a$ defined by $P_{+a}(Y) \coloneqq P(a+Y)$ and $P_{\x a}(Y) \coloneqq P(aY)$.
For convenience, we also write $P_{-a}$ for $P_{+(-a)}$.
Note that $(P_{+a})_{+b}=(P_{+b})_{+a}=P_{+(a+b)}$ for $b \in K$, which we write $P_{+a+b}$.
We define $P_{+a-b}$ likewise.
The \emph{multiplicity of $P$ at $a$} is $\mul P_{+a}$.
Note that $(P_d)_{\x a} = (P_{\x a})_d$, which we denote by $P_{d, \x a}$.
For more on such conjugation, see \cite[\S4.3]{adamtt}.
We extend the derivation of $K$ to $K\{Y\}$ in the natural way, and we also extend $v$ to $K\{Y\}$ by setting $v(P)$ to be the minimum valuation of the coefficients of $P$.
The relations $\prece$, $\prec$, $\asymp$, and $\sim$ are also extended to $K\{Y\}$ in the obvious way.
We recall how $v(P)$ behaves under additive and multiplicative conjugation of $P$ in \S\ref{prelim:arch}.

There are two conditions we sometimes impose connecting the valuation and the derivation.
First, we say $K$ is \emph{asymptotic} if, for all $f$, $g \in \cao^{\neq}$,
\[
f \prec g\ \iff\ f' \prec g'.
\]
If $K$ is asymptotic, then $\der$ is continuous with respect to the valuation topology on $K$ \cite[Corollary~9.1.5]{adamtt}.
It follows immediately from the definition that if $K$ is asymptotic, then $v(C^{\x}) = \{0\}$, in which case we say that $K$ has \emph{few constants}.
This weaker condition is assumed in some lemmas, so as to delay the assumption that $K$ is asymptotic until \S\ref{sec:redcomplex}.
The class of asymptotic fields includes for example the ordered (valued) differential field of logarithmic-exponential transseries studied in \cite{adamtt}, and, more generally, the class of differential-valued fields introduced by Rosenlicht \cite{rosendval}.

The second condition is more basic and is assumed throughout:
We say that $K$ has \emph{small derivation} if $\der \cao \subseteq \cao$.
Small derivation also implies that $\der$ is continuous with respect to the valuation topology on $K$; in fact, $\der$ is continuous if and only if $\der\cao \subseteq a\cao$ for some $a \in K^{\x}$ \cite[Lemma~4.4.7]{adamtt}.
It also implies that $\der \ca O \subseteq \ca O$ \cite[Lemma~4.4.2]{adamtt}, so $\der$ induces a derivation on $\bm k$.
Then we say that $\bm k$ is \emph{$r$-linearly surjective} if, for all $a_0, \dots, a_r \in \bm k$ with $a_i \neq 0$ for some $i \les r$, the equation $1 + a_0 y + a_1y' + \dots + a_r y^{(r)} = 0$ has a solution in $\bm k$.
We call $\bm k$ \emph{linearly surjective} if $\bm k$ is $r$-linearly surjective for each $r$.
Generalizing the notion of henselianity for valued fields, we say that $K$ is \emph{$r$-differential-henselian} (\emph{$r$-$\d$-henselian} for short) if:
\begin{enumerate}
	\item[($r$DH1)] $\bm k$ is $r$-linearly surjective, and 
	\item[($r$DH2)] whenever $P \in \ca O\{Y\}$ has order at most $r$ and satisfies $P_0 \prec 1$ and $P_1 \asymp 1$, there is $y \in \cao$ with $P(y) = 0$.
\end{enumerate}
The following equivalent formulation is often used without comment.
Its proof uses the Equalizer Theorem (see Theorem~\ref{adh6.0.1}), a key result from \cite{adamtt} which also underlies the results of this paper.
\begin{lem}[{\cite[7.1.1, 7.2.1]{adamtt}}]\label{adh7.1.1}
We have that $K$ is $r$-$\d$-henselian if and only if, for every $P \in \ca O\{Y\}$ of order at most $r$ satisfying $P_1 \asymp 1$ and $P_i \prec 1$ for all $i \ges 2$, there is $y \in \ca O$ with $P(y)=0$.
\end{lem}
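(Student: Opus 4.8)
The plan is to prove the two implications separately. Write ($\ast$) for the condition in the statement, namely that every $P \in \ca O\{Y\}$ of order at most $r$ with $P_1 \asymp 1$ and $P_i \prec 1$ for all $i \ges 2$ has a zero $y \in \ca O$. The forward direction is elementary; deducing ($r$DH2) from ($\ast$) is where the Equalizer Theorem enters.

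First I would suppose that $K$ is $r$-$\d$-henselian and take $P$ as in ($\ast$). Reducing modulo $\cao$ gives $\bar P = \bar P_0 + \bar P_1 \in \bm k\{Y\}$ with $\bar P_1$ a nonzero homogeneous linear $\d$-polynomial. If $\bar P_0 = 0$, then $P_0 \prec 1$, and ($r$DH2) directly provides $y \in \cao \subseteq \ca O$ with $P(y) = 0$. Otherwise $P_0 \asymp 1$; dividing $\bar P$ by $\bar P_0$ puts the residue equation in the form $1 + \bar b_0 Y + \dots + \bar b_r Y^{(r)} = 0$ with not all $\bar b_j$ zero (since $P_1 \asymp 1$), so by ($r$DH1) it has a solution $\bar c \in \bm k$, and multiplying back by $\bar P_0$ gives $\bar P(\bar c) = 0$. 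Lifting $\bar c$ to $c \in \ca O$, I pass to $P_{+c}$: small derivation gives $c^{(j)} \in \ca O$ for all $j$, so $P_{+c} \in \ca O\{Y\}$, still of order at most $r$, and $(P_{+c})_0 = P(c) \prec 1$ because $\overline{P(c)} = \bar P(\bar c) = 0$. Expanding $P_{+c}$ into homogeneous parts shows that $(P_{+c})_1$ is $P_1$ plus a sum over $d \ges 2$ of the degree-$1$ parts of $(P_d)_{+c}$, and each of these parts has coefficients that are $\ca O$-combinations of the coefficients of $P_d$, hence is $\prec 1$ since $P_d \prec 1$; thus $(P_{+c})_1 \asymp P_1 \asymp 1$. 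So $P_{+c}$ meets the hypotheses of ($r$DH2), yielding $z \in \cao$ with $P_{+c}(z) = 0$, and $y \coloneqq c + z \in \ca O$ satisfies $P(y) = 0$.

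For the converse I assume ($\ast$). To get ($r$DH1), given $\bar a_0, \dots, \bar a_r \in \bm k$ not all zero, I lift each $\bar a_j$ to $a_j \in \ca O$ and apply ($\ast$) to $P \coloneqq 1 + a_0 Y + \dots + a_r Y^{(r)} \in \ca O\{Y\}$, which has order at most $r$, $P_1 \asymp 1$, and $P_i = 0$ for $i \ges 2$; a zero $y \in \ca O$ of $P$ then has residue solving $1 + \bar a_0 \bar y + \dots + \bar a_r \bar y^{(r)} = 0$ in $\bm k$. To get ($r$DH2), let $P \in \ca O\{Y\}$ have order at most $r$ with $P_0 \prec 1$ and $P_1 \asymp 1$, and assume $P_0 \neq 0$ (otherwise $y = 0$ works). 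Here $P$ need not satisfy $P_i \prec 1$ for $i \ges 2$, and ($\ast$) only produces a zero in $\ca O$ whereas a zero in $\cao$ is wanted; both issues I would resolve simultaneously by a multiplicative conjugation. Using the Equalizer Theorem (Theorem~\ref{adh6.0.1}), one finds $\mathfrak m \in K^{\x}$ with $v(\mathfrak m) > 0$ such that $Q \coloneqq P_{\x \mathfrak m}$ lies in $\ca O\{Y\}$, has order at most $r$, and satisfies $Q_1 \asymp 1$ and $Q_i \prec 1$ for all $i \ges 2$; ($\ast$) then gives $z \in \ca O$ with $Q(z) = 0$, and $y \coloneqq \mathfrak m z$ satisfies $P(y) = 0$ and $v(y) \ges v(\mathfrak m) > 0$, so $y \in \cao$.

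The main obstacle is the choice of $\mathfrak m$. Without monotonicity, multiplicative conjugation by $\mathfrak m$ can both distort the linear part and inflate the higher-degree parts of $P$ --- a derivative $\mathfrak m^{(k)}$ need not satisfy $\mathfrak m^{(k)} \preccurlyeq \mathfrak m$ --- so one cannot simply take $\mathfrak m \asymp P_0$. The Equalizer Theorem is precisely what pins down the correct value of $v(\mathfrak m)$, namely the one at which the dominant contributions of the linear part and of the higher-degree parts of $P$ balance, so that after conjugating the linear part survives and dominates and ($\ast$) applies.
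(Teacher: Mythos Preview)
Your forward direction and the derivation of ($r$DH1) from ($\ast$) are correct and standard. The gap is in your derivation of ($r$DH2) from ($\ast$), in two places.

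First, the claim that $Q \coloneqq P_{\x\f m}$ with $v(\f m)>0$ can satisfy $Q_1\asymp 1$ is false as written: by Lemma~\ref{adh6.1.3}, $v(Q_1)=v_{P_1}(v\f m)=v\f m+o(v\f m)>0$ whenever $v\f m>0$, so $Q_1\prec 1$. What you need is to rescale, i.e., divide $P_{\x\f m}$ by some $a\in K^{\x}$ with $a\asymp (P_1)_{\x\f m}$; the conditions on the rescaled polynomial then become the \emph{relative} conditions $(P_1)_{\x\f m}\asymp P_{\x\f m}$ and $(P_i)_{\x\f m}\prec P_{\x\f m}$ for $i\ges 2$.

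Second, and more substantively, your description of what the Equalizer balances is wrong. Balancing $P_1$ against some $P_i$ with $i\ges 2$ would require $(i-1)\Gamma=\Gamma$, which is not assumed, and at the balancing point $P_1$ does not \emph{dominate} $P_i$ but equals it. The correct move is to apply the Equalizer to $P_0$ and $P_1$: since their degrees differ by~$1$, no divisibility is needed, and there is a unique $\alpha\in\Gamma$ with $v_{P_1}(\alpha)=v(P_0)$. Because $v_{P_1}-v_{P_0}$ is strictly increasing and negative at~$0$, we get $\alpha>0$. Now Lemma~\ref{adh6.1.3} gives, for each $i\ges 2$,
\[
v_{P_i}(\alpha)-v_{P_1}(\alpha)\ =\ v(P_i)+(i-1)\alpha+o(\alpha)\ >\ 0,
\]
since $v(P_i)\ges 0$ and $(i-1)\alpha+o(\alpha)>0$. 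So with $v\f m=\alpha$ and after dividing $P_{\x\f m}$ by some $a\asymp P_0$, the resulting $\tilde Q$ lies in $\ca O\{Y\}$ with $\tilde Q_0\asymp\tilde Q_1\asymp 1$ and $\tilde Q_i\prec 1$ for $i\ges 2$; then ($\ast$) applies and $y\coloneqq\f m z\in\cao$ is the desired zero. This is the argument in \cite[\S7.2]{adamtt} to which the paper refers.
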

We say that $K$ is \emph{differential-henselian} (\emph{$\d$-henselian} for short) if it is $r$-$\d$-henselian for each $r$.
These definitions are due to Aschenbrenner, van den Dries, and van der Hoeven \cite[Chapter~7]{adamtt}, although earlier notions were considered by Scanlon for monotone fields \cite{scanlon} and F.-V.\ Kuhlmann for differential-valued fields \cite{fvkultrametric}.
Connecting this to asymptoticity, we note that if $K$ is $1$-$\d$-henselian and has few constants, then it is asymptotic \cite[Lemmas~7.1.8 and 9.1.1]{adamtt}.

Throughout, by an \emph{extension of $K$} we mean a valued differential field extension of $K$ with small derivation; similarly, \emph{embedding} means ``valued differential field embedding.''
In analogy with the notion of a henselization of a valued field, we defined in \cite{dhfinrank} the notion of a differential-henselization of a valued differential field:
We call an extension $L$ of $K$ a \emph{differential-henselization} (\emph{$\d$-henselization} for short) of $K$ if it is an immediate $\d$-henselian extension of $K$ that embeds over $K$ into any immediate $\d$-henselian extension of $K$.
Finally, we call $K$ \emph{maximal} if it has no proper immediate extension and \emph{differential-algebraically maximal} (\emph{$\d$-algebraically maximal} for short) if it has no proper differentially algebraic immediate extension.
Recall from \cite[Chapter~7]{adamtt} that if the derivation induced on $\bm k$ is nontrivial, then $K$ is $\d$-algebraically maximal if and only if every pc-sequence in $K$ of $\d$-algebraic type over $K$ has a pseudolimit in $K$ (see \S\ref{prelim:imext} for this notion).
It is also worth pointing out that by the main result of \cite{maximext}, $K$ is maximal in this sense if and only if $K$ is maximal as a valued field in the usual sense, which in turn is equivalent to every pc-sequence in $K$ having a pseudolimit in $K$.

\subsection{Outline}

The main technical tool of the paper is Proposition~\ref{mainlemmadiv}, and assuming this we prove Theorems~\ref{maximmed}, \ref{dhdalgmax}, and \ref{dhensel} in \S\ref{mainresults}.
We also use Proposition~\ref{mainlemmadiv} to obtain analogues of these results relativized to $\d$-polynomials of a fixed order in \S\ref{sec:main:add}.
The rest of the paper after \S\ref{mainresults} is devoted to proving Proposition~\ref{mainlemmadiv}, and the strategy closely follows the approach taken to prove \cite[Proposition~14.5.1]{adamtt}, which is an analogue of Proposition~\ref{mainlemmadiv} in the setting of $\upomega$-free $H$-asymptotic differential-valued fields.

First, we adapt the differential newton diagram method of \cite[\S13.5]{adamtt} to the setting of valued differential fields with small derivation and divisible value group in \S\ref{sec:ndiag}, which relies in an essential way on the Equalizer Theorem \cite[Theorem~6.0.1]{adamtt}; this is where divisibility is used.
The main results are Proposition~\ref{ndiag} and Corollary~\ref{ndiagcor}, which are then connected to pc-sequences in \S\ref{sec:ndiag:ddegcut}.

From there, we proceed to study asymptotic differential equations in \S\ref{sec:ade}, with the main technical notion being that of an unraveller, adapted from \cite[\S13.8]{adamtt}.
There are three key steps in this section.
First, we establish the existence of an unraveller that is a pseudolimit of a pseudocauchy sequence in Lemma~\ref{plimunravellersexist}, via Proposition~\ref{unravellersexist}.
Second, we reduce the degree of an asymptotic differential equation in Lemma~\ref{reducedegunravel}.
Third, we find a solution of an asymptotic differential equation in a $\d$-henselian field that approximates an element in an extension of that field in \S\ref{sec:ade:dhsolns}.

The penultimate section, \S\ref{sec:redcomplex}, based on \cite[\S14.4]{adamtt}, is quite technical.
It combines many results from the previous sections and culminates in Proposition~\ref{maintechprop} and its Corollary~\ref{maintechcor}, which is essential to the proof of Proposition~\ref{mainlemmadiv}.
One of the main steps here is Lemma~\ref{slowdownlem}, which allows us to use Lemma~\ref{reducedegunravel} to reduce the degree of an asymptotic differential equation.

There are four salient differences from the approach in \cite{adamtt}.
The first is that the ``dominant part'' and ``dominant degree'' of $\d$-polynomials replace their more technical cousins ``newton polynomial'' and ``newton degree,'' leading to the simplification of some proofs.
The second is that since $K$ is not assumed to be $H$-asymptotic, we replace the convex valuation on $\Gamma$ given by $v(g) \mapsto v(g'/g)$, for $g \in K^{\x}$ with $g \not\asymp 1$, with that given by considering archimedean classes.
Third, under the assumption of $\upomega$-freeness, newton polynomials are in $C[Y](Y')^{n}$ for some $n$, but dominant parts need not have this special form.
This leads to changes in \S\ref{sec:redcomplex}, such as the need to take partial derivatives with respect to higher order derivatives of $Y$ than just $Y'$.
Finally, Lemma~\ref{mdpac} enables the generalization of Proposition~\ref{mainlemmadiv} to nondivisible value group in Proposition~\ref{mainlemmah}.
(An analogous lemma was used in \cite{newtondiv} to remove the assumption of divisible value group from the corresponding results in \cite[Chapter~14]{adamtt}.)

\subsection{Review of assumptions}
Throughout, $K$ is a valued differential field with nontrivial (surjective) valuation $v \colon K^{\x} \to \Gamma$ and nontrivial derivation $\der \colon K \to K$.
The valuation ring is $\ca O$ with maximal ideal $\cao$, and we further assume that $K$ has small derivation, i.e., $\der \cao \subseteq \cao$.
Then the differential residue field is $\bm k = \ca O/\cao$.
The field of constants is denoted by $C$.
Additional assumptions on $K$, $\Gamma$, and $\bm k$ are indicated as needed.
``Extension'' is short for ``valued differential field extension with small derivation'' and ``embedding'' is short for ``valued differential field embedding.''
We let $d$, $i$, $m$, $n$, and $r$ range over $\N = \{0, 1, 2, \dots\}$.

\section{Preliminaries}\label{prelim}

Throughout this section, $P \in K\{Y\}^{\neq}$.

\subsection{Dominant parts of differential polynomials}\label{prelim:ddeg}
We present in this subsection the notion of the dominant part of a $\d$-polynomial over $K$ when $K$ has a monomial group $\f M$, i.e., a subgroup of $K^{\x}$ that is mapped bijectively onto $\Gamma$ by $v$.
This assumption yields slightly improved versions of lemmas from \cite[\S6.6]{adamtt}, where this notion is developed without the monomial group assumption.
All the statements about dominant degree and dominant multiplicity given here remain true in that greater generality, and we freely use them later even when $K$ may not have a monomial group.
The proofs are essentially the same, so are omitted.

\begin{ass}
In this subsection, $K$ has a monomial group $\f M$.
\end{ass}

We let $\f d_P \in \f M$ be the unique monomial such that $\f d_P \asymp P$.
For $Q = 0 \in K\{Y\}$, we set $\f d_Q \coloneqq 0$.
\begin{defn}
Since $\f d_P^{-1}P \in \ca O\{Y\}$, we define the \emph{dominant part} of $P$ to be the $\d$-polynomial
\[
D_P\ \coloneqq\ \overline{\f d_P^{-1}P}\ =\ \sum_{\bm i} (\overline{P_{\bm i}/\f d_P}) \, Y^{\bm i}\ \in\ \bm k\{Y\}^{\neq}.
\]
For $Q=0 \in K\{Y\}$, we set $D_Q \coloneqq 0 \in \bm k\{Y\}$.

Then $\deg D_P \les \deg P$ and $\ord D_P \les \ord P$.
We call $\ddeg P \coloneqq \deg D_P$ the \emph{dominant degree} of $P$ and $\dmul P \coloneqq \mul D_P$ the \emph{dominant multiplicity of $P$ at $0$}.
\end{defn}
Note that if $P$ is homogeneous of degree $d$, then so is $D_P$.

\begin{lem}\label{adh6.6.2}
Let $Q \in K\{Y\}$. Then
\begin{enumerate}
	\item if $P \succ Q$, then $D_{P+Q}=D_P$;
	\item if $P \asymp Q$ and $P+Q \asymp P$, then $D_{P+Q} = D_P + D_Q$;
	\item $D_{PQ} = D_P D_Q$.
\end{enumerate}
\end{lem}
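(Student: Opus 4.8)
The plan is to reduce all three parts to two routine facts about $K\{Y\}$, regarded simply as the polynomial ring $K[Y, Y', Y'', \dots]$ (the derivation plays no role): (a) the residue map $R \mapsto \overline R$ from $\ca O\{Y\}$ onto $\bm k\{Y\}$ is a ring homomorphism, with $\overline R = 0$ exactly when $R \prec 1$; and (b) $v$ behaves on $K\{Y\}$ as expected from its definition as the minimum valuation of the coefficients, so $v(R+S) \ges \min\{v(R),v(S)\}$ with equality when $v(R) \neq v(S)$, and $v(aR) = v(a) + v(R)$ for $a \in K^{\x}$. Recall too that $\f d_R$ is the unique element of $\f M$ with $v(\f d_R) = v(R)$, so $\f d_R^{-1}R \in \ca O\{Y\}$ and $\overline{\f d_R^{-1}R} = D_R$. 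Each part is then a short manipulation of $\f d_P$.

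For (i): from $P \succ Q$, i.e., $v(Q) > v(P)$, fact (b) gives $v(P+Q) = v(P)$, so $\f d_{P+Q} = \f d_P$; since $v(\f d_P^{-1}Q) = v(Q) - v(P) > 0$, i.e., $\f d_P^{-1}Q \prec 1$, fact (a) applied to $\f d_P^{-1}(P+Q) = \f d_P^{-1}P + \f d_P^{-1}Q$ gives $D_{P+Q} = \overline{\f d_P^{-1}P} = D_P$. For (ii): $P \asymp Q$ gives $v(P) = v(Q)$, so $\f d_P = \f d_Q$; write $\f d$ for this common monomial. Then $P + Q \asymp P$ gives $\f d_{P+Q} = \f d$ as well, and additivity of the residue map yields $D_{P+Q} = \overline{\f d^{-1}P} + \overline{\f d^{-1}Q} = D_P + D_Q$.

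For (iii) we may assume $Q \neq 0$, the case $Q=0$ being trivial (then $PQ = 0$, so $D_{PQ} = 0 = D_P D_Q$). Now $\f d_P^{-1}P$ and $\f d_Q^{-1}Q$ lie in $\ca O\{Y\}$ with nonzero residues $D_P$ and $D_Q$, so the product $\f d_P^{-1}\f d_Q^{-1}PQ = (\f d_P^{-1}P)(\f d_Q^{-1}Q) \in \ca O\{Y\}$ has residue $D_P D_Q$, which is nonzero because $\bm k\{Y\}$ is an integral domain. Hence $v(\f d_P^{-1}\f d_Q^{-1}PQ) = 0$, i.e., $v(PQ) = v(P) + v(Q)$ by fact (b); since $\f d_P\f d_Q \in \f M$ has this valuation, $\f d_{PQ} = \f d_P\f d_Q$, and therefore $D_{PQ} = \overline{\f d_{PQ}^{-1}PQ} = \overline{(\f d_P^{-1}P)(\f d_Q^{-1}Q)} = D_P D_Q$.

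The only point carrying any content is the multiplicativity of $v$ on $K\{Y\}$ used in (iii), i.e., the Gauss-lemma fact that $\bm k\{Y\}$ is a domain; everything else is bookkeeping. This is essentially \cite[Lemma~6.6.2]{adamtt}, the monomial-group hypothesis serving only to make $\f d_R$ a genuine element of $K^{\x}$ rather than a coset and not otherwise affecting the argument, so one could alternatively cite that lemma directly.
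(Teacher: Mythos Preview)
Your proof is correct and, for part~(ii), essentially identical to the paper's: both observe $\f d_{P+Q} = \f d_P = \f d_Q$ and invoke additivity of the residue map. The paper does not prove (i) and (iii), deferring them to \cite[Lemma~6.6.2]{adamtt}; your arguments for these are the standard ones and are fine.
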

\begin{proof}
Part (ii) is not in \cite{adamtt}, so we give a proof.
Suppose $P \asymp Q$ and $P+Q \asymp P$, so $\f d_{P+Q} = \f d_P = \f d_Q$.
Then
\[
D_{P+Q}\
=\ \sum_{\bm i} \overline{ \left( P + Q \right)_{\bm i}/\f d_{P+Q}} \, Y^{\bm i}\
=\ \sum_{\bm i}\left( \overline{P_{\bm i}/\f d_P} + \overline{Q_{\bm i}/\f d_Q}\right) Y^{\bm i}\
=\ D_P + D_Q.
\qedhere
\]
\end{proof}

%\begin{lem}\label{adh6.6.3}
%Let $Q \in \ca O\{Y\}$ be such that $\overline{Q} \notin \bm k$. Then
%\[
%P(Q) \asymp P, \hs D_{P(Q)}=D_P(\overline{Q}).
%\]
%\end{lem}
%%\begin{proof}
%%Applying \cite[Lemma~4.3.12]{adamtt} to $\bm k$ gives that $D_P(\overline Q) \neq 0$, because $D_P \neq 0$ and $\overline{Q} \notin \bm k$.
%%Thus
%%\[
%%\overline{\f d_P^{-1}P(Q)}\ =\ \sum_{\bm i}(\overline{P_{\bm i}/\f d_P}) \overline{Q}^{\bm i}\ =\ D_P(\overline Q)\ \neq\ 0.
%%\]
%%In particular, $P(Q) \asymp P$, so $\f d_{P(Q)}=\f d_P$, and hence $D_{P(Q)}=D_P(\overline Q)$.
%%\end{proof}
%
%We use this to derive the following.

\begin{lem}\label{adh6.6.5}
Let $a \in K$ with $a \prece 1$. Then:
\begin{enumerate}
\item $D_{P_{+a}}=(D_P)_{+\bar{a}}$, and thus $\ddeg P_{+a}=\ddeg P$;
\item if $a \asymp 1$, then $D_{P_{\x a}}=(D_P)_{\x \bar{a}}$, $\dmul P_{\x a}=\dmul P$, and $\ddeg P_{\x a}=\ddeg P$;
\item if $a \prec 1$, then $\ddeg P_{\x a} \les \dmul P$.
\end{enumerate}
\end{lem}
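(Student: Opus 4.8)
For part (i) I would first check that $P_{+a}\asymp P$. Expanding $P_{+a}(Y)=P(a+Y)$, every coefficient of $P_{+a}$ lies in the $\ca O$-submodule of $K$ generated by the coefficients of $P$ — here one uses that $\der$ is small, so $\der\ca O\subseteq\ca O$ and hence $a^{(k)}\in\ca O$ for all $k$ — whence $v(P_{+a})\ges v(P)$, that is, $P_{+a}\prece P$; applying this with $P_{+a}$, $-a$ in place of $P$, $a$ gives $P=(P_{+a})_{-a}\prece P_{+a}$, so $P_{+a}\asymp P$ and $\f d_{P_{+a}}=\f d_P$. Since additive conjugation is a $K$-algebra operation, $\f d_P^{-1}P_{+a}=(\f d_P^{-1}P)_{+a}\in\ca O\{Y\}$, and since the coefficientwise residue map $\ca O\{Y\}\to\bm k\{Y\}$ is a differential-ring homomorphism fixing each $Y^{(k)}$ and sending $a$ to $\bar a$, it commutes with additive conjugation; hence $D_{P_{+a}}=\overline{(\f d_P^{-1}P)_{+a}}=(\overline{\f d_P^{-1}P})_{+\bar a}=(D_P)_{+\bar a}$, and $\ddeg P_{+a}=\deg(D_P)_{+\bar a}=\deg D_P=\ddeg P$ because additive conjugation over a field preserves total degree. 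Part (ii) is the same argument with $\x$ in place of $+$ (using $a,a^{-1}\in\ca O$ to obtain $P_{\x a}\asymp P$ and then $D_{P_{\x a}}=(D_P)_{\x\bar a}$), together with the remark that, since $\bar a\in\bm k^{\times}$, multiplicative conjugation by $\bar a$ on $\bm k\{Y\}$ is invertible and therefore preserves both total degree and multiplicity at $0$.

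For part (iii), let $a\prec 1$. As $(\f d_P^{-1}P)_{\x a}=\f d_P^{-1}P_{\x a}$, replacing $P$ by $\f d_P^{-1}P$ changes neither $\ddeg P_{\x a}$ nor $\dmul P$, so I may assume $\f d_P=1$; then $P\in\ca O\{Y\}$, $D_P=\bar P$, and with $d_0\coloneqq\dmul P$ we have $\overline{P_{d_0}}\neq 0$ (so $P_{d_0}\asymp 1$) while $P_d\prec 1$ for $d<d_0$. Since $(P_d)_{\x a}=(P_{\x a})_d$ is the degree-$d$ homogeneous part of $P_{\x a}$, we get $v(P_{\x a})=\min_d v((P_d)_{\x a})\les v((P_{d_0})_{\x a})$, so it is enough to prove $v((P_d)_{\x a})>v((P_{d_0})_{\x a})$ for every $d>d_0$: this yields $(P_d)_{\x a}\prec P_{\x a}$ for all $d>d_0$ and hence $\ddeg P_{\x a}\les d_0$. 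The basic estimate is that, with $r\coloneqq\ord P$ and $\epsilon\coloneqq\min_{0\les l\les r}v(a^{(l)})$ — which is $>0$ because $\der$ is small, so every $a^{(l)}\in\cao$ — multiplicativity of the coefficient valuation on $K\{Y\}$ together with $v\bigl((aY)^{(k)}\bigr)\ges\epsilon$ for $k\les r$ gives $v((P_d)_{\x a})\ges v(P_d)+d\epsilon\ges d\epsilon$ for every $d$.

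It then remains to pin down $v((P_{d_0})_{\x a})$. When $v(a)\les v(a^{(l)})$ for all $1\les l\les r$, one has $v\bigl((aY)^{(k)}\bigr)=v(a)$ for each $k\les r$; moreover $(aY)^{(k)}/a=Y^{(k)}+(\text{terms in }Y^{(j)},\ j<k)\in\ca O\{Y\}$ reduces mod $\cao$ to a form with leading term $Y^{(k)}$, so the forms $\overline{(aY)^{(k)}/a}$, $0\les k\les r$, are triangular and hence $\bm k$-linearly independent; therefore $\overline{(P_{d_0})_{\x a}/a^{d_0}}$ is the image of $\overline{P_{d_0}}\neq 0$ under an invertible $\bm k$-algebra substitution, hence nonzero. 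Thus $v((P_{d_0})_{\x a})=d_0\,v(a)=d_0\epsilon$, and so $v((P_d)_{\x a})\ges d\epsilon>d_0\epsilon=v((P_{d_0})_{\x a})$ for $d>d_0$, finishing this case.

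The main obstacle is the complementary case of (iii): $v(a^{(l)})<v(a)$ for some $1\les l\les r$. Small derivation — unlike monotonicity — does not force $v(a')\ges v(a)$, so the monomial normalizing $P_{\x a}$ is controlled by a derivative of $a$ rather than by $a$, the reductions $\overline{(aY)^{(k)}/\f b}$ (for $\f b$ with $v(\f b)=\epsilon<v(a)$) are no longer triangular and in fact vanish for small $k$, and consequently some degree-$d_0$ monomials of $D_P$ are killed on substitution, so the clean identity $v((P_{d_0})_{\x a})=d_0\epsilon$ can fail. Getting past this requires a more delicate version of the analysis above — carefully tracking the lower-order contributions $\binom{k}{l}a^{(l)}Y^{(k-l)}$ to $(aY)^{(k)}$, as in \cite[\S6.6]{adamtt} — and this is the only genuinely technical point; parts (i), (ii), and the reduction in (iii) are routine.
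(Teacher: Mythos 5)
Parts (i) and (ii), and the reduction to $\f d_P=1$ in (iii), are correct, and the sub-case of (iii) you handle (where $v(a)\les v(a^{(l)})$ for all $l\les r$) is also correct. You are right to flag the complementary case as a genuine gap: there your identity $v\big((P_{d_0})_{\x a}\big)=d_0\epsilon$ fails, and the blanket lower bound $v\big((P_d)_{\x a}\big)\ges d\epsilon$ is then too coarse to separate $d>d_0$ from $d_0$. Since small derivation places no constraint on $v(a')$ relative to $v(a)$ beyond $v(a')>0$, this is not a corner case, and part (iii) is genuinely unproved as written.

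The missing ingredient is Lemma~\ref{adh6.1.3}, recorded earlier in the paper precisely for this purpose (it is \cite[6.1.3, 6.1.5]{adamtt}, from \S 6.1, not \S 6.6). Applied to the homogeneous parts $P_d$ and $P_{d_0}$ with $\gamma\coloneqq va>0$, it gives
\[
v\big((P_d)_{\x a}\big)-v\big((P_{d_0})_{\x a}\big)\ =\ v(P_d)-v(P_{d_0})+(d-d_0)\gamma+o(\gamma).
\]
After your normalization $\f d_P=1$ we have $v(P_{d_0})=0\les v(P_d)$ with $d_0\coloneqq\dmul P$, so for $d>d_0$ the right-hand side is $\ges(d-d_0)\gamma+o(\gamma)>0$; hence $v\big((P_d)_{\x a}\big)>v\big((P_{d_0})_{\x a}\big)\ges v(P_{\x a})$ for all $d>d_0$, which gives $\ddeg P_{\x a}\les d_0$ with no case split. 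The delicate bookkeeping of the cross-terms $\binom{k}{l}a^{(l)}Y^{(k-l)}$ that you anticipated having to carry out is exactly what Lemma~\ref{adh6.1.3} has already absorbed into the $o(\gamma)$ error; using it as a black box, the intended (omitted) proof of (iii) is the one-liner above, and your explicit computation of $v\big((aY)^{(k)}\big)$ is not needed.
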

%\begin{proof}
%For (i), apply Lemma~\ref{adh6.6.3} with $Q=a+Y$.
%For (ii), apply Lemma~\ref{adh6.6.3} with $Q=aY$, and recall that $(P_d)_{\x h}=(P_{\x h})_d$ for any $h \in K$.
%For (iii), the statement is trivial if $a=0$, so suppose $0\neq a \prec 1$.
%For convenience, assume that $P \asymp 1$, and set $d \coloneqq \dmul P$.
%Then
%\[
%P\ =\ \sum_{i<d}P_i+P_d+\sum_{i>d}P_i,
%\]
%where $P_i \prec 1$ for $i<d$, $P_d \asymp 1$, and $P_i \prece 1$ for $i>d$, and
%\[
%P_{\x a}\ =\ \sum_{i<d}(P_i)_{\x a}+(P_d)_{\x a}+\sum_{i>d}(P_i)_{\x a}.
%\]
%Then for $i>d$, by Lemma~\ref{adh6.1.3}, 
%\[
%v\big((P_d)_{\x a}\big)\ =\ dva+o(va)\ <\ v(P_i)+iva+o(va)\ =\ v\big((P_i)_{\x a}\big),
%\]
%from which (iii) follows.
%\end{proof}

It follows from (ii) that $\ddeg P_{\x g}$ and $\dmul P_{\x g}$ only depend on $vg$, for $g \in K^{\x}$.
%\begin{cor}\label{adh6.6.6}
%If $f-g \prece h$, then $\ddeg P_{+f, \x h}= \ddeg P_{+g, \x h}$.
%If moreover $f-g \prec h$, then $D_{P_{+f, \x h}}=D_{P_{+g, \x h}}$.
%\end{cor}
%\begin{proof}
%The second claim is not in \cite{adamtt}, so we give a proof.
%Set $\varepsilon \coloneqq (f-g)/h \prec 1$.
%Then
%\[
%D_{P_{+f, \x h}}\ =\ D_{P_{+g, \x h, +\varepsilon}}\ =\ \left(D_{P_{+g, \x h}}\right)_{+\bar\varepsilon}\ =\ D_{P_{+g, \x h}}.
%\]
%\end{proof}
The next five results are exactly as in \cite[\S6.6]{adamtt}, but are recalled here for the reader's convenience.

\begin{cor}[{\cite[6.6.6]{adamtt}}]\label{adh6.6.6}
If $f, g \in K$ and $h \in K^{\x}$ satisfy $f-g \prece h$, then
\[
\ddeg P_{+f, \x h}\ =\ \ddeg P_{+g, \x h}.
\]
\end{cor}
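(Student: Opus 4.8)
The plan is to reduce the claim directly to Lemma~\ref{adh6.6.5}(i) by a manipulation of conjugates. First I would unwind the notation: by definition $P_{+f,\x h}(Y) = P_{+f}(hY) = P(f+hY)$, and likewise $P_{+g,\x h}(Y) = P(g+hY)$. Put $a \coloneqq h^{-1}(f-g)$. Since $v(a) = v(f-g) - v(h) \ges 0$, the hypothesis $f - g \prece h$ is exactly the statement that $a \prece 1$.

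Next I would use the elementary identity $f + hY = g + h(Y+a)$ to rewrite
\[
P_{+f,\x h}(Y)\ =\ P\bigl(g + h(Y+a)\bigr)\ =\ P_{+g}\bigl(h(Y+a)\bigr)\ =\ (P_{+g})_{\x h}(Y+a),
\]
that is, $P_{+f,\x h} = \bigl(P_{+g,\x h}\bigr)_{+a}$. Now apply Lemma~\ref{adh6.6.5}(i) with the $\d$-polynomial $P_{+g,\x h}$ in place of $P$ — which is legitimate precisely because $a \prece 1$ — to obtain $\ddeg \bigl(P_{+g,\x h}\bigr)_{+a} = \ddeg P_{+g,\x h}$. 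Combining this with the displayed identity gives $\ddeg P_{+f,\x h} = \ddeg P_{+g,\x h}$, as desired.

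I do not expect any genuine obstacle: the argument is pure bookkeeping with additive and multiplicative conjugates. The only point requiring a little care is the order of the two operations in $P_{+f,\x h}$ — the additive conjugation is applied before the multiplicative one, so that the translation picked up in the rewriting is by $a = h^{-1}(f-g)$ rather than by $f-g$ itself, and it is exactly this factor of $h^{-1}$ that makes $f-g \prece h$ (rather than $f-g \prece 1$) the natural hypothesis.
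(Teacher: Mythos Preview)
Your proof is correct and is exactly the standard argument. The paper itself does not give a proof of this corollary; it simply cites \cite[6.6.6]{adamtt}, and your reduction to Lemma~\ref{adh6.6.5}(i) via the identity $P_{+f,\x h} = (P_{+g,\x h})_{+h^{-1}(f-g)}$ is the intended one.
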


\begin{cor}[{\cite[6.6.7]{adamtt}}]\label{adh6.6.7}
Let $f, g \in K^{\x}$.
Then $\mul P = \mul(P_{\x f}) \les \ddeg P_{\x f}$ and
\[
f \prec g\ \implies\ \dmul P_{\x f}\ \les\ \ddeg P_{\x f}\ \les\ \dmul P_{\x g}\ \les\ \ddeg P_{\x g}.
\]
\end{cor}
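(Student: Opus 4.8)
The plan is to reduce everything to Lemma~\ref{adh6.6.5} together with the elementary chain $\mul Q \les \dmul Q \les \ddeg Q$, valid for every $Q \in K\{Y\}^{\neq}$. I would first record this chain: decomposing $Q$ into homogeneous parts, one has $(D_Q)_d = \overline{\f d_Q^{-1} Q_d}$, so $Q_d = 0$ forces $(D_Q)_d = 0$; hence the least $d$ with $(D_Q)_d \neq 0$ is at least the least $d$ with $Q_d \neq 0$, which is the inequality $\mul Q \les \dmul Q$, while $\dmul Q = \mul D_Q \les \deg D_Q = \ddeg Q$ is immediate.

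Next I would handle the first assertion. Multiplicative conjugation by $f \in K^{\x}$ sends the homogeneous part $P_d$ to $P_{d, \x f} = (P_d)_{\x f}$, which is again homogeneous of degree $d$ and is nonzero exactly when $P_d \neq 0$, since $f \neq 0$. Thus the least $d$ with $(P_{\x f})_d \neq 0$ equals the least $d$ with $P_d \neq 0$, i.e.\ $\mul(P_{\x f}) = \mul P$; combining this with the chain above applied to $Q = P_{\x f}$ gives $\mul P = \mul(P_{\x f}) \les \ddeg P_{\x f}$.

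For the displayed implication, assume $f \prec g$. The outer inequalities $\dmul P_{\x f} \les \ddeg P_{\x f}$ and $\dmul P_{\x g} \les \ddeg P_{\x g}$ are instances of $\dmul \les \ddeg$. For the middle one, I would set $a \coloneqq f/g \in K^{\x}$, so that $va = vf - vg > 0$ and hence $a \prec 1$, and observe $P_{\x f} = (P_{\x g})_{\x a}$ because $P(fY) = P(g \cdot aY)$. Then Lemma~\ref{adh6.6.5}(iii), applied with $P_{\x g}$ in place of $P$ and with parameter $a$, yields $\ddeg P_{\x f} = \ddeg (P_{\x g})_{\x a} \les \dmul P_{\x g}$, completing the chain.

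I do not expect a genuine obstacle. The only points needing a little care are the auxiliary bound $\mul Q \les \dmul Q$, which can be strict because a homogeneous part of $Q$ may die in the residue field, and verifying that $a = f/g$ satisfies the strict relation $a \prec 1$ (not merely $a \prece 1$), so that part~(iii) of Lemma~\ref{adh6.6.5} applies rather than only part~(ii).
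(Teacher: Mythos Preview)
Your proof is correct. The paper does not supply its own argument for this corollary but simply cites \cite[6.6.7]{adamtt}; your derivation via the chain $\mul Q \les \dmul Q \les \ddeg Q$ together with Lemma~\ref{adh6.6.5}(iii) applied to $P_{\x g}$ and $a = f/g \prec 1$ is exactly the intended route.
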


Below, we let $\ca E \subseteq K^{\x}$ be nonempty and such that for $f, g \in K^{\x}$, $f \prece g \in \ca E$ implies $f \in \ca E$.
In this case, we say that $\ca E$ is \emph{$\prece$-closed},
and we consider the \emph{dominant degree of $P$ on $\ca E$} defined by
\[
\ddeg_{\ca E} P\ \coloneqq\ \max\{ \ddeg P_{\x f} : f \in \ca E \}.
\]
Note that $\prece$-closed sets correspond to nonempty upward-closed subsets of $\Gamma$.
If for $\gamma \in \Gamma$ we have $\ca E = \{ f \in K^{\x} : vf \ges \gamma \}$, then $\ddeg_{\ges \gamma} P \coloneqq \ddeg_{\ca E} P$.
For any $g\in K^\x$ with $vg=\gamma$ we set $\ddeg_{\prece g} P \coloneqq \ddeg_{\ges \gamma}P$, and by the previous result we have
$\ddeg_{\prece g} P = \ddeg P_{\x g}$.
We define $\ddeg_{>\gamma} P$ and $\ddeg_{\prec g} P$ analogously.

\begin{lem}[{\cite[6.6.9]{adamtt}}]\label{adh6.6.9}
If $v(\ca E)$ has no smallest element,
then
\[
\ddeg_{\ca E} P\ =\ \max\{ \dmul(P_{\x f}) : f \in \ca E \}.
\]
\end{lem}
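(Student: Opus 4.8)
The plan is to prove Lemma~\ref{adh6.6.9} by combining Corollary~\ref{adh6.6.7} with the hypothesis that $v(\ca E)$ has no smallest element. First I would record the easy inequality: for every $f \in \ca E$ we have $\dmul(P_{\x f}) \les \ddeg(P_{\x f}) \les \ddeg_{\ca E} P$ by Corollary~\ref{adh6.6.7} and the definition of $\ddeg_{\ca E} P$, so $\max\{\dmul(P_{\x f}) : f \in \ca E\} \les \ddeg_{\ca E} P$. (Both maxima exist because $\ddeg$ and $\dmul$ are bounded by $\deg P$ and take values in $\N$.)

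For the reverse inequality, fix $g \in \ca E$ realizing $\ddeg_{\ca E} P = \ddeg(P_{\x g}) = \ddeg_{\prece g} P$. Since $v(\ca E)$ has no smallest element, there is $f \in \ca E$ with $vf < vg$, i.e.\ $f \succ g$; equivalently, writing $g = f \cdot (g/f)$ with $g/f \prec 1$, we can apply the second half of Corollary~\ref{adh6.6.7} (with the roles of $f, g$ there played by our $g, f$) to get
\[
\ddeg(P_{\x g}) \ \les\ \dmul(P_{\x f}).
\]
Hence $\ddeg_{\ca E} P \les \dmul(P_{\x f}) \les \max\{\dmul(P_{\x f}) : f \in \ca E\}$, and combining with the first paragraph gives equality.

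The only genuine subtlety is making sure the element $f$ produced from "no smallest element" actually lies in $\ca E$ and is $\prece$-comparable in the right direction to a maximizer $g$; this is immediate since $\ca E$ is $\prece$-closed and $v(\ca E)$ having no least element means precisely that for each $\gamma \in v(\ca E)$ there is $\gamma' \in v(\ca E)$ with $\gamma' < \gamma$. There is no real obstacle here — the lemma is essentially a bookkeeping consequence of the monotonicity packaged in Corollary~\ref{adh6.6.7}, and the hypothesis on $v(\ca E)$ is used exactly once, to slip below a maximizer.
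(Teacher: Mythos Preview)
Your argument is correct and is the natural proof: one inequality is immediate from $\dmul \les \ddeg$ and the definition of $\ddeg_{\ca E}$, and the other follows by choosing a maximizer $g$ and using the hypothesis to find $f \in \ca E$ with $g \prec f$, then applying Corollary~\ref{adh6.6.7}. The paper does not supply its own proof here, merely citing \cite[6.6.9]{adamtt}, and your argument is essentially the one given there.
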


\begin{lem}[{\cite[6.6.10]{adamtt}}]\label{adh6.6.10}
If $f \in \ca E$, then $\ddeg_{\ca E} P_{+f} = \ddeg_{\ca E} P$.
\end{lem}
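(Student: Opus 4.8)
The plan is to deduce this directly from Corollaries~\ref{adh6.6.6} and~\ref{adh6.6.7} together with Lemma~\ref{adh6.6.5}(i), using a small symmetry observation to halve the work. First I would note that it suffices to prove the one-sided inequality $\ddeg_{\ca E} Q_{+g} \les \ddeg_{\ca E} Q$ for all $Q \in K\{Y\}^{\neq}$ and all $g \in \ca E$. Indeed, since $\ca E$ is $\prece$-closed and $v(-f) = v(f)$, we have $-f \in \ca E$, and since $(P_{+f})_{+(-f)} = P$, applying the one-sided inequality with $P_{+f}$ in place of $Q$ and $-f$ in place of $g$ gives $\ddeg_{\ca E} P \les \ddeg_{\ca E} P_{+f}$; combined with the one-sided inequality for $Q = P$, $g = f$, this yields the asserted equality. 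It then remains to show $\ddeg (P_{+f})_{\x h} \les \ddeg_{\ca E} P$ for each fixed $h \in \ca E$ and to take the maximum over $h$, which is attained because all the dominant degrees in sight lie in the finite set $\{0, \dots, \deg P\}$.

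For this one-sided bound I would split according to how $v(h)$ compares with $v(f)$. If $f \prece h$, then $f - 0 \prece h$, so Corollary~\ref{adh6.6.6} (with its ``$g$'' equal to $0$) gives $\ddeg (P_{+f})_{\x h} = \ddeg P_{+f, \x h} = \ddeg P_{\x h} \les \ddeg_{\ca E} P$, the last inequality holding because $h \in \ca E$. If instead $h \prec f$, then Corollary~\ref{adh6.6.7} gives $\ddeg (P_{+f})_{\x h} \les \ddeg (P_{+f})_{\x f}$, and since $(P_{+f})_{\x f}(Y) = P(f + fY) = (P_{\x f})_{+1}(Y)$, Lemma~\ref{adh6.6.5}(i) applied to $P_{\x f}$ with the shift $1 \prece 1$ gives $\ddeg (P_{+f})_{\x f} = \ddeg (P_{\x f})_{+1} = \ddeg P_{\x f} \les \ddeg_{\ca E} P$, this time because $f \in \ca E$. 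In either case $\ddeg (P_{+f})_{\x h} \les \ddeg_{\ca E} P$, as needed.

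I do not expect a real obstacle. The argument is short; the only things needing care are the conjugation bookkeeping — the identities $P_{+f, \x h} = (P_{+f})_{\x h}$, $(P_{+f})_{\x f} = (P_{\x f})_{+1}$, and $(P_{+f})_{+(-f)} = P$ — checking that each use of Corollaries~\ref{adh6.6.6} and~\ref{adh6.6.7} and of Lemma~\ref{adh6.6.5}(i) meets its hypotheses, and observing that $\prece$-closedness of $\ca E$ is precisely what supplies $-f \in \ca E$ for the symmetry step. The one spot where real work might appear to hide is the reverse inequality $\ddeg_{\ca E} P \les \ddeg_{\ca E} P_{+f}$, but the symmetry observation handles it for free.
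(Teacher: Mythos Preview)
Your proposal is correct. The paper does not actually give its own proof of this lemma: it is one of the results recalled verbatim from \cite[\S6.6]{adamtt} ``for the reader's convenience,'' with proofs omitted since ``the proofs are essentially the same.'' Your argument --- the symmetry reduction to a one-sided inequality, followed by the case split on whether $f \prece h$ or $h \prec f$, using Corollary~\ref{adh6.6.6}, Corollary~\ref{adh6.6.7}, and the identity $(P_{+f})_{\x f} = (P_{\x f})_{+1}$ together with Lemma~\ref{adh6.6.5}(i) --- is a clean and self-contained way to supply the missing details, and all hypotheses of the cited results are verified.
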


\begin{cor}[{\cite[6.6.11]{adamtt}}]\label{adh6.6.11}
Suppose that $\ddeg_{\ca E} P=1$ and $y \in \ca E$ satisfies $P(y)=0$. If $f \in \ca E$, then
\[
\mul P_{+y, \x f}\ =\ \dmul P_{+y, \x f}\ =\ \ddeg P_{+y, \x f}\ =\ 1.
\]
\end{cor}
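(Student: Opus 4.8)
The plan is to squeeze each of the three quantities $\mul P_{+y,\x f}$, $\dmul P_{+y,\x f}$, and $\ddeg P_{+y,\x f}$ between $1$ and $1$. For any nonzero $Q \in K\{Y\}$ one has the elementary chain $\mul Q \les \dmul Q \les \ddeg Q$: the first inequality holds because $Q_d = 0$ forces $(D_Q)_d = 0$, and the second because the multiplicity at the origin of the nonzero polynomial $D_Q$ is at most its degree. Applying this to $Q = P_{+y,\x f}$, which is nonzero since $P$ is and both conjugations are automorphisms of $K\{Y\}$, it suffices to establish the two bounds $\mul P_{+y,\x f} \ges 1$ and $\ddeg P_{+y,\x f} \les 1$.

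For the lower bound, I would use that $P(y) = 0$ forces the constant term of $P_{+y}$ to vanish, i.e., $(P_{+y})_0 = P_{+y}(0) = P(y) = 0$, so that $\mul P_{+y} \ges 1$; since multiplicative conjugation preserves multiplicity at $0$ (Corollary~\ref{adh6.6.7}, applied with $P$ replaced by $P_{+y}$, records $\mul P_{+y} = \mul P_{+y,\x f}$), this gives $\mul P_{+y,\x f} \ges 1$.

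For the upper bound, I would invoke Lemma~\ref{adh6.6.10} with the conjugating element taken to be $y$ itself, which lies in $\ca E$ by hypothesis: this yields $\ddeg_{\ca E} P_{+y} = \ddeg_{\ca E} P = 1$. Since $\ddeg_{\ca E} P_{+y}$ is by definition the maximum of $\ddeg P_{+y,\x g}$ over $g \in \ca E$, and $f \in \ca E$, we conclude $\ddeg P_{+y,\x f} \les 1$. Stringing the inequalities together gives $1 \les \mul P_{+y,\x f} \les \dmul P_{+y,\x f} \les \ddeg P_{+y,\x f} \les 1$, hence all three equal $1$.

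I do not expect any real obstacle here: the corollary is essentially immediate from Corollary~\ref{adh6.6.7} and Lemma~\ref{adh6.6.10} once one translates the hypothesis $P(y) = 0$ into the statement $\mul P_{+y} \ges 1$. The only point needing a moment's care is to ensure the degenerate conventions for the zero $\d$-polynomial are never in play, which they are not since $P \neq 0$.
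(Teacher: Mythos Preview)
Your proof is correct and follows the natural route: sandwich the three quantities between the lower bound coming from $P(y)=0$ (hence $\mul P_{+y}\ges 1$, preserved under multiplicative conjugation by Corollary~\ref{adh6.6.7}) and the upper bound coming from Lemma~\ref{adh6.6.10} together with the definition of $\ddeg_{\ca E}$. The paper itself does not give a proof of this corollary---it is quoted verbatim from \cite[6.6.11]{adamtt}---but your argument is exactly the standard one and matches what one finds there.
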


%\begin{cor}[{\cite[6.6.12]{adamtt}}]\label{adh6.6.12}
%If $a, b \in K$ and $\alpha, \beta \in \Gamma$ satisfy $v(b-a) \ges \alpha$ and $\beta \ges \alpha$,
%then
%\[
%\ddeg_{\ges \beta} P_{+b}\ \les\ \ddeg_{\ges \alpha} P_{+a}.
%\]
%\end{cor}

\subsection{Dominant degree in a cut}\label{ddegcut}
We recall the notion of ``dominant degree in a cut'' from \cite{dhfinrank} and some basic properties proved there.
First, for later use we mention that the condition
$\ddeg P \ges 1$ is necessary for the existence of a zero $f\prece 1$ of $P$ in an extension of $K$: 

\begin{lem}[{\cite[2.1]{dhfinrank}}]\label{ddegroot}
Let $g \in K^\x$ and suppose that $P(f)=0$ for some $f \prece g$ in some extension of $K$. Then $\ddeg P_{\x g} \ges 1$.
\end{lem}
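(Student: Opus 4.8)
The plan is to reduce to the case $g \asymp 1$ by multiplicative conjugation, and then to use the fact that $\ddeg P_{\x g} = \ddeg_{\prece g} P$ together with the behaviour of dominant parts under additive and multiplicative conjugation. First I would replace $P$ by $Q \coloneqq P_{\x g}$ and $f$ by $f/g$; since $f \prece g$ gives $f/g \prece 1$, this reduces the statement to: if $Q(h) = 0$ for some $h \prece 1$ in an extension of $K$, then $\ddeg Q \ges 1$. Here I am using that $(P_{\x g})(f/g) = P(f)$, so the hypothesis transfers, and that $\ddeg Q = \ddeg P_{\x g}$ is exactly what we want to bound below by $1$.

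Next I would pass to the completion or a suitable extension with a monomial group so that dominant parts are available (or simply invoke that all the relevant statements about dominant degree hold in the generality of \S\ref{prelim:ddeg} without the monomial group assumption, as noted there). Working in an extension $L$ of $K$ containing $h$, I would consider the dominant part $D_Q \in \bm k\{Y\}^{\neq}$ and argue by contradiction: suppose $\ddeg Q = 0$, i.e., $D_Q$ has degree $0$ in $Y$ and all its derivatives, so $D_Q = c$ for some $c \in \bm k^{\x}$. This means $Q = \f d_Q \cdot (c + R)$ where $R \in \ca O\{Y\}$ has $R \prec 1$, i.e., every coefficient of $R$ lies in $\cao$ and $R$ has no constant term that is a unit; more precisely, writing $Q/\f d_Q = \overline{c} + (\text{terms in } \cao)$, the constant term of $Q/\f d_Q$ is $\asymp 1$ while the positive-degree part is $\prec 1$.

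Then I would evaluate at $h$. Since $h \prece 1$ and the derivation is small, we have $h^{(j)} \prece 1$ for all $j$ (using $\der \ca O \subseteq \ca O$), hence every monomial $h^{\bm i}$ with $|\bm i| \ges 1$ satisfies $h^{\bm i} \prece 1$; for such monomials the corresponding coefficient of $Q/\f d_Q$ is $\prec 1$, so each such term of $(Q/\f d_Q)(h)$ is $\prec 1$. The constant term of $Q/\f d_Q$ is a unit, so $(Q/\f d_Q)(h) \asymp 1$, and in particular $(Q/\f d_Q)(h) \neq 0$, whence $Q(h) \neq 0$ — contradicting $Q(h) = 0$. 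Therefore $\ddeg Q \ges 1$, which is the desired conclusion.

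The main obstacle is bookkeeping about dominant parts in an extension: one must make sure that $\f d_Q$, $D_Q$, and the residue map are being taken in a valued differential field large enough to contain $h$ but in which $\Gamma_L$ and $\bm k_L$ still make the estimates go through. In fact it is cleanest to observe that $\ddeg Q$ is an invariant of $Q$ computed over $K$ (unchanged by the immediate completion), and that the inequality $|Q(h)/\f d_Q| = 1$ only uses $h \prece 1$ together with smallness of the derivation, both of which make sense in $L$. Once one is careful that ``$\prec 1$ for all positive-degree coefficients'' really is what $\ddeg Q = 0$ means, the evaluation estimate is immediate, so the real content is just the conjugation reduction plus this observation; there is no deep difficulty, which is consistent with this being cited as a known lemma from \cite{dhfinrank}.
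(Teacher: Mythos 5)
Your argument is correct. Reducing to $g \asymp 1$ via multiplicative conjugation, identifying $\ddeg Q = 0$ with ``constant coefficient $\asymp \mathfrak{d}_Q$ and all other coefficients $\prec \mathfrak{d}_Q$,'' and then estimating $Q(h)$ directly using $h^{(j)} \prece 1$ (which indeed follows from small derivation of the extension, since that gives $\der \ca O_L \subseteq \ca O_L$) is a valid and essentially self-contained proof. The only remark is that the ``bookkeeping'' concern you flag at the end is less of an obstacle than you suggest: since $Q \in K\{Y\}$ and $v_L$ extends $v$, the quantity $\ddeg Q$ is unambiguous and the estimates on $Q(h)$ take place entirely in $L$, as you in fact note yourself.

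For comparison, one can also give a slicker proof that avoids unwinding the definition of $D_Q$: from $P_{+f}(0)=0$ one gets $\mul_L P_{+f} \ges 1$; Corollary~\ref{adh6.6.7} then gives $\ddeg_L P_{+f,\,\x g} \ges \mul_L P_{+f,\,\x g} = \mul_L P_{+f} \ges 1$; and Corollary~\ref{adh6.6.6} with $f - 0 \prece g$ gives $\ddeg_L P_{+f,\,\x g} = \ddeg_L P_{\x g} = \ddeg P_{\x g}$. This route leans on the conjugation lemmas already developed in \S\ref{prelim:ddeg} rather than reproving the degree-$0$ estimate by hand, but the content is the same; your direct argument has the advantage of making visible exactly why the conclusion holds, at the cost of redoing a computation that Corollaries~\ref{adh6.6.6} and~\ref{adh6.6.7} already package.
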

%\begin{proof}
%Let $L$ be an extension of $K$ and suppose $f \in L$, $f \preccurlyeq g$, and $P(f)=0$. Then $f=ag$ for some $a \in L$ with $a \preccurlyeq 1$.
%Letting $Q=P_{\x g}$, we have $Q(a)=0$, so $D_Q(\bar{a})=0$. Hence, $\ddeg P_{\x g} \ges 1$.
%\end{proof}

In the rest of this section,
$(a_\rho)$ is a pc-sequence in $K$ with $\gamma_\rho \coloneqq v(a_{\rho+1}-a_\rho)$; here and later $\rho+1$ denotes the immediate successor of $\rho$ in the well-ordered set of indices.

\begin{lem}[{\cite[2.2]{dhfinrank}}]
There is an index $\rho_0$ and a number $d\big(P, (a_\rho)\big) \in \N$ such that for all $\rho>\rho_0$,
\[
\ddeg_{\ges \gamma_\rho} P_{+a_\rho}\ =\ d\big(P, (a_\rho)\big).
\]
Whenever $(b_\sigma)$ is a pc-sequence in $K$ equivalent to $(a_\rho)$, we have $d\big(P,(a_\rho)\big)=d\big(P,(b_\sigma)\big)$.
\end{lem}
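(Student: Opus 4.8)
The plan is to deduce both statements from two elementary properties of the dominant degree in a cut that are immediate from \S\ref{prelim:ddeg}. Recall from there that for $Q\in K\{Y\}^{\neq}$ and $\gamma\in\Gamma$ one has $\ddeg_{\ges\gamma}Q=\max\{\ddeg Q_{\x f}:f\in K^{\x},\ vf\ges\gamma\}$. The two properties are: (a) if $c\in K$ with $vc\ges\gamma$, then $\ddeg_{\ges\gamma}Q_{+c}=\ddeg_{\ges\gamma}Q$, which is Lemma~\ref{adh6.6.10} for $\ca E=\{f\in K^{\x}:vf\ges\gamma\}$; and (b) $\gamma\mapsto\ddeg_{\ges\gamma}Q$ is non-increasing, since enlarging $\gamma$ only shrinks the set of admissible $f$. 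I will also use freely the standard facts \cite[\S2.2]{adamtt} that for a pc-sequence $(a_\rho)$ one has $\gamma_\rho\les\gamma_\sigma$ and $v(a_\sigma-a_\rho)=\gamma_\rho$ for all sufficiently large $\rho<\sigma$.

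For the first assertion, I would show that $\rho\mapsto\ddeg_{\ges\gamma_\rho}P_{+a_\rho}$ is non-increasing for large $\rho$; being $\N$-valued, it is then eventually constant, and $d\big(P,(a_\rho)\big)$ is by definition that eventual value. Fix large $\rho<\sigma$. Since $v(a_\sigma-a_\rho)=\gamma_\rho$, property (a) applied to $Q=P_{+a_\rho}$ and $c=a_\sigma-a_\rho$ (using $(P_{+a_\rho})_{+(a_\sigma-a_\rho)}=P_{+a_\sigma}$) gives $\ddeg_{\ges\gamma_\rho}P_{+a_\sigma}=\ddeg_{\ges\gamma_\rho}P_{+a_\rho}$, while (b) with $\gamma_\rho\les\gamma_\sigma$ gives $\ddeg_{\ges\gamma_\sigma}P_{+a_\sigma}\les\ddeg_{\ges\gamma_\rho}P_{+a_\sigma}$. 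Together,
\[
\ddeg_{\ges\gamma_\sigma}P_{+a_\sigma}\ \les\ \ddeg_{\ges\gamma_\rho}P_{+a_\rho},
\]
which is exactly what is needed.

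For the second assertion, set $\delta_\sigma\coloneqq v(b_{\sigma+1}-b_\sigma)$ and let $d_a$, $d_b$ denote the eventual values from the first part for $(a_\rho)$ and $(b_\sigma)$; by symmetry it suffices to prove $d_a\ges d_b$. Since $(a_\rho)$ and $(b_\sigma)$ are equivalent, they have a common pseudolimit $\ell$ in some extension of $K$, and then $v(\ell-a_\rho)=\gamma_\rho$ and $v(\ell-b_\sigma)=\delta_\sigma$ for $\rho$, $\sigma$ large \cite[\S2.2]{adamtt}. Fix $\rho$ large; by the basic theory of equivalent pc-sequences one can choose $\sigma$ large with $\delta_\sigma\ges\gamma_\rho$, whence $v(b_\sigma-a_\rho)\ges\min\{v(\ell-a_\rho),v(\ell-b_\sigma)\}\ges\gamma_\rho$. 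Applying (a) with $c=b_\sigma-a_\rho$ and then (b) with $\gamma_\rho\les\delta_\sigma$,
\[
d_a\ =\ \ddeg_{\ges\gamma_\rho}P_{+a_\rho}\ =\ \ddeg_{\ges\gamma_\rho}P_{+b_\sigma}\ \ges\ \ddeg_{\ges\delta_\sigma}P_{+b_\sigma}\ =\ d_b .
\]
Swapping the roles of $(a_\rho)$ and $(b_\sigma)$ gives $d_b\ges d_a$, hence equality.

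This argument is short, and I do not anticipate a genuine obstacle; the one point requiring care is the pc-sequence bookkeeping in the last paragraph — making the notion of equivalence precise, checking that the supremum of the $\gamma_\rho$ equals that of the $\delta_\sigma$ so that an appropriate $\sigma$ exists (this is where one uses that $a_\rho$ cannot become a pseudolimit of $(b_\sigma)$ without $(a_\rho)$ pseudoconverging to it, which is incompatible with $(\gamma_\rho)$ being strictly increasing), and ensuring this $\sigma$ simultaneously lies past the thresholds coming from the first part applied to $(b_\sigma)$ and from the estimate $v(\ell-b_\sigma)=\delta_\sigma$. All of this is routine once one has the common-pseudolimit picture.
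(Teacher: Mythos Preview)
The paper does not prove this lemma; it is quoted verbatim from \cite[Lemma~2.2]{dhfinrank} and no argument is supplied here. Your proof is correct and is essentially the standard one: eventual monotonicity of $\rho\mapsto\ddeg_{\ges\gamma_\rho}P_{+a_\rho}$ via Lemma~\ref{adh6.6.10} and the obvious monotonicity of $\gamma\mapsto\ddeg_{\ges\gamma}$, followed by a comparison of two equivalent pc-sequences through a common pseudolimit. The only cosmetic point is the parenthetical justification for the existence of $\sigma$ with $\delta_\sigma\ges\gamma_\rho$: what you want to say is that otherwise $a_\rho$ would be a pseudolimit of $(b_\sigma)$, hence (by equivalence) a pseudolimit of $(a_{\rho'})_{\rho'}$, which is impossible since $v(a_\rho-a_{\rho'})=\gamma_\rho$ is eventually constant in $\rho'$; your phrasing ``without $(a_\rho)$ pseudoconverging to it'' slightly obscures this, but the intended argument is sound. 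Equivalently and perhaps more directly, one can observe that equivalent pc-sequences have the same width, so the sets $\{\gamma_\rho\}$ and $\{\delta_\sigma\}$ are mutually cofinal.
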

%\begin{proof}
%Take $\rho_0$ such that for all $\rho'>\rho\ges\rho_0$, we have $\gamma_{\rho'}>\gamma_\rho$ and $\gamma_\rho \in \Gamma$.
%Then
%\[
%\ddeg_{\ges\gamma_{\rho'}} P_{+a_{\rho'}}\ \les\ \ddeg_{\ges\gamma_{\rho}} P_{+a_{\rho}}\ \text{ for all }\rho'>\rho\ges\rho_0
%\]
%by Corollary~\ref{adh6.6.12}.
%This gives the existence of $d\big(P,(a_\rho)\big)$. Set $d=d\big(P,(a_\rho)\big)$.
%We can assume $\rho_0$ to be so large that $\ddeg_{\ges\gamma_\rho}P_{+a_\rho}=d$ for all $\rho\ges\rho_0$.
%Let $(b_{\sigma})$ be a pc-sequence in $K$ equivalent to $(a_\rho)$, and set $\beta_\sigma=v(b_{\sigma+1}-b_\sigma)$.
%Take an index $\sigma_0$ and $e \in \N$ so that  $\beta_\sigma \in \Gamma$
%and $\ddeg_{\ges \beta_\sigma} P_{+b_\sigma}=e$ for all $\sigma \ges \sigma_0$.
%By \cite[Lemma 2.2.17]{adamtt}, we can further arrange that $b_\sigma-a_\rho \prec a_\rho-a_{\rho_0}$ and $\beta_\sigma \ges \gamma_{\rho_0}$ for all $\rho>\rho_0$ and $\sigma>\sigma_0$.
%Then for $\sigma>\sigma_0$ we have $v(b_\sigma-a_{\rho_0})=\gamma_{\rho_0}$, and so
%\[
%e\ =\ \ddeg_{\ges \beta_\sigma} P_{+b_\sigma}\ \les\ \ddeg_{\ges \gamma_{\rho_0}} P_{+a_{\rho_0}}\ =\ d,
%\]
%by Corollary~\ref{adh6.6.12}.
%By symmetry, we also have $d \les e$, so $d=e$.
%\end{proof}

We associate to each pc-sequence $(a_\rho)$ in $K$ its \emph{cut in $K$}, denoted by $c_K(a_\rho)$, such that if $(b_\sigma)$ is a pc-sequence in $K$, then
\[
c_K(a_\rho) = c_K(b_\sigma)\ \iff\ (b_\sigma)\ \text{is equivalent to}\ (a_\rho).
\]
Throughout, $\bm a \coloneqq c_K(a_\rho)$ and if $L$ is an extension of $K$, then $\bm a_L \coloneqq c_L(a_\rho)$.
Note that $c_K(a_\rho+y)$ for $y \in K$ depends only on $\bm a$ and $y$, so we let $\bm a+y$ denote $c_K(a_\rho+y)$ and $\bm a-y$ denote $c_K(a_\rho-y)$. Similarly, $c_K(a_\rho y)$ for $y \in K^\x$ depends only on $\bm a$ and $y$, so we let $\bm a\cdot y$ denote $c_K(a_\rho y)$.
\begin{defn}
The \emph{dominant degree of $P$ in the cut of $(a_\rho)$}, denoted by $\ddeg_{\bm a} P$, is the natural number $d\big(P,(a_\rho)\big)$ from the previous lemma.
\end{defn}

\begin{lem}[{\cite[2.3]{dhfinrank}}]\label{ddegbasic}
Dominant degree in a cut has the following properties:
\begin{enumerate}
\item $\ddeg_{\bm a} P \les \deg P$;
\item $\ddeg_{\bm a} P_{+y} = \ddeg_{\bm a+y} P$ for $y \in K$;
\item if $y \in K$ and $vy$ is in the width of $(a_\rho)$, then $\ddeg_{\bm a} P_{+y} = \ddeg_{\bm a} P$;
\item $\ddeg_{\bm a} P_{\x y} = \ddeg_{\bm a\cdot y} P$ for $y \in K^\x$;
\item if $Q \in K\{Y\}^{\neq}$, then $\ddeg_{\bm a} PQ=\ddeg_{\bm a}P + \ddeg_{\bm a} Q$;
\item if $P(\ell)=0$ for some pseudolimit $\ell$ of $(a_\rho)$ in an extension of $K$, then $\ddeg_{\bm a} P \ges 1$;
\item if $L$ is an extension of $K$, then $\ddeg_{\bm a} P=\ddeg_{\bm a_L} P$.
\end{enumerate}
\end{lem}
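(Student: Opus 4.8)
The plan is to trace every assertion back to the conjugate $P_{+a_\rho}$ together with the $\prece$-closed set $\{f\in K^\x:vf\ges\gamma_\rho\}$, using that $\ddeg_{\bm a}Q$ is by definition the eventual value of $\ddeg_{\ges\gamma_\rho}Q_{+a_\rho}$ as $\rho$ grows, for every $Q\in K\{Y\}^{\neq}$, and then applying the calculus of dominant degree recalled in \S\ref{prelim:ddeg}: mainly the identity $\ddeg_{\ges\gamma}Q=\ddeg Q_{\x g}$ for $vg=\gamma$, the multiplicativity $D_{QR}=D_QD_R$ of Lemma~\ref{adh6.6.2}(iii), the invariance Lemma~\ref{adh6.6.10}, and the monotonicity of Corollary~\ref{adh6.6.7}. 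The three conjugation identities that do all the work are $(P_{+y})_{+a_\rho}=P_{+(a_\rho+y)}$, $(P_{\x y})_{+a_\rho}=(P_{+ya_\rho})_{\x y}$, and $(PQ)_{+a_\rho}=P_{+a_\rho}Q_{+a_\rho}$. In each item below I take $\rho$ large enough that all relevant stabilizations have occurred and I pick $g\in K^\x$ with $vg=\gamma_\rho$.

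For (i): $\ddeg_{\bm a}P=\ddeg(P_{+a_\rho})_{\x g}\les\deg(P_{+a_\rho})_{\x g}=\deg P$, since conjugation preserves total degree and $\ddeg\les\deg$ always. For (ii): $(a_\rho+y)$ is a pc-sequence with the same gaps $\gamma_\rho$ and with cut $\bm a+y$, so the eventual value of $\ddeg_{\ges\gamma_\rho}P_{+(a_\rho+y)}$ is $\ddeg_{\bm a+y}P$ by definition applied to $(a_\rho+y)$, while the first conjugation identity rewrites this as the eventual value of $\ddeg_{\ges\gamma_\rho}(P_{+y})_{+a_\rho}$, namely $\ddeg_{\bm a}P_{+y}$. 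For (iii): by (ii) it suffices to see $\ddeg_{\bm a+y}P=\ddeg_{\bm a}P$, and since $vy$ lies in the width of $(a_\rho)$ we have $vy>\gamma_\rho$ eventually, so Lemma~\ref{adh6.6.10} gives $\ddeg_{\ges\gamma_\rho}P_{+(a_\rho+y)}=\ddeg_{\ges\gamma_\rho}(P_{+a_\rho})_{+y}=\ddeg_{\ges\gamma_\rho}P_{+a_\rho}$. For (iv): the second conjugation identity gives $((P_{\x y})_{+a_\rho})_{\x g}=(P_{+ya_\rho})_{\x(yg)}$ with $v(yg)=\gamma_\rho+vy$, so $\ddeg_{\ges\gamma_\rho}(P_{\x y})_{+a_\rho}=\ddeg_{\ges(\gamma_\rho+vy)}P_{+ya_\rho}$; since $(a_\rho y)$ has gaps $\gamma_\rho+vy$ and cut $\bm a\cdot y$, the right-hand side has eventual value $\ddeg_{\bm a\cdot y}P$.

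For (v): additive conjugation is a ring homomorphism, so $D_{RS}=D_RD_S$ gives $\ddeg_{\ges\gamma_\rho}(PQ)_{+a_\rho}=\ddeg(P_{+a_\rho})_{\x g}+\ddeg(Q_{+a_\rho})_{\x g}=\ddeg_{\ges\gamma_\rho}P_{+a_\rho}+\ddeg_{\ges\gamma_\rho}Q_{+a_\rho}$; taking $\rho$ past the stabilization indices of $P$, $Q$, and $PQ$ yields $\ddeg_{\bm a}PQ=\ddeg_{\bm a}P+\ddeg_{\bm a}Q$. For (vi): if $\ell$ is a pseudolimit of $(a_\rho)$ in an extension, then $v(\ell-a_\rho)=\gamma_\rho$ for $\rho$ large (a standard property of pc-sequences, since the $\gamma_\rho$ are eventually strictly increasing), so $\ell-a_\rho\prece g$ and $(P_{+a_\rho})(\ell-a_\rho)=P(\ell)=0$, whence Lemma~\ref{ddegroot} gives $\ddeg(P_{+a_\rho})_{\x g}\ges1$, i.e.\ $\ddeg_{\bm a}P\ges1$.

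For (vii): the identity $\ddeg_{\ges\gamma}Q=\ddeg Q_{\x g}$ is equally valid in $L$, and applied with $\gamma=\gamma_\rho\in\Gamma$ and $g\in K^\x$ it collapses $\ddeg_{\ges\gamma_\rho}(P_{+a_\rho})$, computed over $L$, to $\ddeg(P_{+a_\rho})_{\x g}$, the dominant degree of a $\d$-polynomial over $K$; it then remains to observe that for $R\in K\{Y\}$ the dominant part $D_R$ is the residue-field image of $\f m^{-1}R$ for any $\f m\in K^\x$ with $\f m\asymp R$, and passing to $L$ alters $D_R$ only by a unit factor in $\bm k_L^\x$, hence not $\ddeg R$; so $d\big(P,(a_\rho)\big)$ is the same over $K$ and over $L$. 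I expect (vii) to be the only point needing genuine care, since over $L$ the dominant degree on a $\prece$-closed set is a priori a maximum over a larger family of multiplicative conjugates; the resolution combines the fact that this maximum is attained at the infimum $\gamma_\rho\in\Gamma$ of the relevant up-set (by the monotonicity of Corollary~\ref{adh6.6.7}) with the absoluteness of the dominant degree of a fixed $\d$-polynomial over $K$ under extension. The remaining six items are pure bookkeeping with the displayed conjugation identities.
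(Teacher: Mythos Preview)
Your proof is correct. The paper does not actually supply a proof of this lemma; it merely quotes the statement from \cite[2.3]{dhfinrank} and records it for later use. Your argument is the natural one and matches the approach taken there: unwind each item to the eventual value of $\ddeg_{\ges\gamma_\rho}P_{+a_\rho}=\ddeg(P_{+a_\rho})_{\x g}$, then invoke the appropriate identity among Lemma~\ref{adh6.6.2}(iii), Corollary~\ref{adh6.6.7}, Lemma~\ref{adh6.6.10}, and Lemma~\ref{ddegroot}. Your treatment of (vii) is also the standard one: since $\gamma_\rho\in\Gamma$, the maximum defining $\ddeg_{\ges\gamma_\rho}$ over $L$ is already attained at some $g\in K^\x$, and $\ddeg R$ for $R\in K\{Y\}^{\neq}$ is determined by which coefficients have valuation $v(R)$, a condition unchanged in any valued field extension.
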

%\begin{proof}
%Items (i), (ii), (iv), and (v) follow routinely from basic facts about dominant degree found in the previous subsection, (iii) follows from (ii), and (vii) is obvious.  
%
%For (vi), let $\ell$ be a pseudolimit of $(a_\rho)$ in an extension of $K$ with $P(\ell)=0$. Take $\rho_0$ such that, for all $\rho \ges \rho_0$, $v(\ell-a_\rho)=\gamma_\rho \in \Gamma$ and 
%$d\big(P,(a_\rho)\big)=\ddeg_{\ges \gamma_\rho}P_{+a_\rho}$. Let $\rho\ges \rho_0$ and set $Q=P_{+a_\rho}$. Then $Q(\ell-a_\rho)=0$, so $\ddeg Q_{\x g} \ges 1$ for any $g \in K$ with $vg = \gamma_\rho$, by Lemma~\ref{ddegroot}.
%%For (vii), it is immediate that $\ddeg_{\bm a} P \les \ddeg_{\bm a_L} P$.
%%To get equality, let $Q=P_{+a_\rho}$ and note that $\ddeg_{\ges \gamma_\rho} Q = \max\{\ddeg Q_{\x h} : v_L(h)=\gamma_\rho\}$ by Corollary~\ref{adh6.6.7}. For any $h \in L^\x$ with $v_L(h) = \gamma_\rho$, we have $h=gu$ for some $g \in K^\x$ with $vg=\gamma_\rho$ and some $u \in L^\x$ with $v_L(u)=0$.
%%Then \cite[Lemma 6.6.5]{adamtt} gives that $\ddeg Q_{\x h}=\ddeg Q_{\x g}$.
%\end{proof}

\subsection{Constructing immediate extensions}\label{prelim:imext}
We review how to construct immediate extensions by evaluating $\d$-polynomials along pc-sequences.
%We recall three lemmas about evaluating $\d$-polynomials along pc-sequences and constructing immediate extensions.
Recall from \cite[\S4.4]{adamtt} that a pc-sequence $(a_\rho)$ in $K$ is of \emph{$\d$-algebraic type over $K$} if there is an equivalent pc-sequence $(b_\sigma)$ in $K$ and a $P \in K\{Y\}$ such that $P(b_\sigma) \pconv 0$; such a $P$ of minimal complexity is called a \emph{minimal $\d$-polynomial of $(a_\rho)$ over $K$}.
If no such $(b_\sigma)$ and $P$ exist, then $(a_\rho)$ is of \emph{$\d$-transcendental type over $K$}.
\begin{ass}
In this subsection, the derivation induced on $\bm k$ is nontrivial.
\end{ass}

\begin{lem}[{\cite[6.8.1]{adamtt}}]\label{adh6.8.1}
Let $(a_\rho)$ be a pc-sequence in $K$ with pseudolimit $a \in L$, where $L$ is an extension of $K$, and let $G \in L\{Y\} \setminus L$.
Then there exists an equivalent pc-sequence $(b_\rho)$ in $K$ such that $G(b_\rho) \pconv G(a)$.
\end{lem}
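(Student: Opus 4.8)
The plan is a Kaplansky-style argument: expand $G$ along the pc-sequence via additive conjugation and show that a single term dominates, using continuity of the derivation to control the terms involving $Y', \dots, Y^{(r)}$.

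After a harmless multiplicative conjugation (replace $(a_\rho)$ by $(\f m a_\rho)$ and $G$ by $G_{\x \f m^{-1}}$ for a suitable $\f m \in K^\x$; an equivalent sequence for the conjugated data yields one for the original) we may assume $a_\rho - a \prec 1$ for all large $\rho$. Put $\epsilon_\rho := a_\rho - a$ and $r := \ord G$, and for $\epsilon$ in $L$ and $\bm j = (j_0, \dots, j_r) \in \N^{1+r}$ write $\epsilon^{\bm j}$ for $\epsilon^{j_0}(\epsilon')^{j_1}\cdots(\epsilon^{(r)})^{j_r}$. Then
\[
G(a_\rho) - G(a)\ =\ G_{+a}(\epsilon_\rho) - G_{+a}(0)\ =\ \sum_{\bm j \neq 0}(G_{+a})_{\bm j}\,\epsilon_\rho^{\bm j},
\]
a finite sum in which $(G_{+a})_{\bm j} \neq 0$ for some $\bm j \neq 0$, since $G \notin L$. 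When $r = 0$ this is $\sum_{i \ges 1}(G_{+a})_i(a_\rho - a)^i$, and as $a_\rho - a$ pseudoconverges to $0$, the term of least $i$ with $(G_{+a})_i \neq 0$ has strictly smallest valuation for large $\rho$, so $v\big(G(a_\rho) - G(a)\big)$ is eventually strictly increasing and $(b_\rho) := (a_\rho)$ works. The goal for general $r$ is the same: to produce some $\bm j_0 \neq 0$ with $G(a_\rho) - G(a) \sim (G_{+a})_{\bm j_0}\epsilon_\rho^{\bm j_0}$ for large $\rho$, so that $(G(a_\rho))$ is a pc-sequence pseudoconverging to $G(a)$ and the (possibly adjusted) $(a_\rho)$ serves as $(b_\rho)$.

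Since $K$ has small derivation, $\der$ --- hence each $\der^k$, hence $y \mapsto G(y)$ on $L$ --- is continuous for the valuation topology \cite[Lemma~4.4.7]{adamtt}, and $\der \cao \subseteq \cao$ gives $\epsilon_\rho^{(k)} \prec 1$ for each $k \les r$ once $\epsilon_\rho \prec 1$. The substance of the proof is to first replace $(a_\rho)$ by an equivalent pc-sequence such that, moreover, the valuations $v\big(\epsilon_\rho^{(k)}\big) = v\big(a_\rho^{(k)} - a^{(k)}\big)$ are eventually strictly increasing for each $k \les r$ --- equivalently, each $\big(a_\rho^{(k)}\big)$ pseudoconverges to $a^{(k)}$. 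When $(a_\rho)$ converges to $a$ in $L$ this holds after passing to a cofinal subsequence, by continuity of the $\der^k$; in general I would pass to $\big(a_\rho + c_\rho\big)$ with $v(c_\rho) > \gamma_\rho$, choosing the $c_\rho$ iteratively --- again via continuity of $\der$ --- so that the $\der^k c_\rho$ correct the behaviour of the $a_\rho^{(k)} - a^{(k)}$. Granted this, the displayed expansion concludes the argument: for large $\rho$ the valuations of the finitely many terms are governed by the strictly increasing $v\big(\epsilon_\rho^{(k)}\big)$, a single $\bm j_0$ attains the strict minimum --- after, if necessary, a further refinement of $(a_\rho)$ to break cofinal ties among the lowest-degree terms, in Kaplansky's manner --- and one finishes as in the case $r = 0$.

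The step I expect to be the main obstacle is precisely this arrangement of the derivative sequences: small derivation keeps $\der \cao$ inside $\cao$ but gives no lower bound on $v(\der z)$ in terms of $v(z)$ for small $z$, so when $(a_\rho)$ does not converge to $a$ --- i.e.\ when its width is bounded --- it is delicate to force $v\big(a_\rho^{(k)} - a^{(k)}\big)$ to increase strictly. This is where one must use that $a$ is a pseudolimit \emph{in a valued differential field}: together with continuity of $\der$, this provides the room for the corrections $c_\rho$. Once it is secured, the Taylor estimate is routine.
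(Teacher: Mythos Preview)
The paper does not prove this lemma; it simply quotes \cite[6.8.1]{adamtt}. So there is no argument here to compare against, but your proposal has a genuine gap worth naming.

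The step you flag as the obstacle is in fact not established by what you propose. Continuity of $\der^k$ tells you only that small inputs give small outputs; it gives no way to \emph{prescribe} $v\big(c_\rho^{(k)}\big)$. To ``correct'' $a_\rho' - a'$ you would need $c_\rho \in K$ with $v(c_\rho) > \gamma_\rho$ and $c_\rho'$ landing at a specified valuation, but small derivation says nothing beyond $c_\rho' \in \cao$, and there is no mechanism to hit a target. Even if you could arrange each $v\big(\epsilon_\rho^{(k)}\big)$ to be strictly increasing, the tie-breaking among the $\epsilon_\rho^{\bm j}$ is not Kaplansky's situation: in the algebraic case the monomials depend on the single quantity $v(\epsilon_\rho)$, whereas here the $v\big(\epsilon_\rho^{(k)}\big)$ for different $k$ can grow at unrelated rates, and any perturbation of $a_\rho$ moves all of them at once, so the Kaplansky-style refinement does not terminate.

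The proof in \cite{adamtt} avoids the individual derivatives entirely. One decomposes $G_{+a} - G(a) = \sum_{d \ges 1} Q_d$ into homogeneous parts and uses the estimate recalled here as Lemma~\ref{adh6.1.3}: for $Q_d$ homogeneous of degree $d$, the Gauss valuation satisfies $v_{Q_d}(\alpha) - v_{Q_d}(\beta) = d(\alpha - \beta) + o(\alpha - \beta)$. This controls the ordering among the $v_{Q_d}\big(v(\epsilon_\rho)\big)$ directly in terms of $v(\epsilon_\rho)$, exactly as in the algebraic case, so a single degree $d_0$ eventually dominates. The remaining issue is passing from the Gauss valuation $v\big((Q_{d_0})_{\times \epsilon_\rho}\big)$ to the evaluation $v\big(Q_{d_0}(\epsilon_\rho)\big)$, and this is where the equivalent $(b_\rho)$ and the standing hypothesis of the subsection---that the induced derivation on $\bm k$ is nontrivial---enter: one chooses $b_\rho$ so that the dominant part of $(Q_{d_0})_{\times(b_\rho - a)}$ does not vanish at $1$ in the residue field, which is possible precisely because a nonzero differential polynomial over a field with nontrivial derivation does not vanish identically. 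You never invoke that hypothesis, which is a signal that the mechanism is different from the one you sketched.
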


\begin{lem}[{\cite[6.9.1]{adamtt}}]\label{adh6.9.1}
Let $(a_\rho)$ be a pc-sequence in $K$ of $\d$-transcendental type over $K$.
Then $K$ has an immediate extension $K \langle a \rangle$ with $a$ $\d$-transcendental over $K$ and $a_\rho \pconv a$ such that for any extension $L$ of $K$ and any $b \in L$ with $a_\rho \pconv b$, there is a unique embedding $K\langle a \rangle \to L$ over $K$ sending $a$ to $b$.
\end{lem}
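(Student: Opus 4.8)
The plan is to adapt Kaplansky's classical construction of an immediate transcendental extension \cite{kaplansky1} to the differential setting. Let $a$ be $\d$-transcendental over $K$, so that $K\{a\}$ is the $\d$-polynomial ring $K\{Y\}$ with $Y$ renamed $a$, and let $K\langle a\rangle$ be its fraction field. The one substantive point is to put on $K\langle a\rangle$ a valuation extending that of $K$, and here the key input --- the heart of the matter --- is that, because $(a_\rho)$ is of $\d$-transcendental type, for every $P \in K\{Y\}^{\neq}$ the value $v(P(a_\rho))$ is eventually constant: otherwise the analysis of the additive conjugates $P_{+a_\rho}$ along the pc-sequence (developed in \S6.8 of \cite{adamtt}, resting on the Equalizer Theorem) would yield an equivalent pc-sequence in $K$ along which $P$ pseudoconverges to $0$, contradicting the hypothesis. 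Moreover this eventual value depends only on the cut of $(a_\rho)$. Granting this, I set $v(P(a))$ equal to that eventual value and extend to $K\langle a\rangle$ by $v(P(a)/Q(a)) \coloneqq v(P(a)) - v(Q(a))$.

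With the valuation in hand, the remaining structure of $K\langle a\rangle$ transfers from $K$. Axioms (V1) and (V2) hold because $v(P(a_\rho)Q(a_\rho)) = v(P(a_\rho)) + v(Q(a_\rho))$ and $v((P+Q)(a_\rho)) \ges \min\{v(P(a_\rho)), v(Q(a_\rho))\}$ hold eventually in $K$. For small derivation I would use the identity $\der(P(a)) = (\der P)(a)$ in $K\langle a\rangle$, where $\der P$ is the image of $P$ under the natural derivation of $K\{Y\}$ --- valid because evaluation at $a$ is a differential homomorphism --- so that, with the same identity over $K$, $v(\der(P(a)))$ is the eventual value of $v(\der(P(a_\rho)))$; then $P(a) \prece 1$ forces $P(a_\rho) \in \ca O$ eventually, hence $\der(P(a_\rho)) \in \ca O$ by small derivation in $K$, hence $\der(P(a)) \prece 1$, and likewise with $\prec$, which by the quotient rule gives $\der\cao_{K\langle a\rangle} \subseteq \cao_{K\langle a\rangle}$. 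Immediacy is then obtained much as in the non-differential case: $\Gamma_{K\langle a\rangle} = \Gamma$ is immediate from the definition of $v$ on quotients, and $\bm k_{K\langle a\rangle} = \bm k$ follows by approximating elements of $\ca O_{K\langle a\rangle}$ modulo $\cao_{K\langle a\rangle}$ by evaluations of $\d$-polynomials along a suitable pc-sequence equivalent to $(a_\rho)$ (again via Lemma~\ref{adh6.8.1}). Finally, $a_\rho \pconv a$ is the instance $P = Y - a_\rho$ of the definition of $v$, since $v(a - a_\rho)$ equals the eventual value of $v(a_\sigma - a_\rho)$, namely $\gamma_\rho$, and the $\gamma_\rho$ are strictly increasing.

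For the universal property, let $L$ be an extension of $K$ and $b \in L$ with $a_\rho \pconv b$. Then $b$ is $\d$-transcendental over $K$: if $P(b) = 0$ for some $P \in K\{Y\}^{\neq}$, then Lemma~\ref{adh6.8.1} would give an equivalent pc-sequence $(c_\sigma)$ in $K$ with $P(c_\sigma) \pconv P(b) = 0$, contradicting that $(a_\rho)$ is of $\d$-transcendental type. Hence $a \mapsto b$ extends uniquely to an isomorphism of differential rings $K\{a\} \xrightarrow{\sim} K\{b\} \subseteq L$ over $K$, and thence to fraction fields; it remains to check that this embedding respects the valuation, i.e.\ that $v(P(b)) = v(P(a))$ for all $P \in K\{Y\}^{\neq}$. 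Given such $P$, take by Lemma~\ref{adh6.8.1} an equivalent pc-sequence $(c_\sigma)$ in $K$ with $P(c_\sigma) \pconv P(b)$; since $(c_\sigma)$ is also of $\d$-transcendental type, $v(P(c_\sigma))$ is eventually constant and equal to $v(P(a))$ by the representative-independence noted above, while $(P(c_\sigma))$ does not pseudoconverge to $0$, and these together force $v(P(b)) = v(P(a))$. Uniqueness of the embedding is clear since $a$ and its derivatives generate $K\langle a\rangle$ over $K$. I expect the main obstacle to be exactly the well-definedness of $v$ from the first paragraph: showing that $v(P(a_\rho))$ stabilizes and is independent of the representative of the cut is where $\d$-transcendental type is genuinely used, and it rests on the apparatus for evaluating $\d$-polynomials along pc-sequences from \S6 of \cite{adamtt}; granting that, everything else is a routine, if somewhat lengthy, transfer of structure from $K$.
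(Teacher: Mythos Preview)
The paper does not give its own proof of this lemma; it is quoted verbatim from \cite[6.9.1]{adamtt} and used as a black box. Your sketch is essentially the argument carried out in that reference: build $K\langle a\rangle$ as the differential fraction field, define $v(P(a))$ as the eventually constant value of $v(P(a_\rho))$ (the crux being that $\d$-transcendental type forces this stabilization and its independence of the representative pc-sequence, which is exactly the content of \cite[\S6.8]{adamtt}), and then verify immediacy, small derivation, and the universal property as you outline.
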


\begin{lem}[{\cite[6.9.3]{adamtt}}]\label{adh6.9.3}
Let $(a_\rho)$ be a pc-sequence in $K$ with minimal $\d$-polynomial $P$ over $K$.
Then $K$ has an immediate extension $K \langle a \rangle$ with $P(a)=0$ and $a_\rho \pconv a$ such that for any extension $L$ of $K$ and any $b \in L$ with $a_\rho \pconv b$ and $P(b)=0$, there is a unique embedding $K\langle a \rangle \to L$ over $K$ sending $a$ to $b$.
\end{lem}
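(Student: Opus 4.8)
The plan is to mimic the construction in Lemma~\ref{adh6.9.1}, but now the pc-sequence is of $\d$-algebraic type, so instead of adjoining a $\d$-transcendental element we must adjoin a zero of the minimal $\d$-polynomial $P$. First I would let $n$ be the complexity-minimal complexity witnessed by $P$, and set $a$ to be a symbol with the intention that $K\langle a\rangle$ is the differential subring of a suitable extension generated by $a$ and its derivatives subject to the relation $P(a)=0$. The key point is that because $P$ is a \emph{minimal} $\d$-polynomial of $(a_\rho)$, no $\d$-polynomial over $K$ of complexity less than $c(P)$ pseudo-converges to $0$ along any equivalent pc-sequence; in particular, for every such $Q$ of lower complexity one has $Q(a_\rho)\not\pconv 0$, which forces $v\bigl(Q(a_\rho)\bigr)$ to eventually stabilize at some value in $\Gamma$. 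This stabilization is exactly what lets us define a valuation on $K\{Y\}/(P)$ consistently.

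The key steps, in order, are: (1) Show $(a_\rho)$ being of $\d$-algebraic type with minimal $\d$-polynomial $P$ implies that for each $Q\in K\{Y\}$ with $c(Q)<c(P)$, the sequence $v\bigl(Q(a_\rho)\bigr)$ is eventually constant, and use this (together with $P(a_\rho)\pconv 0$, which gives $v\bigl(P(a_\rho)\bigr)$ strictly increasing and cofinal in its set of values) to define $w\colon K\{Y\}\setminus(P)\to\Gamma$ by $w(Q)\coloneqq$ the eventual value of $v\bigl(Q(a_\rho)\bigr)$, after first dividing $Q$ by $P$ to reduce its complexity below $c(P)$ via the Ritt/pseudo-division that the minimality of $P$ underwrites. (2) Check $w$ is well-defined on $K\{Y\}/(P)$ and satisfies (V1), (V2), so it extends to a valuation on the fraction field $K\langle a\rangle$; by Lemma~\ref{adh6.8.1} applied with $G=Q$ and the pseudolimit replaced appropriately, the residues and values realized are already in $\bm k$ and $\Gamma$, so the extension is immediate, and $a_\rho\pconv a$ by construction since $v(a-a_\rho)=w(Y-a_\rho)$ grows cofinally. (3) For the universal property, given $L\supseteq K$ and $b\in L$ with $a_\rho\pconv b$ and $P(b)=0$, define the $K$-algebra map $K\{Y\}\to L$ by $Y^{(i)}\mapsto b^{(i)}$; it kills $(P)$ since $P(b)=0$, and one checks it is valuation-preserving because for $Q$ of complexity below $c(P)$, Lemma~\ref{adh6.8.1} gives $Q(a_\rho)\pconv Q(b)$, hence $v\bigl(Q(b)\bigr)=w(Q)$; injectivity follows since $P$ is irreducible of minimal complexity (or: any kernel element of lower complexity would contradict minimality), and uniqueness is immediate as $a\mapsto b$ determines the map on generators.

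The main obstacle I expect is step (1)--(2): proving that the putative valuation $w$ is well-defined and multiplicative on the quotient $K\{Y\}/(P)$. The subtlety is that $K\{Y\}/(P)$ need not be a domain a priori (one must use that $P$ has minimal complexity to get a sensible notion of reduction modulo $P$, reducing the degree in $Y^{(r)}$ where $r=\ord P$), and that the ``eventual value'' $v\bigl(Q(a_\rho)\bigr)$ genuinely stabilizes requires the full strength of $(a_\rho)$ being of $\d$-algebraic type with $P$ minimal — if some lower-complexity $Q$ had $Q(a_\rho)\pconv 0$ we could not assign it a value in $\Gamma$, but that is precisely excluded. Verifying (V1) for $w$ then amounts to showing $v\bigl((Q_1Q_2)(a_\rho)\bigr)$ eventually equals $v\bigl(Q_1(a_\rho)\bigr)+v\bigl(Q_2(a_\rho)\bigr)$, which is clear once both factors have stabilized; the real care is in handling the reduction modulo $P$ so that arbitrary $Q\notin(P)$ are first replaced by complexity-$<c(P)$ representatives without changing the value. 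Once $w$ is in hand the rest is routine bookkeeping, essentially identical to the $\d$-transcendental case.
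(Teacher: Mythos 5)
The paper does not prove this lemma; it is quoted verbatim from \cite[Lemma~6.9.3]{adamtt}, so there is no ``paper's own proof'' to compare against. Your sketch does capture the broad shape of the ADAMTT argument --- build the differential field $K\langle a\rangle$ determined by the irreducible $P$, equip it with a valuation via eventual values of $v\bigl(Q(a_\rho)\bigr)$ for $Q$ of complexity below $c(P)$, then verify the universal property --- but two points need genuine repair rather than just acknowledgment.

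First, $K\{Y\}/(P)$ with $(P)$ the ordinary polynomial ideal is not a differential ring; one must use the Ritt--Kolchin differential domain attached to irreducible $P$, and Ritt pseudo-division expresses $Q$ only up to multiplication by powers of the separant $S_P$ and initial $I_P$: one gets $S_P^e I_P^f Q \equiv R \pmod{[P]}$ with $c(R)<c(P)$. So defining $w$ requires controlling $v\bigl(S_P(a_\rho)\bigr)$ and $v\bigl(I_P(a_\rho)\bigr)$ as well, and verifying (V1) means tracking these factors through the reduction; you flag this as needing ``care,'' but that is exactly where the algebra lives, and it depends crucially on $S_P$ and $I_P$ themselves having complexity below $c(P)$.

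Second, and more substantively, step (1) rests on the implication: if $c(Q)<c(P)$ then $v\bigl(Q(a_\rho)\bigr)$ eventually stabilizes. The deduction from ``$Q(a_\rho)\not\pconv 0$'' to ``eventual constancy of $v\bigl(Q(a_\rho)\bigr)$'' is valid for pc-sequences, so the real content is that $\bigl(Q(a_\rho)\bigr)$ becomes a pc-sequence after passing to an equivalent sequence. You invoke Lemma~\ref{adh6.8.1} for this, but that lemma presupposes a pseudolimit $a$ in an extension $L$ --- precisely the object you are constructing. You need either a version of the evaluation lemma that does not require a pre-existing pseudolimit (which is where most of the work of ADAMTT \S 6.8 lies), or to first produce a pseudolimit in some auxiliary extension by other means. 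As written, the valuation $w$ is not yet well-defined and the construction does not get off the ground without closing this circle.
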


\subsection{Constructing immediate extensions and vanishing}\label{prelim:imextfo}
\begin{ass}
In this subsection, the derivation induced on $\bm k$ is nontrivial and $\Gamma$ has no least positive element.
\end{ass}
The notion of minimal $\d$-polynomial is not first-order, so we include here a first-order variant of Lemma~\ref{adh6.9.3} that is a special case of \cite[Lemma~5.3]{maximext}.
We then connect it to dominant degree in a cut.
Under the assumptions above, all extensions of $K$ are \emph{strict} \cite[Lemma~1.3]{maximext}, and $K$ is \emph{flexible} \cite[Lemma~1.15 and Corollary~3.4]{maximext}.
These notions are defined in \cite{maximext} but are incidental here, and mentioned only since they occur in the corresponding lemmas of \cite{maximext}.

Let $\ell \notin K$ be an element in an extension of $K$ such that $v(\ell - K) \coloneqq \{v(\ell-x) : x \in K\}$ has no largest element (equivalently, $\ell$ is the pseudolimit of some divergent pc-sequence in $K$).
We say that $P$ \emph{vanishes at $(K,\ell)$} if for all $a \in K$ and $\f v \in K^\x$ with $a-\ell \prec \f v$, $\ddeg_{\prec \f v} P_{+a} \ges 1$.
Then $Z(K, \ell)$ denotes the set of nonzero $\d$-polynomials over $K$ vanishing at $(K,\ell)$.

%The first lemma is a special case of \cite[Lemma~4.2]{maximext}.
%\begin{lem}\label{pnotinzatf}
%Suppose $P \notin Z(K, \ell)$.
%Let $a \in K$ and $\f v \in K^{\x}$ be such that $a-\ell \prec \f v$ and $\ddeg_{\prec \f v} P_{+a} = 0$.
%Then $P(f) \sim P(a)$ for any $f$ in an extension of $K$ satisfying $f-a \prec \f v$.
%\end{lem}

%\begin{lem}\label{maximext:5.2}
%Suppose $Z(K, \ell)=\0$.
%Then $P(\ell)\neq 0$ for all $P$, and $K\langle \ell \rangle$ is an immediate extension of $K$.
%Suppose also that $M$ is an extension of $K$ and $g \in M$ satisfies $v(a-g)=v(a-\ell)$ for all $a \in K$.
%Then there is a unique embedding $K\langle \ell \rangle \to M$ over $K$ sending $\ell$ to $g$.
%\end{lem}

\begin{lem}[{\cite[5.3]{maximext}}]\label{maximext:5.3}
Suppose that $Z(K, \ell)\neq\0$ and $P \in Z(K, \ell)$ has minimal complexity.
Then $K$ has an immediate extension $K\langle f\rangle$ such that $P(f)=0$ and $v(a-f)=v(a-\ell)$ for all $a \in K$.
Moreover, if $M$ is an extension of $K$ and $s \in M$ satisfies $P(s)=0$ and $v(a-s)=v(a-\ell)$ for all $a \in K$, then there is a unique embedding $K\langle f\rangle \to M$ over $K$ sending $f$ to $s$.
\end{lem}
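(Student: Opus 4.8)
The plan is to reduce the statement to Lemma~\ref{adh6.9.3} by translating ``vanishing at $(K,\ell)$'' into pseudo-convergence to $0$ along a pc-sequence in $K$; this translation is essentially the content of \cite[Lemma~5.3]{maximext}, whose standing hypotheses (the induced derivation on $\bm k$ is nontrivial and $\Gamma$ has no least positive element, so that $K$ is flexible and all its extensions are strict) are precisely those in force in this subsection.

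Since $v(\ell-K)$ has no largest element, first fix a divergent pc-sequence $(a_\rho)$ in $K$ with $a_\rho\pconv\ell$ and set $\gamma_\rho\coloneqq v(a_{\rho+1}-a_\rho)$, so that $v(\ell-a_\rho)=\gamma_\rho$ eventually. The key step is to produce, from the vanishing of $P$, a pc-sequence $(b_\rho)$ in $K$ equivalent to $(a_\rho)$ — hence with $b_\rho\pconv\ell$ — such that $P(b_\rho)\pconv0$: the hypothesis that $\ddeg_{\prec\f v}P_{+a}\ges1$ for all $a\in K$ and $\f v\in K^\x$ with $a-\ell\prec\f v$ lets one refine $(a_\rho)$ step by step within $K$ so that $P$ actually pseudo-converges to $0$, using the properties of dominant degree in a cut recalled in \S\ref{ddegcut}. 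Since $P$ has minimal complexity in $Z(K,\ell)$, one checks, as in \cite{maximext}, that $P$ is then a minimal $\d$-polynomial of $(b_\rho)$ over $K$; in particular $(b_\rho)$, and so $(a_\rho)$, is of $\d$-algebraic type over $K$.

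Granting this, Lemma~\ref{adh6.9.3} applied to $(b_\rho)$ and $P$ gives an immediate extension $K\langle f\rangle$ of $K$ with $P(f)=0$ and $b_\rho\pconv f$, together with the universal property that for any extension $M$ of $K$ and any $b\in M$ with $b_\rho\pconv b$ and $P(b)=0$ there is a unique embedding $K\langle f\rangle\to M$ over $K$ with $f\mapsto b$. It remains to check that, for $b$ with $P(b)=0$, the condition $b_\rho\pconv b$ is equivalent to $v(a-b)=v(a-\ell)$ for all $a\in K$. For the forward direction, divergence of $(a_\rho)$ gives, for each $a\in K$, some $\rho$ with $v(a-\ell)<\gamma_\rho$, while $v(\ell-f)\ges\gamma_\rho$ for all $\rho$ (both $\ell$ and $f$ are pseudolimits of the equivalent sequence $(b_\rho)$); hence $v(a-f)=\min\{v(a-\ell),v(\ell-f)\}=v(a-\ell)$. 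For the converse, if $s\in M$ satisfies $P(s)=0$ and $v(a-s)=v(a-\ell)$ for all $a\in K$, then taking $a=b_\rho$ gives $v(s-b_\rho)=v(\ell-b_\rho)$ for all $\rho$, and the latter eventually strictly increases, so $b_\rho\pconv s$; Lemma~\ref{adh6.9.3} then supplies the unique embedding $K\langle f\rangle\to M$ sending $f$ to $s$ (and uniqueness of this embedding under the stated constraints follows a fortiori).

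The only genuine obstacle is the refinement step in the second paragraph: extracting a pc-sequence $(b_\rho)$ with $P(b_\rho)\pconv0$ from the vanishing hypothesis, and verifying that minimality of complexity in $Z(K,\ell)$ turns $P$ into a minimal $\d$-polynomial of $(b_\rho)$. Everything else is routine manipulation of pc-sequences, cuts, and widths. Since that step is exactly \cite[Lemma~5.3]{maximext}, in the paper I would simply invoke that result after remarking that the standing assumptions of this subsection place us within its scope.
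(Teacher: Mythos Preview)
The paper does not prove this lemma at all: it is simply quoted from \cite[Lemma~5.3]{maximext}, with no argument given beyond the citation. So there is no ``paper's own proof'' to compare against; your sketch is already more than the paper provides.

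That said, your outline is sound. The reduction to Lemma~\ref{adh6.9.3} via a pc-sequence $(b_\rho)$ equivalent to $(a_\rho)$ with $P(b_\rho)\pconv 0$ is the right shape, and your translation between the condition ``$v(a-f)=v(a-\ell)$ for all $a\in K$'' and ``$f$ is a pseudolimit of $(b_\rho)$'' is correct (divergence is exactly what makes this work). You correctly identify that the only real content is the refinement step producing $(b_\rho)$ and the check that minimality in $Z(K,\ell)$ forces $P$ to be a minimal $\d$-polynomial of $(b_\rho)$; the latter follows from Lemma~\ref{pconv0vanish}, while the former is the substance of the cited result. One small caution: in the paper's logical order, Corollary~\ref{mindiffpolyfo} is \emph{deduced from} Lemma~\ref{maximext:5.3}, so you should not silently invoke that equivalence when arguing for the present lemma---but you don't, since you defer to \cite{maximext} directly. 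In short, both you and the paper end up pointing at the same external reference; your final sentence is exactly what the paper does.
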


\begin{lem}[{\cite[4.6]{maximext}}]\label{pconv0vanish}
Suppose that $(a_\rho)$ is a divergent pc-sequence in $K$ with $a_\rho \pconv \ell$.
If $P(a_\rho) \pconv 0$, then $P \in Z(K, \ell)$.
\end{lem}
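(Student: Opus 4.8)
\emph{The plan.} I would deduce the lemma from the single assertion that
\[
\ddeg_{\ges \gamma_\rho} P_{+a_\rho}\ \ges\ 1 \qquad\text{for all sufficiently large }\rho
\]
(equivalently, $d\big(P,(a_\rho)\big)\ges 1$), and prove that assertion by a dominant-part computation. First note that $P\notin K$: a nonzero constant $\d$-polynomial would give a constant, nonzero sequence $\big(P(a_\rho)\big)$, which cannot pseudoconverge to $0$.

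\emph{The computation.} Suppose instead that $\ddeg_{\ges\gamma_\rho}P_{+a_\rho}=0$ for arbitrarily large $\rho$; fix such a $\rho$ and choose $\f m\in K^\x$ with $v\f m=\gamma_\rho$. By the remarks following Corollary~\ref{adh6.6.7}, $\ddeg_{\ges\gamma}Q=\ddeg Q_{\x g}$ whenever $vg=\gamma$, so $\ddeg(P_{+a_\rho})_{\x\f m}=0$. The constant coefficient of $(P_{+a_\rho})_{\x\f m}$ is $P(a_\rho)$, and having dominant degree $0$ means precisely that this constant coefficient realizes the valuation of $(P_{+a_\rho})_{\x\f m}$ and strictly dominates every other coefficient; thus $(P_{+a_\rho})_{\x\f m}=P(a_\rho)\big(1+\sum_{\bm i\ne\bm 0}u_{\bm i}Y^{\bm i}\big)$ with all $u_{\bm i}\prec 1$. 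Evaluating at $w\coloneqq(a_{\rho+1}-a_\rho)/\f m\asymp 1$, and using that small derivation forces $w^{\bm i}\prece 1$ for every $\bm i$, we get
\[
P(a_{\rho+1})\ =\ (P_{+a_\rho})_{\x\f m}(w)\ \asymp\ P(a_\rho).
\]
So $P(a_{\rho+1})\asymp P(a_\rho)$ for all large $\rho$, contradicting $P(a_\rho)\pconv 0$. Hence $\ddeg_{\ges\gamma_\rho}P_{+a_\rho}\ges 1$ for all large $\rho$.

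\emph{The deduction.} Let $a\in K$ and $\f v\in K^\x$ with $a-\ell\prec\f v$, and put $\theta\coloneqq v(a-\ell)$, so $\theta>v\f v$. Since $(a_\rho)$ is divergent, $a$ is not a pseudolimit of $(a_\rho)$, so $\gamma_\rho>\theta$ for all large $\rho$ (otherwise $v(a-a_\rho)=\gamma_\rho$ eventually and $a$ would be a pseudolimit). Fix such a $\rho$ that is also large enough for the previous paragraph; then $v(a_\rho-a)=\theta$, and since $\{f:vf\ges\theta\}\supseteq\{f:vf\ges\gamma_\rho\}$ we get $\ddeg_{\ges\theta}P_{+a_\rho}\ges\ddeg_{\ges\gamma_\rho}P_{+a_\rho}\ges 1$. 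Choosing $\f m'\in K^\x$ with $v\f m'=\theta$, this reads $\ddeg(P_{+a_\rho})_{\x\f m'}\ges 1$, and since $a_\rho-a\asymp\f m'$, Corollary~\ref{adh6.6.6} gives $\ddeg(P_{+a})_{\x\f m'}=\ddeg(P_{+a_\rho})_{\x\f m'}\ges 1$. As $v\f m'=\theta>v\f v$, this yields $\ddeg_{\prec\f v}P_{+a}\ges\ddeg(P_{+a})_{\x\f m'}\ges 1$. Since $a$ and $\f v$ were arbitrary with $a-\ell\prec\f v$, this is exactly the statement that $P\in Z(K,\ell)$.

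\emph{Main obstacle.} The only step needing care is the dominant-part computation: one must unwind the definition of the dominant part to see that dominant degree $0$ pins the valuation of $(P_{+a_\rho})_{\x\f m}$ to its constant coefficient $P(a_\rho)$, and one must use small derivation to ensure that evaluating the normalized conjugate at a unit $w$ does not lower the valuation. Everything else is bookkeeping with the preliminaries of \S\ref{prelim:ddeg} and the definition of $Z(K,\ell)$.
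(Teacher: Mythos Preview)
Your argument is correct. The paper does not actually prove this lemma here; it cites it from \cite{maximext}, so there is no in-paper proof to compare against. Your direct approach via the dominant-degree machinery of \S\ref{prelim:ddeg}--\ref{ddegcut} is natural and self-contained.

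Two minor expository points worth tightening. First, when you pass from ``$\ddeg_{\ges\gamma_\rho}P_{+a_\rho}=0$ for arbitrarily large $\rho$'' to ``$P(a_{\rho+1})\asymp P(a_\rho)$ for all large $\rho$,'' you are implicitly using that $\ddeg_{\ges\gamma_\rho}P_{+a_\rho}$ is \emph{eventually constant} (equal to $d(P,(a_\rho))$), so being $0$ cofinally forces it to be $0$ eventually. Second, in the deduction it is worth noting explicitly that $\theta=v(a-\ell)=v(a-a_\rho)\in\Gamma$ for large $\rho$ (since $a\neq\ell$ and eventually $\gamma_\rho>\theta$), which is what guarantees the existence of $\f m'\in K^\times$ with $v\f m'=\theta$. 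Neither point affects the validity of the proof.
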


\begin{lem}[{\cite[4.7]{maximext}}]\label{ddegcutfo}
Suppose that $(a_\rho)$ is a divergent pc-sequence in $K$ with $a_\rho \pconv \ell$.
Then
\[
\ddeg_{\bm a} P\ =\ \min\{\ddeg_{\prec \f v} P_{+a} : a-\ell \prec \f v\}.
\]
In particular, $\ddeg_{\bm a}P \ges 1 \iff P \in Z(K, \ell)$.
\end{lem}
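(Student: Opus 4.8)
The ``in particular'' clause follows at once from the displayed identity: by definition $P$ vanishes at $(K,\ell)$ exactly when $\ddeg_{\prec\f v}P_{+a}\ges 1$ for all $a\in K$ and $\f v\in K^{\x}$ with $a-\ell\prec\f v$, i.e.\ exactly when the right-hand minimum is $\ges 1$; and that minimum exists because the relevant set is a nonempty subset of $\{0,1,\dots,\deg P\}$ (any $a\in K$ together with an $\f v$ of sufficiently negative valuation gives a pair with $a-\ell\prec\f v$). So the plan is to prove the identity. I would set $n\coloneqq\ddeg_{\bm a}P$ and recall that $n=\ddeg_{\ges\gamma_\rho}P_{+a_\rho}$ for all large $\rho$, and I would use two standard facts about the divergent pc-sequence $(a_\rho)$ with $a_\rho\pconv\ell$ (cf.\ \cite[\S2.2]{adamtt}): for all large $\rho$ one has $v(\ell-a_\rho)=\gamma_\rho$ and $v(a_{\rho'}-a_\rho)=\gamma_\rho$ whenever $\rho'>\rho$; and, since $(a_\rho)$ is divergent, for each fixed $a\in K$ one has $\gamma_\rho>v(a-\ell)$, hence $v(a-a_\rho)=v(a-\ell)$, for all large $\rho$ (otherwise $v(a-a_\rho)=\gamma_\rho$ would be eventually strictly increasing, making $a$ a pseudolimit of $(a_\rho)$ in $K$).

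First I would show that $\ddeg_{\bm a}P\les\ddeg_{\prec\f v}P_{+a}$ for every $a\in K$ and $\f v\in K^{\x}$ with $a-\ell\prec\f v$; this gives ``$\les$'' in the identity. Given such $a$ and $\f v$, set $\ca E\coloneqq\{f\in K^{\x}:f\prec\f v\}$, a $\prece$-closed set, and choose $\rho$ large enough that $n=\ddeg_{\ges\gamma_\rho}P_{+a_\rho}$, that $v\f v<v(a-\ell)<\gamma_\rho$, and that $v(a-a_\rho)=v(a-\ell)$. Then $a-a_\rho\prec\f v$, so $a-a_\rho\in\ca E$, and Lemma~\ref{adh6.6.10} applied to $P_{+a_\rho}$ yields $\ddeg_{\ca E}P_{+a}=\ddeg_{\ca E}(P_{+a_\rho})_{+(a-a_\rho)}=\ddeg_{\ca E}P_{+a_\rho}$. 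Since $\{f:vf\ges\gamma_\rho\}\subseteq\ca E$ (as $vf\ges\gamma_\rho>v\f v$), we get $\ddeg_{\ca E}P_{+a_\rho}\ges\ddeg_{\ges\gamma_\rho}P_{+a_\rho}=n$, hence $\ddeg_{\prec\f v}P_{+a}\ges n$.

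Then I would exhibit one pair attaining a value $\les n$, giving ``$\ges$''. Fix $\rho_0$ large enough that $n=\ddeg_{\ges\gamma_{\rho_0}}P_{+a_{\rho_0}}$ and the pc-sequence facts hold from $\rho_0$ on; take any $\rho_1>\rho_0$, take $\f v\in K^{\x}$ with $v\f v=\gamma_{\rho_0}$, and put $a\coloneqq a_{\rho_1}$. Then $v(a-\ell)=\gamma_{\rho_1}>\gamma_{\rho_0}=v\f v$, so $a-\ell\prec\f v$. Since $v(a_{\rho_1}-a_{\rho_0})=\gamma_{\rho_0}$, the element $a_{\rho_1}-a_{\rho_0}$ lies in the $\prece$-closed set $\ca E'\coloneqq\{f\in K^{\x}:vf\ges\gamma_{\rho_0}\}$, so Lemma~\ref{adh6.6.10} gives $\ddeg_{\ges\gamma_{\rho_0}}P_{+a_{\rho_1}}=\ddeg_{\ges\gamma_{\rho_0}}P_{+a_{\rho_0}}=n$; and since $\{f:f\prec\f v\}\subseteq\ca E'$, we get $\ddeg_{\prec\f v}P_{+a}\les n$. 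Combining the two halves gives the identity.

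The one delicate point will be the clash between the non-strict bound ``$vf\ges\gamma_\rho$'' built into $\ddeg_{\bm a}P$ and the strict bound ``$f\prec\f v$'' in the definition of $Z(K,\ell)$. As above, this is defused by two trivialities: $\ddeg_{\ges\gamma}Q\ges\ddeg_{>\gamma}Q$ always (a maximum over a larger set), and $v(a_{\rho'}-a_\rho)$ is \emph{equal} to $\gamma_\rho$, hence lies in the closed set $\{f:vf\ges\gamma_\rho\}$, which is precisely what permits Lemma~\ref{adh6.6.10} to move the base point of the additive conjugation between $a_\rho$ and nearby points of $K$. I do not expect other obstacles --- the rest is just taking $\rho$ large enough for the listed convergence facts to take effect.
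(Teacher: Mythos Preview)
Your argument is correct. Note, however, that the paper does not give its own proof of this lemma: it is quoted verbatim from \cite[4.7]{maximext} and stated without proof, so there is nothing in the present paper to compare your argument against.

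That said, your proof is a clean direct verification and uses exactly the tools available in the paper (Lemma~\ref{adh6.6.10} and the elementary monotonicity of $\ddeg_{\ca E}$ in $\ca E$), together with the standard pc-sequence facts from \cite[\S2.2]{adamtt}. The two inequalities are handled in the natural way: for ``$\les$'' you translate the base point from $a_\rho$ to $a$ via Lemma~\ref{adh6.6.10} and enlarge the $\prece$-closed set; for ``$\ges$'' you pick $a=a_{\rho_1}$ and $\f v$ with $v\f v=\gamma_{\rho_0}$ to land exactly on the eventual value. Your remark about the ``delicate point'' (non-strict $\ges\gamma_\rho$ versus strict $\prec\f v$) correctly identifies the only place requiring care, and your resolution via $v(a_{\rho_1}-a_{\rho_0})=\gamma_{\rho_0}$ is the right one. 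One tiny cosmetic comment: in the ``$\les$'' half you could drop the intermediate inequality $v(a-\ell)<\gamma_\rho$ from the list of conditions on $\rho$, since it is already forced by $v(a-a_\rho)=v(a-\ell)$ together with $v(\ell-a_\rho)=\gamma_\rho$; but this does no harm.
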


\begin{cor}\label{mindiffpolyfo}
Suppose that $(a_\rho)$ is a divergent pc-sequence in $K$ with $a_\rho \pconv \ell$.
The following conditions on $P$ are equivalent:
\begin{enumerate}
\item $P \in Z(K, \ell)$ and has minimal complexity in $Z(K, \ell)$;
\item $P$ is a minimal $\d$-polynomial of $(a_\rho)$ over $K$.
\end{enumerate}
\end{cor}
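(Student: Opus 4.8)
The plan is to prove the two implications separately, using Lemma~\ref{ddegcutfo} and Lemma~\ref{pconv0vanish} as the bridge between the two notions of minimality, together with the characterization of minimal $\d$-polynomials via dominant degree in a cut.

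First I would show (i)$\Rightarrow$(ii). Suppose $P \in Z(K,\ell)$ has minimal complexity there. By Lemma~\ref{ddegcutfo} we have $\ddeg_{\bm a}P \ges 1$, and I claim this means there is an equivalent pc-sequence $(b_\sigma)$ in $K$ with $P(b_\sigma)\pconv 0$, so that $(a_\rho)$ is of $\d$-algebraic type over $K$ and $P$ is a candidate minimal $\d$-polynomial. For this I would invoke Lemma~\ref{adh6.9.3} or rather the construction underlying it: from $\ddeg_{\bm a} P \ges 1$ one produces (by the differential newton diagram machinery or directly by the results of \S\ref{prelim:imext}) a pseudolimit $f$ of $(a_\rho)$ in an immediate extension with $P(f)=0$, and then Lemma~\ref{adh6.8.1} gives an equivalent pc-sequence $(b_\rho)$ in $K$ with $P(b_\rho)\pconv P(f)=0$. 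To see that $P$ has \emph{minimal complexity} with this property, suppose $Q$ has smaller complexity and $Q(b'_\sigma)\pconv 0$ for some $(b'_\sigma)$ equivalent to $(a_\rho)$; then $(b'_\sigma)$ is divergent and has the same cut, and $b'_\sigma \pconv \ell$ (after identifying cuts in a common extension), so Lemma~\ref{pconv0vanish} gives $Q \in Z(K,\ell)$, contradicting minimality of the complexity of $P$ in $Z(K,\ell)$. Hence $P$ is a minimal $\d$-polynomial of $(a_\rho)$ over $K$.

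Next, (ii)$\Rightarrow$(i). Suppose $P$ is a minimal $\d$-polynomial of $(a_\rho)$ over $K$, witnessed by an equivalent pc-sequence $(b_\sigma)$ with $P(b_\sigma)\pconv 0$. Since $(a_\rho)$ is divergent and $(b_\sigma)$ is equivalent to it, $(b_\sigma)$ is divergent with the same cut, and $b_\sigma \pconv \ell$; by Lemma~\ref{pconv0vanish}, $P \in Z(K,\ell)$. For minimal complexity, suppose $Q \in Z(K,\ell)$ has strictly smaller complexity. By Lemma~\ref{ddegcutfo}, $\ddeg_{\bm a} Q \ges 1$, and repeating the argument from the first implication (Lemma~\ref{adh6.9.3} plus Lemma~\ref{adh6.8.1}) we get an equivalent pc-sequence $(c_\tau)$ in $K$ with $Q(c_\tau)\pconv 0$; this contradicts the minimality of the complexity of $P$ among $\d$-polynomials witnessing that $(a_\rho)$ is of $\d$-algebraic type. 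Therefore $P$ has minimal complexity in $Z(K,\ell)$, which is (i).

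The main obstacle is the step, used in both directions, that $\ddeg_{\bm a}Q \ges 1$ implies the existence of an equivalent pc-sequence in $K$ along which $Q$ pseudoconverges to $0$; this is essentially the converse direction in the theory connecting dominant degree in a cut to minimal $\d$-polynomials, and it is where one needs either Lemma~\ref{adh6.9.3} together with Lemma~\ref{adh6.8.1}, or a direct appeal to the newton-diagram construction of immediate extensions. Everything else is bookkeeping with equivalence of pc-sequences, the fact that equivalent pc-sequences share a cut, and the translation between ``$a_\rho \pconv \ell$'' and ``$v(a-\ell)$ has no largest element'' so that Lemmas~\ref{pconv0vanish} and \ref{ddegcutfo} apply. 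One should also be slightly careful that the notion of minimal $\d$-polynomial is insensitive to passing to an equivalent pc-sequence, which is exactly what allows the cut $\bm a$ to be the right invariant to work with.
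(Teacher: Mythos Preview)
Your overall strategy is correct and matches the paper's: use Lemma~\ref{pconv0vanish} to pass from ``$P(b_\sigma)\pconv 0$'' to ``$P\in Z(K,\ell)$,'' and in the other direction produce a zero of the relevant $\d$-polynomial in an immediate extension and then apply Lemma~\ref{adh6.8.1}. The gap is precisely the step you flag as ``the main obstacle,'' and your proposed resolution does not work.

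You suggest invoking Lemma~\ref{adh6.9.3} (or ``the construction underlying it'') to obtain, from $\ddeg_{\bm a}Q\ges 1$, a pseudolimit $f$ with $Q(f)=0$. But Lemma~\ref{adh6.9.3} takes as \emph{hypothesis} that $Q$ is a minimal $\d$-polynomial of $(a_\rho)$. In (i)$\Rightarrow$(ii) this is exactly what you are trying to prove, so the argument is circular; in (ii)$\Rightarrow$(i), applied to a hypothetical $Q\in Z(K,\ell)$ of smaller complexity, you have no reason to know $Q$ is a minimal $\d$-polynomial either. Nor does the general implication ``$\ddeg_{\bm a}Q\ges 1\Rightarrow$ some pseudolimit $f$ has $Q(f)=0$'' hold without a minimality assumption, and the newton-diagram machinery of \S\ref{sec:ndiag} is not available at this point (it needs a monomial group and divisible $\Gamma$).

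The correct tool, and the one the paper uses, is Lemma~\ref{maximext:5.3}: given $P$ of \emph{minimal complexity in $Z(K,\ell)$}, it directly builds an immediate extension $K\langle f\rangle$ with $P(f)=0$ and $v(a-f)=v(a-\ell)$ for all $a\in K$, whence $a_\rho\pconv f$. Then Lemma~\ref{adh6.8.1} gives an equivalent $(b_\rho)$ with $P(b_\rho)\pconv 0$, and your minimality arguments via Lemma~\ref{pconv0vanish} go through unchanged. For (ii)$\Rightarrow$(i), if some $Q\in Z(K,\ell)$ had $c(Q)<c(P)$, pass to $Q'$ of minimal complexity in $Z(K,\ell)$, apply Lemma~\ref{maximext:5.3} to $Q'$, and obtain an equivalent pc-sequence along which $Q'$ pseudoconverges to $0$ with $c(Q')\les c(Q)<c(P)$, contradicting (ii). With Lemma~\ref{maximext:5.3} in place of Lemma~\ref{adh6.9.3}, your outline is exactly the paper's proof.
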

\begin{proof}
The proof is the same as that of \cite[Corollary~11.4.13]{adamtt}, using Lemma~\ref{maximext:5.3} in place of \cite[Lemma~11.4.8]{adamtt}, Lemma~\ref{adh6.8.1} in place of \cite[Lemma~11.3.8]{adamtt}, and Lemma~\ref{pconv0vanish} in place of \cite[Lemma~11.4.11]{adamtt}.
\end{proof}
In particular, $Z(K, \ell) = \0$ if and only if $(a_\rho)$ is of $\d$-transcendental type over $K$, and $Z(K, \ell) \neq \0$ if and only if $(a_\rho)$ is of $\d$-algebraic type over $K$.

\subsection{Archimedean classes and coarsening}\label{prelim:arch}

For $\gamma \in \Gamma$, we let $[\gamma]$ denote its archimedean class.
That is,
\[
[\gamma]\ \coloneqq\ \{ \delta \in \Gamma : |\delta| \les n|\gamma|\ \text{and}\ |\gamma| \les n|\delta|\ \text{for some}\ n\}.
\]
We order the set $[\Gamma] \coloneqq \{ [\gamma] : \gamma \in \Gamma\}$ by $[\delta] < [\gamma]$ if $n|\delta| < |\gamma|$ for all $n$.
Giving the set of archimedean classes their reverse order,
the map $\gamma \mapsto [\gamma]$ is a convex valuation on $\Gamma$ (see \cite[\S2.2]{adamtt} for the notion of a valuation on an abelian group and \cite[\S2.4]{adamtt} for that of a \emph{convex} valuation on an \emph{ordered} abelian group).
In particular, the implication $[\delta]<[\gamma] \implies [\delta+\gamma]=[\gamma]$ is often used.
Then for $\phi \in K^{\x}$ with $\phi \not\asymp 1$, the set $\Gamma_{\phi} \coloneqq \{\gamma : [\gamma]<[v\phi]\}$ is a convex subgroup of $\Gamma$.
We will use $v_\phi$, the coarsening of $v$ by $\Gamma_\phi$, and its corresponding dominance relation, $\prece_\phi$, defined by
\begin{align*}
v_{\phi} \colon K^{\x} &\to \Gamma/\Gamma_\phi\\
a &\mapsto va + \Gamma_\phi,
\end{align*}
and $a \prece_\phi b$ if $v_\phi(a) \ges v_\phi(b)$.
Note that the symbols $v_\phi$ and $\prece_\phi$ also appeared in \cite[\S9.4]{adamtt}, where they indicated a different coarsening of $v$.

We first recall how $v(P)$ changes as we additively and multiplicatively conjugate $P$.

\begin{lem}[{\cite[4.5.1]{adamtt}}]\label{adh4.5.1}
Let $f \in K$.
\begin{enumerate}
	\item If $f \prece 1$, then $P_{+f} \asymp P$; if $f \prec 1$, then $P_{+f} \sim P$.
	\item If $f \neq 0$, then $v(P_{\x f}) \in \Gamma$ depends only on $vf \in \Gamma$.
\end{enumerate}
\end{lem}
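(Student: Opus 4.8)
The plan is to expand the coefficients of $P_{+f}$ and $P_{\x f}$ directly. The structural facts I would rely on are: additive conjugation $Q \mapsto Q_{+f}$ (for $f \in K$) and multiplicative conjugation $Q \mapsto Q_{\x f}$ (for $f \in K^\x$) are $K$-algebra automorphisms of $K\{Y\}$ commuting with $\der$; small derivation gives $\der\ca O \subseteq \ca O$ and $\der\cao \subseteq \cao$, so $a^{(j)} \in \ca O$ whenever $a \in \ca O$ and $a^{(j)} \in \cao$ whenever $a \in \cao$, for all $j$; reduction modulo $\cao$ is a differential ring homomorphism $\ca O\{Y\} \to \bm k\{Y\}$, with $\bm k$ carrying the induced derivation; and $v$ vanishes on $\Z$, so the binomial coefficients appearing in the expansions are invisible to $v$. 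Finally, the relations $\asymp$, $\prec$, and $\sim$ between nonzero $\d$-polynomials are preserved under multiplying both sides by a fixed element of $K^\x$, and conjugation is $K$-linear; this lets me rescale $P$ (or $P_{\x g}$) by one of its coefficients of valuation $v(P)$ and thereby assume the relevant $\d$-polynomial lies in $\ca O\{Y\}$ with nonzero image in $\bm k\{Y\}$.

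For (i): assume $v(P) = 0$, so $\overline{P} \neq 0$ in $\bm k\{Y\}$. From $P_{+f} = \sum_{\bm i} P_{\bm i} \prod_{j} (f^{(j)} + Y^{(j)})^{i_j}$ and $f^{(j)} \in \ca O$ (valid since $f \prece 1$) I get $P_{+f} \in \ca O\{Y\}$, and reducing modulo $\cao$ yields $\overline{P_{+f}} = (\overline{P})_{+\overline{f}}$; since additive conjugation by $\overline{f} \in \bm k$ is an automorphism of $\bm k\{Y\}$ and $\overline{P} \neq 0$, we have $\overline{P_{+f}} \neq 0$, i.e.\ $v(P_{+f}) = 0 = v(P)$, so $P_{+f} \asymp P$. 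If moreover $f \prec 1$, then each $f^{(j)} \in \cao$, so $\overline{(f^{(j)} + Y^{(j)})^{i_j}} = (Y^{(j)})^{i_j}$ and hence $\overline{P_{+f}} = \overline{P}$; thus $v(P_{+f} - P) > 0 = v(P)$, which is exactly $P_{+f} \sim P$.

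For (ii): given $f, g \in K^\x$ with $vf = vg$, write $f = ug$ with $u \coloneqq f/g \asymp 1$, so $P_{\x f} = (P_{\x g})_{\x u}$ and $P_{\x g}$ (hence also $P_{\x f}$) is nonzero, so $v(P_{\x f}) \in \Gamma$. It therefore suffices to show $v(Q_{\x u}) = v(Q)$ for all $Q \in K\{Y\}^{\neq}$ and $u \asymp 1$; assume $v(Q) = 0$. Since $u \in \ca O^\x$, all $u^{(k)} \in \ca O$, so the Leibniz expansion $(uY)^{(j)} = \sum_{k=0}^{j} \binom{j}{k} u^{(k)} Y^{(j-k)}$ shows $(uY)^{(j)} \in \ca O\{Y\}$ and hence $Q_{\x u} \in \ca O\{Y\}$; reducing modulo $\cao$ gives $\overline{Q_{\x u}} = (\overline{Q})_{\x \overline{u}}$ with $\overline{u} \in \bm k^\x$, and since multiplicative conjugation by $\overline{u}$ is an automorphism of $\bm k\{Y\}$ and $\overline{Q} \neq 0$, we get $\overline{Q_{\x u}} \neq 0$, i.e.\ $v(Q_{\x u}) = 0 = v(Q)$.

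I do not expect a genuine obstacle here; the whole argument is bookkeeping. The only points needing care are that passing to the residue field commutes with conjugation — which rests on $\der$ inducing a derivation on $\bm k$, a consequence of small derivation — and that the integer coefficients occurring in the expansions have valuation $0$ and so cannot decrease valuations.
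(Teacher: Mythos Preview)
Your proof is correct. The paper itself does not give a proof of this lemma: it is simply recalled from \cite[4.5.1]{adamtt} without argument, so there is nothing to compare against. Your approach---rescaling so that $v(P)=0$, using small derivation to stay in $\ca O\{Y\}$, and reducing modulo $\cao$ to exploit that additive and multiplicative conjugation are automorphisms of $\bm k\{Y\}$---is the standard one and is essentially how the result is proved in \cite{adamtt}.
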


Item (ii) allows us to define the function $v_{P} \colon \Gamma \to \Gamma$ by $vf \mapsto v(P_{\x f})$.
The main property of this function is recorded in the following lemma.
Here, for $\alpha, \beta \in \Gamma$ we write  $\alpha = o(\beta)$ if $[\alpha]<[\beta]$.

\begin{lem}[{\cite[6.1.3, 6.1.5]{adamtt}}]\label{adh6.1.3}
Let $P, Q \in K\{Y\}^{\neq}$ be homogeneous of degrees $m$, $n$, respectively.
For $\alpha, \beta \in \Gamma$ with $\alpha \neq \beta$, we have
\[
v_P(\alpha) - v_P(\beta)\ =\ m(\alpha-\beta) + o(\alpha-\beta).
\]
It follows that, for $\gamma \in \Gamma^{\neq}$,
\[
v_P(\gamma)-v_Q(\gamma)\ =\ v(P)-v(Q) + (m-n)\gamma + o(\gamma),
\]
and if $m>n$, then $v_P-v_Q$ is strictly increasing.
\end{lem}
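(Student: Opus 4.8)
The plan is to reduce the identity to its $\beta = 0$ case, compute $v_P$ explicitly by a multiplicative conjugation, and then read off the error term from the smallness of $\der$ via a symmetry argument. For the reduction, fix $P$ homogeneous of degree $m$ and $\alpha \neq \beta$ in $\Gamma$, and choose $g \in K^{\x}$ with $vg = \beta$ (possible since $v$ is surjective). By Lemma~\ref{adh4.5.1}(ii), for any $\delta \in \Gamma$ and any $h \in K^{\x}$ with $vh = \delta$ one has $v_{P_{\x g}}(\delta) = v\big((P_{\x g})_{\x h}\big) = v(P_{\x gh}) = v_P(\beta + \delta)$, and $P_{\x g}$ is again homogeneous of degree $m$; so, setting $\gamma := \alpha - \beta \in \Gamma^{\neq}$, it is enough to prove
\[
v_R(\gamma) - v(R)\ =\ m\gamma + o(\gamma)\qquad\text{for every }R \in K\{Y\}^{\neq}\text{ homogeneous of degree }m.
\]

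To prove this, normalize so that $v(R) = 0$, set $r := \ord R$, and fix $u \in K^{\x}$ with $vu = \gamma$. Writing $(uY)^{(k)} = u\,L_k(Y)$ with $L_k(Y) := \sum_{j=0}^{k}\binom{k}{j}\tfrac{u^{(k-j)}}{u}Y^{(j)}$, so that the coefficient of $Y^{(k)}$ in $L_k$ equals $1$, substituting $Y^{(k)} \mapsto L_k(Y)$ in $R$ produces a $\d$-polynomial $R^{*}$ with $R_{\x u} = u^{m}R^{*}$; hence $v_R(\gamma) = v(R_{\x u}) = m\gamma + v(R^{*})$ and it remains to show $v(R^{*}) = o(\gamma)$. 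This is where smallness is used: the key input is the standard estimate that $\der$ small implies $v(u^{(l)}/u) = o(\gamma)$ for every $l \ges 1$ (see \cite[\S6.1]{adamtt}); that is, all these values lie in the convex subgroup $\Gamma_{\gamma} := \{\delta \in \Gamma : [\delta] < [\gamma]\}$. Choosing $\mu \in \Gamma_{\gamma}$ with $\mu \ges 0$ and $\mu \ges |v(u^{(l)}/u)|$ for $1 \les l \les r$, every coefficient of every $L_k$ with $k \les r$ has valuation $\ges -\mu$, so $v(L_k) \ges -\mu$, and by the ultrametric inequality
\[
v(R^{*})\ \ges\ \min_{\bm i}\Big(v(R_{\bm i}) + \sum_{k} i_k\, v(L_k)\Big)\ \ges\ v(R) - m\mu\ =\ -m\mu\ \in\ \Gamma_{\gamma}.
\]

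For the reverse inequality, apply the same computation to $R_{\x u}$ (homogeneous of degree $m$, with $v(R_{\x u}) = m\gamma + v(R^{*})$) and to $u^{-1}$ in place of $u$: since $(R_{\x u})_{\x u^{-1}} = R$, the $\d$-polynomial it produces equals $u^{m}R$, of valuation $m\gamma$, so the same estimate gives $m\gamma \ges v(R_{\x u}) - m\mu' = m\gamma + v(R^{*}) - m\mu'$ for some $\mu' \in \Gamma_{\gamma}$ with $\mu' \ges 0$ (obtained from $u^{-1}$ as $\mu$ was from $u$). Thus $-m\mu \les v(R^{*}) \les m\mu'$ with both bounds in $\Gamma_{\gamma}$, whence $v(R^{*}) \in \Gamma_{\gamma}$, i.e.\ $v(R^{*}) = o(\gamma)$; this proves the displayed identity and hence the first assertion. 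The remaining claims follow formally: taking $\beta = 0$ (and $v_P(0) = v(P)$) gives $v_P(\gamma) = v(P) + m\gamma + o(\gamma)$, and likewise for $Q$ with $n$ in place of $m$, and subtracting—using that $\Gamma_{\gamma}$ is a subgroup—yields $v_P(\gamma) - v_Q(\gamma) = v(P) - v(Q) + (m-n)\gamma + o(\gamma)$; moreover, if $m > n$, then for $\gamma_1 > \gamma_2$ the first assertion applied to $P$ and to $Q$ gives $(v_P - v_Q)(\gamma_1) - (v_P - v_Q)(\gamma_2) = (m-n)(\gamma_1 - \gamma_2) + o(\gamma_1 - \gamma_2) > 0$, since $m - n \ges 1$ forces $|o(\gamma_1 - \gamma_2)| < |\gamma_1 - \gamma_2| \les (m-n)(\gamma_1 - \gamma_2)$, so $v_P - v_Q$ is strictly increasing.

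The main obstacle is the smallness estimate $v(u^{(l)}/u) = o(\gamma)$ together with the way it must be exploited: once it is available the rest is routine manipulation of archimedean classes, but this estimate is exactly what can fail for a non-small derivation, and it has to be invoked on \emph{both} $u$ and $u^{-1}$ in order to trap $v(R^{*})$ between two elements of $\Gamma_{\gamma}$ rather than merely to bound it on one side.
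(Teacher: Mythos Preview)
The paper does not prove this lemma; it is quoted from \cite[6.1.3, 6.1.5]{adamtt} without argument, so there is no proof here to compare against. Your approach---reduce to $\beta=0$ by multiplicative conjugation, factor $R_{\x u}=u^m R^*$ via the substitution $Y^{(k)}\mapsto L_k(Y)$, bound $v(R^*)$ from below using the coefficients of the $L_k$, and then get the matching upper bound by the symmetry $R=(R_{\x u})_{\x u^{-1}}$---is exactly the standard route and is what \cite{adamtt} does.

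One imprecision is worth flagging. The ``key input'' you state as $v(u^{(l)}/u)=o(\gamma)$ is too strong as a two-sided statement: nothing prevents $v(u^{(l)}/u)$ from being large and positive with archimedean class $\ges[\gamma]$ (already $u^{(l)}=0$ gives $\infty$, and for $\Gamma=\Z$, $\Gamma_\gamma=\{0\}$, one easily builds examples with $v(u'/u)>0$). What small derivation actually gives, and what \cite[\S6.1]{adamtt} proves as a preliminary to 6.1.3, is the one-sided lower bound: there is $\mu\in\Gamma_\gamma$ with $\mu\ges 0$ and $v(u^{(l)}/u)\ges -\mu$ for $1\les l\les r$. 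That is all your argument requires, since the coefficient of $Y^{(k)}$ in $L_k$ is $1$, so $v(L_k)=\min\big(0,\min_{1\les l\les k} v(u^{(l)}/u)\big)\ges -\mu$ follows from the one-sided bound alone; replace your choice ``$\mu\ges|v(u^{(l)}/u)|$'' by ``$\mu\ges -v(u^{(l)}/u)$ whenever the latter is positive'' and the lower bound $v(R^*)\ges -m\mu$ goes through unchanged. Your symmetry trick with $u^{-1}$ for the upper bound is correct and is precisely how the matching inequality is obtained.
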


The lemmas from the rest of this subsection will play an important role in \S\ref{sec:redcomplex}.
Lemmas~\ref{precgprecf}--\ref{switchasymp} are variants of lemmas from the end of \cite[\S9.4]{adamtt}.
The first two are facts about valued fields, not involving the derivation.

\begin{lem}\label{precgprecf}
Let $f, g \in K^{\x}$ with $f, g \not\asymp 1$.
Then $f \prec_g g \implies f \prec_f g$.
\end{lem}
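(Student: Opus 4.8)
The plan is to unwind the definitions of the coarsened dominance relations $\prec_g$ and $\prec_f$ in terms of archimedean classes and then show that the hypothesis $f \prec_g g$ forces $[vf]$ to be so large compared to $[vg]$ that $f \prec_f g$ follows automatically. Recall that $v_g$ is the coarsening of $v$ by the convex subgroup $\Gamma_g = \{\gamma : [\gamma] < [vg]\}$, so $f \prec_g g$ means $v_g(f) > v_g(g)$ in $\Gamma/\Gamma_g$, i.e.\ $vf - vg \notin \Gamma_g$ and $vf - vg > 0$ modulo $\Gamma_g$; concretely $vf > vg$ and $[vf - vg] \ges [vg]$. Since $vf - vg \notin \Gamma_g$ and $[\delta] < [vg] \implies [\delta + vg] = [vg]$ (the standard fact about the convex valuation $\gamma \mapsto [\gamma]$ recalled just before Lemma~\ref{adh4.5.1}), one checks that $[vf] \ges [vg]$; and in fact, because $vf - vg$ has archimedean class at least $[vg]$ while being a ``larger'' element, we get $[vf] = [vf - vg] \ges [vg]$ with the inequality being what carries weight. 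The point is that $f \prec_g g$ already places $f$ in a coarser regime than $g$.

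First I would record precisely what $f \prec_g g$ gives: $vf > vg$ (so in particular $vf \neq vg$) together with $[vf - vg] \ges [vg]$, hence $[vf] = [vf - vg] \ges [vg]$ (using $[vg] \les [vf - vg]$ and convexity of the archimedean valuation to absorb $vg$). Next I would turn to the target $f \prec_f g$, which by definition means $v_f(f) > v_f(g)$ in $\Gamma/\Gamma_f$, i.e.\ $vf - vg \notin \Gamma_f$ and $vf < vg$ modulo... wait — more carefully, $f \prec_f g$ means $v_f(f) > v_f(g)$, i.e.\ $vf > vg$ modulo $\Gamma_f$ and $vf - vg \notin \Gamma_f = \{\gamma : [\gamma] < [vf]\}$. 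So I must show $[vf - vg] \ges [vf]$ and $vf - vg > 0$. The second is immediate from $vf > vg$. For the first: we have $[vf - vg] = [vf]$ (since $[vg] \les [vf]$ from the previous step, convexity gives $[vf - vg] = [vf]$ unless there is cancellation, but $vf - vg$ and $vf$... here I'd be careful — if $[vg] < [vf]$ then $[vf - vg] = [vf]$ outright; if $[vg] = [vf]$ then from $[vf - vg] \ges [vg] = [vf]$ we still get $[vf - vg] \ges [vf]$, which combined with the general inequality $[vf - vg] \les \max\{[vf],[vg]\} = [vf]$ yields $[vf - vg] = [vf]$). Either way $[vf - vg] \ges [vf]$, so $vf - vg \notin \Gamma_f$, and with $vf - vg > 0$ this is exactly $f \prec_f g$.

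The only real subtlety — and the step I'd expect to need the most care — is handling the case $[vf] = [vg]$ versus $[vf] > [vg]$ uniformly, since the hypothesis $f \prec_g g$ (which lives modulo $\Gamma_g$) and the conclusion $f \prec_f g$ (which lives modulo $\Gamma_f$) concern potentially different convex subgroups, and one must make sure no cancellation in $vf - vg$ drops its archimedean class below $[vf]$. The key facts that make this routine rather than delicate are the convexity of $\gamma \mapsto [\gamma]$ — specifically $[\delta] < [\gamma] \implies [\delta + \gamma] = [\gamma]$ and $[\delta + \gamma] \les \max\{[\delta],[\gamma]\}$ — applied to $\delta = -vg$, $\gamma = vf$ (or to $\delta = vf - vg$, $\gamma = vg$). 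I would lay out the two cases $[vg] < [vf]$ and $[vg] = [vf]$ explicitly, in each deriving $[vf - vg] \ges [vf]$ from the already-extracted information $[vf - vg] \ges [vg]$, and then conclude. The argument uses nothing about the derivation, consistent with the remark preceding the lemma that this and the next are facts about valued fields.
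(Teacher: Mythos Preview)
Your overall strategy matches the paper's: unwind $f \prec_g g$ to get $vf - vg > 0$ and $[vf - vg] \ges [vg]$, then show $[vf - vg] \ges [vf]$. However, there is a genuine gap. You claim $[vf] = [vf - vg] \ges [vg]$, justifying this by ``absorbing $vg$'' via the rule $[\delta] < [\gamma] \implies [\delta + \gamma] = [\gamma]$. But that rule requires a \emph{strict} inequality, and you only have $[vg] \les [vf - vg]$. When $[vg] = [vf - vg]$ the absorption can fail, and in fact $[vf] \ges [vg]$ need not hold. Concretely, take $\Gamma = \Z \times \Z$ with the lexicographic order, $vg = (-1,0)$ and $vf = (0,1)$; then $vf - vg = (1,1) > 0$ and $[vf - vg] = [vg]$ (both in the large archimedean class), so $f \prec_g g$, yet $[vf]$ lies in the small class, so $[vf] < [vg]$. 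Your two-case split therefore misses the case $[vg] > [vf]$.

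The fix is easy: in the missing case $[vg] > [vf]$ one has $[vf - vg] = [vg] > [vf]$ directly by absorption (now the strict inequality goes the right way), so $[vf - vg] \ges [vf]$ still holds. The paper sidesteps the case analysis altogether with a short contradiction argument: if $[vf - vg] < [vf]$, then absorbing $vf - vg$ into $vf$ gives $[vg] = [vf - (vf - vg)] = [vf]$, so $[vf - vg] < [vg]$, contradicting $[vf - vg] \ges [vg]$. This is a bit cleaner than tracking three cases, but once your gap is patched the arguments are equivalent.
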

\begin{proof}
From $f/g \prec_g 1$, we obtain $[vf-vg] \ges [vg]$.
If $[vf-vg]<[vf]$, then
\[
[vf]\ =\ [vf-(vf-vg)]\ =\ [vg],
\]
contradicting $[vf-vg] \ges [vg]$.
Thus $[vf-vg] \ges [vf]$.
But since $vf-vg>0$, we have $f/g \prec_f 1$, that is, $f \prec_f g$.
\end{proof}

\begin{lem}\label{twocoarsen}
Let $\phi_1, \phi_2 \in K^{\x}$ with $\phi_1, \phi_2 \not\asymp 1$ and $[v\phi_1] \les [v\phi_2]$.
Then for all $f, g \in K$, we have
\[
f \prece_{\phi_1} g \implies f \prece_{\phi_2} g \qquad \text{and} \qquad f \prec_{\phi_2} g \implies f \prec_{\phi_1} g.
\]
In particular, $f \asymp_{\phi_1} g \implies f \asymp_{\phi_2} g$.
\end{lem}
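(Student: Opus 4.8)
The plan is to deduce everything from a single observation about convex subgroups of $\Gamma$. First I would note that the hypothesis $[v\phi_1] \les [v\phi_2]$ yields, for every $\gamma \in \Gamma$, the implication $[\gamma] < [v\phi_1] \implies [\gamma] < [v\phi_2]$, so that $\Gamma_{\phi_1} \subseteq \Gamma_{\phi_2}$ as subgroups of $\Gamma$. Since both are convex, the quotient map $\Gamma \to \Gamma/\Gamma_{\phi_2}$ factors through $\Gamma/\Gamma_{\phi_1}$, giving a surjective group homomorphism $\pi \colon \Gamma/\Gamma_{\phi_1} \to \Gamma/\Gamma_{\phi_2}$ with $\pi \circ v_{\phi_1} = v_{\phi_2}$. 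The key point is that $\pi$ is order-preserving: a coset $x + \Gamma_{\phi_1}$ is $\ges 0$ exactly when $x \ges \delta$ for some $\delta \in \Gamma_{\phi_1}$, and such a $\delta$ also lies in $\Gamma_{\phi_2}$, so $x + \Gamma_{\phi_2} \ges 0$ as well.

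Granting this, the first implication is immediate: assuming $f, g \in K^{\x}$ (the cases $f = 0$ or $g = 0$ being trivial), $f \prece_{\phi_1} g$ means $v_{\phi_1}(f) \ges v_{\phi_1}(g)$, and applying $\pi$ gives $v_{\phi_2}(f) \ges v_{\phi_2}(g)$, i.e., $f \prece_{\phi_2} g$. For the second implication I would pass to contrapositives: since $\Gamma/\Gamma_{\phi_2}$ is totally ordered, $\neg(f \prec_{\phi_2} g)$ is exactly $g \prece_{\phi_2} f$, and likewise for $\phi_1$; so the second implication is just the first one with $f$ and $g$ interchanged. Finally, $f \asymp_{\phi_1} g$ unpacks as $f \prece_{\phi_1} g$ and $g \prece_{\phi_1} f$, and feeding each into the first implication gives $f \asymp_{\phi_2} g$.

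I do not anticipate a genuine obstacle; the only care needed is bookkeeping with the orders on the quotient groups $\Gamma/\Gamma_{\phi_i}$, namely the description of their positive cones in terms of $\Gamma_{\phi_i}$, which makes both the order-preservation of $\pi$ and the equivalence of $\prec_{\phi}$ with the negation of the reversed $\prece_{\phi}$ transparent. One could instead argue entirely within $\Gamma$ using archimedean-class descriptions of $\prece_{\phi}$ and $\prec_{\phi}$, but routing through the inclusion $\Gamma_{\phi_1} \subseteq \Gamma_{\phi_2}$ is cleaner and reuses the fact, already noted in \S\ref{prelim:arch}, that $\gamma \mapsto [\gamma]$ is a convex valuation on $\Gamma$.
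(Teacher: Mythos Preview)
Your proposal is correct and follows essentially the same approach as the paper: both arguments reduce immediately to the inclusion $\Gamma_{\phi_1} \subseteq \Gamma_{\phi_2}$. The paper simply records the characterizations $f \prece_\phi g \iff vf - vg \in \Gamma_\phi$ or $vf > vg$ and $f \prec_\phi g \iff vf - vg > \Gamma_\phi$, from which both implications are read off directly; your route via the order-preserving quotient map $\pi$ and the contrapositive for the second implication is a harmless repackaging of the same content.
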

\begin{proof}
Note that for $\phi \in K^{\x}$ with $\phi \not\asymp 1$, $f \prece_{\phi} g$ if and only if $vf-vg \in \Gamma_{\phi}$ or $vf>vg$, and $f \prec_{\phi} g$ if and only if $vf-vg > \Gamma_{\phi}$.
Both implications then follow from $\Gamma_{\phi_1} \subseteq \Gamma_{\phi_2}$.
\end{proof}

\begin{lem}\label{multconjhomog}
Suppose that $P$ is homogeneous of degree $d$ and let $g \in K^{\x}$ with $g \not\asymp 1$.
Then
\[
P_{\x g}\ \asymp_g\ g^d P.
\]
\end{lem}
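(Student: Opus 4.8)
The plan is to reduce the claim to the behaviour of the function $v_P \colon \Gamma \to \Gamma$, $vf \mapsto v(P_{\x f})$, from \S\ref{prelim:arch}, and to invoke the estimate in Lemma~\ref{adh6.1.3}. First I would record the two evaluations that matter: since $P_{\x 1} = P$ we have $v_P(0) = v(P)$, and by definition $v_P(vg) = v(P_{\x g})$. Because $g \not\asymp 1$, we have $vg \neq 0$, so the hypothesis $\alpha \neq \beta$ of Lemma~\ref{adh6.1.3} is met with $\alpha = vg$ and $\beta = 0$; homogeneity of $P$ supplies the degree $d$ needed in that lemma.

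Applying Lemma~\ref{adh6.1.3} with this $\alpha$ and $\beta$ gives
\[
v(P_{\x g}) - v(P)\ =\ v_P(vg) - v_P(0)\ =\ d\cdot vg + o(vg).
\]
Subtracting $d\cdot vg$ from both sides and using $v(g^d P) = d\cdot vg + v(P)$, this rearranges to
\[
v(P_{\x g}) - v(g^d P)\ =\ o(vg),
\]
that is, $[\,v(P_{\x g}) - v(g^d P)\,] < [vg]$. By the description of $\Gamma_g$ and of $\prece_g$ recalled in \S\ref{prelim:arch}, this says precisely that $v(P_{\x g}) - v(g^d P) \in \Gamma_g$, hence $v_g(P_{\x g}) = v_g(g^d P)$, i.e. $P_{\x g} \asymp_g g^d P$.

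There is essentially no obstacle here beyond bookkeeping: the only points to verify are that the hypotheses of Lemma~\ref{adh6.1.3} hold in this application (homogeneity and $vg \neq 0$), and that the conclusion $v(P_{\x g}) - v(g^d P) = o(vg)$ is exactly the membership in $\Gamma_g$ that defines $\asymp_g$. (The degenerate case $d = 0$, where $P \in K^{\x}$ and both sides equal $P$, is trivial and also covered by the same computation.)
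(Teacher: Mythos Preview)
Your proof is correct and follows essentially the same approach as the paper: apply Lemma~\ref{adh6.1.3} with $\alpha = vg$ and $\beta = 0$ to obtain $v(P_{\x g}) = v(P) + d\,vg + o(vg)$, and then read this off as $v_g(P_{\x g}) = v_g(g^d P)$. The paper's version is simply the one-line compression of your argument.
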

\begin{proof}
By Lemma~\ref{adh6.1.3}, $v(P_{\x g})=vP+d\, vg+o(vg)$, so $v_g(P_{\x g})=v_g(g^d P)$.
\end{proof}

\begin{lem}\label{asympg}
Suppose that $g \in K^{\x}$ with $g \prec 1$ and $d=\dmul P = \mul P$.
Then $P_{\x g} \asymp_g g^d P$.
\end{lem}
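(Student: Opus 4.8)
The plan is to split $P$ into homogeneous parts and reduce everything to the homogeneous case already handled by Lemma~\ref{multconjhomog}. Write $P = \sum_{i \ge 0} P_i$ with $P_i$ the homogeneous part of degree $i$. The first step is to decode the hypothesis: $\mul P = d$ says $P_i = 0$ for $i < d$ and $P_d \neq 0$; and since $\dmul P = \mul D_P$, with the degree-$d'$ homogeneous part of $D_P$ equal to $\overline{\f d_P^{-1} P_{d'}}$ (which vanishes precisely when $P_{d'} \prec P$), the condition $\dmul P = d$ together with $\mul P = d$ is equivalent to $P_d \asymp P$. So I may work under the assumption $P = \sum_{i \ge d} P_i$ with $P_d \asymp P$.

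Next, since $g \prec 1$ forces $g \not\asymp 1$, Lemma~\ref{multconjhomog} applies to each nonzero homogeneous part and gives $P_{i, \x g} \asymp_g g^i P_i$ for each $i \ge d$. Multiplicative conjugation is additive in $P$, so $P_{\x g} = \sum_{i \ge d} P_{i, \x g}$ and likewise $g^d P = \sum_{i \ge d} g^d P_i$, and I then compare the $v_g$-values of the summands. For $i > d$ we have $v_g(P_{i, \x g}) = i\, v_g(g) + v_g(P_i)$, while $v_g(P_{d, \x g}) = d\, v_g(g) + v_g(P)$ because $P_d \asymp P$. As $g \prec 1$ we have $v_g(g) > 0$ (the valuation $vg$ is positive and lies outside $\Gamma_g$), and $v_g(P_i) \ge v_g(P)$ since the coefficients of $P_i$ are among those of $P$; hence $v_g(P_{i, \x g}) > v_g(P_{d, \x g})$, i.e.\ $P_{i, \x g} \prec_g P_{d, \x g}$, for all $i > d$. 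Therefore $P_{\x g} \asymp_g P_{d, \x g}$, and by the same estimate $g^d P \asymp_g g^d P_d$. Concatenating with $P_{d, \x g} \asymp_g g^d P_d$ from Lemma~\ref{multconjhomog} yields $P_{\x g} \asymp_g g^d P$.

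The only delicate point is the translation of $\dmul P = \mul P = d$ into the single relation $P_d \asymp P$, which is exactly what powers the valuation estimate: combined with $g \prec 1$ (so that the extra factors $g^{i-d}$ for $i > d$ are genuinely negligible in the coarsening $v_g$), it forces the degree-$d$ part to dominate both $P_{\x g}$ and $g^d P$. Once that observation is in place, the remainder is routine bookkeeping with $v_g$.
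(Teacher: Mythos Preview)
Your proof is correct and follows essentially the same route as the paper: decompose $P$ into homogeneous parts, use Lemma~\ref{multconjhomog} on each, and show that the degree-$d$ part dominates in the $\asymp_g$ sense because $P_d \asymp P$ and $g \prec_g 1$. The paper's version is more terse (it writes the chain $P_{\x g} \asymp_g P_{d,\x g} \asymp_g g^d P_d$ directly from $g^i P_i \prec_g g^d P_d$ for $i>d$), but your explicit unpacking of the hypothesis and the $v_g$-computation amount to the same argument.
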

\begin{proof}
Since $d=\dmul P$, we have $P_i \prece P_d$ for $i \ges d$.
Since $g \prec 1$, we also have $g \prec_g 1$.
Hence $g^i P_i \prec_g g^d P_d$ for $i>d$, so
\[
P_{\x g}\ \asymp_g\ P_{d, \x g}\ \asymp_g\ g^d P_d
\]
by Lemma~\ref{multconjhomog}.
In view of $P_d \asymp P$, this yields $P_{\x g} \asymp_g g^d P$.
\end{proof}

\begin{lem}\label{preceg}
Suppose that $g \in K^{\x}$ with $g \succ 1$ and $d=\ddeg P=\ddeg P_{\x g}$.
Then $g P_{>d} \prece_g P$.
\end{lem}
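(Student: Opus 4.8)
The plan is to reduce the inequality $g P_{>d} \prece_g P$ to a comparison of homogeneous parts. Since $g \succ 1$, we have $g \not\asymp 1$ and $vg<0$, so the coarsening $v_g$ is available; by the ultrametric inequality for $v_g$ it suffices to show $g P_i \prece_g P$ for each $i>d$ with $P_i \neq 0$, where $P_i$ denotes the homogeneous part of $P$ of degree $i$. Since the coefficients of $P_d$ are among those of $P$ we have $P_d \prece P$, so it is in fact enough to prove $g P_i \prece_g P_d$ for each such $i$.

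The first step is to unwind the hypothesis $\ddeg P_{\x g}=d$. Using that $(P_j)_{\x g}$ is the homogeneous part of $P_{\x g}$ of degree $j$ for every $j$, the fact that $D_{P_{\x g}}$ has degree $d$ gives $(P_d)_{\x g} \asymp P_{\x g}$ and $(P_i)_{\x g} \prece P_{\x g}$ for $i>d$, hence $(P_i)_{\x g} \prece_g (P_d)_{\x g}$. The second step uses $i>d$: then $(d+1-i)vg \ges 0$, i.e.\ $g^{d+1-i} \prece 1$, so
\[
g^{d+1-i}(P_i)_{\x g}\ \prece\ (P_i)_{\x g}\ \prece_g\ (P_d)_{\x g}.
\]
For the third step, $P_i$ and $P_d$ are homogeneous of degrees $i$ and $d$, so Lemma~\ref{multconjhomog} gives $(P_i)_{\x g} \asymp_g g^i P_i$ and $(P_d)_{\x g} \asymp_g g^d P_d$; substituting these into the display and multiplying through by $g^{-d}$ yields $g P_i \prece_g P_d$, as required.

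I do not expect a real obstacle here; the only thing to be careful about is the interplay between the fine dominance relation $\prece$ and the coarsened one $\prece_g$ — in particular that $\prece$ implies $\prece_g$ but not conversely, and that Lemma~\ref{multconjhomog} pins down $(P_j)_{\x g}$ only up to $\asymp_g$, which is exactly the accuracy at which the chain of inequalities above operates. (Only the hypothesis $\ddeg P_{\x g}=d$ is used in this argument.)
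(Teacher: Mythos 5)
Your proof is correct. It takes essentially the same route as the paper's: both hinge on Lemma~\ref{multconjhomog}, on reading off the consequences of $\ddeg P_{\x g}=d$ for the homogeneous parts of $P_{\x g}$, and on the elementary estimate $g^{d+1-i}\prece 1$ for $i>d$. The only structural difference is that the paper first picks a single $i>d$ with $P_i\asymp P_{>d}$ and argues with that representative, whereas you handle each $i>d$ separately and then aggregate via the ultrametric inequality for $v_g$; the two are equivalent in effort. Your closing observation is accurate and worth noting: the paper's proof invokes $\ddeg P=d$ precisely to get $P_d\asymp P$, but your argument only needs $P_d\prece P$, which holds unconditionally, so the hypothesis $\ddeg P=d$ is indeed superfluous for this particular conclusion (it is, of course, kept in the statement to match the situation in which the lemma is later applied, namely Corollary~\ref{reducedeg}).
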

\begin{proof}
If $P_{>d}=0$, then the result holds trivially, so assume that $P_{>d} \neq 0$.
Take $i>d$ such that $P_i \asymp P_{>d}$.
Then Lemma~\ref{multconjhomog} and the fact that $g \succ 1$ give
\[
(P_{\x g})_i\ =\ (P_i)_{\x g}\ \asymp_g\ g^i P_i\ \succe\ g^{d+1} P_i\ \asymp\ g^{d+1} P_{>d},
\]
so $(P_{\x g})_{>d} \succe_g g^{d+1}P_{>d}$.
Since $\ddeg P_{\x g}=d$, we also have $(P_{\x g})_d \succ (P_{\x g})_{>d}$.
As $\ddeg P=d$, $P_d \asymp P$, and so
\[
g^d P\ \asymp\ g^d P_d\ \asymp_g\ (P_{\x g})_d\ \succ\ (P_{\x g})_{>d}\ \succe_g\ g^{d+1}P_{>d},
\]
using Lemma~\ref{multconjhomog} again.
Hence $P \succe_g g P_{>d}$.
\end{proof}

\begin{lem}\label{switchasymp}
Let $f, g \in K^\x$ with $f, g \not\asymp 1$ and $[vf]<[vg]$.
Then $P_{\x f}\asymp_g P$.
\end{lem}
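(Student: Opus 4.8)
The plan is to reduce to the case where $P$ is homogeneous. Decompose $P = \sum_d P_d$ into its homogeneous parts, so that $P_{\x f} = \sum_d P_{d, \x f}$ with $P_{d, \x f}$ the homogeneous part of $P_{\x f}$ of degree $d$. Since homogeneous parts of different degrees involve disjoint sets of monomials $Y^{\bm i}$, and $v_g$ on $K\{Y\}$ is computed coefficientwise, we have $v_g(P_{\x f}) = \min_{d \,:\, P_d \neq 0} v_g(P_{d, \x f})$ and $v_g(P) = \min_{d \,:\, P_d \neq 0} v_g(P_d)$. Thus it suffices to prove $P_{d, \x f} \asymp_g P_d$ for each $d$ with $P_d \neq 0$.

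So fix $d$ with $P_d \neq 0$. Since $f \not\asymp 1$, Lemma~\ref{multconjhomog} applied with $g$ there replaced by $f$ gives $P_{d, \x f} \asymp_f f^d P_d$. As $[vf] \les [vg]$ and $f, g \not\asymp 1$, Lemma~\ref{twocoarsen} (with $\phi_1 = f$ and $\phi_2 = g$) upgrades this to $P_{d, \x f} \asymp_g f^d P_d$. Finally, $[d\, vf] \les [vf] < [vg]$ shows $d\, vf \in \Gamma_g$, i.e., $f^d \asymp_g 1$, hence $f^d P_d \asymp_g P_d$. Combining these, $P_{d, \x f} \asymp_g P_d$, as needed.

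There is no genuine obstacle here: once the problem is split into homogeneous pieces, each piece follows immediately from the coarsening lemmas already established, together with the basic fact that $[\delta] < [\gamma]$ implies $[\delta + \gamma] = [\gamma]$ for the archimedean-class valuation on $\Gamma$. (Alternatively, one could bypass Lemmas~\ref{multconjhomog} and \ref{twocoarsen} and read off $v(P_{d,\x f}) - v(P_d) = d\, vf + o(vf) \in \Gamma_g$ directly from Lemma~\ref{adh6.1.3}.) The only point deserving a word of care is the passage between $P$ and its homogeneous parts under $v_g$, which works because multiplicative conjugation preserves the total degree of each monomial, so $(P_{\x f})_d = P_{d, \x f}$ and the homogeneous parts of $P_{\x f}$ have pairwise disjoint monomial support.
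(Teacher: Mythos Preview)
Your proof is correct. Both your argument and the paper's rely on the same ingredients---Lemma~\ref{multconjhomog}, Lemma~\ref{twocoarsen}, and the observation that $f^d \asymp_g 1$ since $[vf]<[vg]$---but organize them differently. You decompose $P$ into all its homogeneous parts, prove $P_{d,\x f} \asymp_g P_d$ for each $d$, and then take the minimum over $d$. The paper instead picks a single $d$ with $P_{\x f} \asymp (P_{\x f})_d$, proves only the one-sided relation $P_{\x f} \prece_g P$, and then obtains the reverse inequality by rerunning the argument with $P_{\x f}$ and $f^{-1}$ in place of $P$ and $f$ (using $[v(f^{-1})]=[vf]$). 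Your route is a bit more direct and proves the stronger degreewise statement $P_{d,\x f} \asymp_g P_d$; the paper's symmetry trick sidesteps the (easy) check that $v_g$ on $K\{Y\}$ is the minimum of $v_g$ over homogeneous parts.
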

\begin{proof}
Take $d$ with $P_{\x f} \asymp (P_{\x f})_{d}$, so $P_{\x f} \asymp_f f^d P_d$ by Lemma~\ref{multconjhomog}.
Then Lemma~\ref{twocoarsen} gives $P_{\x f} \asymp_g f^d P_d$.
As $[vf]<[vg]$, we get $f \asymp_g 1$, and thus $P_{\x f} \asymp_g P_d \prece P$, so $P_{\x f} \prece_g P$.
Now, apply the same argument to $P_{\x f}$ and $f^{-1}$ in place of $P$ and $f$, using that $[v(f^{-1})]=[-vf]=[vf]$, to get $P=(P_{\x f})_{\x f^{-1}} \prece_g P_{\x f}$, and hence $P \asymp_g P_{\x f}$.
\end{proof}

\begin{ass}
In the next two results, $K$ has a monomial group $\f M$.
\end{ass}
Let $\f m$, $\f n$ range over $\f M$.
These two results are based on \cite[Lemma~13.2.3 and Corollary~13.2.4]{adamtt}.

\begin{lem}
Let $\f n \neq 1$ and $[v\f m]<[v\f n]$.
Suppose that $P=Q+R$ with $R \prec_{\f n} P$.
Then
\[
D_{P_{\xf m}}\ =\ D_{Q_{\xf m}}.
\]
\end{lem}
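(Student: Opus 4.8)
The plan is to reduce the statement to Lemma~\ref{adh6.6.2}(i). Since multiplicative conjugation is additive in the $\d$-polynomial, $P_{\xf m} = Q_{\xf m} + R_{\xf m}$, so it suffices to prove the single dominance inequality $R_{\xf m} \prec P_{\xf m}$: granting this, $Q_{\xf m} = P_{\xf m} - R_{\xf m} \succ R_{\xf m}$, and Lemma~\ref{adh6.6.2}(i) applied to $Q_{\xf m}$ and $R_{\xf m}$ yields $D_{P_{\xf m}} = D_{Q_{\xf m} + R_{\xf m}} = D_{Q_{\xf m}}$, which is the claim. We may assume $R \neq 0$, since otherwise $P = Q$; note also that $R \prec_{\f n} P$ forces $P \neq 0$.

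To prove the inequality I would work in the coarsening $v_{\f n}$. Assuming first that $\f m \neq 1$, so that $\f m \not\asymp 1$, and using that $\f n \not\asymp 1$ (as $\f n \neq 1$) together with $[v\f m] < [v\f n]$, Lemma~\ref{switchasymp} gives $P_{\xf m} \asymp_{\f n} P$ and, applied to $R$ in place of $P$, also $R_{\xf m} \asymp_{\f n} R$. Combining these with the hypothesis $R \prec_{\f n} P$ gives $R_{\xf m} \prec_{\f n} P_{\xf m}$. Since $0 \in \Gamma_{\f n}$, a $\prec_{\f n}$-inequality implies the corresponding $\prec$-inequality (in the proof of Lemma~\ref{twocoarsen}, $f \prec_\phi g$ means $vf - vg > \Gamma_\phi$), so $R_{\xf m} \prec P_{\xf m}$, as needed.

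The only point requiring separate attention is the degenerate case $\f m = 1$, which the hypotheses permit (indeed $[v\f m] = [0] < [v\f n]$ whenever $\f n \neq 1$) but which Lemma~\ref{switchasymp} does not cover, since it requires the conjugating element to be $\not\asymp 1$. In that case $P_{\xf m} = P$ and $Q_{\xf m} = Q$, and $R \prec_{\f n} P$ directly yields $R \prec P$ (again because $0 \in \Gamma_{\f n}$), so the reduction of the first paragraph applies verbatim. I do not anticipate any real difficulty here: the only things to watch are this edge case and the routine bookkeeping in passing between $\prec_{\f n}$ and $\prec$.
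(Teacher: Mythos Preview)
Your proof is correct and follows essentially the same approach as the paper: handle $\f m = 1$ separately, and for $\f m \neq 1$ apply Lemma~\ref{switchasymp} together with the hypothesis $R \prec_{\f n} P$ to obtain a strict dominance between the multiplicative conjugates, then conclude via Lemma~\ref{adh6.6.2}(i). The only cosmetic difference is that the paper compares $R_{\xf m}$ with $Q_{\xf m}$ directly (first noting $R \prec_{\f n} Q$), whereas you compare $R_{\xf m}$ with $P_{\xf m}$ and then deduce $R_{\xf m} \prec Q_{\xf m}$; these are interchangeable.
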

\begin{proof}
From $R \prec_{\f n} Q$, we get $R \prec Q$, so if $\f m=1$, then $D_P=D_Q$.
Now assume that $\f m \neq 1$.
Then Lemma~\ref{switchasymp} gives
\[
R_{\xf m}\ \asymp_{\f n}\ R\ \prec_{\f n}\ Q\ \asymp_{\f n}\ Q_{\xf m},
\]
so $R_{\xf m} \prec Q_{\xf m}$, and hence $D_{P_{\xf m}}=D_{Q_{\xf m}}$.
\end{proof}

\begin{cor}\label{reducedeg}
Suppose that $\f n \succ 1$ and $\ddeg P=\ddeg P_{\xf n}=d$.
Let $Q \coloneqq P_{\les d}$.
Then for all $\f m$ with $[v\f m]<[v\f n]$ and all $g \prece 1$ in $K$, we have 
\[
D_{P_{+g, \xf m}}\ =\ D_{Q_{+g, \xf m}}.
\]
\end{cor}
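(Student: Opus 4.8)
The plan is to reduce the statement to the preceding lemma. Write $P = Q + P_{>d}$ with $Q = P_{\les d}$; since additive conjugation by $g$ is additive, $P_{+g} = Q_{+g} + (P_{>d})_{+g}$. The idea is to apply the preceding lemma with $P_{+g}$, $Q_{+g}$, and $R \coloneqq (P_{>d})_{+g}$ in place of $P$, $Q$, and $R$ there --- the hypotheses $\f n \neq 1$ and $[v\f m] < [v\f n]$ are part of our assumptions --- so the only thing left to check is that $(P_{>d})_{+g} \prec_{\f n} P_{+g}$. If $P_{>d} = 0$, then $Q = P$ and the assertion is trivial, so I may assume $P_{>d} \neq 0$.

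For the key estimate I would first work before conjugating by $g$. The hypothesis $\ddeg P = \ddeg P_{\xf n} = d$ together with $\f n \succ 1$ lets me apply Lemma~\ref{preceg} with $\f n$ in the role of $g$ there, yielding $\f n P_{>d} \prece_{\f n} P$. Since $\f n \succ 1$ we have $v\f n < 0$, and $v\f n \notin \Gamma_{\f n}$, so $\f n \succ_{\f n} 1$; hence $P_{>d} \prec_{\f n} \f n P_{>d} \prece_{\f n} P$, and thus $P_{>d} \prec_{\f n} P$.

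It remains to transfer this through the additive conjugation by $g$. As $g \prece 1$, Lemma~\ref{adh4.5.1}(i) gives $P_{+g} \asymp P$ and $(P_{>d})_{+g} \asymp P_{>d}$, and $a \asymp b$ implies $a \asymp_{\f n} b$ since equal valuations have equal images in $\Gamma/\Gamma_{\f n}$. Therefore $(P_{>d})_{+g} \asymp_{\f n} P_{>d} \prec_{\f n} P \asymp_{\f n} P_{+g}$, so $(P_{>d})_{+g} \prec_{\f n} P_{+g}$, as required, and the preceding lemma delivers $D_{P_{+g, \xf m}} = D_{Q_{+g, \xf m}}$. I do not expect a genuine obstacle; the only points demanding care are the bookkeeping with the coarsened dominance $\prece_{\f n}$ --- in particular the observation $\f n \succ_{\f n} 1$, which is what allows the factor $\f n$ to be cancelled from the conclusion of Lemma~\ref{preceg} --- and the fact that $g \prece 1$ (rather than $g \prec 1$) is already enough, via Lemma~\ref{adh4.5.1}, to push the estimate through the conjugation.
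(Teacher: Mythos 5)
Your proof is correct and matches the paper's own argument: both apply Lemma~\ref{preceg} to get $P_{>d} \prec_{\f n} P$ (the paper phrases the intermediate step as $R \prece_{\f n} \f n^{-1}P \prec_{\f n} P$, which is exactly your observation that $\f n \succ_{\f n} 1$), then transfer through additive conjugation via Lemma~\ref{adh4.5.1}(i), and finally invoke the preceding lemma.
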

\begin{proof}
Let $R \coloneqq P-Q=P_{>d}$.
Then Lemma~\ref{preceg} gives
\[
R\ \prece_{\f n}\ \f n^{-1} P\ \prec_{\f n}\ P.
\]
Let $g \prece 1$.
Then $R_{+g} \asymp R$ and $P_{+g} \asymp P$ by Lemma~\ref{adh4.5.1}(i).
Thus we have $R_{+g} \prec_{\f n} P_{+g}$, so it remains to apply the previous lemma.
\end{proof}

\section{Main results}\label{mainresults}

Assuming Proposition~\ref{mainlemmadiv}, we prove here the main results of this paper concerning the uniqueness of maximal immediate extensions, the relationship between $\d$-algebraic maximality and $\d$-henselianity, and the existence and uniqueness of $\d$-henselizations.
The proof of Proposition~\ref{mainlemmadiv} is given in \S\ref{sec:mainlemmadiv}.

\begin{prop}\label{mainlemmadiv}
Suppose that $K$ is asymptotic, $\Gamma$ is divisible, and $\bm k$ is $r$-linearly surjective.
Let $(a_\rho)$ be a pc-sequence in $K$ with minimal $\d$-polynomial $G$ over $K$ of order at most $r$.
Then $\ddeg_{\bm a} G=1$.
\end{prop}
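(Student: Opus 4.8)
The plan is to prove $\ddeg_{\bm a} G = 1$ by establishing the two inequalities $\ddeg_{\bm a} G \geq 1$ and $\ddeg_{\bm a} G \leq 1$ separately. The lower bound is the easy half: by Lemma~\ref{adh6.9.3}, $K$ has an immediate extension $K\langle a \rangle$ with $G(a) = 0$ and $a_\rho \pconv a$, and then $\ddeg_{\bm a} G \geq 1$ by Lemma~\ref{ddegbasic}(vi). So the real content is the upper bound $\ddeg_{\bm a} G \leq 1$, and I would argue by contradiction: assume $d := \ddeg_{\bm a} G \geq 2$, and derive a $\d$-polynomial over $K$ that witnesses $(a_\rho)$ being of $\d$-algebraic type but has strictly smaller complexity than $G$, contradicting the minimality of $G$.

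To carry this out, I would first pass to a convenient setting. Since $\Gamma$ is divisible, $K$ has a monomial group, so the dominant-part machinery of \S\ref{prelim:ddeg} is available; replacing $(a_\rho)$ by an equivalent pc-sequence and translating, I may assume the relevant conjugates $G_{+a_\rho}$ have been normalized so that their dominant parts stabilize. The key is to use the differential Newton diagram method developed in \S\ref{sec:ndiag} (Proposition~\ref{ndiag} and Corollary~\ref{ndiagcor}) together with the analysis of asymptotic differential equations in \S\ref{sec:ade} — in particular the degree-reduction mechanism of Lemma~\ref{reducedegunravel} and the unravelling construction of Proposition~\ref{unravellersexist}. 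The idea, following the strategy of \cite[Proposition~14.5.1]{adamtt}, is: if $\ddeg_{\bm a} G = d \geq 2$, then along the pc-sequence one can perform a multiplicative conjugation and refinement so that the dominant degree drops, producing from $G$ a new $\d$-polynomial $\tilde G$ over $K$ of lower complexity with $\tilde G(b_\sigma) \pconv 0$ for an equivalent pc-sequence $(b_\sigma)$. The $r$-linear surjectivity of $\bm k$ is exactly what is needed to solve the reduced (linear or lower-degree) equations at the residue level and push solutions up; divisibility of $\Gamma$ is what makes the Newton-diagram refinements available.

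More concretely, I expect the argument to funnel through Proposition~\ref{maintechprop} and its Corollary~\ref{maintechcor} from \S\ref{sec:redcomplex}: those results should say, roughly, that when $\ddeg_{\bm a} G \geq 2$ one can reduce the complexity of the $\d$-polynomial attached to the cut $\bm a$ while keeping it in the relevant vanishing set $Z(K,\ell)$ — here $\ell$ is a pseudolimit of $(a_\rho)$ in a maximal immediate extension, and Corollary~\ref{mindiffpolyfo} identifies minimal-complexity elements of $Z(K,\ell)$ with minimal $\d$-polynomials of $(a_\rho)$. Running the reduction once contradicts minimality of $G$, forcing $d \leq 1$, and combined with the lower bound we get $\ddeg_{\bm a} G = 1$.

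The main obstacle, as the outline of the paper itself flags, is the degree-reduction step and the bookkeeping around it: one must simultaneously control the dominant part of $G$ under additive and multiplicative conjugation along the pc-sequence (Lemmas~\ref{adh6.6.5}, \ref{reducedeg}), ensure the conjugations stay compatible with the cut $\bm a$ (Lemma~\ref{ddegbasic}(ii),(iv)), and handle the fact that — unlike in the $\upomega$-free $H$-asymptotic case of \cite{adamtt} — dominant parts need not lie in $C[Y](Y')^n$, so the reduction may require differentiating with respect to $Y^{(j)}$ for $j > 1$ and invoking the full strength of $r$-linear surjectivity rather than just solvability of first-order linear equations. Getting Lemma~\ref{slowdownlem} to interface correctly with Lemma~\ref{reducedegunravel} so that the asymptotic differential equation actually admits a degree drop is where the genuine difficulty lies; the rest is a contradiction with the minimality of the complexity $c(G)$.
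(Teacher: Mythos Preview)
Your overall strategy is right and matches the paper's: argue by contradiction from $d\ges 2$, pass to a suitable immediate extension, and invoke Corollary~\ref{maintechcor} to produce a lower-complexity annihilator, contradicting minimality of $G$. But there is one concrete error and one genuine omission in the setup.

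The error: your claim ``since $\Gamma$ is divisible, $K$ has a monomial group'' is false. Divisibility of the value group does not in general yield a cross-section $\Gamma\to K^\times$. All of \S\ref{sec:ndiag}--\S\ref{sec:redcomplex} is carried out under the standing hypothesis that $K$ has a monomial group, so you cannot simply quote Corollary~\ref{maintechcor} for an arbitrary asymptotic $K$ with divisible $\Gamma$. The paper handles this by passing to an $\aleph_1$-saturated elementary extension of the pair $(\wh K,K)$, where monomial groups exist by saturation. This is exactly why \S\ref{prelim:imextfo} develops the first-order vanishing set $Z(K,\ell)$ and Corollary~\ref{mindiffpolyfo}: the notion ``$G$ is a minimal $\d$-polynomial of $(a_\rho)$'' is not itself first-order, so one needs the equivalent first-order characterization to survive the passage to an elementary extension. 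After saturating, one may also need to replace $(a_\rho)$ by a new divergent pc-sequence with the same pseudolimit $\ell$, and one must re-enlarge $\wh K$ to recover $\d$-algebraic maximality.

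The omission: Corollary~\ref{maintechcor} is stated under the running hypotheses of \S\ref{sec:redcomplex}, which include a fixed immediate asymptotic $r$-$\d$-henselian extension $\wh K$ together with an unraveller $(\wh f,\wh{\ca E}')$ for \eqref{wheqn} satisfying $\mul P_{+\wh f}<d$. You gesture at ``a pseudolimit in a maximal immediate extension'' and at Proposition~\ref{unravellersexist}, but the actual bridge is Lemma~\ref{plimunravellersexist}: it is what produces an unraveller $(\wh f,\wh{\ca E})$ over $\wh K$ tied to the pc-sequence via $a_\rho\pconv a+\wh f+g$ for all $g\in\wh{\ca E}\cup\{0\}$, and Lemma~\ref{mul} then supplies $\mul G_{+a+\wh f}<d$. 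Without this link between the unraveller and the cut, the $f$ coming out of Corollary~\ref{maintechcor} need not be a pseudolimit of $(a_\rho-a)$, and the contradiction with minimality of $G$ via Lemma~\ref{adh6.8.1} does not go through.
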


\subsection{Removing divisibility}
In the next lemmas, we construe the algebraic closure $K^{\ac}$ of $K$ as a valued differential field extension of $K$:
the derivation of $K$ extends uniquely to $K^{\ac}$ \cite[Lemma~1.9.2]{adamtt} and we equip $K^{\ac}$ with any valuation extending that of $K$.
This determines $K^{\ac}$ as a valued differential field extension of $K$ up to isomorphism over $K$, with value group the divisible hull $\Q\Gamma$ of $\Gamma$ and residue field the algebraic closure $\bm k^{\ac}$ of $\bm k$.
If $K$ is henselian, then its valuation extends uniquely to $K^{\ac}$ (see \cite[Proposition~3.3.11]{adamtt}).
By \cite[Proposition~6.2.1]{adamtt}, $K^{\ac}$ has small derivation.

\begin{lem}\label{mdpac}
Suppose that $K$ is henselian and the derivation on $\bm k$ is nontrivial.
Let $(a_\rho)$ be a pc-sequence in $K$ with minimal $\d$-polynomial $P$ over $K$.
Then $P$ remains a minimal $\d$-polynomial of $(a_\rho)$ over the algebraic closure $K^{\ac}$ of $K$.
\end{lem}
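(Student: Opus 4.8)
The plan is to show two things: that $(a_\rho)$ is still a pc-sequence of $\d$-algebraic type over $K^{\ac}$ (so that a minimal $\d$-polynomial over $K^{\ac}$ exists), and that no $\d$-polynomial over $K^{\ac}$ of complexity strictly below $c(P)$ can witness $\d$-algebraic type. The first point is immediate since $P \in K\{Y\} \subseteq K^{\ac}\{Y\}$ and $P(a_\rho) \pconv 0$ already over $K$; thus a minimal $\d$-polynomial $Q \in K^{\ac}\{Y\}$ of $(a_\rho)$ over $K^{\ac}$ exists with $c(Q) \les c(P)$, and the whole content is to rule out $c(Q) < c(P)$. Since $K$ is henselian, by \cite[Proposition~3.3.11]{adamtt} the valuation extends uniquely to $K^{\ac}$, and the derivation extends uniquely by \cite[Lemma~1.9.2]{adamtt}, with $K^{\ac}$ having small derivation by \cite[Proposition~6.2.1]{adamtt} and value group $\Q\Gamma$, residue field $\bm k^{\ac}$; in particular $\Gamma$ has no least positive element and the induced derivation on $\bm k^{\ac}$ is nontrivial (it extends the nontrivial one on $\bm k$), so the machinery of \S\ref{prelim:imextfo} applies over both $K$ and $K^{\ac}$.

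Next I would split into the two cases according to whether $(a_\rho)$ is divergent in $K$. If $(a_\rho)$ converges in $K$, say $a_\rho \pconv a \in K$, then after replacing $Y$ by $a+Y$ we may assume the pseudolimit is $0$ and $a_\rho \pconv 0$ with $a_\rho \neq 0$; the minimal $\d$-polynomial condition over $K$ transfers to a minimality statement that one checks directly is insensitive to the base field, because the relevant inequalities on valuations and dominant degrees only involve elements of $K\{Y\}$ evaluated at elements of $K$. The substantive case is when $(a_\rho)$ is divergent in $K$: fix a pseudolimit $\ell$ in an extension of $K$, noting $v(\ell - K)$ has no largest element. By Corollary~\ref{mindiffpolyfo}, $P$ is a minimal $\d$-polynomial of $(a_\rho)$ over $K$ iff $P \in Z(K,\ell)$ has minimal complexity there. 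Suppose toward a contradiction there is $Q \in K^{\ac}\{Y\}^{\neq}$ with $c(Q) < c(P)$ and $Q \in Z(K^{\ac}, \ell)$ (using that $(a_\rho)$ remains divergent over $K^{\ac}$: if it converged to some $b \in K^{\ac}$, then $b$ is algebraic over $K$, and standard valuation-theoretic arguments show a divergent pc-sequence in a henselian $K$ cannot acquire a pseudolimit in $K^{\ac}$ — or one invokes that $P$ being a minimal $\d$-polynomial of algebraic, not transcendental, type already forces this). The goal is to descend $Q$ to a $\d$-polynomial over $K$ of complexity $\les c(Q) < c(P)$ that still vanishes at $(K,\ell)$, contradicting minimality of $P$ over $K$.

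The main obstacle — and the heart of the proof — is this descent. I would argue as follows: $Z(K^{\ac},\ell)$ being nonempty and $Q$ having minimal complexity in it, Lemma~\ref{maximext:5.3} produces an immediate extension $K^{\ac}\langle f \rangle$ of $K^{\ac}$ with $Q(f)=0$ and $v(x-f)=v(x-\ell)$ for all $x \in K^{\ac}$. Now consider the subextension $K\langle f \rangle$ of $K^{\ac}\langle f\rangle$ generated over $K$ by $f$: it is an immediate extension of $K$ in which $f$ is a pseudolimit of $(a_\rho)$ and is $\d$-algebraic over $K$ (since $Q$, being over $K^{\ac}$, makes $f$ algebraic-over-$K$-coefficients, hence $\d$-algebraic over $K$). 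Therefore $(a_\rho)$, having a pseudolimit $f$ that is $\d$-algebraic over $K$, is of $\d$-algebraic type over $K$, and some minimal $\d$-polynomial $P^* \in K\{Y\}$ of $(a_\rho)$ over $K$ satisfies $c(P^*) = c(P)$ (minimal $\d$-polynomials have the same complexity). The key inequality to extract is $c(P) = c(P^*) \les c(Q_f)$ where $Q_f \in K\{Y\}$ is obtained by clearing the algebraic dependence: more carefully, since $f$ satisfies $Q(f)=0$ with $Q$ of order $r' := \ord Q$, and $f$ is algebraic over $K\langle f', \dots, f^{(r')}\rangle$-type data, one shows the minimal annihilating $\d$-polynomial of $f$ over $K$ has order $\les r'$ and appropriate degree bounds, giving $c(P) \les c(Q) < c(P)$, the desired contradiction. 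The delicate point requiring care is relating the complexity of a $\d$-polynomial over $K^{\ac}$ to that of a $\d$-polynomial over $K$ annihilating the same element — this is exactly where the argument in the proof of \cite[Lemma~11.4.13 / Corollary~11.4.13]{adamtt} and the surrounding development is used, and I would model the descent on whatever analogous lemma in \cite{adamtt} handles passing minimal $\d$-polynomials between a henselian valued differential field and its algebraic closure.
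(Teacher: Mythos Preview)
Your overall structure is right --- the content is ruling out $c(Q)<c(P)$ for $Q\in K^{\ac}\{Y\}$ --- but the descent step has a genuine gap. You propose to take $f$ with $Q(f)=0$ and then bound the complexity of a minimal annihilator of $f$ over $K$ by $c(Q)$. This bound is false in general: if, say, $Q=Y'-\alpha$ with $\alpha\in K^{\ac}\setminus K$ of degree $n>1$ over $K$, then any $R\in K\{Y\}^{\neq}$ with $R(f)=0$ must have complexity strictly greater than $c(Q)=(1,1,1)$, since eliminating $\alpha$ forces the degree in $Y'$ (or the total degree) up to at least $n$. The phrase ``clearing the algebraic dependence'' hides exactly this cost, and there is no lemma in \cite{adamtt} that gives you complexity descent along $K\subseteq K^{\ac}$ for minimal annihilators.

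The paper's argument avoids annihilators of $f$ altogether. It works with the pc-sequence directly and exploits henselianity in a different way: pick a finite subextension $K\subseteq L\subseteq K^{\ac}$ with $Q\in L\{Y\}$; since $K$ is henselian, $[L:K]=[\Gamma_L:\Gamma]\cdot[\bm k_L:\bm k]$, so $L$ has a \emph{valuation basis} $e_1,\dots,e_n$ over $K$. Expanding coefficients gives $Q=\sum_i R_i\,e_i$ with each $R_i\in K\{Y\}$ and $c(R_i)\les c(Q)$ automatically (the $R_i$ are $K$-linear combinations of the coefficients of $Q$, so no monomial $Y^{\bm j}$ is created). Now use Lemma~\ref{adh6.9.3} and Lemma~\ref{adh6.8.1} to get an equivalent pc-sequence $(b_\rho)$ in $K$ with $Q(b_\rho)\pconv 0$. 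The valuation-basis property gives $v\big(Q(b_\rho)\big)=\min_i v\big(R_i(b_\rho)e_i\big)$, so on a cofinal set some fixed $R_i$ achieves the minimum, forcing $R_i(b_\rho)\pconv 0$. That $R_i\in K\{Y\}$ with $c(R_i)\les c(Q)<c(P)$ contradicts the minimality of $P$ over $K$. The valuation basis is the missing idea; your proposed route through ``minimal annihilator of $f$ over $K$'' does not control complexity.
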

\begin{proof}
We may suppose that $(a_\rho)$ is divergent in $K$, the other case being trivial.
Then $(a_\rho)$ is still divergent in $K^{\ac}$:
If it had a pseudolimit $a \in K^{\ac}$, then we would have $Q(a_\rho) \pconv 0$, where $Q \in K[Y]$ is the minimum polynomial of $a$ over $K$ (see \cite[Proposition~3.2.1]{adamtt}).
But since $K$ is henselian, it is algebraically maximal (see \cite[Corollary~3.3.21]{adamtt}), and then $(a_\rho)$ would have a pseudolimit in $K$.

Now suppose to the contrary that $Q$ is a minimal $\d$-polynomial of $(a_\rho)$ over $K^{\ac}$ with $c(Q)<c(P)$. 
Take an extension $L \subseteq K^{\ac}$ of $K$ with $[L:K]=n$ and $Q \in L\{Y\}$.
Then as $K$ is henselian,
\[
[L:K]\ =\ [\Gamma_L:\Gamma] \cdot [\bm k_L : \bm k]
\]
(see \cite[Corollary 3.3.49]{adamtt}),
so we have a valuation basis $\ca B=\{e_1, \dots, e_n\}$ of $L$ over $K$ (see \cite[Proposition 3.1.7]{adamtt}).
That is, $\ca B$ is a basis of $L$ over $K$, and for all $a_1,\dots,a_n \in K$,
\[
v \left( \sum_{i=1}^n a_i e_i \right)\ =\ \min_{1 \les i \les n} v(a_i e_i).
\]
Then by expressing the coefficients of $Q$ in terms of the valuation basis,
\[
Q(Y)\ =\ \sum_{i=1}^n R_i(Y) \cdot e_i,
\]
where $R_i \in K\{Y\}$ for $1 \les i \les n$.

Since $Q$ is a minimal $\d$-polynomial of $(a_\rho)$ over $K^{\ac}$, by Lemma~\ref{adh6.9.3} we have an immediate extension $K^{\ac}\langle a\rangle$ of $K^{\ac}$ with $a_\rho \pconv a$ and $Q(a)=0$.
Then by Lemma~\ref{adh6.8.1}, we have a pc-sequence $(b_\rho)$ in $K$ equivalent to $(a_\rho)$ such that $Q(b_\rho) \pconv Q(a)=0$.
Finally, after passing to a cofinal subsequence, we can assume that we have $i$ with $Q(b_\rho) \asymp R_i(b_\rho) \cdot e_i$ for all $\rho$.
Then $R_i(b_\rho) \pconv 0$ and $c(R_i)<c(P)$, contradicting the minimality of $P$.
\end{proof}

Since minimal $\d$-polynomials are irreducible, note that a corollary of this lemma is that minimal $\d$-polynomials over henselian $K$ (with nontrivial induced derivation on $\bm k$) are absolutely irreducible.
We can now replace the divisibility assumption in the main proposition with that of henselianity.

\begin{prop}\label{mainlemmah}
Suppose that $K$ is asymptotic and henselian, and that $\bm k$ is linearly surjective.
Let $(a_\rho)$ be a pc-sequence in $K$ with minimal $\d$-polynomial $G$ over $K$.
Then $\ddeg_{\bm a} G=1$.
\end{prop}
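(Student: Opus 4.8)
The plan is to deduce Proposition~\ref{mainlemmah} from its divisible case, Proposition~\ref{mainlemmadiv}, by base change to the algebraic closure $K^{\ac}$ of $K$. As recalled in the discussion preceding Lemma~\ref{mdpac}, since $K$ is henselian its valuation extends uniquely to $K^{\ac}$, making $K^{\ac}$ a valued differential field extension of $K$ with value group $\Q\Gamma$, residue field $\bm k^{\ac}$, and small derivation by \cite[Proposition~6.2.1]{adamtt}. Three further facts about $K^{\ac}$ are what make the reduction go through. First, $K^{\ac}$ is again asymptotic, because algebraic extensions of asymptotic fields are asymptotic. Second, $\bm k^{\ac}$ is again linearly surjective, because linear surjectivity is preserved under algebraic extensions of differential fields with nontrivial derivation: an order-$r$ linear differential equation over a degree-$n$ extension translates into a linear differential equation of order at most $rn$ over the base (via a matrix form and a cyclic vector), which is exactly why the hypothesis here is full linear surjectivity of $\bm k$ rather than $r$-linear surjectivity as in Proposition~\ref{mainlemmadiv}. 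Third, the derivation induced on $\bm k$, and hence the one induced on $\bm k^{\ac}$, is nontrivial: otherwise $1 + y' = 0$ would have no solution in $\bm k$, contradicting $1$-linear surjectivity of $\bm k$.

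Granting this, the argument is brief. Since $K$ is henselian and the derivation on $\bm k$ is nontrivial, Lemma~\ref{mdpac} applies and shows that $G$ remains a minimal $\d$-polynomial of $(a_\rho)$ over $K^{\ac}$; its order is unchanged, so writing $r \coloneqq \ord G$ we have that $G$ has order at most $r$ over $K^{\ac}$. Now $K^{\ac}$ is asymptotic, $\Q\Gamma$ is divisible, and $\bm k^{\ac}$ is $r$-linearly surjective, so Proposition~\ref{mainlemmadiv} applies to the pc-sequence $(a_\rho)$ in $K^{\ac}$ with minimal $\d$-polynomial $G$ and yields $\ddeg_{\bm a_{K^{\ac}}} G = 1$. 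Finally, Lemma~\ref{ddegbasic}(vii) gives $\ddeg_{\bm a} G = \ddeg_{\bm a_{K^{\ac}}} G$, whence $\ddeg_{\bm a} G = 1$.

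I do not expect a genuine obstacle here: all of the real content of removing the divisibility hypothesis is already contained in Lemma~\ref{mdpac}, namely that a minimal $\d$-polynomial over henselian $K$ stays minimal over $K^{\ac}$. The only points requiring care are checking that the hypotheses of Proposition~\ref{mainlemmadiv} are inherited by $K^{\ac}$ — divisibility of the value group is automatic, small derivation is \cite[Proposition~6.2.1]{adamtt}, and asymptoticity and linear surjectivity of the residue field are the two preservation statements quoted above — and transporting the conclusion back down to $K$ along Lemma~\ref{ddegbasic}(vii); both are routine.
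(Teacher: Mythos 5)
Your proof is correct and follows the same route as the paper's: pass to $K^{\ac}$ using Lemma~\ref{mdpac}, apply Proposition~\ref{mainlemmadiv} there (with $r \coloneqq \ord G$), and pull the conclusion back via Lemma~\ref{ddegbasic}(vii). You are in fact somewhat more explicit than the paper about the side verifications (asymptoticity of $K^{\ac}$, nontriviality of the derivation on $\bm k$ needed to invoke Lemma~\ref{mdpac}, and linear surjectivity of $\bm k^{\ac}$, for which the paper cites \cite[Corollary~5.4.3]{adamtt}), but the substance is identical.
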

\begin{proof}
By the previous lemma, $G$ remains a minimal $\d$-polynomial of $(a_\rho)$ over $K^{\ac}$.
Note that the value group of $K^{\ac}$ is divisible, and its differential residue field is linearly surjective, as an algebraic extension of $\bm k$ \cite[Corollary~5.4.3]{adamtt}.
But then $\ddeg_{\bm a_{K^{\ac}}} G = 1$ by Proposition~\ref{mainlemmadiv}, and hence $\ddeg_{\bm a} G=1$ by Lemma~\ref{ddegbasic}(vii).
\end{proof}

\subsection{Main results}

Recall from \cite{dhfinrank} that $K$ has the \emph{differential-henselian configuration property} (\emph{dh-configuration property} for short) if for every divergent pc-sequence $(a_\rho)$ in $K$ with minimal $\d$-polynomial $G$ over $K$, we have $\ddeg_{\bm a} G = 1$.
In that paper, van den Dries and the present author showed that several results follow from the dh-configuration property.
In particular, Lemma~\ref{dalgmaxplimroot}, Theorem~\ref{dhdalgmax}, and Corollary~\ref{mindhensel} follow immediately in view of Proposition~\ref{mainlemmah}.
For readability, we include their proofs from that paper verbatim.
Theorems~\ref{maximmed} and \ref{dhensel} require minor modifications involving henselizations.

The next lemma has the same proof as \cite[Lemma~4.1]{dhfinrank}, except for using Proposition~\ref{mainlemmah}.

\begin{lem}\label{dalgmaxplimroot}
Suppose $K$ is asymptotic and henselian, and $\bm k$ is linearly surjective.
Let $(a_\rho)$ be a pc-sequence in $K$ with minimal $\d$-polynomial $G$ over $K$.
Let $L$ be a $\d$-algebraically maximal extension of $K$ such that $\bm k_L$ is linearly surjective.
Then there is $b \in L$ with $a_\rho \pconv b$ and $G(b)=0$.
\end{lem}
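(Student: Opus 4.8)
Following \cite[Lemma~4.1]{dhfinrank}, the plan is to substitute Proposition~\ref{mainlemmah} for the dh-configuration property and run the same argument. Thus the hypotheses on $K$ are used only once: Proposition~\ref{mainlemmah} gives $\ddeg_{\bm a} G = 1$. We may assume $(a_\rho)$ is divergent in $K$, since otherwise a minimal $\d$-polynomial of $(a_\rho)$ vanishes at any pseudolimit of $(a_\rho)$ in $K$, and $K \subseteq L$.

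I would then extract two facts from the hypotheses on $L$. First, $L$ has small derivation and is $\d$-algebraically maximal with linearly surjective differential residue field, so $L$ is $\d$-henselian by \cite[Theorem~7.0.1]{adamtt}. Second, linear surjectivity of $\bm k_L$ makes the induced derivation on $\bm k_L$ nontrivial (solve $1 + y' = 0$), so $\d$-algebraic maximality of $L$ is equivalent to every pc-sequence in $L$ of $\d$-algebraic type over $L$ having a pseudolimit in $L$. Since $G \in K\{Y\} \subseteq L\{Y\}$ witnesses that $(a_\rho)$ is of $\d$-algebraic type over $L$, we obtain a pseudolimit $b \in L$ of $(a_\rho)$; then $v(b - a_\rho) = \gamma_\rho$ for all large $\rho$.

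The heart of the proof is to move $b$ to a zero of $G$ that still pseudo-limits $(a_\rho)$. For a sufficiently large $\rho$ and $\f v \in K^\x$ with $v\f v = \gamma_\rho$, we have $b - a_\rho \asymp \f v$, so Corollary~\ref{adh6.6.6} and the definition of dominant degree in a cut yield
\[
\ddeg (G_{+b})_{\x\f v}\ =\ \ddeg (G_{+a_\rho})_{\x\f v}\ =\ \ddeg_{\ges\gamma_\rho} G_{+a_\rho}\ =\ \ddeg_{\bm a} G\ =\ 1 .
\]
Dividing $(G_{+b})_{\x\f v}$ by a coefficient of least valuation thus produces a $\d$-polynomial over $L$ whose dominant part has degree $1$, so Lemma~\ref{adh7.1.1} together with $\d$-henselianity of $L$ gives $u \in \ca O_L$ with $G_{+b}(\f v u) = 0$; hence $b + \f v u \in L$ is a zero of $G$ differing from $b$ by an element of valuation at least $\gamma_\rho$.

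The main obstacle is that $b + \f v u$ need not be a pseudolimit of $(a_\rho)$: for that, its difference from $b$ must have valuation exceeding \emph{every} $\gamma_\rho$ (i.e.\ lie in the width of $(a_\rho)$), whereas the previous step only controls the scale $\gamma_\rho$ for one $\rho$. As in \cite[Lemma~4.1]{dhfinrank}, this is handled by iterating: performing the previous step for cofinally many $\rho$ builds a pc-sequence of zeros of $G_{+b}$ whose valuations increase past every $\gamma_\rho$; this pc-sequence is of $\d$-algebraic type over $L$ (being killed by $G_{+b}$), so $\d$-algebraic maximality of $L$ supplies a pseudolimit $w \in L$, and one checks via continuity of the derivation that $G_{+b}(w) = 0$ and that $v(w)$ exceeds every $\gamma_\rho$ (when the width is trivial the iteration collapses to $w = 0$, giving $G(b) = 0$ directly). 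Then $s \coloneqq b + w \in L$ satisfies $G(s) = 0$ and $a_\rho \pconv s$, as required.
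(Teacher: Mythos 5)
Your argument through the first application of $\d$-henselianity parallels the opening of the paper's proof, but the paper does \emph{not} iterate from there, and your iteration as stated has genuine gaps. The paper's proof (which follows \cite[Lemma~4.1]{dhfinrank} verbatim apart from the source of $\ddeg_{\bm a} G = 1$) first arranges $G(a_\rho) \pconv 0$, writes $G_{+a}(Y) = G(a) + A(Y) + R(Y)$ with $A$ linear homogeneous and $R$ of degree $\ges 2$, observes that $\ddeg G_{+a, \x g_\rho} = 1$ eventually forces $v(G(a)) \ges v_A(\gamma_\rho) < v_R(\gamma_\rho)$ eventually, and concludes that $v(G,a)$ --- the Newton-approximation quantity of \cite[\S7.3]{adamtt} --- exceeds every $v(a - a_\rho)$. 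A single application of \cite[Lemma~7.3.1]{adamtt} then produces $b \in L$ with $G(b) = 0$ and $v_L(a-b) = v(G,a)$, whence $a_\rho \pconv b$. The point is that the dominant-degree-$1$ condition at \emph{all} eventual scales locates one zero beyond all those scales in one shot; applying henselianity scale-by-scale, as you do, throws away that information.

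Your proposed iteration has two unaddressed gaps. First, the zeros $z_\rho$ with $G_{+b}(z_\rho) = 0$ and $v(z_\rho) \ges \gamma_\rho$ arise from independent invocations of henselianity, so nothing makes $(z_\rho)$ a pc-sequence: you would need $v(z_{\rho+1}-z_\rho)$ to be eventually strictly increasing, which requires a finiteness or uniqueness argument (compare the use of \cite[Lemma~7.5.5]{adamtt} in the proof of Theorem~\ref{dhdalgmax}). Second, even granted a pc-sequence $(z_\rho) \pconv w \in L$, ``continuity of the derivation'' does not give $G_{+b}(w) = 0$: Lemma~\ref{adh6.8.1} only supplies an \emph{equivalent} pc-sequence $(z'_\rho)$ with $G_{+b}(z'_\rho) \pconv G_{+b}(w)$, and $(z'_\rho)$ need not consist of zeros, while the pseudolimit guaranteed by Lemma~\ref{adh6.9.3} is a zero of the \emph{minimal} $\d$-polynomial of $(z_\rho)$ over $L$, which may have strictly smaller complexity than $G_{+b}$. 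Finally, the assertion that \cite[Lemma~4.1]{dhfinrank} handles this by iterating is mistaken; that lemma uses exactly the $v(G,a)$ argument above.
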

\begin{proof}
Note that $L$ is $\d$-henselian by \cite[Theorem~7.0.1]{adamtt}.
Since $L$ is $\d$-algebraically maximal and the derivation of $\bm k_L$ is nontrivial, every pc-sequence in $L$ of $\d$-algebraic type over $L$ has a pseudolimit in $L$ by Lemma~\ref{adh6.9.3}.
Thus we  get $a \in L$ with $a_\rho \pconv a$. Passing to an equivalent pc-sequence we arrange that $G(a_\rho) \pconv 0$.
With $\gamma_\rho =v(a_{\rho+1}-a_{\rho})=v(a-a_\rho)$, eventually, Proposition~\ref{mainlemmah} gives $g_\rho \in K$ with $v(g_\rho)=\gamma_\rho$ and $\ddeg G_{+a_\rho, \times g_\rho}=1$, eventually.
By Corollary~\ref{adh6.6.6}, $\ddeg G_{+a, \times g_\rho}=1$, eventually.
We have $G(a+Y)=G(a)+A(Y)+R(Y)$ where $A$ is linear and homogeneous and all monomials in $R$ have degree $\ges 2$, and so
\[
G_{+a, \times g_{\rho}}(Y)\ =\ G(a)+A_{\times g_{\rho}}(Y) + R_{\times g_{\rho}}(Y).
\]
Now $\ddeg G_{+a,\times g_{\rho}}=1$ eventually, so
$v(G(a)) \ges v_A(\gamma_\rho)<v_R(\gamma_\rho)$ eventually.
With $v(G,a)$ as in \cite[\S7.3]{adamtt} we get $v(G,a)>v(a-a_\rho)$, eventually.
Then \cite[Lemma~7.3.1]{adamtt} gives $b \in L$ with $v_L(a-b)=v(G,a)$ and $G(b)=0$, so $v_L(a-b)>v(a-a_\rho)$ eventually. Thus $a_\rho \pconv b$.
\end{proof}

For the next result, we copy the proof of \cite[Theorem~4.2]{dhfinrank}, except for an argument involving henselizations.

\begin{thm}\label{maximmed}
Suppose that $K$ is asymptotic and $\bm k$ is linearly surjective.
Then any two maximal immediate extensions of $K$ are isomorphic over $K$.
Also, any two $\d$-algebraically maximal $\d$-algebraic immediate extensions of $K$ are isomorphic over $K$.
\end{thm}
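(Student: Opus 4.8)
The plan is to run a back-and-forth exactly as in the proof of \cite[Theorem~4.2]{dhfinrank}, with one extra ingredient: Lemma~\ref{dalgmaxplimroot} requires its base valued differential field to be henselian (this henselianity is what lets the divisibility hypothesis be dropped, via Lemma~\ref{mdpac} and Proposition~\ref{mainlemmah}), so the partial isomorphism must be carried over a henselian field, which we arrange by inserting a henselization step. I would prove both assertions at once: let $L_1$ and $L_2$ be either two maximal immediate extensions of $K$ or two $\d$-algebraically maximal $\d$-algebraic immediate extensions of $K$. In both cases each $L_j$ is algebraically maximal --- a proper algebraic immediate extension would be a proper $\d$-algebraic immediate extension --- hence henselian, as $\bm k_{L_j}=\bm k$ has characteristic~$0$; each $L_j$ is $\d$-algebraically maximal; and $\bm k$ is linearly surjective, so the induced derivation on $\bm k$ is nontrivial. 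In the first case each $L_j$ is moreover spherically complete by \cite{maximext}. Finally, every immediate extension of $K$ is asymptotic, as asymptoticity passes to immediate extensions of asymptotic fields.

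By Zorn's lemma choose a maximal pair $(E,\iota)$ consisting of a valued differential subfield $E$ of $L_1$ containing $K$, with $E$ immediate over $K$, and an embedding $\iota\colon E\to L_2$ over $K$ (the poset is nonempty, containing $K$ with its inclusion into $L_2$, and closed under unions of chains). I claim $E$ is henselian. Since $L_1$ is henselian it contains the henselization $E^{\h}$ of $E$, whose derivation --- the restriction of that of $L_1$, which is also the unique derivation on $E^{\h}$ extending that of $E$ --- is again small, so that $E^{\h}$ is an immediate extension of $K$; since $L_2$ is henselian, $\iota$ extends to an embedding $E^{\h}\to L_2$ over $K$; maximality of $(E,\iota)$ forces $E^{\h}=E$. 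Thus $E$ is asymptotic and henselian with linearly surjective residue field, so Lemma~\ref{dalgmaxplimroot} is available for $E$ and for $\iota(E)$.

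Suppose toward a contradiction that $E\subsetneq L_1$, and pick $a\in L_1\setminus E$. Since $L_1$ is an immediate extension of $E$, the element $a$ is a pseudolimit of a divergent pc-sequence $(a_\rho)$ in $E$. If $(a_\rho)$ is of $\d$-transcendental type over $E$ --- possible only in the first case, since in the second $a$ is $\d$-algebraic over $E$ while by Lemma~\ref{adh6.9.1} any pseudolimit of a pc-sequence of $\d$-transcendental type is $\d$-transcendental over the base --- then spherical completeness of $L_2$ provides a pseudolimit $b\in L_2$ of $(\iota(a_\rho))$, and Lemma~\ref{adh6.9.1} yields an embedding $E\langle a\rangle\to L_2$ over $E$ extending $\iota$. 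If instead $(a_\rho)$ is of $\d$-algebraic type over $E$ with minimal $\d$-polynomial $G$, then Lemma~\ref{dalgmaxplimroot}, applied with base field $\iota(E)$ and the $\d$-algebraically maximal extension $L_2$ (whose residue field, like that of $\iota(E)$, is linearly surjective), provides $b\in L_2$ with $\iota(a_\rho)\pconv b$ and $\iota(G)(b)=0$, and Lemma~\ref{adh6.9.3} then yields an embedding $E\langle a\rangle\to L_2$ over $E$ extending $\iota$. In either case $E\langle a\rangle$ is an immediate extension of $K$ properly containing $E$, contradicting the maximality of $(E,\iota)$. Hence $E=L_1$, so $\iota$ embeds $L_1$ into $L_2$ over $K$; its image is a maximal (resp.\ $\d$-algebraically maximal $\d$-algebraic) immediate extension of $K$ and $L_2$ is an immediate (resp.\ $\d$-algebraic immediate) extension of it, so $\iota(L_1)=L_2$ since the image admits no proper such extension.

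I expect the henselization step to be the main point requiring care: one must check that a henselization of a valued differential field with small derivation is again such a field (small derivation, immediacy), is asymptotic, sits inside a prescribed henselian extension, and that embeddings extend to it --- all resting on the unique extension of derivations along henselizations. The remaining steps form a routine back-and-forth given Lemma~\ref{dalgmaxplimroot} and Lemmas~\ref{adh6.9.1} and~\ref{adh6.9.3}.
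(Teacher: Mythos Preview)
Your approach is essentially the paper's: the same back-and-forth with a henselization step to make Lemma~\ref{dalgmaxplimroot} available. One small gap: in the $\d$-algebraic case you invoke Lemma~\ref{adh6.9.3} to embed $E\langle a\rangle$ into $L_2$, but the element $a\in L_1$ you began with was an \emph{arbitrary} pseudolimit of $(a_\rho)$ and need not satisfy $G(a)=0$, so the universal property of Lemma~\ref{adh6.9.3} does not apply to it. The fix---and this is exactly what the paper does---is to apply Lemma~\ref{dalgmaxplimroot} symmetrically to $L_1$ as well as to $L_2$: it yields $a_1\in L_1$ with $a_\rho\pconv a_1$ and $G(a_1)=0$, and then Lemma~\ref{adh6.9.3} gives the embedding $E\langle a_1\rangle\to L_2$ sending $a_1\mapsto b$, contradicting maximality since $a_1\notin E$.
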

\begin{proof}
Let $L_0$ and $L_1$ be maximal immediate extensions of $K$.
By Zorn's Lemma we have a maximal isomorphism $\mu \colon F_0 \to F_1$ over $K$ between valued differential subfields $F_i\supseteq K$ of $L_i$ for $i=0,1$, where ``maximal'' means that $\mu$ does not extend to
an isomorphism between strictly larger such valued differential
subfields. 
First, $F_i$ is asymptotic by \cite[Lemmas~9.4.2 and 9.4.5]{adamtt}, and $\bm k_{F_i}$ is linearly surjective, as $F_i$ is an immediate extension of $K$ for $i=0,1$.
Next, $F_i$ must be henselian, since its henselization in $L_i$ is algebraic over $F_i$, and thus a valued differential subfield of $L_i$ for $i=0,1$.
Now suppose towards a contradiction that $F_0 \neq L_0$ (equivalently, $F_1 \neq L_1$).
Then $F_0$ is not spherically complete, so we have a divergent pc-sequence $(a_\rho)$ in $F_0$.

Suppose that $(a_\rho)$ is of $\d$-transcendental type over $F_0$.
The spherical completeness of $L_0$ and $L_1$ then
yields $f_0 \in L_0$ and $f_1 \in L_1$
such that $a_\rho \pconv f_0$ and $\mu(a_\rho) \pconv f_1$. 
Hence by Lemma~\ref{adh6.9.1} we obtain an isomorphism $F_0\langle f_0 \rangle \to F_1\langle f_1 \rangle$ extending $\mu$, contradicting the maximality of $\mu$.

Suppose that $(a_\rho)$ is of $\d$-algebraic type over $F_0$, with minimal $\d$-polynomial $G$ over $F_0$.
Then Lemma~\ref{dalgmaxplimroot} gives $f_0 \in L_0$ with $a_\rho \pconv f_0$ and $G(f_0)=0$, and $f_1 \in L_1$ with $\mu(a_\rho) \pconv f_1$ and $\mu(G)(f_1)=0$, where we extend $\mu$ to the differential ring isomorphism $\mu \colon F_0\{Y\} \to F_1\{Y\}$ with $Y \mapsto Y$.
Now Lemma~\ref{adh6.9.3} gives an isomorphism $F_0\langle f_0 \rangle \to F_1\langle f_1 \rangle$ extending $\mu$,
and we have again a contradiction. Thus $F_0=L_0$ and
hence $F_1=L_1$ as well. 

The proof of the second statement is the same, using only Lemma~\ref{adh6.9.3}.
\end{proof}

By Proposition~\ref{mainlemmah} and \cite[Theorem~4.3]{dhfinrank}, we can now remove monotonicity from \cite[Theorem~7.0.3]{adamtt} without any further assumptions.
\begin{thm}\label{dhdalgmax}
If $K$ is asymptotic and $\d$-henselian, then it is $\d$-algebraically maximal.
\end{thm}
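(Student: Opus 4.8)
The plan is to obtain this as a quick consequence of Proposition~\ref{mainlemmah} together with \cite[Theorem~4.3]{dhfinrank}. Recall from \cite{dhfinrank} that $K$ has the \emph{dh-configuration property} when $\ddeg_{\bm a} G = 1$ for every divergent pc-sequence $(a_\rho)$ in $K$ with minimal $\d$-polynomial $G$ over $K$, and that \cite[Theorem~4.3]{dhfinrank} shows that a $\d$-henselian valued differential field having the dh-configuration property is $\d$-algebraically maximal. So it is enough to check that an asymptotic $\d$-henselian $K$ has the dh-configuration property.

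To that end I would first record that $\d$-henselianity of $K$ supplies exactly the hypotheses needed to apply Proposition~\ref{mainlemmah}: the residue field $\bm k$ is linearly surjective by ($r$DH1) for all $r$; $K$ is henselian, as is every $\d$-henselian field (see \cite[Chapter~7]{adamtt}); and the induced derivation on $\bm k$ is nontrivial, since a field carrying the trivial derivation cannot be $1$-linearly surjective (take $a_0 = 0$ and $a_1 \neq 0$, so that $1 + a_0 y + a_1 y'$ reduces to the nonzero constant $1$). Combined with the standing hypothesis that $K$ is asymptotic, Proposition~\ref{mainlemmah} then yields $\ddeg_{\bm a} G = 1$ for every divergent pc-sequence $(a_\rho)$ in $K$ with minimal $\d$-polynomial $G$ over $K$; that is, $K$ has the dh-configuration property.

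It then remains to invoke \cite[Theorem~4.3]{dhfinrank}; for completeness I would reproduce its short argument. Since the induced derivation on $\bm k$ is nontrivial, $\d$-algebraic maximality of $K$ is equivalent to every pc-sequence in $K$ of $\d$-algebraic type over $K$ having a pseudolimit in $K$. Suppose not, and choose a divergent such pc-sequence $(a_\rho)$ with minimal $\d$-polynomial $G$ over $K$; after passing to an equivalent pc-sequence we may assume $G(a_\rho) \pconv 0$, and we put $\gamma_\rho \coloneqq v(a_{\rho+1} - a_\rho)$. Since $\ddeg_{\bm a} G = 1$, for all large $\rho$ there is $g_\rho \in K^{\x}$ with $vg_\rho = \gamma_\rho$ and $\ddeg G_{+a_\rho, \x g_\rho} = 1$; combining this with the rate at which $v(G(a_\rho))$ grows and conjugating $G$ suitably (additively by $a_\rho$, multiplicatively by $g_\rho$, and rescaling by the relevant dominant monomial) so that the linear part dominates and the parts of degree $\ges 2$ become small, $\d$-henselianity --- via Lemma~\ref{adh7.1.1}, or via \cite[Lemma~7.3.1]{adamtt} exactly as in the proof of Lemma~\ref{dalgmaxplimroot} --- produces a zero of $G$ in $K$ that is a pseudolimit of $(a_\rho)$, contradicting divergence. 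I expect no genuine obstacle in this theorem: all the difficulty is concentrated in Proposition~\ref{mainlemmadiv} (hence Proposition~\ref{mainlemmah}), and what is left here is the routine bookkeeping with the $v(G, a_\rho)$-type quantities of \cite[\S7.3]{adamtt} under additive and multiplicative conjugation.
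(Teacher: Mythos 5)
Your high-level plan is exactly the paper's: check that Proposition~\ref{mainlemmah} applies (which, as you say, is automatic once one notes that $\d$-henselianity implies $K$ is henselian, $\bm k$ is linearly surjective, and the induced derivation on $\bm k$ is nontrivial), conclude that $K$ has the dh-configuration property, and then invoke \cite[Theorem~4.3]{dhfinrank}. If you stop at the citation, the argument is complete and is precisely what the paper does in the remark preceding Theorem~\ref{dhdalgmax}.

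However, your ``for completeness'' sketch of the argument behind \cite[Theorem~4.3]{dhfinrank} mischaracterizes the key step, and would not work as stated. You write that $\d$-henselianity --- via Lemma~\ref{adh7.1.1} or via \cite[Lemma~7.3.1]{adamtt} ``exactly as in the proof of Lemma~\ref{dalgmaxplimroot}'' --- produces a zero of $G$ in $K$ that is a pseudolimit of $(a_\rho)$. That is not what happens, and the appeal to the method of Lemma~\ref{dalgmaxplimroot} does not transfer: in that lemma one works inside an extension $L$ of $K$ that is $\d$-henselian and already contains a pseudolimit $a$ of $(a_\rho)$, so \cite[Lemma~7.3.1]{adamtt} can be applied \emph{in $L$} to refine $a$ to a nearby zero of $G$ in $L$. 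Here the whole point is that $(a_\rho)$ is assumed to have no pseudolimit in $K$, so the pseudolimit $a$ lives in the proper immediate extension $K\langle a\rangle$, and \cite[Lemma~7.3.1]{adamtt} cannot be invoked inside $K$ at the element $a$. What the actual proof (reproduced verbatim from \cite{dhfinrank} in the paper) does instead is apply $\d$-henselianity, via Lemma~\ref{adh7.1.1} applied to $G_{+a_\rho,\times g_\rho}$, to get for each $\rho$ a zero $z_\rho \in K$ of $G$ with $a_\rho - z_\rho \prece g_\rho$. No single $z_\rho$ is claimed to be a pseudolimit. The contradiction is reached only after assembling $r+2$ of these zeros, with $r = \ord G \ges 1$, into a configuration $z_{\rho_0},\dots,z_{\rho_{r+1}}$ with $a-z_{\rho_j} \prec a - z_{\rho_i}$ for $i<j$ and $\ddeg G_{+z_{\rho_{r+1}},\times g_{\rho_0}} = 1$, which is forbidden by the counting result \cite[Lemma~7.5.5]{adamtt}. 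That lemma --- which bounds, in terms of $\ord G$, how many zeros of $G$ can sit in such a nested arrangement --- is the nontrivial ingredient your sketch omits; the step from ``zeros near each $a_\rho$'' to ``contradiction'' is not routine bookkeeping with the quantities $v(G,a_\rho)$.
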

\begin{proof}
Let $(a_\rho)$ be a pc-sequence in $K$ with minimal $\d$-polynomial $G$ over $K$.
Towards a contradiction, assume that $(a_\rho)$ is divergent in $K$.
Then Lemma~\ref{adh6.9.3} shows that $G$ has order at least $1$ (since $K$ is henselian) and provides a proper immediate extension $K\langle a \rangle$ of $K$ with $a_\rho \pconv a$. 
Replacing $(a_\rho)$ by an equivalent pc-sequence in $K$, we arrange that $G(a_\rho) \pconv 0$.

By Proposition~\ref{mainlemmah}, $\ddeg_{\bm a} G=1$.
Taking $g_\rho \in K$ with $v(g_\rho)=\gamma_\rho$ we have
$\ddeg G_{+a_\rho, \x g_\rho} = 1$, eventually. By removing some initial terms of the sequence, we arrange that this holds for all $\rho$ and that $v(a-a_\rho)=\gamma_\rho$ for all $\rho$.
By $\d$-henselianity, we have $z_\rho \in K$ with $G(z_\rho)=0$ and $a_\rho - z_\rho \preccurlyeq g_\rho$.
From $a-a_\rho \asymp g_\rho$ we get $a-z_\rho \preccurlyeq g_\rho$.

Let $r\ges 1$ be the order of $G$.
By \cite[Lemma~2.2.19]{adamtt}, $(\gamma_\rho)$ is cofinal in $v(a-K)$, so there are indices $\rho_0<\rho_1<\dots<\rho_{r+1}$ such that $a-z_{\rho_j} \prec a-z_{\rho_i}$, for $0 \les i<j \les r+1$.
Then
\[
z_{\rho_i}-z_{\rho_{i-1}}\ \asymp\ a-z_{\rho_{i-1}}\ \succ\ a-z_{\rho_i}\ \asymp\ z_{\rho_{i+1}}-z_{\rho_i},\ \text{for}\ 1 \les i \les r.
\]
We have $a-z_{\rho_{r+1}} \prec a-z_{\rho_0} \prece g_{\rho_0}$, so $z_{\rho_0}-z_{\rho_{r+1}} \prece g_{\rho_0}$, and thus also $a_{\rho_0}-z_{\rho_{r+1}} \prece g_{\rho_0}$.
Hence
\[
\ddeg G_{+z_{\rho_{r+1}}, \x g_{\rho_0}}\ =\ \ddeg G_{+z_{\rho_0}, \x g_{\rho_0}}\ =\ \ddeg G_{+a_{\rho_0}, \times g_{\rho_0}}\ =\ 1
\]
by Corollary~\ref{adh6.6.6}.
Thus, with $z_{\rho_i}$ in the role of $y_i$, for $0 \les i \les r+1$, and $g_{\rho_0}$ in the role of $g$, we have reached a contradiction with \cite[Lemma~7.5.5]{adamtt}.
\end{proof}

The following generalizes \cite[Theorem~1.3]{dhfinrank}, removing the assumption on the value group.
Its proof is the same, except for the use of the henselization.
\begin{thm}\label{dhensel}
Suppose that $K$ is asymptotic and $\bm k$ is linearly surjective.
Then $K$ has a $\d$-henselization which has no proper differential subfield containing $K$ that is $\d$-henselian.
In particular, any two $\d$-henselizations of $K$ are isomorphic over $K$.
\end{thm}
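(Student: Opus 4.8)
The plan is to exhibit a $\d$-henselization as a maximal differentially algebraic immediate extension. By Zorn's Lemma, among the differentially algebraic immediate extensions of $K$ (all with small derivation) there is one, call it $K^{\dh}$, that is maximal with respect to inclusion. Then $K^{\dh}$ has no proper differentially algebraic immediate extension, so it is $\d$-algebraically maximal, and since $\bm k_{K^{\dh}}=\bm k$ is linearly surjective, $K^{\dh}$ is $\d$-henselian by \cite[Theorem~7.0.1]{adamtt}. Moreover $K^{\dh}$ is henselian (being, a fortiori, algebraically maximal) and asymptotic, as an immediate extension of the asymptotic field $K$ \cite[Lemmas~9.4.2 and 9.4.5]{adamtt}. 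It remains to check the universal embedding property and minimality.

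For the embedding property, let $M$ be any immediate $\d$-henselian extension of $K$. Then $M$ is asymptotic, hence $\d$-algebraically maximal by Theorem~\ref{dhdalgmax}, hence henselian, and $\bm k_M=\bm k$ is linearly surjective. By Zorn, choose a valued differential subfield $F$ with $K\subseteq F\subseteq K^{\dh}$ and an embedding $\iota\colon F\to M$ over $K$ maximal with respect to extension. The henselization of $F$ inside $K^{\dh}$ is algebraic over $F$, hence a valued differential subfield of $K^{\dh}$, and $\iota$ extends to it since $M$ is henselian; so by maximality $F$ is henselian, and it is asymptotic with $\bm k_F$ linearly surjective. Suppose $F\neq K^{\dh}$ and pick $a\in K^{\dh}\setminus F$. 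Since $K^{\dh}/F$ is immediate, $v(a-F)$ has no largest element, so $a$ is a pseudolimit of a divergent pc-sequence $(a_\rho)$ in $F$; as $a$ is differentially algebraic over $F$, applying Lemma~\ref{adh6.8.1} to a nonzero $\d$-polynomial over $F$ killing $a$ shows $(a_\rho)$ is of $\d$-algebraic type over $F$, with a minimal $\d$-polynomial $G$. Transporting along $\iota$ and applying Lemma~\ref{dalgmaxplimroot} to $\iota(F)$ and $M$ yields $b\in M$ with $\iota(a_\rho)\pconv b$ and $\iota(G)(b)=0$. Now Lemma~\ref{adh6.9.3} produces an immediate extension $F\langle a\rangle$ of $F$ with $G(a)=0$ and $a_\rho\pconv a$, together with a universal property; applied once with the concrete $a\in K^{\dh}$ this realizes $F\langle a\rangle$ as a subfield of $K^{\dh}$ properly containing $F$, and applied once with $b\in M$ it gives an embedding $F\langle a\rangle\to M$ extending $\iota$. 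Composing, we extend $\iota$ to an embedding over $K$ of a subfield of $K^{\dh}$ strictly larger than $F$, contradicting maximality. Hence $F=K^{\dh}$, so $K^{\dh}$ is a $\d$-henselization.

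For minimality, suppose $N$ is a differential subfield with $K\subseteq N\subseteq K^{\dh}$ that is $\d$-henselian. Being immediate over $K$, $N$ is asymptotic, so $\d$-algebraically maximal by Theorem~\ref{dhdalgmax}. But $K^{\dh}$ is differentially algebraic and immediate over $N$: if it contained some $a\notin N$, then $N\langle a\rangle\subseteq K^{\dh}$ would be a proper differentially algebraic immediate extension of $N$, contradicting $\d$-algebraic maximality of $N$. Hence $N=K^{\dh}$. Finally, uniqueness: any $\d$-henselization $L$ of $K$ embeds over $K$ into $K^{\dh}$, so $L$ is differentially algebraic over $K$, and being $\d$-henselian and asymptotic it is $\d$-algebraically maximal by Theorem~\ref{dhdalgmax}; thus any two $\d$-henselizations are $\d$-algebraically maximal differentially algebraic immediate extensions of $K$, hence isomorphic over $K$ by Theorem~\ref{maximmed}.

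I expect the main obstacle to be the embedding step of the second paragraph: ensuring the hypotheses of Lemma~\ref{dalgmaxplimroot} (asymptotic and henselian base with linearly surjective residue field) genuinely hold for the maximal partial domain $F$ — which is why the henselization trick is needed — and correctly using the universal property of the abstract extension $F\langle a\rangle$ from Lemma~\ref{adh6.9.3} in both directions (into $K^{\dh}$ and into $M$) to upgrade a pseudolimit in $M$ into an actual extension of the embedding.
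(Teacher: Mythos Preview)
Your overall strategy is sound and close to the paper's, but there is a genuine gap in the embedding step. You pick $a \in K^{\dh} \setminus F$, obtain a divergent pc-sequence $(a_\rho)$ in $F$ with $a_\rho \pconv a$, and let $G$ be a minimal $\d$-polynomial of $(a_\rho)$ over $F$. You then invoke the universal property of Lemma~\ref{adh6.9.3} ``with the concrete $a\in K^{\dh}$'' to realize the abstract $F\langle a\rangle$ inside $K^{\dh}$. But that universal property requires $G$ to vanish at the target element, and there is no reason your concrete $a$ satisfies $G(a)=0$: you only know that \emph{some} nonzero $\d$-polynomial over $F$ kills $a$, not that the \emph{minimal} $\d$-polynomial of $(a_\rho)$ does. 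So as written you cannot identify the abstract $F\langle a\rangle$ with a subfield of $K^{\dh}$.

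The repair is symmetric to what you already did on the $M$ side: apply Lemma~\ref{dalgmaxplimroot} to $F$ and $K^{\dh}$ (the latter is $\d$-algebraically maximal with linearly surjective residue field, as you established in your first paragraph) to obtain $a' \in K^{\dh}$ with $a_\rho \pconv a'$ and $G(a')=0$. Then Lemma~\ref{adh6.9.3} embeds the abstract $F\langle a\rangle$ into $K^{\dh}$ via $a \mapsto a'$ and into $M$ via $a \mapsto b$; composing the inverse of the first with the second extends $\iota$ to a strictly larger subfield of $K^{\dh}$, giving the contradiction. This is precisely how the paper proceeds, by pointing to the proof of Theorem~\ref{maximmed}, where Lemma~\ref{dalgmaxplimroot} is applied on \emph{both} sides. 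The paper also differs in that it obtains $K^{\dh}$ together with its minimality property directly from \cite[Corollary~9.4.11]{adamtt}, rather than building $K^{\dh}$ via Zorn and then deducing minimality from Theorem~\ref{dhdalgmax} as you do; your route for that part is correct and more self-contained.
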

\begin{proof}
We can assume that $K$ is henselian, as $K$ has a henselization that embeds (uniquely) over $K$ into any $\d$-henselian extension of $K$.
By \cite[Corollary~9.4.11]{adamtt} we have an immediate asymptotic $\d$-henselian extension $K^{\dh}$ of $K$ that is $\d$-algebraic over $K$ and has no proper differential subfield containing $K$ that is $\d$-henselian.
%By Theorems~\ref{dhdalgmax} and \ref{maximmed}, there is up to isomorphism over $K$ just one such extension.

Let $L$ be an immediate $\d$-henselian extension of $K$; then $L$ is asymptotic by \cite[Lemmas~9.4.2 and 9.4.5]{adamtt}.
To see that $K^{\dh}$ embeds into $L$ over $K$, use an argument similar to that in the proof of Theorem~\ref{maximmed}, as $K^{\dh}$ and $L$ are $\d$-algebraically maximal by Theorem~\ref{dhdalgmax}.
Thus $K^{\dh}$ is a $\d$-henselization of $K$ and any $\d$-henselization of $K$ is isomorphic over $K$ to $K^{\dh}$. 
\end{proof}

In fact, the argument shows that $K^{\dh}$ as in the proof of Theorem~\ref{dhensel} embeds over $K$ into any (not necessarily immediate) asymptotic $\d$-henselian extension of $K$.
This corollary has the same proof as \cite[Corollary~4.6]{dhfinrank}.
\begin{cor}\label{mindhensel}
If $K$ is asymptotic and $\bm k$ is linearly surjective, then any immediate $\d$-henselian extension of $K$ that is $\d$-algebraic over $K$ is a $\d$-henselization of~$K$.
\end{cor}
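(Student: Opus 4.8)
The plan is to reduce everything to the $\d$-henselization $K^{\dh}$ produced in the proof of Theorem~\ref{dhensel} and to show that any immediate $\d$-henselian extension $L$ of $K$ that is $\d$-algebraic over $K$ is isomorphic to $K^{\dh}$ over $K$, hence itself a $\d$-henselization. First I would record that such an $L$ is asymptotic by \cite[Lemmas~9.4.2 and 9.4.5]{adamtt}, and that its differential residue field $\bm k_L = \bm k$ is linearly surjective since $L$ is immediate over $K$. In particular $L$ is an immediate $\d$-henselian extension of $K$, so the universal property of $K^{\dh}$ supplies an embedding $\iota \colon K^{\dh} \to L$ over $K$.

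Next, identify $K^{\dh}$ with its image $\iota(K^{\dh})$, a differential subfield of $L$ containing $K$. Since both $K^{\dh}$ and $L$ are immediate extensions of $K$, the extension $L$ of $K^{\dh}$ is immediate; and since $L$ is $\d$-algebraic over $K$, it is $\d$-algebraic over $K^{\dh}$. But $K^{\dh}$ is asymptotic and $\d$-henselian, hence $\d$-algebraically maximal by Theorem~\ref{dhdalgmax}, so it has no proper $\d$-algebraic immediate extension. Therefore $L = K^{\dh}$, i.e.\ $\iota$ is an isomorphism over $K$. As $K^{\dh}$ is a $\d$-henselization of $K$ and this property transfers along isomorphisms over $K$, it follows that $L$ is a $\d$-henselization of $K$.

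All of these steps are routine once Theorems~\ref{dhdalgmax} and \ref{dhensel} are available; the one point requiring a little care is the verification that $L$ is an immediate \emph{and} $\d$-algebraic extension of the image of $K^{\dh}$, since that is exactly what lets us invoke $\d$-algebraic maximality of $K^{\dh}$ to conclude that $\iota$ is surjective. This is the crux of the argument, and it is the same as the proof of \cite[Corollary~4.6]{dhfinrank}.
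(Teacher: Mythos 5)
Your proof is correct and follows essentially the same argument as the paper (which, as you note, copies that of \cite[Corollary~4.6]{dhfinrank}): embed $K^{\dh}$ into $L$ over $K$ via the universal property, then use $\d$-algebraic maximality of $K^{\dh}$ (from Theorem~\ref{dhdalgmax}) to conclude surjectivity, hence $L\cong K^{\dh}$. The paper's version is terser but the substance is identical.
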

\begin{proof}
Let $K^{\dh}$ be the $\d$-henselization of $K$ from the proof of Theorem~\ref{dhensel}. Then $K^{\dh}$ is asymptotic, so is $\d$-algebraically maximal by Theorem~\ref{dhdalgmax}.
Hence any embedding $K^{\dh}\to L$ over $K$ into an immediate $\d$-algebraic extension $L$ of $K$ is surjective.
\end{proof}

\subsection{Additional results}\label{sec:main:add}

We also record versions of the above results relativized to $\d$-polynomials of a given order.
In these results, we assume that $\Gamma$ is divisible but not that $K$ is henselian.
The proofs are the same as above, except for using Proposition~\ref{mainlemmadiv} in place of Proposition~\ref{mainlemmah} and not needing to use henselizations.

To state these results, we make some definitions.
If $E$ and $F$ are differential fields, then we say that $F$ is \emph{$r$-differentially algebraic} (\emph{$r$-$\d$-algebraic} for short) over $E$ if for each $a \in F$ there are $a_1, \dots, a_n \in F$ such that $a \in E\langle a_1, \dots, a_n \rangle$ and, for $i=0, \dots, n-1$, $a_{i+1}$ is $\d$-algebraic over $E\langle a_1, \dots, a_i \rangle$ with minimal annihilator of order at most $r$.
It is routine to prove that if $L$ is $r$-$\d$-algebraic over $F$ and $F$ is $r$-$\d$-algebraic over $E$, then $L$ is $r$-$\d$-algebraic over $E$.

\begin{defn}
We call $K$ \emph{$r$-differential-algebraically maximal} (\emph{$r$-$\d$-algebraically maximal} for short) if it has no proper immediate $r$-$\d$-algebraic extension.
\end{defn}
By Zorn, $K$ has an immediate $r$-$\d$-algebraic extension that is $r$-$\d$-algebraically maximal.
Note that $K$ is $\d$-algebraically maximal if and only if it is $r$-$\d$-algebraically maximal for all $r$.
In addition, if the derivation induced on $\bm k$ is nontrivial, then by Lemmas~\ref{adh6.8.1} and \ref{adh6.9.3} $K$ is $r$-$\d$-algebraically maximal if and only if every pc-sequence in $K$ with minimal $\d$-polynomial over $K$ of order at most $r$ has a pseudolimit in $K$.

Note that $K$ being $0$-$\d$-algebraically maximal means that it has no proper immediate valued differential field extension with small derivation that is algebraic over $K$.
Since each algebraic field extension of $K$, given any valuation extending that of $K$ and the unique derivation extending that of $K$, has small derivation \cite[Proposition~6.2.1]{adamtt}, $K$ is $0$-$\d$-algebraically maximal if and only if it is algebraically maximal \emph{as a valued field}.
Thus the results below for $r=0$ follow from the corresponding results for valued fields and hence we may assume that $r \ges 1$, so the derivation induced on $\bm k$ is nontrivial when $\bm k$ is $r$-linearly surjective, as was used in the preceding proofs.

\begin{thm}
If $K$ is asymptotic, $\Gamma$ is divisible, and $\bm k$ is $r$-linearly surjective, then any two $r$-$\d$-algebraically maximal $r$-$\d$-algebraic immediate extensions of $K$ are isomorphic over~$K$.
\end{thm}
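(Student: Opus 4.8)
The plan is to mirror the proof of the first statement in Theorem~\ref{maximmed}, with ``$\d$-algebraically maximal'' replaced by ``$r$-$\d$-algebraically maximal'' and ``$\d$-algebraic'' by ``$r$-$\d$-algebraic,'' and with Proposition~\ref{mainlemmadiv} used in place of Proposition~\ref{mainlemmah} so that no henselization is needed. Concretely: since the case $r = 0$ follows from the uniqueness of maximal immediate extensions of valued fields (Kaplansky), we may assume $r \ges 1$, so the derivation induced on $\bm k$ is nontrivial. Let $L_0$ and $L_1$ be $r$-$\d$-algebraically maximal $r$-$\d$-algebraic immediate extensions of $K$. By Zorn's Lemma, fix a maximal isomorphism $\mu \colon F_0 \to F_1$ over $K$ between valued differential subfields $F_i$ with $K \subseteq F_i \subseteq L_i$ such that each $F_i$ is $r$-$\d$-algebraic over $K$. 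Each $F_i$ is asymptotic by \cite[Lemmas~9.4.2 and 9.4.5]{adamtt} and $\bm k_{F_i}$ is $r$-linearly surjective since $F_i$ is an immediate extension of $K$.

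First I would show $F_0 = L_0$ (equivalently $F_1 = L_1$) by contradiction. If $F_0 \neq L_0$, then since $L_0$ is $r$-$\d$-algebraic over $K$ and hence over $F_0$, and $F_0 \subsetneq L_0$, the field $F_0$ is not $r$-$\d$-algebraically maximal as a base, so there is a proper immediate $r$-$\d$-algebraic extension of $F_0$ inside $L_0$; pick an element of it generating a simple step, yielding a pc-sequence $(a_\rho)$ in $F_0$ that is of $\d$-algebraic type over $F_0$ with a minimal $\d$-polynomial $G$ over $F_0$ of order $\les r$. (If instead every such $(a_\rho)$ were convergent in $F_0$, then $F_0$ would already contain the relevant simple extensions, contradicting properness — this is the analogue of the remark before the theorem characterizing $r$-$\d$-algebraic maximality via pc-sequences.) Now apply Proposition~\ref{mainlemmadiv} to conclude $\ddeg_{\bm a} G = 1$; this is where divisibility of $\Gamma$ is used, and it is why henselianity and henselizations are not needed here. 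Then run the argument of Lemma~\ref{dalgmaxplimroot} verbatim with $F_0$ in place of $K$ and $L_0$ in place of $L$ — using that $L_0$ is $\d$-algebraically maximal \emph{relative to order $\les r$} together with $\bm k_{L_0}$ being $r$-linearly surjective (hence $L_0$ is $r$-$\d$-henselian of the relevant order by \cite[Theorem~7.0.1]{adamtt}), and that $v(G, a)$-estimates and \cite[Lemma~7.3.1]{adamtt} only concern $\d$-polynomials of order $\les r$ — to obtain $f_0 \in L_0$ with $a_\rho \pconv f_0$ and $G(f_0) = 0$. Symmetrically (extending $\mu$ to $F_0\{Y\} \to F_1\{Y\}$ fixing $Y$) get $f_1 \in L_1$ with $\mu(a_\rho) \pconv f_1$ and $\mu(G)(f_1) = 0$. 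By Lemma~\ref{adh6.9.3}, $\mu$ extends to an isomorphism $F_0\langle f_0 \rangle \to F_1\langle f_1 \rangle$, and $F_0\langle f_0\rangle$ is $r$-$\d$-algebraic over $F_0$ hence over $K$, contradicting maximality of $\mu$. Therefore $F_0 = L_0$ and $F_1 = L_1$, so $\mu \colon L_0 \to L_1$ is the desired isomorphism over $K$.

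The main obstacle is verifying that the order-$\les r$ bookkeeping is consistent throughout: that the simple-step construction producing $(a_\rho)$ really yields a minimal $\d$-polynomial of order $\les r$ (so Proposition~\ref{mainlemmadiv} applies), and that each invocation of Lemma~\ref{dalgmaxplimroot}'s internals — the Newton-diagram/dominant-degree estimates, \cite[Lemma~7.3.1]{adamtt}, and the solvability in $L_i$ — only ever touches $\d$-polynomials of order $\les r$, so that $r$-$\d$-algebraic maximality of $L_i$ (rather than full $\d$-algebraic maximality) suffices. Granting this, the transitivity of $r$-$\d$-algebraicity (noted just before the theorem) guarantees that the extended subfields $F_i\langle f_i\rangle$ stay in the class over which $\mu$ is required to be maximal, closing the Zorn argument. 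Since the excerpt already records this transitivity and the pc-sequence characterization of $r$-$\d$-algebraic maximality, the proof goes through with only the routine order-tracking changes, exactly as the paper indicates.
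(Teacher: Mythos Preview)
Your proposal is correct and follows essentially the same approach the paper indicates: it mirrors the second statement of Theorem~\ref{maximmed}, substituting Proposition~\ref{mainlemmadiv} for Proposition~\ref{mainlemmah} and dropping the henselization step, with the $r$-$\d$-algebraic bookkeeping handled via transitivity and the pc-sequence characterization of $r$-$\d$-algebraic maximality recorded just before the theorem. One small quibble: for $r=0$, what you need is uniqueness of the henselization (the algebraically maximal \emph{algebraic} immediate extension in equicharacteristic~$0$), not Kaplansky's theorem on maximal immediate extensions.
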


The proof of \cite[Theorem~7.0.1]{adamtt} shows that if $K$ is $r$-$\d$-algebraically maximal and $\bm k$ is $r$-linearly surjective, then $K$ is $r$-$\d$-henselian.
Conversely:
\begin{thm}\label{rdhrdalgmax}
If $K$ is asymptotic and $r$-$\d$-henselian, and $\Gamma$ is divisible, then $K$ is $r$-$\d$-algebraically maximal.
\end{thm}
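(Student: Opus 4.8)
The approach is to follow the proof of Theorem~\ref{dhdalgmax} essentially word for word, with Proposition~\ref{mainlemmadiv} used in place of Proposition~\ref{mainlemmah}; this substitution is exactly what the hypotheses here permit, since $\Gamma$ is now assumed divisible and Proposition~\ref{mainlemmadiv} asks only that $\bm k$ be $r$-linearly surjective, which is ($r$DH1). By the discussion preceding the statement we may assume $r \ges 1$, so $\bm k$ has nontrivial induced derivation, and it then suffices to show that every pc-sequence $(a_\rho)$ in $K$ whose minimal $\d$-polynomial $G$ over $K$ has order at most $r$ has a pseudolimit in $K$. I would also note at the outset that $K$ is henselian, since the order-$0$ instances of ($r$DH2) are a form of Hensel's Lemma.

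So suppose toward a contradiction that $(a_\rho)$ is a pc-sequence in $K$ with minimal $\d$-polynomial $G$ over $K$ of order at most $r$ and no pseudolimit in $K$. Then Lemma~\ref{adh6.9.3} yields a proper immediate extension $K\langle a\rangle$ of $K$ with $a_\rho \pconv a$ and $G(a)=0$; since $K$ is henselian, hence algebraically maximal, $G$ has some order $s$ with $1 \les s \les r$, and after passing to an equivalent pc-sequence we may assume $G(a_\rho) \pconv 0$. Because $\Gamma$ is divisible, $\bm k$ is $s$-linearly surjective, and $\ord G = s$, Proposition~\ref{mainlemmadiv} gives $\ddeg_{\bm a} G = 1$. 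From this point the argument is the endgame of the proof of Theorem~\ref{dhdalgmax}, with $s$ playing the role of ``$r$'' there: choose $g_\rho \in K$ with $v(g_\rho)=\gamma_\rho$, so that, discarding initial terms, $\ddeg G_{+a_\rho,\x g_\rho}=1$ and $v(a-a_\rho)=\gamma_\rho$ for all $\rho$; use $s$-$\d$-henselianity to get $z_\rho \in K$ with $G(z_\rho)=0$ and $a_\rho - z_\rho \prece g_\rho$, hence $a-z_\rho \prece g_\rho$; use \cite[Lemma~2.2.19]{adamtt} to find indices $\rho_0<\dots<\rho_{s+1}$ with $a-z_{\rho_j}\prec a-z_{\rho_i}$ for $0 \les i<j\les s+1$; and apply Corollary~\ref{adh6.6.6} to obtain $\ddeg G_{+z_{\rho_{s+1}},\x g_{\rho_0}}=\ddeg G_{+a_{\rho_0},\x g_{\rho_0}}=1$. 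Taking the $z_{\rho_i}$ and $g_{\rho_0}$ in the roles of $y_0,\dots,y_{s+1}$ and $g$ then contradicts the order-$s$ form of \cite[Lemma~7.5.5]{adamtt}, which requires only $s$-$\d$-henselianity since the $\d$-polynomial at hand has order $s$.

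Because the skeleton coincides with that of Theorem~\ref{dhdalgmax}, the only thing demanding care is the bookkeeping of orders: each appeal — to Proposition~\ref{mainlemmadiv}, to $r$-$\d$-henselianity, and to \cite[Lemma~7.5.5]{adamtt} — is made for a $\d$-polynomial of order $s \les r$, so that the $s$-relativized hypotheses, all consequences of $r$-$\d$-henselianity and $r$-linear surjectivity of $\bm k$, are in force; this is automatic because $G$ and all of its additive and multiplicative conjugates have order exactly $s$. The main (if modest) obstacle I anticipate is confirming that \cite[Lemma~7.5.5]{adamtt} and the lemmas beneath it genuinely relativize to a single fixed order $s$, rather than needing full $\d$-henselianity; inspection of their proofs should settle this.
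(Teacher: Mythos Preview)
Your proposal is correct and follows essentially the same approach as the paper, which simply states that the proof is the same as that of Theorem~\ref{dhdalgmax} but with Proposition~\ref{mainlemmadiv} replacing Proposition~\ref{mainlemmah} (and without the need to pass to a henselization). Your observation that the relativization of \cite[Lemma~7.5.5]{adamtt} to a fixed order requires checking is a fair caveat; the paper also does not spell this out explicitly.
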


We say that an extension $L$ of $K$ is an \emph{$r$-differential-henselization} (\emph{$r$-$\d$-henselization} for short) of $K$ if it is an immediate asymptotic $r$-$\d$-henselian extension of $K$ that embeds over $K$ into any asymptotic $r$-$\d$-henselian extension of $K$.
In the next proof, instead of using \cite[Corollary~9.4.11]{adamtt} we let $K^{\dh}$ be any $r$-$\d$-algebraically maximal immediate $r$-$\d$-algebraic extension of $K$, since $K^{\dh}$ is $r$-$\d$-henselian and no proper differential subfield of $K^{\dh}$ containing $K$ is  $r$-$\d$-henselian by Theorem~\ref{rdhrdalgmax}.
%For the following, we need that \cite[Corollary~9.4.11]{adamtt} goes through with ``$r$-linearly surjective,'' ``$r$-$\d$-henselian,'' and ``$r$-$\d$-algebraic'' replacing ``linearly surjective,'' ``$\d$-henselian,'' and ``$\d$-algebraic'' respectively, if $\Gamma$ is divisible.
%To see this, if $K$ is as in Theorem~\ref{rdhensel} and $L$ is  any $r$-$\d$-algebraically maximal immediate $r$-$\d$-algebraic extension $K$, then $L$ is $r$-$\d$-henselian and has no proper $r$-$\d$-henselian differential subfield containing $K$ by Theorem~\ref{rdhrdalgmax}.

\begin{thm}\label{rdhensel}
If $K$ is asymptotic, $\Gamma$ is divisible, and $\bm k$ is $r$-linearly surjective, then $K$ has an $r$-$\d$-henselization, and any two $r$-$\d$-henselizations of $K$ are isomorphic over~$K$.
\end{thm}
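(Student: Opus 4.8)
The plan is to run the proof of Theorem~\ref{dhensel} verbatim, relativized to $\d$-polynomials of order at most~$r$, using Proposition~\ref{mainlemmadiv} in place of Proposition~\ref{mainlemmah} at every point; since $\Gamma$, and hence the value group of every immediate extension of $K$ occurring in the argument, is divisible, this substitution is legitimate and, as in the other results of this subsection, it removes the need to pass to henselizations. For existence, I would invoke the $r$-$\d$-henselian form of \cite[Corollary~9.4.11]{adamtt} (promised above to go through) to obtain an immediate asymptotic $r$-$\d$-henselian extension $K^{r\text{-}\dh}$ of $K$ that is $r$-$\d$-algebraic over $K$ and has no proper differential subfield containing $K$ that is $r$-$\d$-henselian. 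That such an extension exists at all is already clear from Zorn: take any immediate $r$-$\d$-algebraic extension that is $r$-$\d$-algebraically maximal; it is asymptotic by \cite[Lemmas~9.4.2 and 9.4.5]{adamtt} and $r$-$\d$-henselian by the $r$-$\d$-henselian form of \cite[Theorem~7.0.1]{adamtt} together with $r$-linear surjectivity of $\bm k$. The content of \cite[Corollary~9.4.11]{adamtt} is the minimality.

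For the embedding property, let $L$ be an asymptotic $r$-$\d$-henselian extension of $K$; in particular $L$ is henselian and, by the $r$-$\d$-henselian analogue of Theorem~\ref{dhdalgmax} recorded above, $r$-$\d$-algebraically maximal. By Zorn, choose a maximal embedding $\iota\colon F_0\to L$ over $K$ with $K\subseteq F_0\subseteq K^{r\text{-}\dh}$. I claim $F_0=K^{r\text{-}\dh}$. If not, then $F_0$ is a proper differential subfield of $K^{r\text{-}\dh}$ containing $K$, hence is not $r$-$\d$-henselian; as $F_0$ is asymptotic by \cite[Lemmas~9.4.2 and 9.4.5]{adamtt} with $\bm k_{F_0}=\bm k$ still $r$-linearly surjective, the $r$-$\d$-henselian form of \cite[Theorem~7.0.1]{adamtt} forces $F_0$ not to be $r$-$\d$-algebraically maximal. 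Since $\Gamma_{F_0}=\Gamma$ is divisible and the derivation induced on $\bm k$ is nontrivial (we may assume $r\ges 1$), the characterization of $r$-$\d$-algebraic maximality via Lemmas~\ref{adh6.8.1} and \ref{adh6.9.3} provides a divergent pc-sequence $(a_\rho)$ in $F_0$ with minimal $\d$-polynomial $G$ over $F_0$ of order at most $r$. The order-$\les r$ analogue of Lemma~\ref{dalgmaxplimroot} — proved exactly as \cite[Lemma~4.1]{dhfinrank} but with Proposition~\ref{mainlemmadiv} replacing Proposition~\ref{mainlemmah}, so that ``$\Gamma$ divisible'' replaces ``$K$ henselian'' among its hypotheses — applied with base $F_0$ and target $K^{r\text{-}\dh}$ yields $f_0\in K^{r\text{-}\dh}$ with $a_\rho\pconv f_0$ and $G(f_0)=0$, and applied with base $\iota(F_0)$ and target $L$ yields $f_1\in L$ with $\iota(a_\rho)\pconv f_1$ and $\iota(G)(f_1)=0$. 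Then Lemma~\ref{adh6.9.3} extends $\iota$ to an embedding $F_0\langle f_0\rangle\to L$ over $K$ with $F_0\langle f_0\rangle\subseteq K^{r\text{-}\dh}$ and $F_0\langle f_0\rangle\neq F_0$, contradicting maximality of $\iota$. Hence $F_0=K^{r\text{-}\dh}$, so $K^{r\text{-}\dh}$ is an $r$-$\d$-henselization of $K$. For uniqueness: if $L$ is any $r$-$\d$-henselization of $K$, then $K^{r\text{-}\dh}$ embeds over $K$ into $L$ by what we just showed, and $L$ embeds over $K$ into $K^{r\text{-}\dh}$ since $K^{r\text{-}\dh}$ is asymptotic and $r$-$\d$-henselian; the composite $K^{r\text{-}\dh}\to K^{r\text{-}\dh}$ over $K$ has image an $r$-$\d$-henselian differential subfield of $K^{r\text{-}\dh}$ containing $K$, so by minimality it is onto, whence the embedding $L\to K^{r\text{-}\dh}$ is onto and $L\cong K^{r\text{-}\dh}$ over $K$.

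The back-and-forth itself is routine; the real work lies in verifying that the cited ingredients genuinely relativize. The most substantial point is re-reading the proof of \cite[Corollary~9.4.11]{adamtt} to confirm that building the chain of immediate extensions by adjoining only pseudolimits of pc-sequences with minimal $\d$-polynomial of order at most $r$ still terminates in a minimal such extension. Next is the bookkeeping around $r$-$\d$-algebraicity: an intermediate field $F_0$ between $K$ and the $r$-$\d$-algebraic extension $K^{r\text{-}\dh}$ need not visibly be $r$-$\d$-algebraic over $K$, which is why the argument above extracts a suitable pc-sequence from failure of $r$-$\d$-algebraic maximality (via \cite[Theorem~7.0.1]{adamtt}) rather than directly from an element of $K^{r\text{-}\dh}\setminus F_0$. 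Finally, one subtlety merits care: the target $L$ need not have divisible value group, so its $r$-$\d$-algebraic maximality is not literally an instance of the relativized Theorem~\ref{dhdalgmax}; but $L$ is henselian (being $r$-$\d$-henselian), so one recovers it by running the proof of Proposition~\ref{mainlemmah} — Lemma~\ref{mdpac}, then Proposition~\ref{mainlemmadiv} over $K^{\ac}$ (here $L^{\ac}$), then Lemma~\ref{ddegbasic}(vii) — while tracking that minimal $\d$-polynomials retain order at most $r$, which then feeds the remainder of the proof of Theorem~\ref{dhdalgmax}.
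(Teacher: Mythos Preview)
Your proof is correct and follows the paper's own approach, which is simply to say that the proof of Theorem~\ref{dhensel} goes through verbatim with the obvious relativizations, using Proposition~\ref{mainlemmadiv} in place of Proposition~\ref{mainlemmah} and without the henselization step (since $\Gamma$ is already divisible). You in fact supply considerably more detail than the paper does: you spell out the back-and-forth argument for the embedding property, you are explicit about extracting a divergent pc-sequence in $F_0$ from failure of $r$-$\d$-algebraic maximality rather than from an arbitrary element of $K^{r\text{-}\dh}\setminus F_0$, and you correctly observe that the target $L$ need not have divisible value group, so the relativized Theorem~\ref{dhdalgmax} of \S\ref{subsec:main:add} does not apply to $L$ directly---a point the paper passes over in silence. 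Your fix for this last point (using henselianity of $L$ to run Lemma~\ref{mdpac} and Proposition~\ref{mainlemmadiv} over $L^{\ac}$, then feeding the resulting dh-configuration property into the proof of Theorem~\ref{dhdalgmax}) is the right one and requires only that algebraic extensions of $r$-linearly surjective differential fields remain $r$-linearly surjective, which follows from the same argument as \cite[Corollary~5.4.3]{adamtt}.
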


\begin{cor}
If $K$ is asymptotic, $\Gamma$ is divisible, and $\bm k$ is $r$-linearly surjective, then any immediate $r$-$\d$-henselian extension of $K$ that is $r$-$\d$-algebraic over $K$ is an $r$-$\d$-henselization of $K$.
\end{cor}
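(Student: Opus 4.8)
The plan is to imitate the two-line proof of Corollary~\ref{mindhensel}, replacing each ingredient by its order-$r$ counterpart recorded just above. First I would invoke the $r$-relativized form of Theorem~\ref{dhensel} to obtain an $r$-$\d$-henselization $H$ of $K$: thus $H$ is an immediate asymptotic $r$-$\d$-henselian extension of $K$ that embeds over $K$ into every asymptotic $r$-$\d$-henselian extension of $K$. Since $H$ is immediate over $K$ its value group is $\Gamma$, hence divisible, so the $r$-relativized form of Theorem~\ref{dhdalgmax} applies to $H$ and shows $H$ is $r$-$\d$-algebraically maximal.

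Next, given an immediate $r$-$\d$-henselian extension $L$ of $K$ that is $r$-$\d$-algebraic over $K$, I would argue that $L$ is an $r$-$\d$-henselization of $K$ as follows. Being immediate over the asymptotic field $K$, $L$ is asymptotic by \cite[Lemmas~9.4.2 and 9.4.5]{adamtt}, hence an asymptotic $r$-$\d$-henselian extension of $K$, so $H$ embeds over $K$ into $L$; identifying $H$ with its image gives $K\subseteq H\subseteq L$. As $H$ and $L$ are both immediate over $K$, $L$ is immediate over $H$, and $L$ is $r$-$\d$-algebraic over $H$ because a tower $a\in K\langle a_1,\dots,a_n\rangle$ witnessing $r$-$\d$-algebraicity of $a\in L$ over $K$ is also a witnessing tower over the larger base $H$ (any annihilator of $a_{i+1}$ of order $\les r$ over $K\langle a_1,\dots,a_i\rangle$ remains a nonzero annihilator of order $\les r$ over $H\langle a_1,\dots,a_i\rangle$, so the minimal annihilator over the latter field has order $\les r$). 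Thus $L$ is an immediate $r$-$\d$-algebraic extension of the $r$-$\d$-algebraically maximal field $H$, forcing $L=H$; since $H$ embeds over $K$ into every asymptotic $r$-$\d$-henselian extension of $K$, so does $L$, and as $L$ is moreover an immediate asymptotic $r$-$\d$-henselian extension of $K$, it is an $r$-$\d$-henselization of $K$.

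I do not expect any real obstacle. Beyond citing the $r$-relativized forms of Theorems~\ref{dhensel} and \ref{dhdalgmax} (obtained from Proposition~\ref{mainlemmadiv} exactly as in the non-relative case), the only things to verify are that immediacy passes to the intermediate extension $H\subseteq L$ and that $r$-$\d$-algebraicity is stable under enlarging the base field, both of which are immediate from the definitions — the latter because minimal annihilators cannot increase in order under base extension. This reproduces the proof of Corollary~\ref{mindhensel} essentially verbatim.
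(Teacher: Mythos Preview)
Your proposal is correct and follows essentially the same approach as the paper: take the $r$-$\d$-henselization $H$ of $K$ furnished by the $r$-relativized Theorem~\ref{dhensel}, note it is $r$-$\d$-algebraically maximal by the $r$-relativized Theorem~\ref{dhdalgmax}, embed $H$ into the given $L$, and conclude surjectivity. You have simply made explicit a few details (asymptoticity of $L$, stability of $r$-$\d$-algebraicity under enlarging the base) that the paper leaves to the reader when it says the proofs in \S\ref{subsec:main:add} are the same as for Corollary~\ref{mindhensel}.
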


\section{Newton diagrams}\label{sec:ndiag}

We develop a differential newton diagram method for valued differential fields with small derivation.
This approach is closely modelled on the differential newton diagram method for a certain class of asymptotic fields developed in \cite[\S13.5]{adamtt}.
In \S\ref{sec:ndiag:ddegcut}, we connect this to dominant degree in a cut, adapting two lemmas from \cite[\S13.6]{adamtt}.
The assumption of divisible value group allows us to use the Equalizer Theorem, which underlies this method:
\begin{thm}[{\cite[6.0.1]{adamtt}}]\label{adh6.0.1}
Let $P, Q \in K\{Y\}^{\neq}$ be homogeneous of degrees $m$, $n$, respectively, with $m>n$.
If $(m-n)\Gamma=\Gamma$, then there exists a unique $\alpha \in \Gamma$ such that $v_P(\alpha)=v_Q(\alpha)$.
\end{thm}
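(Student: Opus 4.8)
The plan is to handle uniqueness and existence separately, in both cases working with the function $\psi \coloneqq v_P - v_Q \colon \Gamma \to \Gamma$, whose zeros we are after. By Lemma~\ref{adh6.1.3} (applied to $P$ and to $Q$ and subtracted, together with its second formula), for all $\gamma \neq \delta$ in $\Gamma$ we have $\psi(\gamma) - \psi(\delta) = (m-n)(\gamma - \delta) + o(\gamma - \delta)$; moreover $\psi$ is strictly increasing and $\psi(\gamma) = c + (m-n)\gamma + o(\gamma)$ for $\gamma \in \Gamma^{\neq}$, where $c \coloneqq v(P) - v(Q) = \psi(0)$.

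\emph{Uniqueness.} If $\psi(\alpha) = \psi(\beta) = 0$ with $\alpha \neq \beta$, then $(m-n)(\alpha - \beta) = o(\alpha - \beta)$, so $[\alpha - \beta] = [(m-n)(\alpha - \beta)] < [\alpha - \beta]$, a contradiction. Only $m > n$ is used here, not divisibility.

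\emph{Existence.} This is the crux, and the only place $(m-n)\Gamma = \Gamma$ enters. I would begin by solving the leading-order equation: since $\Gamma$ is torsion-free and $(m-n)$-divisible, there is a unique $\alpha_0 \in \Gamma$ with $(m-n)\alpha_0 = -c$, and then $\psi(\alpha_0) = o(\alpha_0)$. One then corrects $\alpha_0$ repeatedly: given $\gamma$ with $\psi(\gamma) \neq 0$, let $\delta$ be the unique solution of $(m-n)\delta = -\psi(\gamma)$; then $[\delta] = [\psi(\gamma)]$ and $\psi(\gamma + \delta) = \psi(\gamma) + (m-n)\delta + o(\delta) = o(\delta)$, so $[\psi(\gamma + \delta)] < [\psi(\gamma)]$. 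Thus each correction strictly lowers the archimedean class of the error (and of the next correction), so $\psi$ is a strictly increasing ``slope-$(m-n)$'' function whose error can be driven down indefinitely, and $0$ ought to lie in its image.

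The obstacle is that $\Gamma$ need not be complete, so this correction scheme need not terminate after finitely — or even countably — many steps. Forcing termination requires the finer analysis of the functions $v_P$, $v_Q$ carried out in \cite[\S6]{adamtt}: each is determined by the finitely many monomials occurring in $P$, respectively $Q$, and is ``piecewise affine'' of slope $m$, respectively $n$, modulo $o$-errors, so that the two graphs cross at a single point built from the finitely many coefficient valuations $v(P_{\bm i})$, $v(Q_{\bm j})$ by additions and divisions by integers — a point which lies in $\Gamma$ exactly because $(m-n)\Gamma = \Gamma$. Establishing this control on $v_P$ and showing that the correction above closes up inside $\Gamma$, rather than in some completion of $\Gamma$, is the real work; uniqueness and the leading-order reduction are formal consequences of Lemma~\ref{adh6.1.3}.
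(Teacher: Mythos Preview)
The paper does not prove this theorem; it is quoted verbatim from \cite[Theorem~6.0.1]{adamtt} and used as a black box throughout (notably in Lemma~\ref{equalizercor} and Proposition~\ref{ndiag}). So there is no in-paper proof to compare against.

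On the substance of your proposal: your uniqueness argument via Lemma~\ref{adh6.1.3} is correct and is exactly the mechanism behind the strict monotonicity of $v_P - v_Q$. Your existence sketch, however, is not a proof, and you rightly flag the gap yourself. The iterative correction $\gamma \mapsto \gamma + \delta$ with $(m-n)\delta = -\psi(\gamma)$ does drive $[\psi(\gamma)]$ strictly downward, but $[\Gamma]$ need not be well-ordered (or even well-founded in the relevant direction), so transfinite iteration produces a pseudo-Cauchy sequence in $\Gamma$ rather than a limit in $\Gamma$. Your closing paragraph gestures at the right idea --- that $v_P$ is controlled by the finitely many monomials of $P$ --- but the actual argument in \cite[Chapter~6]{adamtt} is considerably more delicate than a piecewise-affine picture: it proceeds by induction on the order of $P$ and $Q$, reducing via additive and multiplicative conjugation and a careful analysis of how $v_P$ decomposes across the homogeneous components. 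The claim that the equalizer is ``built from the finitely many coefficient valuations by additions and divisions by integers'' is morally right but not literally how the proof goes, and making it precise is essentially the content of that chapter.

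In short: uniqueness is fine; existence is a genuine gap that you have correctly identified but not closed. Since the present paper treats the Equalizer Theorem as an imported result, no proof is expected here.
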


\begin{ass}
In this section, $K$ has a monomial group $\f{M}$.
\end{ass}
Let $P$ range over $K\{Y\}^{\neq}$, $f$ and $g$ over $K$, and $\f m$ and $\f n$ over $\f M$.
For $f \in K^{\x}$, let $\f d_f$ be the unique monomial with $\f d_f \asymp f$ and $u_f \coloneqq f/\f d_f \asymp 1$.
The next lemma plays a subtle but important role in this section and the next.
It is based on \cite[Lemma~13.5.4]{adamtt}, but with a somewhat different proof due to the differences between dominant parts and newton polynomials (the appropriate analogue of dominant parts in that setting).

\begin{lem}\label{ddegtrans}
Suppose that $\Gamma$ has no least positive element and $f \prece \f m$.
If $f \prec \f m$, let $u \coloneqq 0$; if $f \asymp \f m$, let $u \coloneqq u_f$.
Then
\[
\ddeg_{\prec \f m} P_{+f}\ =\ \mul(D_{P_{\xf m}})_{+\bar{u}}.
\]
In particular, $\ddeg_{\prec \f m} P = \dmul P_{\xf m}$.
\end{lem}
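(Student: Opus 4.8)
The plan is to reduce everything to computations in the residue field after clearing the dominant monomial of $P_{\xf m}$. First I would recall that $\ddeg_{\prec \f m} P_{+f} = \ddeg_{\ges\gamma} P_{+f}$ for any $\gamma$ with $\gamma > \Gamma_{<v\f m}$-type cut, but more concretely that $\ddeg_{\prec \f m} P_{+f}$ is by definition $\ddeg_{\ca E}P_{+f}$ where $\ca E = \{h : h \prec \f m\}$. Since $v(\ca E)$ has no least element (this is where ``$\Gamma$ has no least positive element'' enters), Lemma~\ref{adh6.6.9} gives $\ddeg_{\prec\f m}P_{+f} = \max\{\dmul (P_{+f})_{\xf n} : \f n \prec \f m\}$. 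So the task is to understand $\dmul (P_{+f})_{\xf n}$ for $\f n$ ranging over monomials with $v\f n > v\f m$ (recall $\f n \prec \f m$ means $v\f n > v\f m$), and show the maximum equals $\mul (D_{P_{\xf m}})_{+\bar u}$.

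The key step is a change of variables: write $(P_{+f})_{\xf n} = (P_{\xf m})_{+(f/\f m),\, \x(\f n/\f m)}$, and set $\f e := \f n/\f m \prec 1$, $a := f/\f m \prece 1$. So I need to compute $\dmul$ of $(P_{\xf m})_{+a,\x\f e}$ for $\f e \prec 1$, where $a \prec 1$ if $f \prec \f m$ and $a \asymp 1$ with $\bar a = \bar u$ if $f \asymp \f m$. Now $P_{\xf m} \in \ca O\{Y\}$ with $P_{\xf m} \asymp 1$, so its dominant part is $D_{P_{\xf m}} = \overline{P_{\xf m}}$. Using Lemma~\ref{adh6.6.5}(i), $D_{(P_{\xf m})_{+a}} = (D_{P_{\xf m}})_{+\bar a} = (D_{P_{\xf m}})_{+\bar u}$, and hence $\dmul (P_{\xf m})_{+a} = \mul (D_{P_{\xf m}})_{+\bar u}$. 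Then Lemma~\ref{adh6.6.5}(iii) applied to $(P_{\xf m})_{+a}$ gives $\ddeg ((P_{\xf m})_{+a})_{\x\f e} \les \dmul (P_{\xf m})_{+a} = \mul(D_{P_{\xf m}})_{+\bar u}$ for any $\f e \prec 1$, so $\dmul ((P_{\xf m})_{+a})_{\x\f e} \les \mul(D_{P_{\xf m}})_{+\bar u}$ too; this gives the ``$\les$'' direction of the desired equality, uniformly in $\f n$.

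For the reverse inequality I would exhibit a single monomial $\f e \prec 1$ (equivalently $\f n \prec \f m$) achieving the bound. Here I'd invoke the behaviour of $\dmul$ under multiplicative conjugation by small monomials together with Corollary~\ref{adh6.6.7}: as $\f e$ shrinks, $\dmul ((P_{\xf m})_{+a})_{\x\f e}$ is nondecreasing, and Lemma~\ref{adh6.6.9}-type reasoning (or a direct look at the Newton-diagram slopes via the Equalizer Theorem) shows it stabilizes at $\mul ((P_{\xf m})_{+a})$ for $\f e$ sufficiently small, since once $v\f e$ is large enough the homogeneous part of $(P_{\xf m})_{+a}$ of degree $\mul$ strictly dominates all higher-degree parts after conjugation. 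Thus $\max_{\f n \prec \f m}\dmul(\cdots) = \mul((P_{\xf m})_{+a}) = \dmul (P_{\xf m})_{+a} = \mul(D_{P_{\xf m}})_{+\bar u}$, completing the equality. The final sentence is the special case $f := 0$, for which $u = 0$ and $(D_{P_{\xf m}})_{+\bar 0} = D_{P_{\xf m}}$, so $\ddeg_{\prec\f m}P = \mul D_{P_{\xf m}} = \dmul P_{\xf m}$.

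The main obstacle I anticipate is the reverse inequality: one must check that the $\dmul$ of the conjugated polynomial actually reaches $\mul$ for small enough $\f e$, rather than merely being bounded above by $\mul(D_{P_{\xf m}})_{+\bar u}$. This requires knowing that shrinking $\f e$ cannot keep the dominant part of higher degree indefinitely — intuitively clear from the Newton-diagram picture (the lowest-degree vertex eventually wins), but it needs either a clean appeal to the monotonicity in Corollary~\ref{adh6.6.7} combined with Lemma~\ref{adh6.6.9}, or a short direct argument comparing $v_{Q_d}(\gamma)$ for the homogeneous parts $Q_d$ of $Q := (P_{\xf m})_{+a}$ as $\gamma \to \infty$, using Lemma~\ref{adh6.1.3} to see that $v_{Q_{\mul}}(\gamma) - v_{Q_d}(\gamma) \to -\infty$ for $d > \mul$. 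I'd present it via the former route to keep the proof short.
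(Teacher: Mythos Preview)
Your overall strategy matches the paper's: reduce to $\f m = 1$ via the substitution $(P_{+f})_{\xf n} = (P_{\xf m})_{+f/\f m,\,\x\f n/\f m}$, set $Q \coloneqq (P_{\xf m})_{+a}$ with $a = f/\f m$, identify $\dmul Q = \mul(D_{P_{\xf m}})_{+\bar u}$ via Lemma~\ref{adh6.6.5}(i), and then prove $\ddeg_{\prec 1} Q = \dmul Q$. Your $\les$ direction via Corollary~\ref{adh6.6.7} (equivalently Lemma~\ref{adh6.6.5}(iii)) is fine and is what the paper does.

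The reverse inequality, however, contains a genuine error. By Corollary~\ref{adh6.6.7}, as $\f e$ shrinks (i.e.\ $v\f e \to +\infty$), the quantity $\dmul Q_{\x\f e}$ is \emph{nonincreasing}, not nondecreasing, and it stabilizes at $\mul Q$, not at $\dmul Q$. These two numbers need not agree: for $Q = Y + \epsilon$ with $\epsilon \prec 1$ one has $\mul Q = 0$ but $\dmul Q = 1$. So your chain
\[
\max_{\f e \prec 1}\dmul Q_{\x\f e}\ =\ \mul Q\ =\ \dmul Q
\]
fails at both equalities. The maximum is approached as $\f e \to 1^-$, not as $\f e \to 0$. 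The paper's argument takes $v g$ small and positive: since $d \coloneqq \dmul Q$ gives $v(Q_d) < v(Q_i)$ for all $i < d$, one picks $g \prec 1$ with $vg$ small enough that $v(Q_d) + (d+1)\,vg < v(Q_i)$ for all $i<d$; then Lemma~\ref{adh6.1.3} forces $v(Q_{d,\x g}) < v(Q_{i,\x g})$ for $i<d$, hence $\dmul Q_{\x g} \ges d$. This is precisely the ``direct argument via Lemma~\ref{adh6.1.3}'' you gesture at parenthetically, except with $\gamma \to 0^+$ rather than $\gamma \to \infty$. (Aside: the Equalizer Theorem is not needed here and invoking it would import a divisibility hypothesis the lemma does not carry.)
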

\begin{proof}
For $\f{n \prec m}$, let $\f{e=nm}^{-1} \in \f M$.
Then
\[
P_{+f, \xf n}\ =\ P_{\xf m, +\f{m}^{-1}f, \xf e},
\]
so by replacing $P$ with $P_{\xf m}$ and $f$ with $\f{m}^{-1}f$, we may assume that $\f m=1$.
Set $Q \coloneqq P_{+f}$, so by Lemma~\ref{adh6.6.5}(i), $D_Q = (D_P)_{+\bar{f}} = (D_P)_{+\bar{u}}$.
Thus $\mul(D_P)_{+\bar u}=\dmul Q$, so it remains to show
\[
\ddeg_{\prec 1} Q\ =\ \dmul Q.
\]

First, $\ddeg_{\prec 1} Q \les \dmul Q$ by Corollary~\ref{adh6.6.7}.
For the other direction, let $d \coloneqq \dmul Q$.
We have $v(Q_d) < v(Q_i)$ for all $i<d$, so take $g \prec 1$ with $vg$ small enough that
\[
v(Q_{d})+(d+1)\, vg\ <\ v(Q_i)
\qquad \text{for all}\ i<d.
\]
It follows that
\[
v(Q_{d})+d\, vg + o(vg)\ <\ v(Q_i)+i\, vg + o(vg)
\qquad \text{for all}\ i<d,
\]
so $v(Q_{d, \x g}) < v(Q_{i, \x g})$ for all $i<d$ by Lemma~\ref{adh6.1.3}.
Hence $\dmul Q_{\x g} \ges d$.
But
\[
\ddeg_{\prec 1} Q\ =\ \max \{\dmul Q_{\x g} : g \prec 1\}
\]
by Lemma~\ref{adh6.6.9}, so $\ddeg_{\prec 1} Q \ges d$, as desired.
\end{proof}

%In the next lemma, we use the notion of \emph{dominant weight} of a $\d$-polynomial $P$, denoted by $\dwt P$.
%This notion plays no role in the rest of the paper, so we refer the interested reader to \cite[\S4.5]{adamtt} for a definition.
%\begin{lem}\label{startmon}
%Let $f \in K^{\x}$, $\f m \coloneqq \f d_f$, and $u \coloneqq u_f$.
%Suppose $P(f)=0$.
%Then $D_{P_{\xf m}}(\bar{u})=0$, and thus $\dwt (P_{\xf m}) \ges 1$ or $D_{P_{\xf m}}$ is not homogeneous.
%\end{lem}
%\begin{proof}
%By replacing $P$ with $P_{\xf m}$ and $f$ with $\f m^{-1}f = u$, we may assume that $f=u \asymp 1$.
%By scaling $P$, we may assume that $P \asymp 1$.
%Then $D_P(\bar f)=\overline{P(f)}=0$.
%Now, if $\dwt(P)=0$, then $D_P \in \bm k[Y]^{\neq}$, but since it has a nonzero root in $\bm k$, it cannot be homogeneous.
%\end{proof}
%
%This lemma motivates several definitions.
%We call $y \in K^{\x}$ an \emph{approximate zero of $P$} if, for $\f m \coloneqq \f d_y$ and $u \coloneqq u_y$, $D_{P_{\xf m}}(\bar{u})=0$.
%If $y$ is an approximate zero of $P$, we define its \emph{multiplicity} to be $\mul(D_{P_{\xf m}})_{+\bar u}$.
%We say that $\f m$ is a \emph{starting monomial for $P$} if $\dwt(P_{\xf m}) \ges 1$ or $D_{P_{\xf m}}$ is not homogeneous.
%In particular, if $\f m$ is a starting monomial for $P$, then $\ddeg P_{\xf m} \ges 1$.
%The previous lemma shows that zeros of $P$ are approximate zeros of $P$, and also that if $y$ is an approximate zero of $P$, then $\f d_y$ is a starting monomial for $P$.

We call $y \in K^{\x}$ an \emph{approximate zero of $P$} if, for $\f m \coloneqq \f d_y$ and $u \coloneqq u_y$, $D_{P_{\xf m}}(\bar{u})=0$.
If $y$ is an approximate zero of $P$, we define its \emph{multiplicity} to be $\mul(D_{P_{\xf m}})_{+\bar u}$.
We call $\f m$ an \emph{algebraic starting monomial for $P$} if $D_{P_{\xf m}}$ is not homogeneous.
In particular, if $\f m$ is an algebraic starting monomial for $P$, then $\ddeg P_{\xf m} \ges 1$.
Note that $\f m$ is an algebraic starting monomial for $P$ if and only if $\f m/\f n$ is an algebraic starting monomial for $P_{\xf n}$.
By Corollary~\ref{adh6.6.7}, $P$ has at most $\deg P-\mul P$ algebraic starting monomials.

\begin{ass}
In the rest of this section, $\Gamma$ is divisible.
\end{ass}
The existence of algebraic starting monomials is an easy corollary of the Equalizer Theorem, and is crucial to what follows.
It corresponds to \cite[Corollary~13.5.6]{adamtt}.
\begin{lem}\label{equalizercor}
Let $P, Q \in K\{Y\}^{\neq}$ be homogeneous of different degrees.
Then there exists a unique $\f m$ such that $D_{(P+Q)_{\xf m}}$ is not homogeneous.
\end{lem}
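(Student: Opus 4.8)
The plan is to pin down exactly which monomials $\f m$ make $D_{(P+Q)_{\xf m}}$ fail to be homogeneous and then read the answer off the Equalizer Theorem. Write $m \coloneqq \deg P$ and $n \coloneqq \deg Q$, and assume without loss of generality that $m>n$. For each $\f m$ we have $(P+Q)_{\xf m}=P_{\xf m}+Q_{\xf m}$, where $P_{\xf m}$ is homogeneous of degree $m$ and $Q_{\xf m}$ is homogeneous of degree $n$; since $P,Q\neq 0$, the dominant parts $D_{P_{\xf m}}$ and $D_{Q_{\xf m}}$ are nonzero and homogeneous of degrees $m$ and $n$ respectively.

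The key claim I would establish is that $D_{(P+Q)_{\xf m}}$ is not homogeneous if and only if $P_{\xf m}\asymp Q_{\xf m}$. If $P_{\xf m}\not\asymp Q_{\xf m}$, say $P_{\xf m}\succ Q_{\xf m}$ (the other case being symmetric), then Lemma~\ref{adh6.6.2}(i) gives $D_{(P+Q)_{\xf m}}=D_{P_{\xf m}}$, which is homogeneous. If instead $P_{\xf m}\asymp Q_{\xf m}$, note that $P_{\xf m}$ and $Q_{\xf m}$ share no monomial $Y^{\bm i}$ (their monomials have total degrees $m$ and $n\neq m$), so there is no cancellation in the sum and $v\big((P+Q)_{\xf m}\big)=\min\{v(P_{\xf m}),v(Q_{\xf m})\}=v(P_{\xf m})$; hence $(P+Q)_{\xf m}\asymp P_{\xf m}\asymp Q_{\xf m}$, and Lemma~\ref{adh6.6.2}(ii) yields $D_{(P+Q)_{\xf m}}=D_{P_{\xf m}}+D_{Q_{\xf m}}$, which has nonzero homogeneous parts in the distinct degrees $m$ and $n$ and so is not homogeneous.

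Finally I would show that there is a unique $\f m$ with $P_{\xf m}\asymp Q_{\xf m}$. By Lemma~\ref{adh4.5.1}(ii) this condition says $v_P(v\f m)=v_Q(v\f m)$. Since $\Gamma$ is divisible we have $(m-n)\Gamma=\Gamma$, so the Equalizer Theorem (Theorem~\ref{adh6.0.1}) furnishes a unique $\alpha\in\Gamma$ with $v_P(\alpha)=v_Q(\alpha)$; as $\f M$ is a monomial group, $v$ restricts to a bijection $\f M\to\Gamma$, and the unique $\f m$ with $v\f m=\alpha$ is the one we want.

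The bulk of this is bookkeeping, and the only spot that needs care is the case $P_{\xf m}\asymp Q_{\xf m}$, where one must check the absence of cancellation before invoking Lemma~\ref{adh6.6.2}(ii); with the homogeneity dichotomy in hand, existence and uniqueness are an immediate consequence of the Equalizer Theorem together with the presence of a monomial group.
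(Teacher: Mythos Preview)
Your argument is correct and follows essentially the same route as the paper: use Lemma~\ref{adh6.6.2}(i) when $P_{\xf m}\not\asymp Q_{\xf m}$, Lemma~\ref{adh6.6.2}(ii) when $P_{\xf m}\asymp Q_{\xf m}$, and invoke the Equalizer Theorem for the unique $\f m$. Your explicit verification that $(P+Q)_{\xf m}\asymp P_{\xf m}$ (no cancellation since $m\neq n$) is a detail the paper leaves implicit when applying Lemma~\ref{adh6.6.2}(ii).
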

\begin{proof}
By Theorem~\ref{adh6.0.1}, there is a unique $\f m$ such that $P_{\xf m} \asymp Q_{\xf m}$.
Then
\[
D_{(P+Q)_{\xf m}}\ =\ D_{P_{\xf m}+Q_{\xf m}}\ =\ D_{P_{\xf m}}+D_{Q_{\xf m}}
\]
by Lemma~\ref{adh6.6.2}(ii), so $D_{(P+Q)_{\xf m}}$ is not homogeneous.
For $\f n \neq \f m$, we have $D_{(P+Q)_{\xf n}}=D_{P_{\xf n}}$ or $D_{(P+Q)_{\xf n}}=D_{Q_{\xf n}}$ by Lemma~\ref{adh6.6.2}(i), since $P_{\xf n} \succ Q_{\xf n}$ or $P_{\xf n} \prec Q_{\xf n}$.
\end{proof}

For $P$ and $Q$ as in Lemma~\ref{equalizercor}, we let $\f e(P, Q)$ denote the unique monomial which that lemma yields and call it the \emph{equalizer for $P$, $Q$}.
We are interested in the case that these two $\d$-polynomials are homogeneous parts of the same $\d$-polynomial.
Let $J \coloneqq \{ j \in \N : P_j \neq 0 \}$ and note that $\ddeg P_{\xf m} \in J$ for all $\f m$.
For distinct $i, j \in J$, let $\f e(P, i,j) \coloneqq \f e(P_i, P_j)$,
and so any algebraic starting monomial for $P$ is of the form $\f e(P,i,j)$ for some distinct $i, j \in J$.
%Note that $\f M$ is (totally) ordered by~$\prece$.

In the next two results, let $\ca E \subseteq K^{\x}$ be $\prece$-closed.
Recall this means that $\ca E \neq \0$ and $f \in \ca E$ whenever $0 \neq f \prece g$ with $g \in \ca E$.
The first result corresponds to \cite[Proposition~13.5.7]{adamtt}.

\begin{prop}\label{ndiag}
There exist $i_0, \dots, i_n \in J$ and equalizers
\[
\f e(P,i_0,i_1)\ \prec\ \f e(P,i_1,i_2)\ \prec \dots \prec\ \f e(P,i_{n-1},i_n)
\]
with $\mul P = i_0 < \dots <i_n = \ddeg_{\ca E} P$ such that
\begin{enumerate}
\item the algebraic starting monomials for $P$ in $\ca E$ are the
$\f e(P,i_m,i_{m+1})$ for $m<n$;
\item for $m<n$ and $\f{m=e}(P,i_m,i_{m+1})$, we have $\dmul P_{\xf m}=i_m$ and $\ddeg P_{\xf m}=i_{m+1}$.
\end{enumerate}
\end{prop}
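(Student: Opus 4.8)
Throughout I would reformulate the problem in terms of the functions $v_{P_j}\colon\Gamma\to\Gamma$ attached to the nonzero homogeneous parts $P_j$ ($j\in J$) via Lemma~\ref{adh4.5.1}(ii), so $v_{P_j}(vf)=v(P_{j,\x f})$. Since homogeneous parts of distinct degrees involve disjoint families of monomials $Y^{\bm i}$, no cancellation occurs among them, so $v(P_{\x f})=\min_{j\in J}v_{P_j}(vf)$ for all $f\in K^\x$. Introduce the set of \emph{active degrees} $A(\gamma):=\{j\in J:v_{P_j}(\gamma)=\min_{k\in J}v_{P_k}(\gamma)\}$. Reading off the definition of the dominant part one finds $\ddeg P_{\x f}=\max A(vf)$, $\dmul P_{\x f}=\min A(vf)$, and that $\f m$ is an algebraic starting monomial for $P$ exactly when $|A(v\f m)|\ges 2$, in which case $\f m=\f e\big(P,\min A(v\f m),\max A(v\f m)\big)$. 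So the proposition is really a combinatorial statement about the finite family $\big(A(\gamma)\big)_\gamma$ over the upward closed set $v(\ca E)$.

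I would next record three facts about this family. First, for $i<j$ in $J$, Lemma~\ref{adh6.1.3} gives that $v_{P_j}-v_{P_i}$ is strictly increasing, and the Equalizer Theorem~\ref{adh6.0.1}---the only place divisibility of $\Gamma$ enters---gives it a unique zero, which is $v\f e(P,i,j)$; hence $v_{P_j}(\gamma)<v_{P_i}(\gamma)\iff\gamma<v\f e(P,i,j)$. Second, Corollary~\ref{adh6.6.7} translates to $\min A(\gamma_1)\ges\max A(\gamma_2)$ whenever $\gamma_1<\gamma_2$, and $\min A(\gamma)\ges\mul P$ always (as $A(\gamma)\subseteq J$). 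Third, for $\gamma$ above all of the finitely many equalizers one has $A(\gamma)=\{\mul P\}$, and such $\gamma$ belong to $v(\ca E)$ since the latter is upward closed.

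The proof itself proceeds by induction on $\deg P-\mul P$, following the pattern of \cite[Proposition~13.5.7]{adamtt}. Put $i_0:=\mul P$ and $d:=\ddeg_{\ca E}P$. If $d=i_0$ (in particular if $P$ is homogeneous), take $n:=0$: an algebraic starting monomial $\f m\in\ca E$ would force $\ddeg P_{\x\f m}>\dmul P_{\x\f m}\ges i_0=d\ges\ddeg P_{\x\f m}$, which is absurd, so there are none. If $d>i_0$, set $\gamma_1:=\max\{v\f e(P,i_0,j):j\in J,\ j>i_0\}$ and $i_1:=\max A(\gamma_1)$. The maximality in the definition of $\gamma_1$ gives $i_0,i_1\in A(\gamma_1)$, hence $i_1>i_0$, $\gamma_1=v\f e(P,i_0,i_1)$, and $\f m_1:=\f e(P,i_0,i_1)$ satisfies $\dmul P_{\x\f m_1}=i_0$ and $\ddeg P_{\x\f m_1}=i_1$; moreover $\gamma_1\in v(\ca E)$ (some $\gamma\in v(\ca E)$ has $\max A(\gamma)=d>i_0$, which forces $\gamma\les\gamma_1$), and $\f m_1$ is the \emph{only} algebraic starting monomial for $P$ in $\ca E$ of valuation $\ges\gamma_1$, because $A(\gamma)=\{i_0\}$ for $\gamma>\gamma_1$. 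I then pass to $Q:=P_{\ges i_1}$ (the sum of the homogeneous parts of $P$ of degree $\ges i_1$), so $\mul Q=i_1$ and $\deg Q=\deg P$ (as $\deg P\ges d\ges i_1$), whence $\deg Q-\mul Q<\deg P-\mul P$. Granting that $Q$ has the same active sets as $P$ at valuations $<\gamma_1$ and active set $\{i_1\}$ at valuations $\ges\gamma_1$, the algebraic starting monomials of $Q$ are exactly those of $P$ of valuation $<\gamma_1$ and $\ddeg_{\ca E}Q=d$; the inductive hypothesis applied to $(Q,\ca E)$ then gives $i_1<i_2<\dots<i_n=d$ and $\f e(P,i_1,i_2)\prec\dots\prec\f e(P,i_{n-1},i_n)$ with the required properties, all these equalizers having valuation $<\gamma_1$, so prepending $i_0$ and $\f m_1=\f e(P,i_0,i_1)$ produces the chain for $P$, and conditions (i) and (ii) are immediate from the active-set comparison.

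The main obstacle is precisely this comparison of the active sets of $Q$ and $P$. That $Q$ acquires no new algebraic starting monomial of valuation $\ges\gamma_1$ reduces to the inequality $v\f e(P,i_1,j)<\gamma_1$ for every $j\in J$ with $j>i_1$; this holds because $j\notin A(\gamma_1)$ (since $i_1=\max A(\gamma_1)$) forces $v_{P_j}(\gamma_1)>v_{P_{i_1}}(\gamma_1)$. That the active sets of $P$ and $Q$ agree below $\gamma_1$ amounts to $\min A(\gamma)\ges i_1$ for every $\gamma<\gamma_1$: were there $j^\ast\in A(\gamma)$ with $i_0<j^\ast<i_1$, then $j^\ast\in A(\gamma)$ together with $\gamma<\gamma_1$ would give $v\f e(P,j^\ast,i_1)<\gamma_1$, so $v_{P_{i_1}}(\gamma_1)>v_{P_{j^\ast}}(\gamma_1)$; combined with $v_{P_{i_0}}(\gamma_1)=v_{P_{i_1}}(\gamma_1)$ this yields $\gamma_1<v\f e(P,i_0,j^\ast)$, contradicting the maximality defining $\gamma_1$. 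Both points hinge on the exact choices of $\gamma_1$ as a maximum and of $i_1$ as $\max A(\gamma_1)$; the remainder is bookkeeping.
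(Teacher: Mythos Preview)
Your argument is correct and takes a route genuinely dual to the paper's. The paper also argues by induction, but on $\ddeg_{\ca E}P-\mul P$ rather than $\deg P-\mul P$: it first locates the \emph{largest} algebraic starting monomial $\f e$ in $\ca E$ by maximising $\f e(P,j,d)$ over $j<d$ (where $d=\ddeg_{\ca E}P$), shows $\ddeg P_{\xf e}=d$ and $\dmul P_{\xf e}=i_{n-1}$, invokes Lemma~\ref{ddegtrans} to get $\ddeg_{\prec\f e}P=i_{n-1}$, and then recurses by \emph{shrinking $\ca E$} to $\{g:g\prec\f e\}$ while keeping $P$ fixed. You instead locate the \emph{smallest} algebraic starting monomial $\f m_1=\f e(P,i_0,i_1)$ first and recurse by \emph{truncating $P$} to $Q=P_{\ges i_1}$ while keeping $\ca E$ fixed. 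The paper's route buys a clean reduction via Lemma~\ref{ddegtrans} with no need to compare two polynomials; your active-set reformulation makes the Newton-diagram combinatorics completely transparent and avoids Lemma~\ref{ddegtrans} altogether, at the cost of the explicit $P$-versus-$Q$ comparison you carry out in the last paragraph.

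One small remark on that last paragraph: the inequality $\min A(\gamma)\ges i_1$ for $\gamma<\gamma_1$ already follows in one line from your ``second fact'' (the translation of Corollary~\ref{adh6.6.7}), since $\min A(\gamma)\ges\max A(\gamma_1)=i_1$. Your direct argument there handles only $i_0<j^\ast<i_1$ and tacitly omits $j^\ast=i_0$, which is of course ruled out by $\gamma<\gamma_1=v\f e(P,i_0,i_1)$; either way the step is fine.
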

\begin{proof}
Let $i$, $j$ range over $J$ and $d \coloneqq \ddeg_{\ca E} P$.
Then $\mul P \les d \les \deg P$, and we proceed by induction on $d-\mul P$.
If $d = \mul P$, then for all $\f m \in \ca E$, $D_{P_{\xf m}}$ is homogeneous of degree $d$, so there is no algebraic starting monomial for $P$ in $\ca E$.

Now assume that $d > \mul P$ and take $i<d$ such that $\f e \coloneqq \f e(P,i,d) \succe \f e(P,j,d)$ for all $j<d$.
First, we show that $\f e \in \ca E$.
We have $P_{d, \xf e} \asymp P_{i, \xf e}$ by the previous lemma, so $v_{P_d}(v\f e)=v_{P_i}(v\f e)$.
By Lemma~\ref{adh6.1.3}, the function $v_{P_d}-v_{P_i}$ is strictly increasing, so for any $g \prec \f e$, we have $v_{P_d}(vg) > v_{P_i}(vg)$, that is, $P_{d, \x g} \prec P_{i, \x g}$.
Hence $\ddeg P_{\x g} < d$.
To obtain $\f e \in \ca E$, take $g \in \ca E$ with $\ddeg P_{\x g} = d$, so $\f e \prece g$.

Next, we show that $\ddeg P_{\xf e}=d$.
If $\ddeg P_{\xf e} = j < d$, then $P_{d, \xf e} \prec P_{j, \xf e}$.
By Lemma~\ref{adh6.1.3} again, the function $v_{P_d}-v_{P_j}$ is strictly increasing, so it follows that $\f e \prec \f e(P,j,d)$, contradicting the maximality of $\f e$.

From this and $P_{i, \xf e} \asymp P_{d, \xf e}$, we get $(D_{P_{\xf e}})_i = D_{P_{i, \xf e}} \neq 0$ and $(D_{P_{\xf e}})_d = D_{P_{d, \xf e}} \neq 0$, and hence $\f e$ is an algebraic starting monomial for $P$.
In fact, $\f e$ is the largest algebraic starting monomial for $P$ in $\ca E$.
Suppose to the contrary that $\f n \in \ca E$ is an algebraic starting monomial for $P$ with $\f{n \succ e}$.
Then $d=\ddeg P_{\xf e} \les \ddeg P_{\xf n}$ by Corollary~\ref{adh6.6.7}, so $\ddeg P_{\xf n}=d$.
It follows that $\f{n=e}(P,j,d)$ for some $j<d$, contradicting the maximality of $\f e$.

If $i > \dmul P_{\xf e}$, then for $j \coloneqq \dmul P_{\xf e}$, the uniqueness in Lemma~\ref{equalizercor} yields $\f e(P,j,d)=\f e$.
By replacing $i$ with $j$, we assume that $i = \dmul P_{\xf e}$.
Then by Lemma~\ref{ddegtrans}, we also have $\ddeg_{\prec \f e} P =i$.
To complete the proof, we apply the inductive assumption with $\{g \in K^{\x} : g \prec \f e\}$ replacing $\ca E$.
\end{proof}

The tuple $(i_0,\dots,i_n)$ from Proposition~\ref{ndiag} is uniquely determined by $K$, $P$, and $\ca E$.
Note that if $\mul P=\ddeg_{\ca E} P$, then $n=0$ and the tuple is $(\mul P)$.
For $1 \les m \les n$, set $\f e_m \coloneqq \f e(P,i_{m-1},i_m)$.
We now show how $\dmul P_{\x g}$ and $\ddeg P_{\x g}$ behave as $g$ ranges over $\ca E$.
This follows from Proposition~\ref{ndiag} in exactly the same way as \cite[Corollary~13.5.8]{adamtt} follows from \cite[Proposition~13.5.7]{adamtt}, using Corollary~\ref{adh6.6.7} instead of \cite[Corollary~11.2.5]{adamtt}.

\begin{cor}\label{ndiagcor}
Suppose that $\mul P \neq \ddeg_{\ca E} P$, so $n \ges 1$. Let $g$ range over $\ca E$.
Then $\dmul P_{\x g}$ and $\ddeg P_{\x g}$ are in $\{i_0,\dots,i_n\}$ and we have:
\begin{align*}
\dmul P_{\x g}\ =\ i_0 \quad &\iff \quad g\ \prece\ \f e_1;\\
\ddeg P_{\x g}\ =\ i_0 \quad &\iff \quad g\ \prec\ \f e_1;\\
\dmul P_{\x g}\ =\ i_m \quad &\iff \quad \f e_m\ \prec\ g\ \prece\ \f e_{m+1}, \qquad (1 \les m < n);\\
\ddeg P_{\x g}\ =\ i_m \quad &\iff \quad \f e_m\ \prece\ g\ \prec\ \f e_{m+1}, \qquad (1 \les m<n);\\
\dmul P_{\x g}\ =\ i_n \quad &\iff \quad \f e_n\ \prec\ g;\\
\ddeg P_{\x g}\ =\ i_n \quad &\iff \quad \f e_n\ \prece\ g.
\end{align*}
\end{cor}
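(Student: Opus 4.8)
The plan is to read off the behaviour of $\dmul P_{\x g}$ and $\ddeg P_{\x g}$ directly from the data furnished by Proposition~\ref{ndiag}, using monotonicity in $vg$ (Corollary~\ref{adh6.6.7}) to squeeze the values on the regions between consecutive equalizers. Since $\dmul P_{\x g}$ and $\ddeg P_{\x g}$ depend only on $vg$ by Lemma~\ref{adh6.6.5}(ii), I would tacitly replace $g$ by the monomial $\f d_g \asymp g$; in particular $D_{P_{\x g}}$ is non-homogeneous exactly when $\f d_g$ is an algebraic starting monomial for $P$, which by Proposition~\ref{ndiag}(i) happens exactly when $g \asymp \f e_m$ for some $m \in \{1, \dots, n\}$ (recall $n \ges 1$ here). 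The two inputs I will invoke repeatedly are: at each equalizer, $\f e_m = \f e(P, i_{m-1}, i_m)$, so Proposition~\ref{ndiag}(ii) yields $\dmul P_{\x \f e_m} = i_{m-1}$ and $\ddeg P_{\x \f e_m} = i_m$; and when $g$ is not $\asymp$ to any $\f e_m$, the $\d$-polynomial $D_{P_{\x g}}$ is homogeneous, so $\dmul P_{\x g} = \ddeg P_{\x g}$.

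First I would pin down this common value in each homogeneous region. If $g \prec \f e_1$, then Corollary~\ref{adh6.6.7} gives $i_0 = \mul P = \mul P_{\x g} \les \ddeg P_{\x g} \les \dmul P_{\x \f e_1} = i_0$, so $\dmul P_{\x g} = \ddeg P_{\x g} = i_0$. If $\f e_m \prec g \prec \f e_{m+1}$ for some $1 \les m < n$, then Corollary~\ref{adh6.6.7} gives $i_m = \ddeg P_{\x \f e_m} \les \dmul P_{\x g} \les \ddeg P_{\x g} \les \dmul P_{\x \f e_{m+1}} = i_m$, so $\dmul P_{\x g} = \ddeg P_{\x g} = i_m$. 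If $\f e_n \prec g$, then, using $g \in \ca E$, Corollary~\ref{adh6.6.7} gives $i_n = \ddeg P_{\x \f e_n} \les \dmul P_{\x g} \les \ddeg P_{\x g} \les \ddeg_{\ca E} P = i_n$, so $\dmul P_{\x g} = \ddeg P_{\x g} = i_n$. Together with the boundary values $(\dmul, \ddeg) P_{\x g} = (i_{m-1}, i_m)$ at $g \asymp \f e_m$, this already establishes $\dmul P_{\x g}, \ddeg P_{\x g} \in \{i_0, \dots, i_n\}$ for all $g \in \ca E$.

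It then remains to assemble the six biconditionals. The regions $g \prec \f e_1$, $\ \f e_m \prec g \prec \f e_{m+1}$ $(1 \les m < n)$, $\ \f e_n \prec g$, and $\ g \asymp \f e_m$ $(1 \les m \les n)$ partition $\ca E$, and on them the pair $(\dmul, \ddeg) P_{\x g}$ takes the values $(i_0, i_0)$, $(i_m, i_m)$, $(i_n, i_n)$, and $(i_{m-1}, i_m)$ respectively; since $i_0 < \dots < i_n$, each stated equivalence is simply the union of those regions on which the relevant coordinate equals the relevant $i_j$ — e.g.\ $\dmul P_{\x g} = i_0$ on $(g \prec \f e_1) \cup (g \asymp \f e_1)$, that is, on $g \prece \f e_1$, whereas $\ddeg P_{\x g} = i_0$ only on $g \prec \f e_1$ (since $\ddeg P_{\x \f e_1} = i_1 > i_0$), and symmetrically at the top end and in the middle. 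The only delicate point, and the step I expect to require the most care, is keeping the half-open intervals straight: whether the equalizer $\f e_m$ contributes to the $\dmul$-equivalence for $i_{m-1}$ (giving $\f e_{m-1} \prec g \prece \f e_m$) or to the $\ddeg$-equivalence for $i_m$ (giving $\f e_m \prece g \prec \f e_{m+1}$). This is dictated precisely by the boundary values $\dmul P_{\x \f e_m} = i_{m-1}$ and $\ddeg P_{\x \f e_m} = i_m$ from Proposition~\ref{ndiag}(ii), so there is no genuine obstacle; the bookkeeping is the same as that by which \cite[Corollary~13.5.8]{adamtt} is deduced from \cite[Proposition~13.5.7]{adamtt}, with Corollary~\ref{adh6.6.7} in place of \cite[Corollary~11.2.5]{adamtt}.
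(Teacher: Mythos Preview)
Your proposal is correct and takes essentially the same approach as the paper, which simply says the result follows from Proposition~\ref{ndiag} in the same way that \cite[Corollary~13.5.8]{adamtt} follows from \cite[Proposition~13.5.7]{adamtt}, using Corollary~\ref{adh6.6.7} in place of \cite[Corollary~11.2.5]{adamtt}. You have in fact written out explicitly the squeeze argument and the region-by-region bookkeeping that the paper leaves to that reference.
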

%\begin{proof}
%We first prove the third equivalence, so let $1 \les m < n$.
%Then for $\f e_m \prec g \prec \f e_{m+1}$, Proposition~\ref{ndiag} and Corollary~\ref{adh6.6.7} give
%\[
%i_m\ =\ \ddeg P_{\xf e_m}\ \les\ \dmul P_{\x g}\ \les\ \ddeg P_{\x g}\ \les\ \dmul P_{\xf e_{m+1}}\ =\ i_m,
%\]
%which yields the right-to-left direction since if $g \asymp \f e_{m+1}$, then $\dmul P_{\x g} = \dmul P_{\xf e_{m+1}} = i_m$.
%For the converse, note that similarly, if $g \prece \f e_m$, then $\dmul P_{\x g} \les \dmul P_{\xf e_m} = i_{m-1}$, and if $g \succ \f e_{m+1}$, then $\dmul P_{\x g} \ges \ddeg P_{\xf e_{m+1}} = i_{m+1}$.
%The fourth equivalence is proved in the same way.
%
%For the first equivalence, if $g \prec \f e_1$, then
%\[
%i_0\ =\ \mul P\ \les\ \dmul P_{\x g}\ \les\ \ddeg P_{\x g}\ \les\ \dmul P_{\xf e_1}\ =\ i_0,
%\]
%and if $g \asymp \f e_1$, then $\dmul P_{\x g} = \dmul P_{\xf e_1} = i_0$.
%The converse follows as in the third equivalence.
%The remaining equivalences are proved similarly.
%\end{proof}

\subsection{Application to dominant degree in a cut}\label{sec:ndiag:ddegcut}

The first lemma is easily adapted from \cite[Lemma~13.6.17]{adamtt} and its corollary corresponds to \cite[Corollary~13.6.18]{adamtt}.
\begin{lem}\label{ndiagddeg}
Suppose that $(a_\rho)$ is a pc-sequence in $K$ with $a_\rho \pconv 0$.
Let
\[
\ca E_{\bm a}\ \coloneqq\ \{g \in K^{\x}: g\prec a_\rho,\ \text{eventually}\}.
\]
\begin{enumerate}
	\item If $\ca E_{\bm a} \neq \0$, then $\ddeg_{\bm a} P = \ddeg_{\ca E_{\bm a}} P$.
	\item If $\ca E_{\bm a}=\0$, then $\ddeg_{\bm a}P=\mul P$.
\end{enumerate}
\end{lem}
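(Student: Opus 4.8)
The plan is to reduce both parts to the single identity $\ddeg_{\bm a} P = \ddeg P_{\x a_\rho}$ for all sufficiently large $\rho$, and then read off the value of $\ddeg P_{\x a_\rho}$ from the Newton-diagram description in Corollary~\ref{ndiagcor}. First I fix an index $\rho_0$ past which $v(a_\rho) = \gamma_\rho$ (valid since $a_\rho \pconv 0$), $(\gamma_\rho)$ is strictly increasing, and $\ddeg_{\ges \gamma_\rho} P_{+a_\rho} = \ddeg_{\bm a} P =: d$. For such $\rho$ the $\prece$-closed set $\{f \in K^\x : vf \ges \gamma_\rho\}$ has $\succ$-largest elements those with $vf = \gamma_\rho$, so by Corollary~\ref{adh6.6.7} the maximum defining $\ddeg_{\ges \gamma_\rho} P_{+a_\rho}$ is attained at some $f \asymp a_\rho$; since $\ddeg (P_{+a_\rho})_{\x f}$ depends only on $vf$ and $(P_{+a_\rho})_{\x a_\rho} = (P_{\x a_\rho})_{+1}$, Lemma~\ref{adh6.6.5}(i) then gives $d = \ddeg (P_{\x a_\rho})_{+1} = \ddeg P_{\x a_\rho}$. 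In the same spirit, if $\ca E_{\bm a} \neq \0$ then every $f \in \ca E_{\bm a}$ satisfies $f \prec a_\rho$ for all large $\rho$, so $\ddeg P_{\x f} \les \dmul P_{\x a_\rho} \les \ddeg P_{\x a_\rho} = d$ by Corollary~\ref{adh6.6.7}; hence $\ddeg_{\ca E_{\bm a}} P \les d$.

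The crux is the dichotomy: \emph{either $d = \mul P$, or there is a monomial $\f e \in \ca E_{\bm a}$ with $\ddeg P_{\x \f e} = d$}. To prove it, suppose $d > \mul P$ and apply Proposition~\ref{ndiag} and Corollary~\ref{ndiagcor} to $P$ on the $\prece$-closed set $\ca E := \{f \in K^\x : vf \ges \gamma_{\rho_1}\}$ for a fixed $\rho_1 > \rho_0$. Since $a_\rho \in \ca E$ for $\rho \ges \rho_1$, we have $\ddeg_{\ca E} P \ges \ddeg P_{\x a_\rho} = d > \mul P$, so the proposition yields $\mul P = i_0 < \dots < i_n = \ddeg_{\ca E} P$ with equalizers $\f e_1 \prec \dots \prec \f e_n$ in $\ca E$. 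Now $d = \ddeg P_{\x a_\rho} \in \{i_0, \dots, i_n\}$, say $d = i_k$; as $d > \mul P = i_0$ we get $k \ges 1$, and Corollary~\ref{ndiagcor} gives $\gamma_\rho = v(a_\rho) \les v\f e_k$ for all large $\rho$, hence $\gamma_\rho < v\f e_k$ for all large $\rho$ by strict monotonicity of $(\gamma_\rho)$. Thus $\f e_k \prec a_\rho$ eventually, i.e.\ $\f e_k \in \ca E_{\bm a}$, and $\ddeg P_{\x \f e_k} = i_k = d$ again by Corollary~\ref{ndiagcor}.

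Both parts now follow. If $\ca E_{\bm a} = \0$, the dichotomy forces $d = \mul P$, which is (ii). If $\ca E_{\bm a} \neq \0$ and $d = \mul P$, then $\mul P \les \ddeg_{\ca E_{\bm a}} P \les d = \mul P$; if $\ca E_{\bm a} \neq \0$ and $d > \mul P$, the dichotomy supplies $\f e \in \ca E_{\bm a}$ with $\ddeg_{\ca E_{\bm a}} P \ges \ddeg P_{\x \f e} = d$, which together with $\ddeg_{\ca E_{\bm a}} P \les d$ gives equality; either way (i) holds. The point that needs care is that $a_\rho$ itself is \emph{not} in $\ca E_{\bm a}$ — its successors are strictly smaller — so it cannot serve directly as a witness for the lower bound $\ddeg_{\ca E_{\bm a}} P \ges d$; the equalizer $\f e_k$, whose valuation is a strict upper bound for the $\gamma_\rho$, is the right witness, and producing it is precisely where the Newton-diagram apparatus of this section is needed.
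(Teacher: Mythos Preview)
Your proof is correct and follows essentially the same approach as the paper: both reduce to the identity $\ddeg_{\bm a} P = \ddeg P_{\x a_\rho}$ for large $\rho$ and then use the Newton-diagram description (Proposition~\ref{ndiag}/Corollary~\ref{ndiagcor}) to locate an equalizer $\f e_k \in \ca E_{\bm a}$ witnessing the lower bound. The only organizational differences are that you reach $d = \ddeg P_{\x a_\rho}$ via $(P_{+a_\rho})_{\x a_\rho} = (P_{\x a_\rho})_{+1}$ and Lemma~\ref{adh6.6.5}(i) where the paper invokes Lemma~\ref{adh6.6.10}, and that your dichotomy handles case~(ii) by contrapositive (no equalizer $\Rightarrow d=\mul P$) whereas the paper argues directly from Lemma~\ref{adh6.1.3} using cofinality of $(\gamma_\rho)$ in $\Gamma$; both routes are equally short.
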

\begin{proof}
Set $\gamma_\rho \coloneqq v(a_{\rho+1}-a_\rho)$.
By removing some initial terms, we may assume that $\gamma_\rho$ is strictly increasing and $v(a_\rho) = \gamma_\rho \in \Gamma$ for all $\rho$.
Then by Lemma~\ref{adh6.6.10},
\[
\ddeg_{\ges \gamma_\rho} P_{+a_\rho}\ =\ \ddeg_{\ges\gamma_\rho} P\ =\ \ddeg P_{\x a_\rho},
\]
so $\ddeg_{\bm a} P$ is the eventual value of $\ddeg P_{\x a_\rho}$.
If $P$ is homogeneous, then $\ddeg P_{\x g} = \deg P = \mul P$ for all $g \in K^{\x}$, so the statements about $\ddeg_{\bm a} P$ are immediate.

Suppose now that $P$ is not homogeneous, so $\mul P<\deg P$.
If $\ca E_{\bm a} \neq \0$, we use Corollary~\ref{ndiagcor} with $K^{\x}$ in the role of $\ca E$, so we have the tuple $(i_0, \dots, i_n)$ with $i_n=\deg P$.
By removing further initial terms, we may assume that $\ddeg P_{\x a_\rho}$ is constant.
If $\ddeg P_{\x a_\rho}=i_0$, then $a_{\rho} \prec \f e_1$.
Thus for any $g \in \ca E_{\bm a}$, we have $g\prec \f e_1$, and hence $\ddeg_{\ca E_{\bm a}} P=i_0$.
If $\ddeg P_{\x a_\rho}=i_m$ for any $1 \les m \les n$, then $\f e_m \prece a_\rho$.
As $\gamma_\rho$ is strictly increasing, $\f e_m \prec a_\rho$ for all $\rho$, so $\f e_m \in \ca E_{\bm a}$.
Hence $\ddeg_{\ca E_{\bm a}} P \ges \ddeg P_{\xf e_m} = i_m$.
But by Corollary~\ref{adh6.6.7}, $\ddeg P_{\x a_\rho} \ges \ddeg_{\ca E_{\bm a}} P$, so $\ddeg_{\ca E_{\bm a}} P = i_m$.

If $\ca E_{\bm a}=\0$, %then $\gamma_\rho$ is cofinal in $\Gamma$, and thus $(a_\rho)$ is a cauchy sequence (see \cite[\S2.2]{adamtt}).
let $i_0 \coloneqq \mul P$.
Then for all $i>i_0$, by Lemma~\ref{adh6.1.3},
\[
v_{P_i}(\gamma_\rho) - v_{P_{i_0}}(\gamma_\rho)\ =\ v(P_i)-v(P_{i_0}) + (i-i_0)\gamma_\rho + o(\gamma_\rho).
\]
As $\gamma_\rho$ is cofinal in $\Gamma$, we thus have $v_P(\gamma_\rho)=v_{P_{i_0}}(\gamma_\rho)<v_{P_i}(\gamma_\rho)$, eventually, for all $i>i_0$, and so $\ddeg P_{\x a_\rho}=i_0$, eventually.
\end{proof}

With $(a_\rho)$ and $\ca E_{\bm a}$ as in the above lemma, if $\ca E_{\bm a} = \0$, then $(a_\rho)$ is in fact a cauchy sequence in $K$ (see \cite[\S2.2]{adamtt}), since $\gamma_\rho$ is cofinal in $\Gamma$; this is not used later.

\begin{cor}\label{ndiagddegcor}
Suppose $(b_\rho)$ is a pc-sequence in $K$ with pseudolimit $b \in K$.
Let $\bm b \coloneqq c_K(b_\rho)$ and
\[
\ca E_{\bm b}\ \coloneqq\ \{g \in K^{\x}: g \prec b_\rho-b,\ \text{eventually}\}.
\]
\begin{enumerate}
	\item If $\ca E_{\bm b}\neq \0$, then $\ddeg_{\bm b} P = \ddeg_{\ca E_{\bm b}} P_{+b}$.
	\item If $\ca E_{\bm b}=\0$, then $\ddeg_{\bm b} P = \mul P_{+b}$.
\end{enumerate}
\end{cor}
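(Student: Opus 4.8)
The plan is to reduce the statement directly to Lemma~\ref{ndiagddeg} by an additive translation. Set $a_\rho \coloneqq b_\rho - b$ for each index $\rho$. Since translating a sequence by a fixed element of $K$ does not change the differences $v(a_{\rho+1}-a_\rho) = v(b_{\rho+1}-b_\rho)$, the sequence $(a_\rho)$ is again a pc-sequence in $K$, and from $b_\rho \pconv b$ we get $a_\rho \pconv 0$. In the cut notation of \S\ref{ddegcut} we have $c_K(a_\rho) = \bm b - b$, and the set $\ca E_{\bm a}$ attached to $(a_\rho)$ in the statement of Lemma~\ref{ndiagddeg} is literally $\{g \in K^{\x} : g \prec b_\rho - b \text{ eventually}\} = \ca E_{\bm b}$; in particular $\ca E_{\bm a} = \0$ if and only if $\ca E_{\bm b} = \0$. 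At the very start one may also delete finitely many initial terms of $(b_\rho)$ so that $a_\rho \neq 0$ and $v(a_\rho)$ is strictly increasing, matching the normalization used inside the proof of Lemma~\ref{ndiagddeg}; this affects neither $\bm b$ nor $\ca E_{\bm b}$.

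Next I would apply Lemma~\ref{ndiagddeg} to the $\d$-polynomial $P_{+b}$ together with the pc-sequence $(a_\rho)$. In the case $\ca E_{\bm b} = \ca E_{\bm a} \neq \0$ this gives $\ddeg_{\bm a} P_{+b} = \ddeg_{\ca E_{\bm b}} P_{+b}$, and in the case $\ca E_{\bm b} = \0$ it gives $\ddeg_{\bm a} P_{+b} = \mul P_{+b}$. It then remains to identify $\ddeg_{\bm a} P_{+b}$ with $\ddeg_{\bm b} P$, which is precisely Lemma~\ref{ddegbasic}(ii) with $y = b$: indeed $\ddeg_{\bm a} P_{+b} = \ddeg_{\bm a + b} P$, and $\bm a + b = c_K(a_\rho + b) = c_K(b_\rho) = \bm b$. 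Substituting this identity into the two displayed conclusions yields clauses (i) and (ii) of the corollary.

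I do not expect any genuine obstacle here: all of the substance is in Lemma~\ref{ndiagddeg}, and the corollary is obtained from it purely by the translation $Y \mapsto Y$, $b_\rho \mapsto b_\rho - b$. The only points requiring a little care are the bookkeeping with cuts (checking $\bm a + b = \bm b$ and $c_K(a_\rho) = \bm b - b$), the observation that $\ca E_{\bm a}$ and $\ca E_{\bm b}$ coincide as subsets of $K^{\x}$, and recording that $\ca E_{\bm b}$ is $\prece$-closed so that $\ddeg_{\ca E_{\bm b}} P_{+b}$ is defined in the sense of \S\ref{prelim:ddeg}.
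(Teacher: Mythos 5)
Your proposal is correct and follows essentially the same route as the paper's proof, which also applies Lemma~\ref{ndiagddeg} to $a_\rho := b_\rho - b$ and $P_{+b}$ and then invokes Lemma~\ref{ddegbasic}(ii). You simply spell out the bookkeeping with cuts and the identification $\ca E_{\bm a} = \ca E_{\bm b}$ more explicitly than the paper does.
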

\begin{proof}
This follows by applying Lemma~\ref{ndiagddeg} to $a_\rho \coloneqq b_\rho-b$ and $P_{+b}$, using Lemma~\ref{ddegbasic}(ii).
\end{proof}
%\begin{proof}
%Set $a_\rho \coloneqq b_\rho-b$.
%By Lemma~\ref{ddegbasic}(ii), we have
%\[
%\ddeg_{\bm b} P\ =\ \ddeg_{\bm a+b} P\ =\ \ddeg_{\bm a} P_{+b}.
%\]
%It remains to apply the previous lemma with $P_{+b}$ in place of $P$.
%\end{proof}

\section{Asymptotic differential equations}\label{sec:ade}

\begin{ass}
In this section, $K$ has a monomial group $\f{M}$ and $\Gamma$ has no least positive element.
\end{ass}

Let $\f m$ range over $\f M$ and $P \in K\{Y\}^{\neq}$ have order at most $r$.
An \emph{asymptotic differential equation} over $K$ is something of the form
\begin{equation}\tag{E}\label{eqn}
P(Y)\ =\ 0, \hs Y \in \ca E,
\end{equation}
where $\ca E \subseteq K^\x$ is $\prece$-closed.
That is, it consists of an algebraic differential equation with an asymptotic condition on solutions.
If $\ca E = \{ g \in K^{\x} : g \prec f \}$ for some $f \in K^{\x}$, then we write $Y \prec f$ for the asymptotic condition instead of $Y \in \ca E$, and similarly with ``$\prece$.''

For the rest of this section, we fix such an asymptotic differential equation \eqref{eqn}.
Then the \emph{dominant degree of \eqref{eqn}} is defined to be $\ddeg_{\ca E} P$.
A \emph{solution of \eqref{eqn}} is a $y \in \ca E$ such that $P(y)=0$.
An \emph{approximate solution of \eqref{eqn}} is an approximate zero of $P$ that lies in $\ca E$, and the \emph{multiplicity} of an approximate solution of \eqref{eqn} is its multiplicity as an approximate zero of $P$.
The following is used frequently and follows from Lemma~\ref{ddegtrans}.

\begin{cor}\label{approxsolnequiv}
Let $y \in \ca E$.
Then
\begin{enumerate}
\item $y$ is an approximate solution of \eqref{eqn} $\iff$
$\ddeg_{\prec y} P_{+y} \ges 1$;
\item if $y$ is an approximate solution of \eqref{eqn}, then its multiplicity is
$\ddeg_{\prec y} P_{+y}$.
\end{enumerate}
\end{cor}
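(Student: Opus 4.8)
The plan is to read everything off Lemma~\ref{ddegtrans}, which is essentially this corollary in disguise. Fix $y \in \ca E$ and put $\f m \coloneqq \f d_y \in \f M$ and $u \coloneqq u_y$, so that $y \asymp \f m$ and $\bar u \in \bm k^{\x}$. Since $vy = v\f m$, we have $\ddeg_{\prec y} P_{+y} = \ddeg_{\prec \f m} P_{+y}$ straight from the definition of $\ddeg_{\prec(\cdot)}$. Now apply Lemma~\ref{ddegtrans} with $f \coloneqq y$: here $y \prece \f m$ (in fact $y \asymp \f m$), so the auxiliary element called $u$ in that lemma is $u_y$, matching our $u$, and the lemma yields
\[
\ddeg_{\prec y} P_{+y}\ =\ \mul\big(D_{P_{\xf m}}\big)_{+\bar u}.
\]

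For (i), since $y \in \ca E$ is assumed, ``$y$ is an approximate solution of \eqref{eqn}'' means exactly ``$y$ is an approximate zero of $P$'', i.e.\ $D_{P_{\xf m}}(\bar u) = 0$. I would then invoke the elementary fact that a $\d$-polynomial $Q \in \bm k\{Y\}$ and a point $c \in \bm k$ satisfy $Q(c) = 0$ if and only if $\mul Q_{+c} \ges 1$, because the degree-zero homogeneous part of $Q_{+c} = Q(c+Y)$ is the value $Q(c)$. Taking $Q = D_{P_{\xf m}}$ and $c = \bar u$ and substituting into the displayed identity gives
\[
y\ \text{is an approximate zero of}\ P\ \iff\ \mul\big(D_{P_{\xf m}}\big)_{+\bar u} \ges 1\ \iff\ \ddeg_{\prec y} P_{+y} \ges 1,
\]
which is (i). For (ii), if $y$ is an approximate solution then its multiplicity is by definition its multiplicity as an approximate zero of $P$, namely $\mul(D_{P_{\xf m}})_{+\bar u}$, and the displayed identity rewrites this as $\ddeg_{\prec y} P_{+y}$.

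I do not expect a genuine obstacle here; the argument is bookkeeping around Lemma~\ref{ddegtrans}. The only points requiring a moment's care are that the auxiliary element $u$ of Lemma~\ref{ddegtrans} in the case $f \asymp \f m$ is literally $u_y$, so that its residue agrees with the $\bar u$ occurring in the definitions of approximate zero and of its multiplicity, and that $\ddeg_{\prec y}$ depends only on $vy$, which is what lets us pass between $y$ and its dominant monomial $\f d_y$ when invoking the lemma.
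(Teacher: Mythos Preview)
Your proof is correct and is exactly the approach the paper takes: the corollary is stated as following directly from Lemma~\ref{ddegtrans}, and your bookkeeping (identifying $\f m=\f d_y$, $u=u_y$, and $\ddeg_{\prec y}=\ddeg_{\prec \f m}$) is precisely what is needed to unpack that.
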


%A \emph{starting monomial for \eqref{eqn}} is a starting monomial for $P$ that lies in $\ca E$.
An \emph{algebraic starting monomial for \eqref{eqn}} is an algebraic starting monomial for $P$ that lies in $\ca E$.
%If $y$ is an approximate solution of \eqref{eqn}, then $\f d_y$ is a starting monomial for \eqref{eqn} by Lemma~\ref{startmon}.
%In particular, if $\ddeg_{\ca E} P=0$, then \eqref{eqn} has no approximate solutions.
So if $\ddeg_{\ca E} P=0$, then \eqref{eqn} has no algebraic starting monomials.
By Proposition~\ref{ndiag}, if $\Gamma$ is divisible and $\mul P<\ddeg_{\ca E} P$, then there is an algebraic starting monomial for \eqref{eqn} and $\ddeg_{\ca E} P = \ddeg P_{\xf e}$, where $\f e$ is the largest algebraic starting monomial for \eqref{eqn}.

It will be important to alter $P$ and $\ca E$ in certain ways.
Namely, let $\ca{E' \subseteq E}$ be $\prece$-closed and let $f \in \ca E \cup\{0\}$.
We call the asymptotic differential equation
\begin{equation}\tag{E${}'$}\label{eqn'}
P_{+f}(Y)\ =\ 0, \hs Y \in \ca E'
\end{equation}
a \emph{refinement of \eqref{eqn}}.
Below, \eqref{eqn'} refers to a refinement of this form.
By Lemma~\ref{adh6.6.10},
\[
\ddeg_{\ca E} P\ =\ \ddeg_{\ca E} P_{+f}\ \ges\ \ddeg_{\ca E'} P_{+f},
\]
so the dominant degree of \eqref{eqn'} is at most the dominant degree of \eqref{eqn}.
Note also that if $y$ is a solution of \eqref{eqn'} and $f+y \neq 0$, then $f+y$ is a solution of \eqref{eqn}.
The same is true with ``approximate solution'' replacing ``solution,'' provided that $y \not\sim -f$.
%\begin{lem}
%Let $y \not\sim -f$ be an approximate solution of \eqref{eqn'} of multiplicity $\mu$.
%Then $f+y$ is an approximate solution of \eqref{eqn} of multiplicity at least $\mu$.
%\end{lem}
%\begin{proof}
%From $y \not\sim -f$, we get $y \prece f+y$, so
%\[
%1\ \les\ \mu\ =\ \ddeg_{\prec y}(P_{+f})_{+y}\ =\ \ddeg_{\prec y}P_{+f+y}\ \les\ \ddeg_{\prec f+y}P_{+f+y}.
%\]
%That is, $f+y$ is an approximate solution of \eqref{eqn} of multiplicity at least $\mu$.
%\end{proof}
The following routine adaptation of \cite[Lemma~13.8.2]{adamtt} gives a sufficient condition for being an approximate solution.

\begin{lem}\label{approxsolncheck}
Let $f \neq 0$ with $f \succ g$ for all $g \in \ca E'$,
and suppose that
\[
\ddeg_{\ca E'}P_{+f}\ =\ \ddeg_{\ca E}P\ \ges\ 1.
\]
Then $f$ is an approximate solution of \eqref{eqn}.
\end{lem}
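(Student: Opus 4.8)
The plan is to show that $f$ satisfies the criterion of Corollary~\ref{approxsolnequiv}(i), namely that $\ddeg_{\prec f} P_{+f} \ges 1$. Since $f \succ g$ for all $g \in \ca E'$, we have $\ca E' \subseteq \{g \in K^{\x} : g \prec f\}$, and the latter set is $\prece$-closed; write $\ca E_f \coloneqq \{g \in K^{\x} : g \prec f\}$. By Lemma~\ref{adh6.6.10} applied to $\ca E_f$ (note $f \notin \ca E_f$, but we only need a $\prece$-closed set containing $\ca E'$ on which to compare dominant degrees) together with monotonicity of $\ddeg_{(-)} P_{+f}$ under inclusion of $\prece$-closed sets, we get
\[
\ddeg_{\ca E} P\ \ges\ \ddeg_{\ca E_f} P_{+f}\ \ges\ \ddeg_{\ca E'} P_{+f}\ =\ \ddeg_{\ca E} P.
\]
Here the first inequality uses $\ca E_f \subseteq \ca E$ — which holds because $f \in \ca E$ and $\ca E$ is $\prece$-closed, so every $g \prec f$ lies in $\ca E$ — combined with Lemma~\ref{adh6.6.10} to replace $P$ by $P_{+f}$ over $\ca E$. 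Hence all three quantities are equal, and in particular $\ddeg_{\ca E_f} P_{+f} = \ddeg_{\ca E} P \ges 1$.

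It remains to translate $\ddeg_{\ca E_f} P_{+f} \ges 1$ into $\ddeg_{\prec f} P_{+f} \ges 1$; but $\ddeg_{\prec f} P_{+f}$ is by definition $\ddeg_{\ca E_f} P_{+f}$ (taking any $g \in K^{\x}$ with $vg = vf$, the set $\{h : h \prec g\}$ is exactly $\ca E_f$), so this is immediate. By Corollary~\ref{approxsolnequiv}(i), $f$ is an approximate solution of \eqref{eqn}, as desired.

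The only subtlety — and the step I would be most careful about — is the chain of inequalities relating $\ddeg_{\ca E} P$, $\ddeg_{\ca E_f} P_{+f}$, and $\ddeg_{\ca E'} P_{+f}$: one must check that $\ca E_f$ genuinely sits between $\ca E'$ and $\ca E$ as $\prece$-closed sets, using both that $f \in \ca E$ (so $\ca E_f \subseteq \ca E$) and that $f$ dominates all of $\ca E'$ (so $\ca E' \subseteq \ca E_f$), and then invoke Lemma~\ref{adh6.6.10} to pass from $P$ to $P_{+f}$ at the level of $\ca E$. Everything else is a direct appeal to Corollary~\ref{approxsolnequiv} and the definitions. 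This is genuinely routine, exactly mirroring \cite[Lemma~13.8.2]{adamtt}.
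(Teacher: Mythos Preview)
Your proof is correct and follows essentially the same route as the paper's: both sandwich $\ddeg_{\prec f} P_{+f}$ between $\ddeg_{\ca E'} P_{+f}$ and $\ddeg_{\ca E} P$ using the inclusions $\ca E' \subseteq \{g : g \prec f\} \subseteq \ca E$, invoke Lemma~\ref{adh6.6.10} to pass from $P$ to $P_{+f}$ on $\ca E$, and then conclude via Corollary~\ref{approxsolnequiv}(i). The paper inserts the additional intermediate set $\{g : g \prece f\}$, but this is cosmetic.
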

\begin{proof}
We have, using Lemma~\ref{adh6.6.10} for the equality,
\[
\ddeg_{\ca E'} P_{+f}\ \les\ \ddeg_{\prec f} P_{+f}\ \les\ \ddeg_{\prece f} P_{+f}\ =\ \ddeg_{\prece f} P\ \les\ \ddeg_{\ca E} P.
\]
Hence $\ddeg_{\prec f} P_{+f} = \ddeg_{\ca E}P \ges 1$, so $f$ is an approximate solution of \eqref{eqn}.
\end{proof}

Note that by the previous proof, $\ddeg_{\prec f} P_{+f} \les \ddeg_{\ca E} P$ for all $f \in \ca E$.
The next lemma, corresponding to \cite[Lemma~13.8.3]{adamtt}, relates the strictness of this inequality to approximate solutions.

\begin{lem}\label{unraveldef}
Suppose that $d \coloneqq \ddeg_{\ca E} P \ges 1$. Then the following are equivalent:
\begin{enumerate}
\item $\ddeg_{\prec f}P_{+f}<d$ for all $f \in \ca E$;
\item $\ddeg_{\prec f}P_{+f}<d$ for all $f \in \ca E$ with $\ddeg P_{\x f}=d$;
\item there is no approximate solution of \eqref{eqn} of multiplicity $d$.
\end{enumerate}
\end{lem}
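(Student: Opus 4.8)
The plan is to prove the cycle of implications (i)$\,\Rightarrow\,$(ii)$\,\Rightarrow\,$(iii)$\,\Rightarrow\,$(i). Throughout I would lean on Corollary~\ref{approxsolnequiv}, which translates the existence of an approximate solution of a given multiplicity into a statement about $\ddeg_{\prec f}P_{+f}$, together with the inequality $\ddeg_{\prec f}P_{+f}\les\ddeg_{\ca E}P$ valid for all $f\in\ca E$ (noted just after Lemma~\ref{approxsolncheck}), the monotonicity $\ddeg_{\ca E'}P\les\ddeg_{\ca E''}P$ for $\prece$-closed $\ca E'\subseteq\ca E''$, and the identity $\ddeg_{\prece g}P=\ddeg P_{\x g}$ from \S\ref{prelim:ddeg}.

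The implication (i)$\,\Rightarrow\,$(ii) is immediate, as (ii) only restricts the range of the universally quantified $f$. For (ii)$\,\Rightarrow\,$(iii) I would argue by contradiction: suppose \eqref{eqn} has an approximate solution $y$ of multiplicity $d$. By Corollary~\ref{approxsolnequiv}(ii), $\ddeg_{\prec y}P_{+y}=d$. Applying Lemma~\ref{adh6.6.10} to the $\prece$-closed set $\{g\in K^\x:g\prece y\}$ (which contains $y$) and using $\ddeg_{\prece y}P=\ddeg P_{\x y}$, I obtain the chain $d=\ddeg_{\prec y}P_{+y}\les\ddeg_{\prece y}P_{+y}=\ddeg_{\prece y}P=\ddeg P_{\x y}\les\ddeg_{\ca E}P=d$, which forces $\ddeg P_{\x y}=d$. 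Thus $y\in\ca E$ satisfies $\ddeg P_{\x y}=d$ yet $\ddeg_{\prec y}P_{+y}=d$, contradicting (ii).

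For (iii)$\,\Rightarrow\,$(i) I would use the contrapositive: if (i) fails, pick $f\in\ca E$ with $\ddeg_{\prec f}P_{+f}\ges d$; since $\ddeg_{\prec f}P_{+f}\les\ddeg_{\ca E}P=d$, in fact $\ddeg_{\prec f}P_{+f}=d\ges 1$, so by Corollary~\ref{approxsolnequiv}(i) $f$ is an approximate solution of \eqref{eqn}, and by part~(ii) of that corollary its multiplicity is $\ddeg_{\prec f}P_{+f}=d$; hence (iii) fails. The only step that needs a moment's thought is the chain of inequalities forcing $\ddeg P_{\x y}=d$ in (ii)$\,\Rightarrow\,$(iii): it is precisely what makes (ii), whose hypothesis singles out those $f$ with $\ddeg P_{\x f}=d$, strong enough to recover (iii). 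Everything else is routine bookkeeping with dominant degrees on the $\prece$-closed sets $\{g:g\prec f\}\subseteq\{g:g\prece f\}\subseteq\ca E$.
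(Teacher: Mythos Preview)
Your proof is correct and uses the same key ingredients as the paper: Corollary~\ref{approxsolnequiv} for the translation between approximate solutions and $\ddeg_{\prec f}P_{+f}$, and the chain $\ddeg_{\prec f}P_{+f}\les\ddeg_{\prece f}P_{+f}=\ddeg_{\prece f}P=\ddeg P_{\x f}$ via Lemma~\ref{adh6.6.10}. The only difference is organizational: the paper proves (i)$\iff$(iii) directly from Corollary~\ref{approxsolnequiv} and then (ii)$\Rightarrow$(i) by observing that any $f\in\ca E$ with $\ddeg P_{\x f}<d$ automatically satisfies $\ddeg_{\prec f}P_{+f}<d$ by that same chain, whereas you run the cycle (i)$\Rightarrow$(ii)$\Rightarrow$(iii)$\Rightarrow$(i) and deploy the chain in the contrapositive direction inside (ii)$\Rightarrow$(iii).
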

\begin{proof}
The equivalence of (i) and (iii) is given by Corollary~\ref{approxsolnequiv}.
Now, let $f \in \ca E$ and suppose that $\ddeg P_{\x f}<d$.
Then, using Lemma~\ref{adh6.6.10} for the first equality,
\[
\ddeg_{\prec f} P_{+f}\ \les\ \ddeg_{\prece f} P_{+f}\ =\ \ddeg_{\prece f} P\ =\ \ddeg P_{\x f}\ <\ d.
\]
This gives (ii)$\implies$(i), and the converse is trivial.
\end{proof}

We say that \eqref{eqn} is \emph{unravelled} if $d \coloneqq \ddeg_{\ca E} P \ges 1$ and the conditions in Lemma~\ref{unraveldef} hold.
In particular, if $d \ges 1$ and \eqref{eqn} does not have an approximate solution, then \eqref{eqn} is unravelled.
And if \eqref{eqn} is unravelled and has an approximate solution, then $d \ges 2$ by Lemma~\ref{unraveldef}(iii).
We now introduce unravellers and partial unravellers, which correspond to special refinements of \eqref{eqn}.
In the proof of Proposition~\ref{unravellersexist}, we construct a sequence of partial unravellers ending in an unravelled asymptotic differential equation.
Suppose that $d \ges 1$, and let $f \in \ca E\cup\{0\}$ and $\ca E' \subseteq \ca E$ be $\prece$-closed.
We say that $(f, \ca E')$ is a \emph{partial unraveller for \eqref{eqn}} if $\ddeg_{\ca E'}P_{+f}=d$.
By Lemma~\ref{adh6.6.10}, $(f, \ca E)$ is a partial unraveller for \eqref{eqn}.
Note that if $(f, \ca E')$ is a partial unraveller for \eqref{eqn} and $(f_1, \ca E_1)$ is a partial unraveller for \eqref{eqn'}, then $(f+f_1, \ca E_1)$ is a partial unraveller for \eqref{eqn}.
An \emph{unraveller for \eqref{eqn}} is a partial unraveller $(f, \ca E')$ for \eqref{eqn} with unravelled \eqref{eqn'}.
The following is routine, corresponding to \cite[Lemma~13.8.6]{adamtt}.
\begin{lem}\label{multconjade}
Suppose that $\ddeg_{\ca E} P \ges 1$.
Let $a \in K^{\x}$ and set $a\ca E \coloneqq \{ay \in K^{\x} : y \in \ca E\}$.
Consider the asymptotic differential equation
\begin{equation}\tag{$a$\ref{eqn}}\label{aeqn}
P_{\x a^{-1}}(Y)\ =\ 0, \hs Y \in a\ca E.
\end{equation}
\begin{enumerate}
	\item The dominant degree of \eqref{aeqn} equals the dominant degree of \eqref{eqn}.
	\item If $(f, \ca E')$ is a partial unraveller for \eqref{eqn}, then $(af, a\ca E')$ is a partial unraveller for \eqref{aeqn}.
	\item If $(f, \ca E')$ is an unraveller for \eqref{eqn}, then $(af, a\ca E')$ is an unraveller for \eqref{aeqn}.
	\item If $a \in \f M$, then the algebraic starting monomials for \eqref{aeqn} are exactly the $a\f e$, where $\f e$ ranges over the algebraic starting monomials for \eqref{eqn}.
\end{enumerate}
\end{lem}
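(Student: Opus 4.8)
The plan is to reduce all four parts to two elementary conjugation identities for $\d$-polynomials: for $a, b \in K^{\x}$ and $c \in K$,
\[
(P_{\x a})_{\x b}\ =\ P_{\x ab}, \qquad (P_{\x a^{-1}})_{+c}\ =\ (P_{+a^{-1}c})_{\x a^{-1}},
\]
both read off directly from $P_{\x a}(Y) = P(aY)$ and $P_{+c}(Y) = P(c+Y)$. From the first identity and the bijection $f \mapsto af$ of $\ca E$ onto $a\ca E$ we get $(P_{\x a^{-1}})_{\x af} = P_{\x f}$ for $f \in \ca E$, and taking the maximum of dominant degrees over $f \in \ca E$ proves (i). Running the same computation with an arbitrary $\prece$-closed $\ca F \subseteq K^{\x}$ and arbitrary $Q \in K\{Y\}^{\neq}$ in place of $\ca E$ and $P$ yields what I will call the \emph{scaling identity}, $\ddeg_{a\ca F} Q_{\x a^{-1}} = \ddeg_{\ca F} Q$, which I would use repeatedly in the remaining parts.

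For (ii) I would first check that $(af, a\ca E')$ is an admissible datum: $af \in a\ca E \cup \{0\}$ and $a\ca E' \subseteq a\ca E$ are immediate, and $a\ca E'$ is $\prece$-closed because $0 \neq h \prece ay$ with $y \in \ca E'$ forces $0 \neq h/a \prece y$, so $h/a \in \ca E'$ and $h \in a\ca E'$. By the second identity, the refinement of \eqref{aeqn} determined by $(af, a\ca E')$ has underlying $\d$-polynomial $(P_{\x a^{-1}})_{+af} = (P_{+f})_{\x a^{-1}}$, so by the scaling identity applied to $P_{+f}$ and $\ca E'$ its dominant degree equals $\ddeg_{\ca E'} P_{+f} = d$, which by (i) is the dominant degree of \eqref{aeqn}; hence $(af, a\ca E')$ is a partial unraveller for \eqref{aeqn}.

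For (iii), the same rewriting $(P_{\x a^{-1}})_{+af} = (P_{+f})_{\x a^{-1}}$ shows that the refinement of \eqref{aeqn} by $(af, a\ca E')$ is literally the multiplicative conjugate of \eqref{eqn'} by $a$, namely $(P_{+f})_{\x a^{-1}}(Y) = 0$, $Y \in a\ca E'$. So, granting (ii), it remains to show this conjugated equation is unravelled whenever \eqref{eqn'} is. Writing $Q \coloneqq P_{+f}$, the scaling identity gives that it has dominant degree $\ddeg_{a\ca E'} Q_{\x a^{-1}} = \ddeg_{\ca E'} Q \ges 1$. For $g \in \ca E'$, using the second identity together with $\{k \in K^{\x} : k \prec ag\} = a\{m \in K^{\x} : m \prec g\}$ and the scaling identity once more, one finds $\ddeg_{\prec ag}(Q_{\x a^{-1}})_{+ag} = \ddeg_{\prec g} Q_{+g}$; since $g \mapsto ag$ is a bijection $\ca E' \to a\ca E'$, condition (i) of Lemma~\ref{unraveldef} for the conjugated equation is then exactly condition (i) of Lemma~\ref{unraveldef} for \eqref{eqn'}, which holds by hypothesis. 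Combined with (ii), this makes $(af, a\ca E')$ an unraveller for \eqref{aeqn}.

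Finally, for (iv), when $a \in \f M$ we have $a^{-1}\f m \in \f M$ and $(P_{\x a^{-1}})_{\xf m} = P_{\x a^{-1}\f m}$, so $D_{(P_{\x a^{-1}})_{\xf m}}$ is inhomogeneous exactly when $D_{P_{\x a^{-1}\f m}}$ is; that is, $\f m$ is an algebraic starting monomial for $P_{\x a^{-1}}$ iff $a^{-1}\f m$ is one for $P$ (the remark recorded where algebraic starting monomials were introduced), and combined with $\f m \in a\ca E \iff a^{-1}\f m \in \ca E$ this identifies the algebraic starting monomials for \eqref{aeqn} as the $a\f e$ with $\f e$ an algebraic starting monomial for \eqref{eqn}. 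I do not anticipate a real obstacle: the whole proof is bookkeeping around the two conjugation identities, and the only points needing care are checking that $a\ca E'$ stays $\prece$-closed and keeping the nested conjugations straight so that the reductions in (ii) and (iii) land on the right $\d$-polynomial and the right domain.
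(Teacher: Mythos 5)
Your proposal is correct, and it is exactly the sort of ``routine'' calculation the paper has in mind: the paper gives no proof of Lemma~\ref{multconjade}, only the remark that it corresponds to \cite[Lemma~13.8.6]{adamtt}, so the intended argument is precisely the bookkeeping around the conjugation identities $(P_{\x a})_{\x b}=P_{\x ab}$ and $(P_{\x a^{-1}})_{+c}=(P_{+a^{-1}c})_{\x a^{-1}}$ that you carried out. Your derivation of the scaling identity $\ddeg_{a\ca F}Q_{\x a^{-1}}=\ddeg_{\ca F}Q$ and its repeated use in parts (i)--(iii), the check that $a\ca E'$ remains $\prece$-closed, the reduction of the unravelled condition in (iii) to Lemma~\ref{unraveldef}(i) via $\ddeg_{\prec ag}(Q_{\x a^{-1}})_{+ag}=\ddeg_{\prec g}Q_{+g}$, and the appeal in (iv) to the noted equivalence ``$\f m$ is an algebraic starting monomial for $P$ iff $\f m/\f n$ is one for $P_{\xf n}$'' are all correct and complete.
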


%The next result follows immediately from Lemma~\ref{unravel+f}.
%\begin{cor}
%Suppose $\ddeg_{\ca E} P \ges 1$ and $(f, \ca E')$ is an unraveller for \eqref{eqn}.
%Let $g \in K$ such that $f-g \prece \f e$ for some algebraic starting monomial $\f e$ for \eqref{eqn'}.
%Then $(g, \ca E')$ is an unraveller for \eqref{eqn}.
%\end{cor}
%%\begin{proof}
%%This is routine from Lemma~\ref{unravel+f}.
%%\end{proof}

The next proposition is about the existence of unravellers, and is a key ingredient in the proof of Proposition~\ref{mainlemmadiv}.
It corresponds to \cite[Proposition~13.8.8]{adamtt}, and the main difference in the proof is to invoke $r$-$\d$-algebraic maximality and the nontriviality of the derivation on $\bm k$ instead of asymptotic $\d$-algebraic maximality to obtain pseudolimits of appropriate pc-sequences.
For this, recall the notion of $r$-$\d$-algebraic maximality from \S\ref{sec:main:add}, and in particular that if the derivation induced on $\bm k$ is nontrivial, then $K$ is $r$-$\d$-algebraically maximal if and only if every pc-sequence in $K$ with minimal $\d$-polynomial over $K$ of order at most $r$ has a pseudolimit in $K$.

\begin{prop}\label{unravellersexist}
Suppose that $K$ is $r$-$\d$-algebraically maximal, $\Gamma$ is divisible, and the derivation induced on $\bm k$ is nontrivial.
Suppose that
$d \coloneqq \ddeg_{\ca E}P \ges 1$ and that there is no $f \in \ca E\cup\{0\}$ with $\mul P_{+f}=d$.
Then there exists an unraveller for \eqref{eqn}.
\end{prop}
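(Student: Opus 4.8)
The plan is to follow the strategy of the proof of \cite[Proposition~13.8.8]{adamtt}: construct, by recursion on ordinals, a chain of partial unravellers $(f_\rho,\ca E_\rho)$ for \eqref{eqn} obtained by successive refinement, and argue that the construction must halt at an unravelled equation. Start with $(f_0,\ca E_0)=(0,\ca E)$, a partial unraveller by Lemma~\ref{adh6.6.10}. At a successor stage, suppose we have a partial unraveller $(f_\rho,\ca E_\rho)$ for \eqref{eqn}, so $\ddeg_{\ca E_\rho}P_{+f_\rho}=d\ges 1$. If the refinement $P_{+f_\rho}(Y)=0$, $Y\in\ca E_\rho$, is unravelled, halt: $(f_\rho,\ca E_\rho)$ is an unraveller for \eqref{eqn}. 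Otherwise, by Lemma~\ref{unraveldef} there is $y_\rho\in\ca E_\rho$ with $\ddeg_{\prec y_\rho}(P_{+f_\rho})_{+y_\rho}=d$, i.e.\ an approximate solution of multiplicity $d$ (Corollary~\ref{approxsolnequiv}); set $f_{\rho+1}\coloneqq f_\rho+y_\rho$ and $\ca E_{\rho+1}\coloneqq\{g\in K^\x:g\prec y_\rho\}$. Since $\ca E_\rho$ is $\prece$-closed and $y_\rho\in\ca E_\rho$, we get $\ca E_{\rho+1}\subseteq\ca E_\rho$, and $\ddeg_{\ca E_{\rho+1}}P_{+f_{\rho+1}}=\ddeg_{\prec y_\rho}(P_{+f_\rho})_{+y_\rho}=d$, so $(f_{\rho+1},\ca E_{\rho+1})$ is again a partial unraveller for \eqref{eqn}. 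With $\gamma_\rho\coloneqq v(f_{\rho+1}-f_\rho)=vy_\rho$ we have $\ca E_{\rho+1}=\{g:vg>\gamma_\rho\}$, and $y_{\rho+1}\in\ca E_{\rho+1}$ forces $\gamma_{\rho+1}>\gamma_\rho$; thus the $f_\rho$ form a pc-sequence with strictly increasing widths, and in particular are pairwise distinct.

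The crux is the limit stage. Let $\lambda$ be a limit ordinal with $(f_\rho,\ca E_\rho)$ defined for all $\rho<\lambda$. Using Corollary~\ref{adh6.6.6} to see that $\ddeg P_{+f_{\rho+1},\x g}=\ddeg P_{+f_\rho,\x g}$ whenever $vg>\gamma_\rho$, together with $\{g:vg\ges\gamma_\rho\}\subseteq\ca E_\rho$, one checks that $\ddeg_{\ges\gamma_\rho}P_{+f_\rho}=d$ for every $\rho<\lambda$, hence $\ddeg_{\bm f}P=d$ for $\bm f\coloneqq c_K(f_\rho)$, by the definition of dominant degree in a cut. Now if $(f_\rho)_{\rho<\lambda}$ were divergent in $K$, then choosing a pseudolimit $\ell$ in some extension, Lemma~\ref{ddegcutfo} gives $P\in Z(K,\ell)$, so $Z(K,\ell)\neq\0$ and by Corollary~\ref{mindiffpolyfo} the sequence $(f_\rho)$ is of $\d$-algebraic type over $K$ with a minimal $\d$-polynomial of complexity at most $c(P)$, in particular of order at most $r$; since the derivation induced on $\bm k$ is nontrivial, $r$-$\d$-algebraic maximality of $K$ then forces $(f_\rho)$ to have a pseudolimit in $K$, a contradiction. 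So $(f_\rho)_{\rho<\lambda}$ has a pseudolimit $f_\lambda\in K$; set $\ca E_\lambda\coloneqq\bigcap_{\rho<\lambda}\ca E_\rho=\{g\in K^\x:g\prec f_\rho-f_\lambda\text{ eventually}\}$. If $\ca E_\lambda=\0$, then Corollary~\ref{ndiagddegcor}(ii) gives $\mul P_{+f_\lambda}=\ddeg_{\bm f}P=d$; but $vf_\lambda=\gamma_0=vy_0$ with $y_0\in\ca E$, so $f_\lambda\prece y_0$ yields $f_\lambda\in\ca E$, contradicting the hypothesis that $\mul P_{+f}\neq d$ for all $f\in\ca E\cup\{0\}$. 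Hence $\ca E_\lambda\neq\0$, and Corollary~\ref{ndiagddegcor}(i) gives $\ddeg_{\ca E_\lambda}P_{+f_\lambda}=\ddeg_{\bm f}P=d$, so $(f_\lambda,\ca E_\lambda)$ is a partial unraveller for \eqref{eqn} and the construction continues.

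Since the $f_\rho$ are pairwise distinct elements of $K$, the construction cannot run through all ordinals, so it must halt; by the successor- and limit-stage analyses it halts only at a stage where the current refinement is unravelled, which yields the desired unraveller. The main obstacle is the limit-stage bookkeeping: establishing $\ddeg_{\bm f}P=d$ (so that the pc-sequence $(f_\rho)$ has a minimal $\d$-polynomial of order at most $r$, which is exactly what lets us invoke $r$-$\d$-algebraic maximality), and ruling out the degenerate case $\ca E_\lambda=\0$ by means of the hypothesis on multiplicities.
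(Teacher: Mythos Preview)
Your proof is correct and follows essentially the same approach as the paper's: both build a transfinite chain of partial unravellers, handle the successor step by taking an approximate solution of multiplicity $d$, and at limit stages use $\ddeg_{\bm f}P=d$ together with Lemma~\ref{ddegcutfo}, Corollary~\ref{mindiffpolyfo}, and $r$-$\d$-algebraic maximality to obtain a pseudolimit, then invoke Corollary~\ref{ndiagddegcor} to rule out $\ca E_\lambda=\0$ via the multiplicity hypothesis. Your verification that $f_\lambda\in\ca E$ (from $v(f_\lambda)=\gamma_0=vy_0$) is a detail the paper leaves implicit; note that the same observation is also needed in the nondegenerate case to confirm that $(f_\lambda,\ca E_\lambda)$ is a partial unraveller.
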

\begin{proof}
We construct a sequence $\big((f_\lambda, \ca E_\lambda)\big)_{\lambda<\rho}$ of partial unravellers for \eqref{eqn} indexed by an ordinal $\rho>0$ such that:
\begin{enumerate}
\item $\ca{E_\lambda \supseteq E_\mu}$ for all $\lambda<\mu<\rho$;
\item $f_\mu-f_\lambda \succ f_\nu-f_\mu$ for all $\lambda<\mu<\nu<\rho$;
\item $f_{\lambda+1}-f_\lambda \in \ca{E}_\lambda \setminus \ca{E}_{\lambda +1}$ for all $\lambda$ with $\lambda+1<\rho$.
\end{enumerate}
For $\rho=1$, we set $(f_0, \ca E_0) \coloneqq (0, \ca E)$ and these conditions are vacuous.
Below, we frequently use that by (ii) we have $f_\mu-f_\lambda \asymp f_{\lambda+1}-f_\lambda$ for all $\lambda<\mu<\rho$.

First, suppose that $\rho$ is a successor ordinal, so $\rho=\sigma+1$, and consider the refinement 
\begin{equation}\tag{E${}_\sigma$}\label{eqnsig}
P_{+f_\sigma}(Y)\ =\ 0, \hs Y \in \ca E_\sigma
\end{equation}
of \eqref{eqn}.
If \eqref{eqnsig} is unravelled, then $(f_\sigma, \ca E_\sigma)$ is an unraveller for \eqref{eqn} and we are done, so suppose that \eqref{eqnsig} is not unravelled.
Take $f \in \ca E_\sigma$ such that $\ddeg_{\prec f}(P_{+f_\sigma})_{+f}=d$.
Then
\[
\ca E_\rho\ \coloneqq\ \{y \in K^{\x} : y \prec f\}\ \subset\ \ca E_\sigma
\]
is $\prece$-closed with
\[
\ddeg_{\ca E_\rho} (P_{+f_\sigma})_{+f}\ =\ d,
\]
so $(f, \ca E_\rho)$ is a partial unraveller for \eqref{eqnsig}.
Thus, setting $f_\rho \coloneqq f_\sigma+f$, we have that $(f_\rho, \ca E_\rho)$ is a partial unraveller for \eqref{eqn}.
Conditions (i) and (iii) on $\big((f_\lambda, \ca E_\lambda)\big)_{\lambda<\rho+1}$ with $\rho+1$ in place of $\rho$ are obviously satisfied.
For (ii), it is sufficient to check that $f_{\lambda+1}-f_\lambda \succ f_\rho - f_\sigma = f$ for $\lambda<\sigma$, which follows from $f_{\lambda+1}-f_{\lambda} \notin \ca E_\sigma$.

Now suppose that $\rho$ is a limit ordinal.
By (ii), $(f_\lambda)_{\lambda<\rho}$ is a pc-sequence in $K$, so we let $\bm f \coloneqq c_K(f_\lambda)$ and claim that $\ddeg_{\bm f} P = d$.
To see this, set $g_{\lambda} \coloneqq f_{\lambda+1}-f_\lambda$ for $\lambda$ with $\lambda+1<\rho$.
By (iii), we have, using Lemma~\ref{adh6.6.10}  in the third line,
\begin{align*}
d\ =\ \ddeg_{\ca E_{\lambda+1}} P_{+f_{\lambda+1}}\ &\les\ \ddeg_{\prece g_\lambda} P_{+f_{\lambda+1}}\\
&=\ \ddeg_{\prece g_\lambda} (P_{+f_\lambda})_{+(f_{\lambda+1}-f_\lambda)}\\
&=\ \ddeg_{\prece g_\lambda} P_{+f_\lambda}\\
&\les\ \ddeg_{\ca E_\lambda} P_{+f_\lambda}\ =\ d.
\end{align*}
Thus $\ddeg_{\prece g_\lambda} P_{+f_\lambda}=d$ for all $\lambda<\rho$, so $\ddeg_{\bm f} P = d$.
By Lemma~\ref{ddegcutfo} and Corollary~\ref{mindiffpolyfo}, $(f_\lambda)_{\lambda<\rho}$ has a minimal $\d$-polynomial over $K$ of order at most $r$, so since $K$ is $r$-$\d$-algebraically maximal, we may take $f_\rho \in K$ with $f_\lambda \pconv f_\rho$.
Now set
\[
\ca E_\rho\ \coloneqq\ \bigcap_{\lambda<\rho} \ca E_\lambda\ =\ \left\{y \in K^{\x}:y \prec g_\lambda\ \text{for all}\ \lambda<\rho\right\},
\]
where the equality follows from (iii).
If $\ca E_\rho =\0$, then by Corollary~\ref{ndiagddegcor},
\[
d\ =\ \ddeg_{\bm f} P\ =\ \mul P_{+f_\rho},
\]
contradicting the hypothesis.
So $\ca E_\rho\neq\0$, and thus Corollary~\ref{ndiagddegcor} yields
\[
d\ =\ \ddeg_{\bm f} P\ =\ \ddeg_{\ca E_\rho} P_{+f_\rho},
\]
so $(f_\rho, \ca E_\rho)$ is a partial unraveller for \eqref{eqn}.
For $\big((f_\lambda, \ca E_\lambda)\big)_{\lambda<\rho+1}$, conditions (i) and (iii) with $\rho+1$ in place of $\rho$ are obviously satisfied.
For (ii), it is enough to check that $f_{\lambda+1}-f_\lambda \succ f_\rho - f_\mu$ for $\lambda<\mu<\rho$, which follows from $f_\rho - f_\mu \asymp f_{\mu+1}-f_\mu$.

This inductive construction must end, and therefore there exists an unraveller for \eqref{eqn}.
\end{proof}

\subsection{Behaviour of unravellers under immediate extensions}\label{secade:unravelimmed}

In this subsection, we fix an immediate extension $L$ of $K$, and we use the monomial group of $K$ as a monomial group for $L$.
We consider how unravellers change under passing from $K$ to $L$ and connect this to pseudolimits of pc-sequences.
Lemma~\ref{plimunravellersexist} is a key step in the proof of Proposition~\ref{mainlemmadiv}.

Given $\ca E$, the set $\ca E_L \coloneqq \{y \in L^{\x}:vy \in v\ca E\}$ is also $\prece$-closed with $\ca E_L \cap K=\ca E$.
Consider the asymptotic differential equation
\begin{equation}\tag{E${}_L$}\label{eqnL}
P(Y)\ =\ 0, \hs Y \in \ca E_L
\end{equation}
over $L$, which has the same dominant degree as \eqref{eqn}, i.e., $\ddeg_{\ca E_L} P = \ddeg_{\ca E}P$.
Note that $y \in K$ is an approximate solution of \eqref{eqn} if and only if it is an approximate solution of \eqref{eqnL}.
If so, its multiplicities in both settings agree.
Thus if \eqref{eqnL} is unravelled, then \eqref{eqn} is unravelled.
For the other direction, if $y \in L$ is an approximate solution of \eqref{eqnL} of  multiplicity $\ddeg_{\ca E_L} P$, then any $z \in K$ with $z \sim y$ is an approximate solution of \eqref{eqn} of multiplicity $\ddeg_{\ca E} P = \ddeg_{\ca E_L} P$.
The next lemma follows from this, and corresponds to \cite[Lemma~13.8.9]{adamtt}.

\begin{lem}\label{unravelimmedlem}
Suppose that $\ddeg_{\ca E}P \ges 1$, and let $f \in \ca E\cup\{0\}$ and $\ca {E' \subseteq E}$ be $\prece$-closed.
Then:
\begin{enumerate}
	\item $(f, \ca E')$ is a partial unraveller for \eqref{eqn} if and only if $(f, \ca E'_L)$ is a partial unraveller for \eqref{eqnL};
	\item $(f, \ca E')$ is an unraveller for \eqref{eqn} if and only if $(f, \ca E'_L)$ is an unraveller for \eqref{eqnL}.
\end{enumerate}
\end{lem}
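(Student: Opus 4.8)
The plan is to exploit the discussion immediately preceding the lemma, which already establishes the two crucial transfer facts about approximate solutions: first, that $y \in K$ is an approximate solution of \eqref{eqn} of a given multiplicity if and only if it is one of \eqref{eqnL}; and second, that if $y \in L$ is an approximate solution of \eqref{eqnL} of multiplicity $\ddeg_{\ca E_L} P$, then any $z \in K$ with $z \sim y$ is an approximate solution of \eqref{eqn} of the same multiplicity $\ddeg_{\ca E} P = \ddeg_{\ca E_L} P$. Since $L$ is immediate over $K$, such a $z$ always exists for any $y \in L^{\x}$. Combined with the fact that $\ddeg_{\ca E_L} P = \ddeg_{\ca E} P$ (and similarly after refinement, $\ddeg_{\ca E'_L} P_{+f} = \ddeg_{\ca E'} P_{+f}$, since $v\ca E'_L = v\ca E'$ and dominant degree on a $\prece$-closed set depends only on the underlying upward-closed subset of $\Gamma$ via Lemma~\ref{adh6.6.5}(ii)), both halves reduce to manipulating these observations.

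For part (i), I would simply note that $(f, \ca E')$ being a partial unraveller for \eqref{eqn} means $\ddeg_{\ca E'} P_{+f} = d$ where $d = \ddeg_{\ca E} P$, and $(f, \ca E'_L)$ being a partial unraveller for \eqref{eqnL} means $\ddeg_{\ca E'_L} P_{+f} = \ddeg_{\ca E_L} P$. Since $\ddeg_{\ca E_L} P = \ddeg_{\ca E} P = d$ and $\ddeg_{\ca E'_L} P_{+f} = \ddeg_{\ca E'} P_{+f}$, the two conditions are literally the same equation, so the equivalence is immediate. (Here $f \in \ca E \subseteq \ca E_L$, so $f$ is a legitimate parameter in both refinements, and $\ca E'_L \subseteq \ca E_L$ is $\prece$-closed as noted at the start of the subsection.)

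For part (ii), by part (i) it remains only to check that, given a partial unraveller $(f, \ca E')$ for \eqref{eqn} (equivalently $(f, \ca E'_L)$ for \eqref{eqnL}), the refinement \eqref{eqn'} over $K$ is unravelled if and only if the refinement of \eqref{eqnL} along $(f, \ca E'_L)$ is. But a refinement of \eqref{eqnL} along $(f, \ca E'_L)$ is exactly $(P_{+f})(Y) = 0$, $Y \in \ca E'_L$, which is $(\eqref{eqn'})_L$ in the notation of the subsection; and by the definition of ``unravelled'' together with Lemma~\ref{unraveldef}, \eqref{eqn'} is unravelled iff $d' \coloneqq \ddeg_{\ca E'} P_{+f} \ges 1$ and there is no approximate solution of \eqref{eqn'} of multiplicity $d'$, and likewise for $(\eqref{eqn'})_L$. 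So the claim is precisely the special case of the lemma's underlying assertion applied to \eqref{eqn'} in place of \eqref{eqn}: namely, \emph{\eqref{eqn} is unravelled iff \eqref{eqnL} is unravelled.} This in turn follows from the two transfer facts: if \eqref{eqnL} has no approximate solution of multiplicity $\ddeg_{\ca E_L} P$ but \eqref{eqn} has one, say $y \in \ca E$, then $y$ is also an approximate solution of \eqref{eqnL} of the same multiplicity by the first fact, contradiction; conversely if \eqref{eqnL} has an approximate solution $y \in \ca E_L$ of multiplicity $\ddeg_{\ca E_L} P$, then picking $z \in K$ with $z \sim y$ (possible since $L$ is immediate) gives an approximate solution of \eqref{eqn} of multiplicity $\ddeg_{\ca E} P$ by the second fact. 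Since the dominant degrees agree, ``$\ges 1$'' transfers trivially in both directions, completing the proof.

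The main obstacle is essentially bookkeeping rather than mathematics: one must be careful that the parameter $f$ and the set $\ca E'$ (or $\ca E'_L$) used in defining a refinement over $L$ really do come from $K$, so that the two notions of refinement line up and the observation ``a refinement of \eqref{eqnL} along a $K$-partial-unraveller equals $(\eqref{eqn'})_L$'' holds on the nose. Once this alignment is in place the argument is a direct translation of the paragraph preceding the lemma, which is exactly why the excerpt flags it as ``routine, corresponding to \cite[Lemma~13.8.9]{adamtt}.''
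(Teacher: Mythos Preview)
Your proposal is correct and follows exactly the route the paper intends: the paper does not give a separate proof but simply says the lemma ``follows from this,'' where ``this'' is the preceding paragraph establishing precisely the two transfer facts about approximate solutions that you invoke. Your expansion of part~(i) via $\ddeg_{\ca E'_L} P_{+f} = \ddeg_{\ca E'} P_{+f}$ and your reduction of part~(ii) to the equivalence ``\eqref{eqn'} unravelled $\iff$ $(\eqref{eqn'})_L$ unravelled'' is exactly how that paragraph is meant to be applied.
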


This next lemma does not use the assumptions of this section.
It corresponds to \cite[Lemma~13.8.10]{adamtt} and has exactly the same proof, except for using Lemma~\ref{adh6.8.1} instead of \cite[Lemma~11.3.8]{adamtt}.
\begin{lem}\label{mul}
Suppose that the derivation induced on $\bm k$ is nontrivial.
Let $(a_\rho)$ be a divergent pc-sequence in $K$ with minimal $\d$-polynomial $G$ over $K$, and let $a_\rho \pconv \ell \in L$.
Then $\mul(G_{+\ell}) \les 1$.
\end{lem}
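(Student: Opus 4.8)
The plan is to follow the proof of \cite[Lemma~13.8.10]{adamtt}, adapted to the present setting. Write $G$ for the minimal $\d$-polynomial of $(a_\rho)$ over $K$, and suppose toward a contradiction that $\mul(G_{+\ell}) \ges 2$. The idea is to produce from $(a_\rho)$ a pc-sequence in $K$ witnessing a $\d$-polynomial of strictly smaller complexity than $G$ having the same pseudolimit behaviour, contradicting the minimality of $G$.

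First I would pass to an equivalent pc-sequence (which does not change the minimal $\d$-polynomial up to the relevant equivalence) so that $G(a_\rho) \pconv 0$, and write $\gamma_\rho \coloneqq v(a_{\rho+1}-a_\rho)$, which is eventually strictly increasing with $v(\ell - a_\rho) = \gamma_\rho$ eventually. Consider the additive conjugate $G_{+\ell} \in L\{Y\}$: its decomposition into homogeneous parts is $G_{+\ell} = (G_{+\ell})_1 + (G_{+\ell})_{\ges 2}$ (the constant term $G(\ell)$ may be nonzero, but plays no role once we look at $a_\rho - \ell \pconv 0$), where $(G_{+\ell})_1$ is the homogeneous part of degree $1$ and the assumption $\mul(G_{+\ell}) \ges 2$ says precisely $(G_{+\ell})_1 = 0$. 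Now apply Lemma~\ref{adh6.8.1} to the pc-sequence $(a_\rho)$ with pseudolimit $\ell \in L$ and to a suitable $G' \in L\{Y\} \setminus L$ built from a partial derivative of $G$, i.e., to one of the $\d$-polynomials $\partial G/\partial Y^{(j)}$ evaluated at $\ell + Y$; this yields an equivalent pc-sequence $(b_\rho)$ in $K$ along which that derived $\d$-polynomial pseudoconverges to its value at $\ell$. The point is that $\mul(G_{+\ell}) \ges 2$ forces all first-order data of $G$ at $\ell$ to vanish, and a Taylor-expansion (Hasse--Schmidt) argument — exactly as in \cite[\S4.3]{adamtt} and the proof of \cite[Lemma~13.8.10]{adamtt} — shows that some $\partial G/\partial Y^{(j)}$, which has strictly smaller complexity than $G$, must then satisfy $(\partial G/\partial Y^{(j)})(b_\rho) \pconv 0$ for an equivalent pc-sequence $(b_\rho)$ in $K$, contradicting the minimality of $G$.

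The one subtlety, and the step I expect to be the main obstacle, is verifying that the derived $\d$-polynomial one extracts is genuinely nonzero and of strictly smaller complexity, and that it really does pseudoconverge to $0$ along an equivalent sequence in $K$ rather than merely to some value; this is where the nontriviality of the induced derivation on $\bm k$ enters (it guarantees, via the theory in \cite[\S4.4, \S6.8]{adamtt}, that the relevant derivatives detect the first-order vanishing and that Lemma~\ref{adh6.8.1} applies to give a sequence in $K$). Since the excerpt explicitly states that the proof is "exactly the same" as that of \cite[Lemma~13.8.10]{adamtt} apart from replacing \cite[Lemma~11.3.8]{adamtt} by Lemma~\ref{adh6.8.1}, I would present the argument at that level of detail, citing the Taylor expansion and complexity bookkeeping from \cite{adamtt} and invoking Lemma~\ref{adh6.8.1} at the single point where a pc-sequence in the ground field is needed.
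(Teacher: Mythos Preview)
Your approach is correct and matches the paper's: follow \cite[Lemma~13.8.10]{adamtt}, invoking Lemma~\ref{adh6.8.1} in place of \cite[Lemma~11.3.8]{adamtt}, so that some nonzero $\partial G/\partial Y^{(j)}$ (of strictly smaller complexity than $G$) vanishes at $\ell$ and hence pseudoconverges to $0$ along an equivalent pc-sequence in $K$, contradicting minimality. One correction: under the assumption $\mul(G_{+\ell}) \ges 2$, the constant term $G(\ell) = (G_{+\ell})_0$ must vanish as well by the definition of $\mul$, so your parenthetical remark that it ``may be nonzero'' is mistaken, though this slip does not affect the argument.
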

%\begin{proof}
%Let $H \in K\{Y\}^{\neq}$ be of lower complexity than $G$.
%If $H(\ell)=0$, then by Lemma~\ref{adh6.8.1} there would be an equivalent pc-sequence $(b_\rho)$ in $K$ with $H(b_\rho) \pconv 0$, contradicting the minimality of $G$.
%
%In particular, $S_G(\ell)\neq 0$, where $S_G \coloneqq \partial G/\partial Y^{(n)}$ is the separant of $G$ and $n$ is the order of $G$.
%To see that $\mul (G_{+\ell}) \les 1$, decompose
%\[
%G_{+\ell}\ =\ \sum_{i=0}^m F_i \cdot \big(Y^{(n)}\big)^i \qquad \text{and} \qquad S_{G_{+\ell}}\ =\ \sum_{i=1}^m i F_i \cdot \big(Y^{(n)}\big)^{i-1},
%\]
%with $F_i \in K[Y, \dots, Y^{(n-1)}]$, $i=0,\dots,m$.
%From $S_{G_{+\ell}}=(S_G)_{+\ell}$ and $S_G(\ell) \neq 0$ we get $F_1(0) \neq 0$, so $\mul F_1 = 0$, and thus $\mul G_{+\ell} \les 1$.
%\end{proof}

The next lemma is routinely adapted from \cite[Lemma~13.8.11]{adamtt}.
\begin{lem}\label{plimunravellersexist}
Suppose that $\Gamma$ is divisible and the derivation induced on $\bm k$ is nontrivial.
Let $(a_\rho)$ be a divergent pc-sequence in $K$ with minimal $\d$-polynomial $P$ over $K$, and $a_\rho \pconv \ell \in L$.
Suppose that $L$ is $r$-$\d$-algebraically maximal and $\ddeg_{\bm a}P\ges 2$.
Let $a \in K$ and $\f v \in K^{\x}$ be such that $a-\ell \prec \f v$ and $\ddeg_{\prec \f v} P_{+a} = \ddeg_{\bm a}P$. \textnormal{(}Such $a$ and $\f v$ exist by Lemma~\ref{ddegcutfo}.\textnormal{)}
Consider the asymptotic differential equation
\begin{equation}\label{eqnav}
P_{+a}(Y)\ =\ 0, \hs Y\prec \f v.
\end{equation}
Then there exists an unraveller $(f, \ca E)$ for \eqref{eqnav} over $L$ such that:
\begin{enumerate}
	\item $f \neq 0$;
	\item $\ddeg_{\prec f} P_{+a+f} = \ddeg_{\bm a} P$;
	\item $a_\rho \pconv a+f+z$ for all $z \in \ca E\cup\{0\}$.
\end{enumerate}
\end{lem}
\begin{proof}
We first show how to arrange that $a=0$ and (ii) holds.
Take $g \in K^{\x}$ with $a-\ell \sim -g$, so $g\prec \f v$.
Then, using Lemma~\ref{adh6.6.10}, we have
\[
\ddeg_{\prec g} P_{+a+g}\ \les\ \ddeg_{\prec \f v} P_{+a+g}\ =\ \ddeg_{\prec \f v} P_{+a}\ =\ \ddeg_{\bm a} P.
\]
Conversely, as $(a+g)-\ell \prec g$, Lemma~\ref{ddegcutfo} gives $\ddeg_{\bm a}P \les \ddeg_{\prec g} P_{+a+g}$, so
\[\ddeg_{\bm a} P\ =\ \ddeg_{\prec \f v} P_{+a}\ =\ \ddeg_{\prec g} P_{+a+g}.\]
Also, $P_{+a+g}$ is a minimal $\d$-polynomial of $\big(a_\rho-(a+g)\big)$ over $K$ and, by Lemma \ref{ddegbasic}(ii),
\[
\ddeg_{\bm a-(a+g)}P_{+a+g}\ =\ \ddeg_{\bm a} P.
\]
We can now replace $P$, $(a_\rho)$, $\ell$,  and $\f v$ with $P_{+a+g}$, $\big(a_\rho-(a+g)\big)$, $\ell-(a+g)$, and $g$, respectively.
To see that this works, suppose that $\ca E \subseteq L^{\x}$ is $\prece$-closed in $L$ with $\ca E \prec g$, and $(h, \ca E)$ is an unraveller for the asymptotic differential equation
\[
P_{+a+g}(Y)\ =\ 0, \hs Y\prec g
\]
over $L$ with $a_\rho-(a+g) \pconv h+z$ for all $z \in \ca E\cup\{0\}$.
In particular, $h \prec g$, so $g+h \neq 0$, and it is clear from $\ddeg_{\prec \f v} P_{+a} = \ddeg_{\prec g} P_{+a+g}$ that $(g+h, \ca E)$ is an unraveller for \eqref{eqnav}.
Condition (iii) is also obviously satisfied.
For condition (ii), note that as $h \prec g$, using Lemma~\ref{adh6.6.10} in the middle equality,
\[
\ddeg_{\prec g+h} P_{+a+g+h}\ =\ \ddeg_{\prec g} P_{+a+g+h}\ =\ \ddeg_{\prec g} P_{+a+g}\ =\ \ddeg_{\bm a} P.
\]

Thus it remains to show that there is an unraveller $(f, \ca E)$ for \eqref{eqnav} in $L$ (with $a=0$) such that $a_\rho \pconv f+z$ for all $z \in \ca E\cup\{0\}$.
Consider the set
\[
\ca Z\ \coloneqq\ \{z \in L^{\x}:z\prec a_\rho-\ell,\ \text{eventually}\}.
\]
For any $z \in \ca Z\cup\{0\}$, we have $a_\rho\pconv z+\ell$, so by Lemma~\ref{mul}, \[
\mul (P_{+\ell+z})\ \les\ 1\ <\ 2\ \les\ \ddeg_{\bm a} P.
\]
By Corollary~\ref{ndiagddegcor}, $\ca Z \neq\0$, so $\ca Z$ is $\prece$-closed and $\ddeg_{\ca Z} P_{+\ell}=\ddeg_{\bm a} P$.
Then Proposition~\ref{unravellersexist} yields an unraveller $(s, \ca E)$ for the asymptotic differential equation
\[
P_{+\ell}(Y)\ =\ 0, \hs Y \in \ca Z
\]
over $L$.
Setting $f \coloneqq \ell+s$, we get that $(f, \ca E)$ is an unraveller for \eqref{eqnav} with $a_\rho \pconv f+z$ for all $z \in \ca E\cup\{0\}$.
\end{proof}

\subsection{Reducing degree}

In this subsection, we consider a refinement of \eqref{eqn} and then truncate it by removing monomials of degree higher than the dominant degree of \eqref{eqn}.
Given an unraveller for \eqref{eqn}, we show how to find an unraveller for this truncated refinement in Lemma~\ref{reducedegunravel}, an essential component in the proof of Proposition~\ref{maintechprop}.

\begin{ass}
In this subsection, $\Gamma$ is divisible.
\end{ass}
Suppose that $d \coloneqq \ddeg_{\ca E} P \ges 1$ and we have an unraveller $(f, \ca E')$ for \eqref{eqn}.
That is, the refinement
\begin{equation}\tag{\ref{eqn'}}\label{eqn'2}
P_{+f}(Y)\ =\ 0, \hs Y\in \ca E'
\end{equation}
of \eqref{eqn} is unravelled with dominant degree $d$.
Now suppose that $d>\mul (P_{+f})$, so \eqref{eqn'} has an algebraic starting monomial, and let $\f e$ be its largest algebraic starting monomial.
Suppose that $g \in K^{\x}$ satisfies $\f e\prec g \prec f$, and consider another refinement of \eqref{eqn}:
\begin{equation}\tag{E${}_g$}\label{eqng}
P_{+f-g}(Y)\ =\ 0, \hs Y \prece g.
\end{equation}
Set $\ca E'_g \coloneqq \{y \in \ca E': y \prec g\}$, so $\f e \in \ca E'_g$.
The next lemma is routinely adapted from \cite[Lemma~13.8.12]{adamtt}.

\begin{lem}\label{tildeunravel}
The asymptotic differential equation \eqref{eqng} has dominant degree $d$ and $(g, \ca E'_g)$ is an unraveller for \eqref{eqng}.
\end{lem}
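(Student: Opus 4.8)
The plan is to chase the definitions, transferring the two defining properties of an unraveller (the dominant degree being $d$, and the nonexistence of an approximate solution of multiplicity $d$) from \eqref{eqn'2} to \eqref{eqng}, using the conjugation lemmas from \S\ref{prelim:ddeg} and the $\prece$-closed versions of dominant degree. First I would compute the dominant degree of \eqref{eqng}. Since $g \prec f$, we have $f-g \asymp f$ (by Lemma~\ref{adh4.5.1}(i) applied appropriately, or directly from $g \prec f$), and more to the point $P_{+f-g} = (P_{+f})_{+(g-f+\,?\,)}$—let me instead work via $P_{+f-g}=(P_{+f})_{-g}$—wait, $P_{+f-g}(Y)=P(f-g+Y)$, and $(P_{+f})_{+(-g)}(Y)=P(f-g+Y)$, so indeed $P_{+f-g}=(P_{+f})_{-g}$. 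Since $-g \in \ca E'$ (as $g \prec f$ and $\ca E'$ is $\prece$-closed with $g\prec f$, noting $f\in\ca E\cup\{0\}$; actually $g\in\ca E'$ precisely because $\f e\prec g\prec f$ and $\ca E'$ contains $\f e$ — here I need $g\in\ca E'$, which follows since $\f e\in\ca E'$ and $\f e\prec g$ would go the wrong way, so instead I use that $\ca E'_g$ is defined as $\{y\in\ca E':y\prec g\}$ and $g$ itself may or may not lie in $\ca E'$; I will take $g\in\ca E'$ as part of the hypothesis $\f e\prec g\prec f$ together with $f$ bounding $\ca E'$). Then by Lemma~\ref{adh6.6.10}, $\ddeg_{\ca E'} (P_{+f})_{-g} = \ddeg_{\ca E'} P_{+f} = d$. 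Now $\ca E'_g \subseteq \ca E'$, so $\ddeg_{\ca E'_g}(P_{+f-g}) \les d$; for the reverse inequality, $\f e\in\ca E'_g$ is an algebraic starting monomial of \eqref{eqn'2} with $\ddeg P_{+f,\,\x\f e}=d$ (it is the largest such), and by Corollary~\ref{adh6.6.6} (since $(f)-(f-g)=g\prec\f e$ fails—rather $g\succ\f e$, so $f-(f-g)=g\succe\f e$, hmm) I instead argue: $\f e\prec g$ gives $\f e\prece g$, and $\f e$ is a starting monomial for $P_{+f}$; by Corollary~\ref{adh6.6.6} with $f-g$ and $f$ differing by $g$ and $g\succ\f e$—this needs care, so let me instead just invoke $\ddeg_{\prec g}P_{+f}=d$ which holds because \eqref{eqn'2} is unravelled with $\f e\prec g$ the largest algebraic starting monomial so $\ddeg P_{+f,\x g}=d$—then $\ddeg_{\ca E'_g}P_{+f-g}\ges\ddeg P_{+f-g,\,\x\f e}=\ddeg P_{+f,\,\x\f e}=d$ by Corollary~\ref{adh6.6.6}. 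Hence \eqref{eqng} has dominant degree $d$.

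Next I would check that $(g,\ca E'_g)$ is a partial unraveller for \eqref{eqng}: this means $\ddeg_{\ca E'_g}(P_{+f-g})_{+g}=d$, i.e.\ $\ddeg_{\ca E'_g}P_{+f}=d$; but $\ca E'_g = \{y\in\ca E':y\prec g\}$ and $\f e\in\ca E'_g$ with $\ddeg P_{+f,\,\x\f e}=d$, while $\ca E'_g\subseteq\ca E'$ forces $\ddeg_{\ca E'_g}P_{+f}\les d$, so equality holds. It remains to show the refinement $(P_{+f})(Y)=0,\ Y\in\ca E'_g$ is unravelled, i.e.\ has no approximate solution of multiplicity $d$ that lies in $\ca E'_g$. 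But any approximate solution of this refinement in $\ca E'_g\subseteq\ca E'$ of multiplicity $d$ would be an approximate solution of \eqref{eqn'2} of multiplicity $d$ (approximate-solution-hood and multiplicity for a fixed $\d$-polynomial depend only on membership in the $\prece$-closed set and the local data at that element, via Corollary~\ref{approxsolnequiv}, so shrinking $\ca E'$ to $\ca E'_g$ can only remove approximate solutions, never create them), contradicting that \eqref{eqn'2} is unravelled. Therefore the refinement of \eqref{eqng} by $(g,\ca E'_g)$ is unravelled, so $(g,\ca E'_g)$ is an unraveller for \eqref{eqng}.

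I expect the only real friction to be bookkeeping with the various $\prece$-closed sets and conjugations — in particular confirming the identity $P_{+f-g}=(P_{+f})_{-g}$ and that $g$ (hence the relevant shifts) lands in $\ca E'$, and that $\f e$ remains the largest algebraic starting monomial after passing to $\ca E'_g$ — all of which is routine given Corollaries~\ref{adh6.6.6} and \ref{approxsolnequiv} and Lemma~\ref{adh6.6.10}. The conceptual content is simply that shrinking the asymptotic domain from $\ca E'$ down to $\ca E'_g$ (which still contains the largest algebraic starting monomial $\f e$, hence preserves the dominant degree $d$) cannot introduce new approximate solutions, so unravelledness is inherited.
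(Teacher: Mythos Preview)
Your second half (showing $(g,\ca E'_g)$ is a partial unraveller for \eqref{eqng} and that the resulting refinement is unravelled) is correct and matches the paper: since $\f e\in\ca E'_g$ with $\ddeg P_{+f,\x\f e}=d$, one gets $\ddeg_{\ca E'_g}P_{+f}=d$, and unravelledness passes down because $\ca E'_g\subseteq\ca E'$.

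The first half, however, has two genuine gaps. First, the dominant degree of \eqref{eqng} is $\ddeg_{\prece g}P_{+f-g}$, since the asymptotic condition in \eqref{eqng} is $Y\prece g$, not $Y\in\ca E'_g$; you compute the latter quantity and then declare the former equals $d$. Second, your key step $\ddeg P_{+f-g,\x\f e}=\ddeg P_{+f,\x\f e}$ via Corollary~\ref{adh6.6.6} fails: that corollary needs $(f-g)-f=-g\prece\f e$, but $\f e\prec g$. The fix (and this is what the paper does) is to shift at level $g$ rather than $\f e$: since $g$ lies in the $\prece$-closed set $\{y:y\prece g\}$, Lemma~\ref{adh6.6.10} gives
\[
\ddeg_{\prece g}P_{+f-g}\ =\ \ddeg_{\prece g}P_{+f}\ \ges\ \ddeg_{\prece\f e}P_{+f}\ =\ d,
\]
and for the upper bound one uses $f-g\sim f\in\ca E$ together with Lemma~\ref{adh6.6.10} applied in $\ca E$ to get $\ddeg_{\prece g}P_{+f-g}\les\ddeg_{\ca E}P_{+f-g}=\ddeg_{\ca E}P=d$. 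This also dissolves your worry about whether $g\in\ca E'$: the argument never needs that, only $g\in\ca E$ (which follows from $g\prec f\in\ca E$) and $\f e\in\ca E'_g$.
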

\begin{proof}
First, since $\f e$ is the largest algebraic starting monomial for \eqref{eqn'2}, Proposition~\ref{ndiag} gives
\[
d\ =\ \ddeg P_{+f, \xf e}\ =\ \ddeg_{\prece \f e} P_{+f}.
\]
Note that $f-g \sim f \in \ca E$.
Now, by Lemma~\ref{adh6.6.10} we obtain
\[
d\ =\ \ddeg_{\prece \f e} P_{+f}\ \les\ \ddeg_{\prece g} P_{+f}\ =\ \ddeg_{\prece g} P_{+f-g}\ \les\ \ddeg_{\ca E} P_{+f-g}\ =\ \ddeg_{\ca E} P\ =\ d,
\]
which gives that \eqref{eqng} has dominant degree $d$.
Similarly,
\[
d\ =\ \ddeg_{\prece \f e} P_{+f}\ \les\ \ddeg_{\ca E'_g} P_{+f}\ \les\ \ddeg_{\ca E} P_{+f}\ =\ \ddeg_{\ca E} P\ =\ d,
\]
and thus the asymptotic differential equation
\[
P_{+f}(Y)\ =\ 0, \hs Y \in \ca E'_g,
\]
which is a refinement of both \eqref{eqng} and \eqref{eqn'2}, has dominant degree $d$.
Finally, since \eqref{eqn'2} is unravelled, the pair $(g, \ca E'_g)$ is an unraveller for \eqref{eqng}.
\end{proof}

We now turn to ignoring terms of degree higher than the dominant degree of \eqref{eqn}.
First, some notation.
Recall that for $F \in K\{Y\}$, we set $F_{\les n} \coloneqq F_0+F_1+\dots+F_n$.
Note that if $n \ges \ddeg F$, then $D_F=D_{F_{\les n}}$.
Now set $F \coloneqq P_{+f-g}$, so $d \ges \ddeg F_{\xf m}$ for all $\f m \prece g$.
Consider the ``truncation''
\begin{equation}\tag{E${}_{g, \les d}$}\label{eqnglesd}
F_{\les d}(Y)\ =\ 0, \hs Y\prece g
\end{equation}
of \eqref{eqng} as an asymptotic differential equation over $K$.
We have, for all $\f m \prece g$,
\[
D_{F_{\xf m}}\ =\ D_{(F_{\xf m})_{\les d}}\ =\ D_{(F_{\les d})_{\xf m}},
\]
so \eqref{eqnglesd} has the same algebraic starting monomials and dominant degree as \eqref{eqng}.
Next, we show that under suitable conditions the unraveller $(g, \ca E'_g)$ for \eqref{eqng} from the previous lemma remains an unraveller for \eqref{eqnglesd}.
Recall that $[\gamma]$ denotes the archimedean class of $\gamma \in \Gamma$ and that such classes are ordered in the natural way; see \S\ref{prelim:arch}.

The next lemma corresponds to \cite[Lemma~13.8.13]{adamtt}.
The essential difference is that we use the valuation $vg \mapsto [vg]$ on $\Gamma$ instead of the valuation $vg \mapsto v(g'/g)$ used in \cite[Lemma~13.8.13]{adamtt}.
\begin{lem}\label{reducedegunravel}
Suppose that $[v(\f e/g)] < [v(g/f)]$.
Then $(g, \ca E'_g)$ is an unraveller for \eqref{eqnglesd}, and $\f e$ is the largest algebraic starting monomial for the unravelled asymptotic differential equation
\begin{equation}\tag{E${}_{g, \les d}'$}\label{eqnglesd'}
(F_{\les d})_{+g}(Y)\ =\ 0, \hs Y\in \ca E'_g.
\end{equation}
\end{lem}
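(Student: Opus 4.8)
The plan is to adapt the proof of \cite[Lemma~13.8.13]{adamtt}, with the archimedean valuation $\gamma\mapsto[\gamma]$ on $\Gamma$ playing the role of the valuation $vg\mapsto v(g'/g)$ used there. First I would reduce to the case $g\asymp 1$: conjugating all the relevant data multiplicatively by $\f d_g^{-1}\in\f M$ and invoking Lemma~\ref{multconjade} (together with the fact that truncation commutes with multiplicative conjugation), one may assume $vg=0$, so that $[v(\f e/g)]<[v(g/f)]$ becomes $[v\f e]<[vf]$, and $f\succ 1$, $\f e\prec 1$, $\f d_f\succ 1$. Since the dominant degree of \eqref{eqng} is $d$ by Lemma~\ref{tildeunravel}, we have $\ddeg F=\ddeg F_{\x g}=d$ (using $g\asymp 1$), and a short computation with Corollaries~\ref{adh6.6.6} and~\ref{adh6.6.7} together with Lemma~\ref{adh6.6.10} shows $\ddeg F_{\xf d_f}=d$ as well. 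Hence Corollary~\ref{reducedeg} applies with $\f n:=\f d_f$ and $Q:=F_{\les d}$, giving the following \emph{key identity}: for every $\f m\in\f M$ with $[v\f m]<[vf]$ and every $h\in\ca E'_g$ (so that $g+h\asymp 1\prece 1$),
\[
D_{((F_{\les d})_{+g})_{+h, \xf m}}\ =\ D_{F_{+g+h, \xf m}}\ =\ D_{(P_{+f+h})_{\xf m}}\ =\ D_{((P_{+f})_{+h})_{\xf m}},
\]
and in particular, taking $h=0$, $D_{((F_{\les d})_{+g})_{\xf m}}=D_{(P_{+f})_{\xf m}}$ whenever $[v\f m]<[vf]$.

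Granting this, the ``partial unraveller'' assertion and the claim about $\f e$ are quick. We have $\ddeg_{\ca E'_g}(F_{\les d})_{+g}\les\ddeg_{\prece g}F_{\les d}=d$ by Lemma~\ref{adh6.6.10}, while the key identity gives $D_{((F_{\les d})_{+g})_{\xf e}}=D_{(P_{+f})_{\xf e}}$, which has degree $d$ and is not homogeneous because $\f e$ is the largest algebraic starting monomial for \eqref{eqn'2} (Proposition~\ref{ndiag}); hence $\ddeg_{\ca E'_g}(F_{\les d})_{+g}=d$, so $(g,\ca E'_g)$ is a partial unraveller for \eqref{eqnglesd} and $\f e$ is an algebraic starting monomial for \eqref{eqnglesd'}. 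Moreover, if $\f m\succ\f e$ lies in $\ca E'_g$ then $0<v\f m<v\f e$, so $[v\f m]\les[v\f e]<[vf]$ and thus $D_{((F_{\les d})_{+g})_{\xf m}}=D_{(P_{+f})_{\xf m}}$ is homogeneous; hence $\f e$ is the \emph{largest} algebraic starting monomial for \eqref{eqnglesd'}.

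It remains to show that \eqref{eqnglesd'} is unravelled. Its dominant degree is $d\ges 1$, so by Lemma~\ref{unraveldef}(i) it suffices to prove $\ddeg_{\prec h}((F_{\les d})_{+g})_{+h}<d$ for every $h\in\ca E'_g$, and I would distinguish cases according to $[vh]$. If $[vh]<[vf]$, then Lemma~\ref{ddegtrans} applied on both sides together with the key identity at $\f m=\f d_h$ yields $\ddeg_{\prec h}((F_{\les d})_{+g})_{+h}=\ddeg_{\prec h}(P_{+f})_{+h}<d$, the inequality holding because \eqref{eqn'2} is unravelled. If $[vh]\ges[vf]$, then $[v\f e]<[vf]\les[vh]$, so $\f d_h\prec\f e$, and here I would bring in the auxiliary scale $\f m_0\in\f M$ with $v\f m_0=2v\f e$: then $v\f e<v\f m_0<vh$ (the second inequality since $[v\f e]<[vh]$) and $[v\f m_0]=[v\f e]<[vf]$, so, using Corollaries~\ref{adh6.6.6} and~\ref{adh6.6.7}, the key identity at $h=0$, $\f m=\f m_0$, and Proposition~\ref{ndiag} (as $\f m_0\prec\f e$),
\[
\ddeg_{\prec h}((F_{\les d})_{+g})_{+h}\ \les\ \ddeg((F_{\les d})_{+g})_{\xf d_h}\ \les\ \ddeg((F_{\les d})_{+g})_{\xf m_0}\ =\ \ddeg(P_{+f})_{\xf m_0}\ <\ d .
\]

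I expect the case $[vh]\ges[vf]$ to be the crux. Corollary~\ref{reducedeg} compares $F$ with its truncation $F_{\les d}$ only at multiplicative scales of archimedean class strictly below $[vf]$, so at a scale $\f d_h$ of class $\ges[vf]$ one cannot apply it directly; the archimedean hypothesis $[v(\f e/g)]<[v(g/f)]$ is exactly what guarantees room for an intermediate scale $\f m_0$ with $\f d_h\prec\f m_0\prec\f e$ and $[v\f m_0]<[vf]$, through which the estimate can be pushed through. All remaining points reduce directly to the unravelledness of \eqref{eqn'2}.
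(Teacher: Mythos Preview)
Your proposal is correct and follows essentially the same approach as the paper's proof: reduce to $g\asymp 1$, establish the key identity $D_{(F_{\les d})_{+g,\xf m}}=D_{P_{+f,\xf m}}$ for $[v\f m]<[vf]$ via Corollary~\ref{reducedeg}, and then verify the unraveller conditions by a case split using Lemma~\ref{ddegtrans}. The only difference is cosmetic: the paper splits according to whether $\f e\prece h$ or $h\prec\f e$ (handling the latter via the intermediate scale $\f e^2$), whereas you split according to whether $[vh]<[vf]$ or $[vh]\ges[vf]$ (handling the latter via your $\f m_0$ with $v\f m_0=2v\f e$, which is precisely $\f e^2$); both splits lead to the same estimates.
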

\begin{proof}
First, we reduce to the case $g \asymp 1$: set $\f g \coloneqq \f d_g$ and replace $P$, $f$, $g$, $\ca E$, and $\ca E'$ by $P_{\xf g}$, $f/\f g$, $g/\f g$, $\f g^{-1}\ca E$, and $\f g^{-1}\ca E'$, respectively, and use Lemma~\ref{multconjade}.
Note that now $\f e \prec 1 \prec f$ and $[v\f e]<[vf]$.

Since $F = P_{+f-g}$ and $g \asymp 1$, we have $\ddeg F = \ddeg_{\prece 1} F = d$ by Lemma~\ref{tildeunravel}, so
\[
d\ \les\ \ddeg F_{\x f}\ \les\ \ddeg_{\ca E} F\ =\ d,
\]
using Corollary~\ref{adh6.6.7} and Lemma~\ref{adh6.6.10}.
This yields $d = \ddeg F = \ddeg F_{\x f}$.
For $\f m$ with $[v\f m]<[vf]$ we may thus apply Corollary~\ref{reducedeg} with $F$ and $\f d_f$ in place of $P$ and $\f n$ to get
\begin{equation}\label{reducedegunravel:dompart}
D_{P_{+f, \xf m}}\ =\ D_{F_{+g, \xf m}}\ =\ D_{(F_{\les d})_{+g, \xf m}}.
\end{equation}
In particular, this holds if $\f e \prece \f m \prec 1$, as then $[v\f m]\les[v\f e]<[vf]$.
Thus $\f e$ is the largest algebraic starting monomial for \eqref{eqnglesd'}, since it is the largest such for \eqref{eqn'2}.

For $(g, \ca E'_g)$ to be an unraveller for \eqref{eqnglesd}, we now show:
\begin{enumerate}
	\item $\ddeg_{\ca E'_g} (F_{\les d})_{+g}=d$;
	\item $\ddeg_{\prec h} (F_{\les d})_{+g+h}<d$ for all $h \in \ca E'_g$.
\end{enumerate}
For (i), if $\f{e\prece m} \in \ca E'_g$, then by Corollary~\ref{ndiagcor} and \eqref{reducedegunravel:dompart} we have
\[
d\ =\ \ddeg P_{+f, \xf m}\ =\ \ddeg (F_{\les d})_{+g, \xf m}.
\]
For (ii), let $h \in \ca E'_g$, so $h \in \ca E'$ and $h \prec 1$.
Set $\f h \coloneqq \f d_h$ and $u \coloneqq h/\f h$.
Applying Lemma~\ref{ddegtrans}, we have
\begin{align}
\ddeg_{\prec h} (F_{\les d})_{+g+h}\ &=\ \mul\big(D_{(F_{\les d})_{+g, \xf h}}\big)_{+\bar u};\label{reducedegunravel:trans1}\\
\ddeg_{\prec h} P_{+f+h}\ &=\ \mul\big(D_{P_{+f, \xf h}}\big)_{+\bar u}.\label{reducedegunravel:trans2}
\end{align}
First suppose $\f e \prece h$, so then combining \eqref{reducedegunravel:dompart}, for $\f m = \f h$, with \eqref{reducedegunravel:trans1} and \eqref{reducedegunravel:trans2} we have
\[
\ddeg_{\prec h} (F_{\les d})_{+g+h}\ =\ \ddeg_{\prec h}P_{+f+h}\ <\ d,
\]
since \eqref{eqn'2} is unravelled.
Now suppose $h \prec \f e$.
If $\f e^2 \prece h \prec \f e$, then $[vh]=[v\f e]<[vf]$, and thus by \eqref{reducedegunravel:dompart} and Corollary~\ref{ndiagcor},
\[
\ddeg (F_{\les d})_{+g, \xf h}\ =\ \ddeg P_{+f, \xf h}\ <\ \ddeg P_{+f, \xf e}\ =\ d.
\]
By Corollary~\ref{adh6.6.7}, $\ddeg (F_{\les d})_{+g, \xf h}<d$ remains true for any $h \prec \f e$.
Hence, by \eqref{reducedegunravel:trans1},
\[
\ddeg_{\prec h} (F_{\les d})_{+g+h}\ =\ \mul\big(D_{(F_{\les d})_{+g, \xf h}}\big)_{+\bar u}\ \les\ \ddeg(F_{\les d})_{+g, \xf h}\ <\ d,
\]
which completes the proof of (ii).
\end{proof}

\subsection{Finding solutions in differential-henselian fields}\label{sec:ade:dhsolns}

We now use $\d$-henselianity to find solutions of asymptotic differential equations.
Given an element of an extension of $K$, when $K$ has few constants we find a solution closest to that element.
The only result in this subsection that uses the assumption that $\Gamma$ has no least positive element is Lemma~\ref{bestapproxsoln}.

We say that \eqref{eqn} is \emph{quasilinear} if $\ddeg_{\ca E} P=1$.
Note that if $K$ is $r$-$\d$-henselian and \eqref{eqn} is quasilinear, then $P$ has a zero in $\ca E \cup \{0\}$.
Note that even in this case, \eqref{eqn} may not have a solution, since those are required to be nonzero.
The next lemma, routinely adapted from \cite[Lemma~14.3.4]{adamtt}, shows how certain approximate solutions yield solutions.

\begin{lem}\label{approxtoreal}
Suppose that $K$ is $r$-$\d$-henselian.
Let $g \in K^{\x}$ be an approximate zero of $P$ such that $\ddeg P_{\x g} = 1$.
Then there exists $y \sim g$ in $K$ such that $P(y)=0$.
\end{lem}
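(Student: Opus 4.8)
The plan is to exploit $r$-$\d$-henselianity in the form of Lemma~\ref{adh7.1.1} after reducing to a situation where $P$ becomes, up to units, a $\d$-polynomial in $\ca O\{Y\}$ with $P_1 \asymp 1$ and $P_i \prec 1$ for $i \ges 2$, and whose constant term is small. First I would set $\f m \coloneqq \f d_g$ and $u \coloneqq u_g = g/\f m \asymp 1$, so that by hypothesis $D_{P_{\xf m}}(\bar u) = 0$ and $\ddeg P_{\xf m} = \ddeg P_{\x g} = 1$. Passing from $P$ to $P_{\xf m}$ (which by Lemma~\ref{multconjade}-type bookkeeping, or directly, preserves approximate zeros and dominant degrees appropriately, and is harmless since $\f m \in \f M$ and a zero $y \sim g$ of $P$ corresponds to a zero $y/\f m \sim u$ of $P_{\xf m}$), I may assume $g \asymp 1$, i.e. $g = u \asymp 1$ and $D_P(\bar u) = 0$ with $\ddeg P = 1$.

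Next I would additively conjugate by $g$: set $Q \coloneqq P_{+g} \asymp P$ (using Lemma~\ref{adh4.5.1}(i), since $g \asymp 1 \prece 1$). Then $D_Q = (D_P)_{+\bar u}$ by Lemma~\ref{adh6.6.5}(i), and since $D_P(\bar u) = 0$ we get $\mul D_Q \ges 1$, i.e. $(D_Q)_0 = 0$, which means $Q_0 \prec Q$, i.e. the constant term of $Q$ is strictly smaller in valuation than $Q$ itself. On the other hand $\ddeg Q = \ddeg P_{+g} = \ddeg P = 1$ by Lemma~\ref{adh6.6.5}(i) again, so $D_Q$ has degree $1$; combined with $(D_Q)_0 = 0$ this forces $D_Q$ to be homogeneous of degree $1$, i.e. $D_Q = (D_Q)_1$, so $Q_1 \asymp Q$ and $Q_i \prec Q$ for all $i \ges 2$. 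Now divide by $\f d_Q$: the $\d$-polynomial $R \coloneqq \f d_Q^{-1} Q \in \ca O\{Y\}$ satisfies $R_1 \asymp 1$, $R_i \prec 1$ for $i \ges 2$, and $R_0 = \f d_Q^{-1}Q_0 \prec 1$. By Lemma~\ref{adh7.1.1} (the equivalent formulation of $r$-$\d$-henselianity), since $\ord R \les \ord P \les r$, there is $z \in \ca O$ with $R(z) = 0$, hence $Q(z) = 0$, hence $P(g + z) = 0$.

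It remains to check $g + z \sim g$, equivalently $z \prec g \asymp 1$, i.e. $z \prec 1$. This is where I expect the only real subtlety: Lemma~\ref{adh7.1.1} as quoted only gives $z \in \ca O$, not $z \in \cao$. But $R_0 \prec 1$ and $R_1 \asymp 1$ and $R_i \prec 1$ for $i \ges 2$ means that the residue $\bar R \in \bm k\{Y\}$ has the form $\bar R = \overline{R_1}Y + (\text{higher order linear terms in } Y', \dots, Y^{(r)})$ — more precisely $\bar R$ has zero constant term, so $\bar R(0) = 0$; applying $\der$-henselianity we should locate a zero $z$ with $\bar z = 0$. Concretely I would instead invoke the original axiom ($r$DH2): since $Q_0 \prec Q \asymp Q_1$, we have $R_0 \prec 1$ and $R_1 \asymp 1$, and $R$ has order at most $r$, so ($r$DH2) directly yields $z \in \cao$ with $R(z) = 0$. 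Then $z \prec 1$, so $g + z \sim g$ and $P(g+z) = 0$, and $y \coloneqq g + z$ works. The main obstacle is thus just the bookkeeping to land exactly in the hypotheses of ($r$DH2) — ensuring $R_1 \asymp 1$ (not merely $R_1 \prec 1$), which is precisely what $\ddeg P_{\x g} = 1$ together with $(D_Q)_0 = 0$ buys us — and confirming that the multiplicative-conjugation reduction to $g \asymp 1$ is legitimate.
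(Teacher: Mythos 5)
Your proof is correct and takes essentially the same route as the paper's: conjugate by the dominant monomial and by $u_g$ to normalize, observe that $D_{P_{\xf m}}(\bar u)=0$ combined with $\ddeg P_{\xf m}=1$ forces the dominant part of the conjugated polynomial to have multiplicity exactly $1$, and then apply $(r\text{DH}2)$ directly (not Lemma~\ref{adh7.1.1}, whose weaker conclusion $z\in\ca O$ you correctly identify as insufficient) to get $z\in\cao$, yielding $y=(u+z)\f m\sim g$. The paper phrases this via $\dmul P_{\xf m,+u}=\ddeg P_{\xf m,+u}=1$ rather than spelling out that the dominant part is homogeneous linear, but the content is identical.
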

\begin{proof}
Let $\f m \coloneqq \f d_g$ and $u \coloneqq g/\f m$, so $D_{P_{\xf m}}(\bar{u})=0$ and thus
\[
\dmul P_{\xf m, +u}\ =\ \mul (D_{P_{\xf m}})_{+\bar{u}}\ \ges\ 1.
\]
By Lemma~\ref{adh6.6.5}, we also have
\[
\dmul P_{\xf m, +u}\ \les\ \ddeg P_{\xf m, +u}\ =\ \ddeg P_{\xf m}\ =\ 1.
\]
Thus $\dmul P_{\xf m, +u}=1$, so by $r$-$\d$-henselianity we have $z \prec 1$ with $P_{\xf m, +u}(z)=0$.
Setting $y \coloneqq (u+z)\f m$ gives $P(y)=0$ and $y \sim u\f m=g$.
\end{proof}

Now let $f$ be an element of an extension of $K$.
We say that a solution $y$ of \eqref{eqn} \emph{best approximates $f$} (\emph{among solutions of \eqref{eqn}}) if $y-f \prece z-f$ for each solution $z$ of \eqref{eqn}.
Note that if $f \in K^\x$ is a solution of \eqref{eqn}, then $f$ is the unique solution of \eqref{eqn} that best approximates $f$.
Also, if $f \succ \ca E$, then $y-f \asymp f$ for all $y \in \ca E$, and so each solution of \eqref{eqn} best approximates $f$.
The next lemma concerning multiplicative conjugation has a routine proof, identical to that of \cite[Lemma~11.2.10]{adamtt}, by distinguishing the cases $z \succ g$ and $z \prece g$.
\begin{lem}\label{aeconj}
Let $f$ be an element of an extension of $K$ and let $g \in \ca E$ with $f \prece g$. Suppose that $y$ is a solution of the asymptotic differential equation
\[
P_{\x g}(Y)\ =\ 0, \hs Y \prece 1
\]
that best approximates $g^{-1}f$.
Then the solution $gy$ of \eqref{eqn} best approximates $f$.
\end{lem}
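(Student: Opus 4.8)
The plan is to mimic the proof of \cite[Lemma~11.2.10]{adamtt}. First I would check that $gy$ is genuinely a solution of \eqref{eqn}: from $y \prece 1$ we get $gy \prece g$, hence $gy \in \ca E$ since $\ca E$ is $\prece$-closed and $g \in \ca E$; moreover $gy \neq 0$ and $P(gy) = P_{\x g}(y) = 0$. It then remains to fix an arbitrary solution $z$ of \eqref{eqn} and prove $gy - f \prece z - f$, which I would do by the case split $z \prece g$ versus $z \succ g$ suggested in the statement.

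Suppose first $z \prece g$. Then $w \coloneqq g^{-1}z \in K^{\x}$ satisfies $w \prece 1$ and $P_{\x g}(w) = P(z) = 0$, so $w$ is a solution of the asymptotic differential equation $P_{\x g}(Y) = 0$, $Y \prece 1$. Since $y$ best approximates $g^{-1}f$ among solutions of that equation, $y - g^{-1}f \prece w - g^{-1}f = g^{-1}(z-f)$, and multiplying through by $g$ (which respects $\prece$ by (V1)) yields $gy - f \prece z - f$. Suppose instead $z \succ g$. Using $f \prece g$ we get $vz < vg \les vf$, so $v(z-f) = vz$, i.e.\ $z - f \asymp z \succ g$; on the other hand $v(gy) \ges vg$ and $vf \ges vg$ give $gy - f \prece g$. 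Hence $gy - f \prece g \prec z - f$, and in particular $gy - f \prece z - f$. As $z$ was an arbitrary solution of \eqref{eqn}, this shows $gy$ best approximates $f$.

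I do not expect a genuine obstacle: the whole argument is a short manipulation of the dominance relation, exactly as in \cite{adamtt}. The only points deserving attention are that $f$ lies in an \emph{extension} of $K$, so the inequalities $f \prece g$, $z - f \asymp z$, and $gy - f \prece g$ are to be read in the value group of that extension, and that one must track nonvanishing (of $gy$ and of $g^{-1}z$) to stay within the definition of ``solution,'' which excludes $0$.
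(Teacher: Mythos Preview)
Your proof is correct and follows exactly the approach the paper indicates: the argument is identical to that of \cite[Lemma~11.2.10]{adamtt}, carried out via the case distinction $z \prece g$ versus $z \succ g$. Your attention to the nonvanishing of $gy$ and $g^{-1}z$ and to working in the extension's value group is appropriate, and nothing further is needed.
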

%\begin{proof}
%Let $z$ be a solution of \eqref{eqn}.
%If $z \succ g \succe f$, then $z-f \sim z$.
%As $y \prece 1$ and $f \prece g$, we have $gy - f \prece g$.
%Combining these two yields $gy-f \prece z-f$.
%If $z \prece g$, then $g^{-1}z \prece 1$ is a solution of the above asymptotic differential equation and so by assumption on $y$, we have $y-g^{-1}f \prece g^{-1}z-g^{-1}f$,
%and hence $gy-f \prece z-f$.
%\end{proof}

This next lemma is easily adapted from \cite[Lemma~14.1.13]{adamtt}; it is the one place in this section that we impose the assumption $C \subseteq \ca O$.
\begin{lem}\label{bestapprox}
Suppose that $r \ges 1$ and $K$ is $r$-$\d$-henselian with $C \subseteq \ca O$.
Suppose that \eqref{eqn} is quasilinear and has a solution.
Let $f$ be an element of an extension of $K$.
Then $f$ is best approximated by some solution of \eqref{eqn}.
\end{lem}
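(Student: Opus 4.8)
The plan is to reduce to the case where $f$ is not already in $K^\times$ and, using Lemma~\ref{aeconj}, to the case where the asymptotic condition is $Y \prec 1$; then extract a best approximation from a pc-sequence built out of candidate solutions. First I would dispose of the trivial cases: if $f \in K^\times$ is itself a solution of \eqref{eqn}, it best approximates itself; and if $f \succ \mathcal E$ (in the sense that $f \succ z$ for all $z \in \mathcal E$), then $z - f \asymp f$ for every $z \in \mathcal E$ by Lemma~\ref{adh4.5.1}(i), so every solution best approximates $f$, and there is one by hypothesis. So assume neither holds; in particular there is a solution $y_0$ of \eqref{eqn} with $f \prece y_0$, and pick $g \in \mathcal E$ with $y_0 \prece g$ and $f \prece g$. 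By Lemma~\ref{multconjade}(i), the conjugated equation $P_{\times g}(Y) = 0$, $Y \prec 1$, is still quasilinear, and it has a solution (namely $g^{-1}y_0$); by Lemma~\ref{aeconj}, a solution of the conjugated equation best approximating $g^{-1}f$ gives, after multiplying by $g$, a solution of \eqref{eqn} best approximating $f$. So we may replace \eqref{eqn} by the conjugated equation and assume $\mathcal E = \{y \in K^\times : y \prec 1\}$.

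Now consider the set $S$ of valuations $v(y - f)$, where $y$ ranges over solutions of \eqref{eqn}; this is a nonempty subset of $\Gamma \cup \{\infty\}$, and I want to produce a solution realizing $\sup S$. If the supremum is attained by some solution, we are done, so suppose not: then there is a strictly increasing sequence of indices and solutions $y_\rho$ of \eqref{eqn} with $v(y_\rho - f)$ strictly increasing and cofinal in $S$. For $\rho < \sigma$ one has $y_\sigma - y_\rho \asymp y_\rho - f$ (since $v(y_\sigma - f) > v(y_\rho - f)$), so $(y_\rho)$ is a pc-sequence in $K$ with $y_\rho \pconv f$. Here is where quasilinearity and $r$-$\d$-henselianity enter, via the Equalizer-Theorem machinery: since $\ddeg_{\mathcal E} P = 1$ and $v(y_\rho - f)$ is cofinal in $v(f - K)$ over the relevant tail (replacing $(y_\rho)$ by an equivalent pc-sequence if necessary so that $P(y_\rho) \pconv 0$), Corollary~\ref{approxsolnequiv} and Corollary~\ref{ndiagddegcor} force $\ddeg_{\bm y} P = 1$, where $\bm y = c_K(y_\rho)$; then $\ddeg P_{\times g_\rho, +y_\rho} = 1$ eventually for $g_\rho \in K^\times$ with $v(g_\rho) = v(y_{\rho+1} - y_\rho)$, so $y_\rho$ is an approximate zero of $P$ of the kind handled by Lemma~\ref{approxtoreal} — actually I will apply Lemma~\ref{approxtoreal} to produce from these approximate zeros a genuine solution $y^\ast \sim$ the appropriate conjugate, and check $v(y^\ast - f) > v(y_\rho - f)$ for all $\rho$, contradicting cofinality of $v(y_\rho - f)$ in $S$. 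The assumption $C \subseteq \mathcal O$ (few constants, sharpened) is what guarantees the valuation-theoretic bookkeeping — specifically, that the $\ddeg$ computations behave well under the conjugations — which is exactly the role it plays in \cite[Lemma~14.1.13]{adamtt}.

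The main obstacle I anticipate is the last step: showing that the solution extracted near the pc-sequence $(y_\rho)$ actually does better than every $y_\rho$, i.e. that one does not merely recover a solution at the same "level" as the $y_\rho$. This is where one needs that \eqref{eqn} is quasilinear (so that Lemma~\ref{approxtoreal} applies with $\ddeg P_{\times g} = 1$, pinning the approximate zero down to a genuine zero at strictly higher valuation from $f$) rather than just of dominant degree $\ges 1$; and it is where the few-constants hypothesis $C \subseteq \mathcal O$ is used to control $v(P, \cdot)$-type quantities along the sequence. Concretely I would mimic the argument in \cite[Lemma~14.1.13]{adamtt}, tracking the refinement $P_{+y_\rho}(Y) = 0$, $Y \prec y_{\rho+1} - y_\rho$, observing its dominant degree is $1$, invoking Lemma~\ref{approxtoreal} to get $z_\rho \in K$ with $P(z_\rho) = 0$ and $z_\rho - y_\rho \prec y_{\rho+1} - y_\rho \prece y_\rho - f$, so $z_\rho - f \sim y_\rho - f$ has valuation in $S$ but the construction can be iterated to push $v(z_\rho - f)$ strictly up — and either this terminates at a best approximation, giving the claim, or it produces an even longer pc-sequence, at which point the $r$-$\d$-henselianity (through $\d$-algebraic maximality–type completeness of the relevant quotient, or a direct appeal) closes the gap. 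Everything else — the reductions in the first paragraph and the pc-sequence setup in the second — is routine given the lemmas already proved.
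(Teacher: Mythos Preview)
Your reductions in the first paragraph are essentially the same as the paper's (the paper reduces to $\ca E=\ca O^{\neq}$ rather than the strict cone, but this is immaterial). The divergence is in the second half, and there is a genuine gap.

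The paper's argument after the reduction is very short: assuming no best approximation exists, one produces solutions $y_0, y_1, \dots$ of \eqref{eqn} with $y_i - f \succ y_{i+1}-f$ for all $i$, hence $y_{i+1}-y_i \asymp y_i - f \succ y_{i+1}-f \asymp y_{i+2}-y_{i+1}$. Since $\ddeg P_{+y_i}=\ddeg P=1$ for each $i$ (by Lemma~\ref{adh6.6.5}), this directly contradicts \cite[Lemma~7.5.5]{adamtt}. That lemma says, roughly, that a $\d$-polynomial of order $r$ cannot have zeros $y_0,\dots,y_{r+1}$ with $y_i-y_{i-1}\succ y_{i+1}-y_i$ and $\ddeg P_{+y_{r+1},\x g}=1$ for suitable $g$; it is precisely here that the hypothesis $C\subseteq\ca O$ is used. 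So the whole point is a \emph{finite} bound on the length of such chains of zeros, coming from the order of $P$.

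Your approach misses this entirely. You build a pc-sequence $(y_\rho)$ and then try to extract a strictly better solution near it via Lemma~\ref{approxtoreal}, but as you yourself note, applying that lemma gives $z_\rho$ with $z_\rho-f\sim y_\rho-f$, which does not improve the valuation. Your proposed fix (``iterate to push $v(z_\rho-f)$ strictly up'') is where the argument breaks: iterating just reproduces the same pc-sequence behaviour and never gets past $\sup S$. The fallback to ``$\d$-algebraic maximality--type completeness'' is unjustified, since $K$ is only assumed $r$-$\d$-henselian (indeed, that $r$-$\d$-henselianity implies $r$-$\d$-algebraic maximality is one of the main theorems of the paper, proved much later and itself relying on the very lemma \cite[7.5.5]{adamtt} you are avoiding). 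The remedy is to abandon the pc-sequence extraction and instead invoke \cite[Lemma~7.5.5]{adamtt} directly once you have $r+2$ solutions forming a descending chain.
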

\begin{proof}
By the comment above Lemma~\ref{aeconj}, we may assume that $f \not\succ \ca E$.
Thus we may take $g \in \ca E$ with $f \prece g$ such that \eqref{eqn} has a solution $y \prece g$ and
\[
\ddeg P_{\x g}\ =\ \ddeg_{\ca E} P\ =\ 1.
\]
By Lemma~\ref{aeconj}, we may replace $P$ by $P_{\x g}$ and $\ca E$ by $\ca O^{\neq}$ in order to assume that $\ca E = \ca O^{\neq}$.
Suppose that $f$ is not best approximated by any solution of \eqref{eqn}.
Then for each $i$ we get $y_i \in K^{\x}$ such that:
\begin{enumerate}
	\item $y_i$ is a solution of \eqref{eqn}, i.e., $P(y_i)=0$ and $y_i \prece 1$;
	\item $y_i-f \succ y_{i+1}-f$;
	\item $\ddeg P_{+y_i}=\ddeg P=1$ (by Lemma~\ref{adh6.6.5}).
\end{enumerate}
Item (ii) implies that $y_{i+1}-y_i \asymp y_i-f$, contradicting \cite[Lemma~7.5.5]{adamtt}.
\end{proof}

The next lemma is based on \cite[Lemma~14.1.14]{adamtt} but has a shorter proof using Lemma~\ref{approxtoreal}.
\begin{lem}\label{approxtorealade}
Suppose that $K$ is $r$-$\d$-henselian, \eqref{eqn} is quasilinear, and $f \in \ca E$ is an approximate solution of \eqref{eqn}.
Then \eqref{eqn} has a solution $y_0 \sim f$, and every solution $y$ of \eqref{eqn} that best approximates $f$ satisfies $y \sim f$.
\end{lem}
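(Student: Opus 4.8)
The plan is to obtain $y_0$ directly from Lemma~\ref{approxtoreal}: the only thing that needs checking is that the approximate-solution hypothesis, combined with quasilinearity, forces $\ddeg P_{\x f}=1$ exactly, after which Lemma~\ref{approxtoreal} applies verbatim. The second assertion will then fall out of the definition of ``best approximates'' together with $y_0\sim f$.

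First I would pin down $\ddeg P_{\x f}$. Since $f$ is an approximate solution of \eqref{eqn}, Corollary~\ref{approxsolnequiv}(i) gives $\ddeg_{\prec f}P_{+f}\ges 1$. Applying Lemma~\ref{adh6.6.10} with the $\prece$-closed set $\{g\in K^\x:g\prece f\}$, which contains $f$, we get $\ddeg_{\prece f}P_{+f}=\ddeg_{\prece f}P=\ddeg P_{\x f}$, so $1\les\ddeg_{\prec f}P_{+f}\les\ddeg_{\prece f}P_{+f}=\ddeg P_{\x f}$. Conversely, $f\in\ca E$ and $\ca E$ being $\prece$-closed give $\{g\in K^\x:g\prece f\}\subseteq\ca E$, whence $\ddeg P_{\x f}=\ddeg_{\prece f}P\les\ddeg_{\ca E}P=1$ by quasilinearity; so $\ddeg P_{\x f}=1$. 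Now $f$ is an approximate zero of $P$, which has order at most $r$, with $\ddeg P_{\x f}=1$, and $K$ is $r$-$\d$-henselian, so Lemma~\ref{approxtoreal} yields $y_0\in K$ with $y_0\sim f$ and $P(y_0)=0$. Since $y_0\sim f$ implies $y_0\asymp f$, and $f\in\ca E$ with $\ca E$ being $\prece$-closed, we have $y_0\in\ca E$; thus $y_0$ is a solution of \eqref{eqn} with $y_0\sim f$, which is the first assertion.

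For the second assertion, let $y$ be a solution of \eqref{eqn} that best approximates $f$. By definition $y-f\prece z-f$ for every solution $z$ of \eqref{eqn}, and in particular $y-f\prece y_0-f\prec f$, the last relation being $y_0\sim f$; hence $y-f\prec f$, i.e., $y\sim f$. I do not expect a genuine obstacle here: the one point requiring care is upgrading $\ddeg P_{\x f}\ges 1$, which records that $f$ is an approximate zero, to $\ddeg P_{\x f}=1$, which uses quasilinearity and is exactly the hypothesis needed to invoke Lemma~\ref{approxtoreal} with a multiplicity-one dominant part; everything else is unwinding definitions.
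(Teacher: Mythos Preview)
Your proof is correct and follows essentially the same approach as the paper. The only cosmetic difference is that the paper pins down $\ddeg_{\prec f}P_{+f}=1$ and then invokes $r$-$\d$-henselianity directly to obtain $z\prec f$ with $P(f+z)=0$, whereas you instead establish $\ddeg P_{\x f}=1$ and route through Lemma~\ref{approxtoreal}; the second assertion is handled identically.
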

\begin{proof}
Let $\f m \coloneqq \f d_f$ and $u \coloneqq f/\f m$.
Then by Lemma~\ref{adh6.6.5} we have
\[
\ddeg P_{\x f}\ =\ \ddeg P_{\x \f{m}, + u}\ \ges\ \dmul P_{\x \f{m}, + u}\ \ges\ 1,
\]
and so since \eqref{eqn} is quasilinear, $\ddeg P_{\x f}=1$.
Thus Lemma~\ref{approxtoreal} yields a solution $y_0 \sim f$ of \eqref{eqn}.
If $y$ is a solution of \eqref{eqn} that best approximates $f$, then $y \sim f$, as
\[
y-f\ \prece\ y_0-f\ \prec\ f.
\qedhere
\]
\end{proof}

For the next lemma, a routine adaptation of  \cite[Lemma~14.3.13]{adamtt}, recall from \S\ref{secade:unravelimmed} that given an immediate extension $L$ of $K$, we extend the asymptotic differential equation \eqref{eqn} over $K$ to \eqref{eqnL} over $L$.
Note that if \eqref{eqn} is quasilinear, then so is \eqref{eqnL}.

\begin{lem}\label{bestapproxsoln}
Suppose that $K$ is $r$-$\d$-henselian and let $L$ be an immediate extension of $K$.
Suppose that \eqref{eqn} is quasilinear, $\ca E' \subseteq \ca E$ is $\prece$-closed, and $f \in \ca E_L$ is such that the refinement
\begin{equation}\tag{E${}'_L$}\label{eqn'L}
P_{+f}(Y)\ =\ 0, \hs Y \in \ca E'_L
\end{equation}
of \eqref{eqnL} is also quasilinear.
Let $y \prece f$ be a solution of \eqref{eqn} that best approximates $f$.
Then $f-y \in \ca E'_L \cup \{0\}$.
\end{lem}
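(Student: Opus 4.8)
The plan is to argue by contradiction: assuming $f-y\neq 0$ and $f-y\notin\ca E'_L$, I will produce a solution of \eqref{eqn} strictly closer to $f$ than $y$, contradicting that $y$ best approximates $f$. First I record the structural consequences of the assumption. Since $\ca E'_L$ is $\prece$-closed and $f-y\notin\ca E'_L$, every $z\in\ca E'_L$ satisfies $z\prec f-y$; and since $\ca E'$ is nonempty, $\ca E'_L\neq\0$. Since $y\prece f$ and $f\in\ca E_L$ we get $v(f-y)\ges vf\in v\ca E$, and as $\prece$-closed subsets of $K$ and of $L$ are determined by their (common) value sets, this gives $f-y\in\ca E_L$ and $\{z\in L^\times:z\prec f-y\}\subseteq\ca E_L$. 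Consequently there is $h\in\ca E$ in $K$ with $h\sim f-y$: either $f-y\in K$, or, by immediacy of $L/K$, $v\bigl((f-y)-K\bigr)$ has no largest element and we may take $h\in K$ approximating $f-y$ well enough that $h\sim f-y$.

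Next I transfer quasilinearity of the refinement \eqref{eqn'L} to an approximate-solution statement over $K$. Writing $P_{+f}=(P_{+y})_{+(f-y)}$ and using $\ca E'_L\subseteq\{z\in L^\times:z\prec f-y\}\subseteq\ca E_L$, quasilinearity of \eqref{eqn'L} together with the remark following Lemma~\ref{approxsolncheck} gives
\[
1\ =\ \ddeg_{\ca E'_L}P_{+f}\ \les\ \ddeg_{\prec(f-y)}(P_{+y})_{+(f-y)}\ \les\ \ddeg_{\ca E_L}P_{+y}\ =\ \ddeg_{\ca E}P\ =\ 1,
\]
where $\ddeg_{\ca E_L}P_{+y}=\ddeg_{\ca E}P_{+y}=\ddeg_{\ca E}P$ by Lemma~\ref{adh6.6.10} (valid as $y\in\ca E$) and invariance of dominant degree on passing to $L$. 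By Corollary~\ref{approxsolnequiv}, $f-y$ is then an approximate solution of the quasilinear equation $P_{+y}(Y)=0$, $Y\in\ca E_L$ over $L$, of multiplicity $1=\ddeg_{\ca E_L}P_{+y}$; so by the discussion preceding Lemma~\ref{unravelimmedlem}, any element of $K$ that is $\sim$-equivalent to $f-y$, in particular $h$, is an approximate solution of $P_{+y}(Y)=0$, $Y\in\ca E$ over $K$, of multiplicity $\ddeg_{\ca E}P_{+y}=1$.

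Now I apply Lemma~\ref{approxtorealade} to the quasilinear equation $P_{+y}(Y)=0$, $Y\in\ca E$ over the $r$-$\d$-henselian field $K$ (note $\ord P_{+y}=\ord P\les r$), obtaining $y_0\in\ca E$ with $P_{+y}(y_0)=0$ and $y_0\sim h\sim f-y$. Then $z\coloneqq y+y_0$ is nonzero (since $y_0\sim f-y$ forces $y_0\neq-y$ whenever $f\neq y$) and lies in $\ca E$ (as $v\ca E$ is upward closed and $vz\ges\min(vy,vy_0)$), and $P(z)=P_{+y}(y_0)=0$, so $z$ is a solution of \eqref{eqn}. But $z-f=y_0-(f-y)\prec f-y\asymp y-f$, contradicting $y-f\prece z-f$. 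Hence $f-y\in\ca E'_L\cup\{0\}$.

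The only delicate point is the transfer step in the second paragraph: one must verify that $f-y$ genuinely lies in $\ca E_L$ and that its multiplicity as an approximate solution over $L$ equals $\ddeg_{\ca E_L}P_{+y}$, so that the passage between approximate solutions over $L$ and over $K$ from \S\ref{secade:unravelimmed} applies; together with the minor point of producing $h\in\ca E$ with $h\sim f-y$ using immediacy of $L/K$, this is where care is needed, and everything else is routine valuation-theoretic bookkeeping.
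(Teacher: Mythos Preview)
Your proof is correct and follows essentially the same contradiction strategy as the paper's: assuming $f-y\notin\ca E'_L$, both arguments show $f-y$ is an approximate solution of the quasilinear equation $P_{+y}(Y)=0$ over $L$, transfer this to an approximate solution $h\sim f-y$ in $K$, and then use $r$-$\d$-henselianity to produce a solution of \eqref{eqn} strictly closer to $f$ than $y$. The only cosmetic difference is that the paper works with the monomial $\f m\coloneqq\f d_{f-y}$ and the equation $P_{+y}(Y)=0$, $Y\prece\f m$, invoking Lemma~\ref{approxtoreal} directly, whereas you work with the larger set $\ca E$ and invoke Lemma~\ref{approxtorealade}; both routes yield the same $y_0\sim f-y$.
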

\begin{proof}
The case $f=y$ being trivial, suppose that $f \neq y$ and set $\f m \coloneqq \f d_{f-y}$.
As $f-y \in \ca E_L$, we have $\f m \in \ca E$.
Now suppose towards a contradiction that $f-y \notin \ca E'_L$.
Then $\ca E'_L \prec \f m \in \ca E$, so by quasilinearity and Lemma~\ref{adh6.6.10},
\[
1\ =\ \ddeg_{\ca E'_L} P_{+f}\ \les\ \ddeg_{\prece \f m} P_{+f}\ =\ \ddeg_{\prece \f m} P_{+y}\ \les\ \ddeg_{\ca E} P_{+y}\ =\ \ddeg_{\ca E} P\ =\ 1.
\]
Hence the asymptotic differential equation
\begin{equation}\label{eqnym}
P_{+y}(Y)\ =\ 0, \hs Y \prece \f m
\end{equation}
over $K$ is also quasilinear.
Also, by the quasilinearity of \eqref{eqn'L}, we have
\[
\ddeg_{\prec \f m} (P_{+y})_{+(f-y)}\ =\ \ddeg_{\prec \f m} P_{+f}\ \ges\ \ddeg_{\ca E'_L} P_{+f}\ =\ 1,
\]
so $f-y$ is an approximate solution of \eqref{eqnym} over $L$, by Corollary~\ref{approxsolnequiv}.
Take $g \in K^{\x}$ with $g \sim f-y$, so $g$ is an approximate solution of \eqref{eqnym} over $K$, and, by the quasilinearity of \eqref{eqnym},
\[
\ddeg P_{+y, \x g}\ =\ \ddeg_{\prece \f m} P_{+y}\ =\ 1.
\]
Then by Lemma~\ref{approxtoreal} we have $z \sim g \sim f-y$ in $K$ such that $P(y+z)=0$.
We must have $y+z \neq 0$, as otherwise $f \prec y-f$, contradicting $y \prece f$.
From $y \prece f$, we also obtain $y+z \prece f$, so $y+z \in \ca E$.
Since $y+z-f \prec y-f$, this contradicts that $y$ best approximates $f$.
\end{proof}

\section{Reducing complexity}\label{sec:redcomplex}

This is a technical section whose main goal is Proposition~\ref{maintechprop}.
This proposition, or rather its consequence Corollary~\ref{maintechcor}, is the linchpin of Proposition~\ref{mainlemmadiv}, and its proof uses all of the previous sections and some additional results from \cite{adamtt}.
This section is based on \cite[\S14.4]{adamtt}.

\begin{ass}
In this section, $K$ is asymptotic and has a monomial group $\f M$, $\Gamma$ is divisible, and $\bm k$ is $r$-linearly surjective with $r \ges 1$.
\end{ass}

Let $\f m$ and $\f n$ range over $\f M$.
As usual, we let $P \in K\{Y\}^{\neq}$ with order at most $r$.
As in the previous section, let $\ca E \subseteq K^{\x}$ be $\prece$-closed, so we have an asymptotic differential equation
\begin{equation}\tag{E}\label{neweqn}
P(Y)\ =\ 0, \hs Y \in \ca E
\end{equation}
over $K$.
Set $d \coloneqq \ddeg_{\ca E} P$ and suppose that $d \ges 2$.
We fix an immediate asymptotic $r$-$\d$-henselian extension $\wh{K}$ of $K$ and use $\f M$ as a monomial group of $\wh{K}$.

Let $\wh{\ca E} \coloneqq \ca E_{\wh{K}}= \{y \in \wh{K}^{\x} : vy \in v\ca E\}$, so we have the asymptotic differential equation
\begin{equation}\tag{\^{E}}\label{wheqn}
P(Y)\ =\ 0, \hs Y \in \wh{\ca E}
\end{equation}
over $\wh{K}$ with dominant degree $d$.
Suppose that \eqref{wheqn} is not unravelled, and that this is witnessed by an $\wh{f} \in \wh{\ca E}$ such that $(\wh{f}, \wh{\ca E}')$ is an unraveller for \eqref{wheqn}.
That is, $\ddeg_{\prec \wh f} P_{+\wh f}=d$, and the refinement
\begin{equation}\tag{\^{E}${}'$}\label{wheqn'}
P_{+\wh{f}}(Y)\ =\ 0, \hs Y \in \wh{\ca E}'
\end{equation}
of \eqref{wheqn} is unravelled with dominant degree $d$.
By Corollary~\ref{approxsolnequiv}, $\wh f$ is an approximate solution of \eqref{wheqn} of multiplicity $d$.
Note also that $\wh{\ca E}' = \ca E'_{\wh{K}}$ for the $\prece$-closed set $\ca E' \coloneqq \wh{\ca E}' \cap K \subseteq \ca E$.
Since \eqref{wheqn} is not unravelled, neither is \eqref{eqn} by the discussion preceding Lemma~\ref{unravelimmedlem}.
Suppose also that $\mul P_{+\wh f}<d$, so that by Proposition~\ref{ndiag}, \eqref{wheqn'} has an algebraic starting monomial; let $\f e$ be the largest such.

The main proposition corresponds to \cite[Proposition~14.4.1]{adamtt}.
\begin{prop}\label{maintechprop}
There exists $f \in \wh K$ such that one of the following holds:
\begin{enumerate}
\item $\wh f-f \prece \f e$ and $A(f)=0$ for some $A \in K\{Y\}$ with $c(A)<c(P)$ and $\deg A=1$;
\item $\wh f \sim f$, $\wh f-a \prece f-a$ for all $a \in K$, and $A(f)=0$ for some $A \in K\{Y\}$ with $c(A)<c(P)$ and $\ddeg A_{\x f}=1$.
\end{enumerate}
\end{prop}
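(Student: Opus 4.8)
\section*{Proof plan}

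The plan is to descend the approximate solution $\wh f\in\wh K$ of \eqref{wheqn} to a zero in $\wh K$ of a $\d$-polynomial $A$ over $K$ with $c(A)<c(P)$, by a sequence of reductions — multiplicative conjugation, truncation of high-degree terms, and, where forced, passage to a partial derivative of $P$ — after which $r$-$\d$-henselianity of $\wh K$ yields the actual zero. The argument is organized around the position of $\wh f$ over $K$: either $v(\wh f-K)$ has a largest element, realized by a best approximation $a_0\in K$, or it does not and $\wh f$ is a pseudolimit of a divergent pc-sequence $(a_\rho)$ in $K$.

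First I would normalize by a multiplicative conjugation so that $\f e\prec 1$ (Lemma~\ref{multconjade}), and recall that $d=\ddeg P_{+\wh f,\xf e}=\ddeg_{\prece\f e}P_{+\wh f}$, that $\wh f$ is an approximate solution of \eqref{wheqn} of multiplicity $d\ges 2$, and that $\mul P_{+\wh f}<d$. The fact that makes the descent work is that, by Corollary~\ref{adh6.6.6} and Corollary~\ref{reducedeg}, for $[v\f m]<[v\wh f]$ the dominant part of a conjugate $P_{+b,\xf m}$ is unchanged when $b\in\wh K$ is replaced by a sufficiently close element of $K$ and the homogeneous parts of degree $>d$ are discarded; this lets me trade the $\wh K$-coefficients of $P_{+\wh f}$ for $K$-coefficients without disturbing dominant degrees, dominant multiplicities, or the largest algebraic starting monomial $\f e$. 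Using Lemma~\ref{reducedegunravel} with a monomial $g$ satisfying $\f e\prec g\prec\wh f$ and $[v(\f e/g)]<[v(g/\wh f)]$, together with the slowdown of Lemma~\ref{slowdownlem}, I pass from the unravelled refinement \eqref{wheqn'} to an unravelled equation over $K$ of strictly smaller complexity — truncating the monomials of degree $>d$ when $\deg P>d$, and otherwise passing to a partial derivative $\partial P/\partial Y^{(j)}$ for a suitable $j\les r$ (its degree in $Y^{(j)}$ being one smaller), which is where the possible non-homogeneity of dominant parts forces differentiation in derivatives of $Y$ beyond $Y'$ — and iterate until the dominant degree is $1$. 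Once the equation is quasilinear, Lemma~\ref{approxtoreal} (with Lemmas~\ref{approxtorealade} and~\ref{bestapproxsoln}) produces the required zero $f\in\wh K$ of the resulting $A$, with $\ddeg A_{\x f}=1$: this is conclusion~(ii) when $\wh f\sim f$, and conclusion~(i) when the descent terminates at a linear $A$ and a coarser approximation $\wh f-f\prece\f e$.

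In the pc-sequence case I would begin the descent from a pseudolimit-witnessing unraveller, supplied by Lemma~\ref{plimunravellersexist} applied to $(a_\rho)$ over an $r$-$\d$-algebraically maximal immediate extension (such an extension exists by Zorn and is $r$-$\d$-henselian by the relativized \cite[Theorem~7.0.1]{adamtt}, so there is no circularity with Proposition~\ref{mainlemmadiv}); condition~(iii) there, $a_\rho\pconv a+f+z$, guarantees that the zero eventually produced still best-approximates $\wh f$ among elements of $K$, so $\wh f\sim f$ and conclusion~(ii) holds. The degenerate sub-case in which a best approximation $a_0\in K$ with $\wh f-a_0\prece\f e$ already exists is immediate: take $f=a_0$ and $A=Y-a_0$, which satisfy~(i) since $\deg P\ges d\ges 2$ and $r\ges 1$ force $c(A)<c(P)$.

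The main obstacle, as in \cite[\S14.4]{adamtt}, is the bookkeeping needed to keep the descent ``inside'' $K$: at each iteration the conjugating monomial and the $K$-approximation of $\wh f$ must be fine enough to preserve dominant parts, $\f e$, and unravelledness, yet coarse enough to lie over $K$ and to keep $\wh f-f$ controlled at the end. Since the $H$-asymptotic valuation $vg\mapsto v(g'/g)$ of \cite{adamtt} is replaced here by the archimedean-class valuation $vg\mapsto[vg]$, inequalities of the form $[v(\f e/g)]<[v(g/\wh f)]$ must be re-established at each stage, for which Lemmas~\ref{precgprecf}--\ref{switchasymp} are the tools; checking that the slowdown step of Lemma~\ref{slowdownlem} remains compatible with these inequalities throughout the iteration is the delicate point.
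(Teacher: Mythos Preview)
Your plan misidentifies both the source of the case split and the shape of the descent.

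First, the proposition is stated \emph{after} an unraveller $(\wh f,\wh{\ca E}')$ has already been fixed; there is no ``best approximation versus pc-sequence'' dichotomy to run here. The case analysis on the cut of $\wh f$ over $K$, and the appeal to Lemma~\ref{plimunravellersexist}, belong to the proof of Proposition~\ref{mainlemmadiv} (via Corollary~\ref{maintechcor}), not to this proposition. Inside the present proof one should simply work with the given $\wh f$.

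Second, and more seriously, the argument is not an iteration that whittles the dominant degree down to $1$. The dichotomy between (i) and (ii) comes from a single quasilinear equation obtained by a $(d-1)$-fold partial differentiation, not a single $\partial/\partial Y^{(j)}$. Concretely: with $\f f=\f d_{\wh f}$ one chooses $\bm i$ with $|\bm i|=d-1$ so that $\Delta\coloneqq(\partial^{\bm i})_{\xf f}$ makes $\ddeg(\Delta P)_{\xf f}=1$, and then one takes $f_0\in\wh K$ a solution of $\Delta P(Y)=0$, $Y\prece\f f$, best approximating~$\wh f$ (Lemmas~\ref{tschirnapproxsoln}, \ref{approxtorealade}, \ref{bestapprox}). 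If $\wh f-a\prece f_0-a$ for all $a\in K$, one is already done with $f=f_0$ and $A=\Delta P$: this is exactly case~(ii). Otherwise there is $f\in K$ with $f_0-\wh f\sim f-\wh f$, and \emph{this} $f$ gives a Tschirnhaus refinement \eqref{eqnT}. Only now do the hypotheses of the Slowdown Lemma become available (via Lemma~\ref{getcompatibleref}), and Corollary~\ref{slowdowncor} supplies precisely the archimedean inequality $[v(\f e/(\wh f-f))]<[v((\wh f-f)/\wh f)]$ needed for Lemma~\ref{reducedegunravel}. After one truncation the new equation has $\deg=d$, so the special case (Lemma~\ref{maintechpropspecial}) applies and produces a \emph{linear} $A$ in one step, yielding case~(i).

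In your plan the Slowdown Lemma is invoked without having produced a Tschirnhaus refinement, so its hypotheses are not available; the inequality $[v(\f e/g)]<[v(g/\wh f)]$ cannot be manufactured by hand at each stage, and a single $\partial/\partial Y^{(j)}$ neither gives a quasilinear equation nor preserves the unraveller structure needed to iterate. The condition in~(ii), $\wh f-a\prece f-a$ for all $a\in K$, is also not a consequence of ``$f$ best-approximates $\wh f$ among elements of $K$'' as you suggest; it arises because $f_0$ is a best approximation among solutions in $\wh K$ of $\Delta P=0$, and that is what your outline is missing.
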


\subsection{Special case}\label{unravelspecial}
We first prove Proposition~\ref{maintechprop} in the special case that $\ddeg_{\ca E} P = \deg P$ and later reduce to this case using Lemma~\ref{slowdownreducedeg}.
Below, we consider the $\d$-polynomial
\[
P_{+\wh{f}, \xf{e}} \in \wh{K}\{Y\};
\]
note that $\ddeg P_{+\wh{f}, \xf{e}} = d$ by the choice of $\f e$.
Let $s \les r$ be the order of $P$.
For $\bm i \in \N^{1+s}$, we let
\[
\partial^{\bm i}\ \coloneqq\ \frac{\partial^{|\bm i|}}{\partial Y^{i_0} \dots \partial \big(Y^{(s)}\big)^{i_s}}
\]
denote the partial differential operator on $\wh{K}\{Y\}$ that differentiates $i_n$ times with respect to $Y^{(n)}$ for $n=0,\dots,s$.
(We also use additive and multiplicative conjugates of partial differential operators; see \cite[\S12.8]{adamtt}.)
For any partial differential operator (in the sense of \cite[\S12.7]{adamtt}) $\Delta$ on $\wh{K}\{Y\}$, any $Q \in \wh{K}\{Y\}$, and any $a \in \wh{K}$,
\[
\Delta (Q_{+a})\ =\ (\Delta Q)_{+a}
\]
by \cite[Lemma~12.8.7]{adamtt}, so we write $\Delta Q_{+a}$ and do not distinguish between these.
If $a \in \wh{K}^{\x}$, note that, by \cite[Lemma~12.8.8]{adamtt},
\[
\Delta Q_{\x a}\ \coloneqq\ \Delta (Q_{\x a})\ =\ (\Delta_{\x a} Q)_{\x a},
\]
Note that, when no parentheses are used, we intend additive and multiplicative conjugation of $Q$ to take place before $\Delta$ is applied, in order to simplify notation.

Now, choose $\bm i \in \N^{1+s}$ such that $\deg(\partial^{\bm i} Y^{\bm j})=1$ for some $\bm j \in \N^{1+s}$ with $|\bm j|=d$ and
\[
\big(P_{+\wh{f}, \xf{e}}\big)_{\bm j}\ \asymp\ P_{+\wh{f}, \xf{e}}.
\]
In particular, $|\bm i|=d-1$ and
\[
\ddeg \partial^{\bm i} P_{+\wh{f}, \xf{e}}\ =\ \deg D_{\partial^{\bm i} P_{+\wh{f}, \xf{e}}}\ =\ \deg \partial^{\bm i} D_{P_{+\wh{f}, \xf{e}}}\ =\ 1.
\]
We consider the partial differential operator $\Delta \coloneqq (\partial^{\bm i})_{\xf e}$ on $\wh{K}\{Y\}$.
We have
\[
(\Delta P)_{+\wh{f}, \xf{e}}\ =\ \partial^{\bm i} P_{+\wh{f}, \xf{e}}
\]
by \cite[Lemmas~12.8.7 and 12.8.8]{adamtt}.
Hence the asymptotic differential equation
\[
\Delta P_{+\wh{f}}(Y)\ =\ 0, \hs Y \prece \f{e}
\]
is quasilinear.
In \cite[\S14.4]{adamtt}, partial differentiation is also used to obtain a quasilinear asymptotic differential equation.
Under the powerful assumption of $\upomega$-freeness made in that setting, newton polynomials have a very special form, and so a specific choice of $\Delta$ was needed.
Here, and in the next subsection, a similar technique works despite the lack of restrictions on dominant parts.

This next lemma is routinely adapted from \cite[Lemma~14.4.2]{adamtt}.
\begin{lem}\label{solnexists}
Suppose that $\f{e} \prec \wh{f}$ and the asymptotic differential equation
\begin{equation}\label{eqnDelta}
\Delta P(Y)\ =\ 0, \hs Y \in \wh{\ca E}
\end{equation}
over $\wh{K}$ is quasilinear.
Then \eqref{eqnDelta} has a solution $y \sim \wh{f}$, and if $f$ is any solution of \eqref{eqnDelta} that best approximates $\wh f$, then $f-\wh f \prece \f e$.
\end{lem}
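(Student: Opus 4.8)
The plan is to read off the solution directly from $r$-$\d$-henselianity of $\wh K$, exploiting the quasilinearity set up just before the lemma. The key point is that the refinement $\Delta P_{+\wh f}(Y)=0$, $Y\prece\f e$ of \eqref{eqnDelta} is already known to be quasilinear, so once we know $\f e\prec\wh f$, a zero of $\Delta P$ is forced to lie within $\prece\f e$ of $\wh f$.

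First I would check that this refinement is legitimate: since $\f e\prec\wh f\in\wh{\ca E}$ and $\wh{\ca E}$ is $\prece$-closed, we have $\f e\in\wh{\ca E}$, hence $\{y\in\wh K^{\x}:y\prece\f e\}\subseteq\wh{\ca E}$. By the computation preceding the lemma, $\ddeg(\Delta P)_{+\wh f,\xf e}=1$, so this refinement has dominant degree $1$. Since $\partial^{\bm i}$ involves only $Y^{(n)}$ with $n\les s\les r$ and conjugation does not raise the order, $\Delta P$, and hence $(\Delta P)_{+\wh f}$, has order at most $r$; as $\wh K$ is $r$-$\d$-henselian, the remark in \S\ref{sec:ade:dhsolns} that the defining $\d$-polynomial of a quasilinear asymptotic differential equation over an $r$-$\d$-henselian field has a zero in $\ca E\cup\{0\}$ gives some $w\prece\f e$ with $(\Delta P)_{+\wh f}(w)=0$, i.e.\ $\Delta P(\wh f+w)=0$; possibly $w=0$, in which case $\wh f$ itself plays the role of $\wh f+w$ below.

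Now set $y\coloneqq\wh f+w$. From $w\prece\f e\prec\wh f$ we get $y\sim\wh f$, so $y\neq 0$ and $y\asymp\wh f$; since $\wh{\ca E}$ is $\prece$-closed and contains $\wh f$, also $y\in\wh{\ca E}$. Thus $y$ is a solution of \eqref{eqnDelta} with $y\sim\wh f$, which is the first assertion. For the second, if $f$ is any solution of \eqref{eqnDelta} that best approximates $\wh f$, then applying the definition of best approximation to the solution $y$ yields $f-\wh f\prece y-\wh f=w\prece\f e$, as required.

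I do not expect a genuine obstacle here: all the real work has gone into constructing $\Delta$ so that $\Delta P_{+\wh f}(Y)=0$, $Y\prece\f e$ is quasilinear, after which $r$-$\d$-henselianity immediately produces a zero near $\wh f$. The only care needed is the bookkeeping with $\prece$-closed sets (that $\f e\in\wh{\ca E}$, so the refinement makes sense, and that $\wh f+w\in\wh{\ca E}$) and the innocuous case $w=0$. Alternatively, one could first verify that $\wh f$ is an approximate solution of \eqref{eqnDelta} of multiplicity $1$ — using quasilinearity of \eqref{eqnDelta} for $\ddeg_{\prec\wh f}(\Delta P)_{+\wh f}\les 1$ and the refinement above for the reverse inequality — and then apply Lemma~\ref{approxtorealade}; but the direct argument already covers both parts of the statement.
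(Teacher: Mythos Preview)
Your argument is correct. It differs from the paper's proof mainly in how the second assertion is obtained. The paper first shows $\ddeg_{\prec \wh f}(\Delta P)_{+\wh f}=1$ by sandwiching between $\ddeg_{\prece \f e}(\Delta P)_{+\wh f}=1$ and $\ddeg_{\wh{\ca E}}\Delta P=1$, uses $r$-$\d$-henselianity to get some $y\sim\wh f$, and then for the second part invokes Lemma~\ref{bestapproxsoln} applied to the quasilinear refinement $\Delta P_{+\wh f}(Y)=0$, $Y\prece\f e$. You instead work directly with that refinement from the start, producing a specific solution $y=\wh f+w$ with the sharper bound $y-\wh f\prece\f e$; the best-approximation inequality then gives $f-\wh f\prece\f e$ in one line, bypassing Lemma~\ref{bestapproxsoln} entirely. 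Your route is a little more elementary and self-contained; the paper's route illustrates the general machinery of Lemma~\ref{bestapproxsoln}, which is reused elsewhere.
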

\begin{proof}
By Lemma~\ref{adh6.6.10}, we have
\[
\ddeg_{\prec \wh f} \Delta P_{+\wh f}\ \les\ \ddeg_{\wh{\ca E}} \Delta P_{+\wh f}\ =\ \ddeg_{\wh{\ca E}} \Delta P\ =\ 1.
\]
But from $\f{e} \prec \wh{f}$, we also have
\[
1\ =\ \ddeg_{\prece \f e} \Delta P_{+\wh{f}}\ \les\ \ddeg_{\prec \wh{f}} \Delta P_{+\wh{f}},
\]
so $\ddeg_{\prec \wh f} \Delta P_{+\wh f}=1$.
Then since $\wh K$ is $r$-$\d$-henselian, we get $y \sim \wh f$ with $\Delta P(y)=0$.

For the second statement, the refinement
\begin{equation}\label{eqnDeltaref}
\Delta P_{+\wh{f}}(Y)\ =\ 0, \hs Y \prece \f{e}
\end{equation}
of \eqref{eqnDelta} is quasilinear, so we can apply Lemma~\ref{bestapproxsoln} with $\wh{K}$ in the roles of both $L$ and $K$, and $\Delta P$, $\wh f$, $f$, \eqref{eqnDelta}, and \eqref{eqnDeltaref} in the roles of $P$, $f$, $y$, \eqref{eqn}, and \eqref{eqn'L}, respectively.
\end{proof}

We now conclude the proof of Proposition~\ref{maintechprop} in the case that $\deg P = d$, easily adapted from \cite[Corollary~14.4.3]{adamtt}.
Recall that $d \ges 2$.

\begin{lem}\label{maintechpropspecial}
Suppose that $\deg P = d$.
Then there exist $f \in \wh{K}$ and $A \in K\{Y\}$ such that $\wh{f}-f \prece \f{e}$, $A(f)=0$, $c(A)<c(P)$, and $\deg A = 1$.
\end{lem}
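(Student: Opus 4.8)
The plan is to take $A$ to be the $\d$-polynomial $\Delta P$, where $\Delta = (\partial^{\bm i})_{\xf e}$ is the partial differential operator constructed above, and to obtain $f$ as a solution in $\wh K$ of the asymptotic differential equation $\Delta P(Y)=0$, $Y\in\wh{\ca E}$ that best approximates $\wh f$, by appealing to Lemma~\ref{solnexists}.

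First I would check that $A\coloneqq\Delta P$ lies in $K\{Y\}$: since $P\in K\{Y\}$ and $\f e\in\f M\subseteq K^{\x}$, the $\d$-polynomial $\Delta P=\bigl(\partial^{\bm i}(P_{\xf e})\bigr)_{\x \f e^{-1}}$ is built from $P$ by multiplicative conjugation by elements of $K^{\x}$ and partial differentiation with respect to the $Y^{(n)}$, operations that keep the coefficients in $K$. Next, $\deg A=1$: on the one hand $\deg\partial^{\bm i}(P_{\xf e})\les\deg P_{\xf e}-|\bm i|=d-(d-1)=1$, so $\deg A\les 1$; on the other hand $A_{+\wh f,\xf e}=(\Delta P)_{+\wh f,\xf e}=\partial^{\bm i}P_{+\wh f,\xf e}$ has dominant degree $1$ by construction, hence total degree $\ges 1$, and total degree is preserved under additive and multiplicative conjugation, so $\deg A\ges 1$. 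Since $\ord A\les\ord P$ and, in case of equality, $A$ is linear while $\deg P=d\ges 2$, comparing complexities lexicographically gives $c(A)<c(P)$.

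The geometric input needed to apply Lemma~\ref{solnexists} is that $\f e\prec\wh f$, and this is where the hypothesis $\deg P=d$ enters. Since $\deg P_{+\wh f}=\deg P=d$ and $\ddeg_{\wh{\ca E}'}P_{+\wh f}=d$, we have $\ddeg_{\wh K^{\x}}P_{+\wh f}=d=\deg P_{+\wh f}$, so Proposition~\ref{ndiag} and Corollary~\ref{ndiagcor}, applied to $P_{+\wh f}$ over $\wh K^{\x}$ (using $\mul P_{+\wh f}<d$), provide the largest algebraic starting monomial $\f b$ of $P_{+\wh f}$ with the property that, for $g\in\wh K^{\x}$,
\[
\ddeg P_{+\wh f,\x g}=d\iff\f b\prece g.
\]
From $\ddeg_{\prec\wh f}P_{+\wh f}=d$ there is $g\prec\wh f$ with $\ddeg P_{+\wh f,\x g}=d$, hence $\f b\prece g\prec\wh f$. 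On the other hand $\f e$ is an algebraic starting monomial of $P_{+\wh f}$, so $\f e\prece\f b$, and $\ddeg P_{+\wh f,\xf e}=d$ forces $\f b\prece\f e$; thus $\f e=\f b\prec\wh f$.

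Finally I would verify that $\Delta P(Y)=0$, $Y\in\wh{\ca E}$ is quasilinear: by Lemma~\ref{adh6.6.10}, $\ddeg_{\wh{\ca E}}A=\ddeg_{\wh{\ca E}}A_{+\wh f}\ges\ddeg_{\prece\f e}A_{+\wh f}=\ddeg A_{+\wh f,\xf e}=\ddeg\partial^{\bm i}P_{+\wh f,\xf e}=1$, while $\ddeg_{\wh{\ca E}}A\les\deg A=1$. Now Lemma~\ref{solnexists} applies, its hypotheses $\f e\prec\wh f$ and quasilinearity of $\Delta P(Y)=0$, $Y\in\wh{\ca E}$ both being in place, and yields a solution $y\sim\wh f$ in $\wh K$; by Lemma~\ref{bestapprox}, which applies since $\wh K$ is $r$-$\d$-henselian with $r\ges 1$, $C_{\wh K}\subseteq\ca O_{\wh K}$ (as $\wh K$ is asymptotic), and the equation is quasilinear with a solution, there is a solution $f\in\wh K$ best approximating $\wh f$, and the second assertion of Lemma~\ref{solnexists} gives $\wh f-f\prece\f e$. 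Then $f$ and $A=\Delta P$ satisfy all the requirements. I expect the identification $\f e=\f b$ (hence $\f e\prec\wh f$) to be the only genuinely non-routine point; everything else is bookkeeping with dominant degrees and the quasilinearity of $\Delta P_{+\wh f}(Y)=0$, $Y\prece\f e$ already established above.
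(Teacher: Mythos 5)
Your proof is correct and follows the same overall strategy as the paper's: take $A \coloneqq \Delta P$, observe $\deg \Delta P = 1$ from the hypothesis $\deg P = d$ and the choice of $\bm i$, then use Lemma~\ref{solnexists} together with Lemma~\ref{bestapprox} (applicable since $\wh K$ is asymptotic, hence $C_{\wh K}\subseteq\ca O_{\wh K}$) to get a best-approximating solution $f$ with $\wh f - f \prece \f e$. The one genuine difference is your treatment of the case $\wh f \prece \f e$: the paper disposes of it in one line (take $f\coloneqq 0$, $A\coloneqq Y$), whereas you observe via Proposition~\ref{ndiag} and Corollary~\ref{ndiagcor} that, once $\deg P = d$ forces $\ddeg_{\wh K^{\x}} P_{+\wh f}=d$, the monomial $\f e$ is actually the largest algebraic starting monomial over all of $\wh K^{\x}$ and hence $\f e\prec\wh f$ always holds, making that case vacuous. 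Your observation is correct and slightly sharpens the picture, but costs more work than the paper's trivial dispatch; otherwise the argument (including the explicit verification of $c(\Delta P)<c(P)$, which the paper leaves implicit) matches the paper.
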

\begin{proof}
Since $\deg P = d$, we also have $\deg P_{+\wh{f}, \xf{e}} = d$, and hence
\[
\deg \Delta P\ =\ \deg (\Delta P)_{+\wh{f}, \xf{e}}\ =\ \deg \partial^{\bm i} P_{+\wh{f}, \xf{e}}\ =\ 1,
\]
by the choice of $\bm i$.
Hence \eqref{eqnDelta} is quasilinear.

If $\wh{f} \prece \f{e}$, then $f \coloneqq 0$ and $A \coloneqq Y$ work, so assume that $\f{e} \prec \wh{f}$.
First, Lemma~\ref{solnexists} yields a solution $y \sim \wh f$ of \eqref{eqnDelta}.
As $\wh K$ has few constants, Lemma \ref{bestapprox} gives that $\wh{f}$ is best approximated by some solution $f$ of \eqref{eqnDelta}.
So applying Lemma~\ref{solnexists} again, we have $f-\wh{f} \prece \f{e}$.
Then $\Delta P(f)=0$, $c(\Delta P)<c(P)$, and $\deg \Delta P = 1$, so we may take $A \coloneqq \Delta P$.
\end{proof}

\subsection{Tschirnhaus refinements}\label{sec:redcomplex:tschirn}
Set $\f{f \coloneqq d}_{\wh{f}}$, and we now consider the $\d$-polynomial $P_{\xf f} \in K\{Y\}^{\neq}$.
If $\f e \succe \f f$, then the first case of Proposition~\ref{maintechprop} holds for $f \coloneqq 0$ and $A \coloneqq Y$, so in the rest of this subsection and in \S\ref{sec:redcomplex:slowdown} and \S\ref{sec:redcomplex:slowdowncor} we suppose that $\f{e \prec f}$.
Then we have, by the choice of $\f e$ and Lemma~\ref{adh6.6.10},
\[
d\ =\ \ddeg_{\prece \f e}P_{+\wh{f}}\ \les\ \ddeg_{\prece \f{f}} P_{+\wh{f}}\ =\ \ddeg_{\prece \f{f}} P\ \les\ \ddeg_{\wh{\ca E}} P\ =\ d,
\]
and thus $\ddeg P_{\xf f}=d$.

Now, choose $\bm i \in \N^{1+s}$ so that $\deg (\partial^{\bm i} Y^{\bm j})$=1 for some $\bm j \in \N^{1+s}$ with $|\bm j|=d$ and $(P_{\xf f})_{\bm j} \asymp P_{\xf f}$.
Thus we have $|\bm i|=d-1$ and
\[
D_{\partial^{\bm i} P_{\xf f}}\ =\ \partial^{\bm i} D_{P_{\xf f}},
\]
and so $\ddeg \partial^{\bm i} P_{\xf f} = 1$.
We consider the partial differential operator $\Delta \coloneqq (\partial^{\bm i})_{\xf f}$ on $\wh{K}\{Y\}$.
By \cite[Lemma~12.8.8]{adamtt},
\[
\left( \Delta P \right)_{\xf f}\ =\ \partial^{\bm i} P_{\xf f},
\]
and thus the asymptotic differential equation
\begin{equation}\label{eqnDeltaf}
\Delta P(Y)\ =\ 0, \hs Y \prece \f f
\end{equation}
over $\wh K$ is quasilinear.
The comments from \S\ref{unravelspecial} about the difference between the partial differentiation used here and that used in \cite[\S14.4]{adamtt} apply in this subsection as well, and necessitate a slight weakening of the following lemma from its counterpart \cite[Lemma~14.4.4]{adamtt}.
It follows immediately from the quasilinearity of \eqref{eqnDeltaf} by Corollary~\ref{adh6.6.11}.
\begin{lem}\label{noasm}
Suppose that $f \in \wh K$ is a solution of \eqref{eqnDeltaf}.
Then for all $g \in \wh{K}^{\x}$ with $g \prece \f f$ we have
\[
\mul \left(\Delta P\right)_{+f, \x g}\ =\ \ddeg \left(\Delta P\right)_{+f, \x g}\ =\ 1,
\]
and hence $(\Delta P)_{+f}$ has no algebraic starting monomial $\f g \in \f M$ with $\f g \prece \f f$.
\end{lem}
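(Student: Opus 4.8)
The plan is to deduce the lemma directly from the quasilinearity of \eqref{eqnDeltaf} together with Corollary~\ref{adh6.6.11}. There is essentially no obstacle here: all the substantive work (the choice of $\bm i$ and of $\Delta \coloneqq (\partial^{\bm i})_{\xf f}$, and the resulting quasilinearity of \eqref{eqnDeltaf}) has already been carried out in \S\ref{tschirn}, and what remains is bookkeeping to line the notation up with that corollary.

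First I would record the two facts already available. From \S\ref{tschirn}, the asymptotic differential equation \eqref{eqnDeltaf}, namely $\Delta P(Y) = 0$ with $Y \prece \f f$, over $\wh K$ is quasilinear; writing $\ca E \coloneqq \{y \in \wh K^{\x} : y \prece \f f\}$ for the $\prece$-closed set encoding that asymptotic condition, quasilinearity is by definition the statement $\ddeg_{\ca E} \Delta P = 1$. And by the definition of a solution of an asymptotic differential equation, the hypothesis that $f$ is a solution of \eqref{eqnDeltaf} says precisely that $f \in \ca E$ and $\Delta P(f) = 0$.

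Next I would apply Corollary~\ref{adh6.6.11} over $\wh K$, with $\Delta P$ in the role of $P$, the set $\ca E$ above in the role of $\ca E$, $f$ in the role of $y$, and an arbitrary $g \in \wh K^{\x}$ with $g \prece \f f$ (equivalently, $g \in \ca E$) in the role of the ``$f$'' appearing there. Its three hypotheses are exactly the facts just recorded, so it yields
\[
\mul (\Delta P)_{+f, \x g}\ =\ \dmul (\Delta P)_{+f, \x g}\ =\ \ddeg (\Delta P)_{+f, \x g}\ =\ 1,
\]
which contains the displayed equality of the lemma. For the ``hence'' clause I would take $\f g \in \f M$ with $\f g \prece \f f$, note that then $\f g \in \ca E$, and specialize the line above to $g \coloneqq \f g$, getting $\dmul (\Delta P)_{+f, \x \f g} = \ddeg (\Delta P)_{+f, \x \f g} = 1$. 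Since $\dmul$ and $\ddeg$ are the multiplicity and degree of $D_{((\Delta P)_{+f})_{\xf g}}$, this forces that dominant part to be homogeneous of degree $1$, so $\f g$ is not an algebraic starting monomial for $(\Delta P)_{+f}$.

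The only points deserving a word are that $\f M$ is legitimately used as a monomial group of the immediate extension $\wh K$ (since $\Gamma_{\wh K} = \Gamma$), and that both ``being a solution of \eqref{eqnDeltaf}'' and ``being a monomial $\f g \prece \f f$'' unwind to membership in the single set $\ca E$, so that the hypotheses of Corollary~\ref{adh6.6.11} match on the nose. No degree reduction, Equalizer Theorem, or $\d$-henselianity is needed at this point; all of that is already built into the quasilinearity of \eqref{eqnDeltaf}.
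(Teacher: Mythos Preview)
Your proposal is correct and matches the paper's own proof, which simply says the lemma ``follows immediately from the quasilinearity of \eqref{eqnDeltaf} by Corollary~\ref{adh6.6.11}.'' Your write-up just unpacks this one-line justification in detail, lining up the roles of $\Delta P$, $f$, $g$, and $\ca E$ in that corollary exactly as intended.
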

%\begin{proof}
%By Corollary~\ref{adh6.6.11}, we have for all $\f{g \prece f}$,
%\[
%\mul (\Delta P)_{+f, \xf g}\ =\ \dmul (\Delta P)_{+f, \xf g}\ =\ \ddeg (\Delta P)_{+f, \xf g}\ =\ 1.
%\]
%If $(\Delta P)_{+f}$ were to have an algebraic starting monomial $\f g \prece \f f$, then by Proposition~\ref{ndiag},
%\[
%1\ =\ \dmul (\Delta P)_{+f, \xf g}\ <\ \ddeg (\Delta P)_{+f, \xf g}\ =\ 1,
%\]
%a contradiction.
%\end{proof}

The next statement is based on \cite[Lemma~14.4.5]{adamtt} but has a different proof due to the different choices of $\Delta$ needed here and in \cite[\S14.4]{adamtt}.
\begin{lem}\label{tschirnapproxsoln}
The element $\wh f \in \wh K$ is an approximate solution of \eqref{eqnDeltaf}.
\end{lem}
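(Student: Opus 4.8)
The plan is to read off the defining property of an approximate zero of $\Delta P$ directly from the fact — already recorded just before the lemma, via Corollary~\ref{approxsolnequiv} — that $\wh f$ is an approximate solution of \eqref{wheqn} of multiplicity $d$. The key observation is that this multiplicity condition forces the additive conjugate of the dominant part of $P_{\xf f}$ to be homogeneous of degree $d$, and homogeneity of degree $d$ together with $|\bm i| = d-1$ is exactly what is needed.

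First I would set $\f f := \f d_{\wh f}$ and $u := \wh f/\f f \asymp 1$, with residue $\bar u \in \bm k$, and recall that $\ddeg P_{\xf f} = d$ was established at the start of \S\ref{tschirn}, so $\deg D_{P_{\xf f}} = d$. Since $\wh f$ is an approximate solution of \eqref{wheqn} of multiplicity $d$, the definition of the multiplicity of an approximate zero gives $\mul (D_{P_{\xf f}})_{+\bar u} = d$. As additive conjugation does not raise degree, $\deg (D_{P_{\xf f}})_{+\bar u} \les d$; combined with $\mul (D_{P_{\xf f}})_{+\bar u} = d$ this shows that $(D_{P_{\xf f}})_{+\bar u}$ is homogeneous of degree $d$.

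Next, since $|\bm i| = d - 1$, the operator $\partial^{\bm i}$ sends a homogeneous $\d$-polynomial of degree $d$ to one homogeneous of degree $1$ (or to $0$), which in particular has vanishing constant term. Applying this to $(D_{P_{\xf f}})_{+\bar u}$ and using $\partial^{\bm i}(Q_{+\bar u}) = (\partial^{\bm i} Q)_{+\bar u}$ from \cite[Lemma~12.8.7]{adamtt} together with $R_{+\bar u}(0) = R(\bar u)$, I get $(\partial^{\bm i} D_{P_{\xf f}})(\bar u) = \bigl(\partial^{\bm i}\bigl((D_{P_{\xf f}})_{+\bar u}\bigr)\bigr)(0) = 0$. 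Since $D_{(\Delta P)_{\xf f}} = D_{\partial^{\bm i} P_{\xf f}} = \partial^{\bm i} D_{P_{\xf f}}$ (from the setup of \S\ref{tschirn}), this reads $D_{(\Delta P)_{\xf f}}(\bar u) = 0$; as $\f d_{\wh f} = \f f$ and $u_{\wh f} = u$, this is precisely the assertion that $\wh f$ is an approximate zero of $\Delta P$. Finally, $\wh f \asymp \f f$ gives $\wh f \prece \f f$, so $\wh f$ lies in the domain of \eqref{eqnDeltaf}, and therefore $\wh f$ is an approximate solution of \eqref{eqnDeltaf}.

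The only delicate point — and the place where this argument departs from its counterpart \cite[Lemma~14.4.5]{adamtt} — is the homogeneity step: in the $\upomega$-free setting of \cite{adamtt} the relevant newton polynomial has a rigid shape and $\Delta$ can be chosen explicitly, whereas here $D_{P_{\xf f}}$ is an unrestricted $\d$-polynomial of degree $d$, and it is exactly the multiplicity-$d$ hypothesis that compensates by pinning $(D_{P_{\xf f}})_{+\bar u}$ down to be homogeneous of degree $d$. The rest is routine bookkeeping with dominant parts and conjugated partial differential operators, and I do not expect any genuine obstacle there.
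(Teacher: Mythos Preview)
Your proposal is correct and follows essentially the same route as the paper: both arguments use that $\wh f$ has multiplicity $d$ as an approximate zero of $P$ to conclude that $(D_{P_{\xf f}})_{+\bar u}$ is homogeneous of degree $d$ (the paper phrases this via \cite[Lemma~4.3.1]{adamtt} by writing out $(D_{P_{\xf f}})_{+\bar u} = \sum_{|\bm j|=d}(D_{P_{\xf f}})_{\bm j} Y^{\bm j}$ explicitly), then apply $\partial^{\bm i}$ with $|\bm i|=d-1$ and read off that $D_{(\Delta P)_{\xf f}}(\bar u)=0$. The only cosmetic difference is that the paper observes the multiplicity of $\partial^{\bm i} D_{P_{\xf f}}$ at $\bar u$ is exactly~$1$ (using the choice of $\bm i$), whereas you note only that the constant term vanishes --- but the latter is all the lemma requires.
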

\begin{proof}
Set $u \coloneqq \wh{f}/\f f$.
Since $\wh f$ is an approximate zero of $P$ of multiplicity $d = \ddeg P_{\xf f} = \deg D_{P_{\xf f}}$,
\[
\big(D_{P_{\xf f}}\big)_{+\bar{u}}\ =\ \sum_{|\bm j|=d}\big(D_{P_{\xf f}}\big)_{\bm j} Y^{\bm j},
\]
by \cite[Lemma~4.3.1]{adamtt}, where $\bm j$ ranges over $\N^{1+s}$.
Then
\[
\big( \partial^{\bm i} D_{P_{\xf f}} \big)_{+\bar{u}}\
=\ \partial^{\bm i} \big( D_{P_{\xf f}} \big)_{+\bar{u}}\
=\ \sum_{|\bm j|=d} \big( D_{P_{\xf f}} \big)_{\bm j} \partial^{\bm i} Y^{\bm j},
\]
so the multiplicity of $\partial^{\bm i} D_{P_{\xf f}}$ at $\bar u$ is $1$ by the choice of $\bm i$.
In view of
\[
D_{(\Delta P)_{\xf f}}\ =\ D_{\partial^{\bm i} P_{\xf f}}\ =\ \partial^{\bm i} D_{P_{\xf f}},
\]
$\wh{f}$ is an approximate solution of \eqref{eqnDeltaf}.
\end{proof}

Let $f \in \wh K$ with $f \sim \wh f$, so $\ddeg_{\prec \f f} P_{+f} = \ddeg_{\prec \f f} P_{+\wh f} = d$ by Lemma~\ref{adh6.6.10}.
That is, the refinement
\begin{equation}\tag{T}\label{eqnT}
P_{+f}(Y)\ =\ 0, \hs Y \prec \f f
\end{equation}
of \eqref{wheqn} still has dominant degree $d$.
As $\wh f$ is an approximate solution of \eqref{eqnDeltaf}, Lemmas~\ref{approxtorealade} and~\ref{bestapprox} give a solution $f_0 \in \wh K$ of \eqref{eqnDeltaf} that best approximates $\wh f$ with $f_0 \sim \wh f \sim f$.
Thus
\[
\ddeg_{\prec \f f} \Delta P_{+f}\ =\ \ddeg_{\prec \f f} \Delta P_{+f_0}\ =\ 1
\]
by Lemmas~\ref{adh6.6.10} and \ref{noasm}.
Hence the refinement
\begin{equation}\tag{$\Delta$T}\label{eqnDeltaT}
\Delta P_{+f}(Y)\ =\ 0, \hs Y \prec \f f
\end{equation}
of \eqref{eqnDeltaf} is also quasilinear.

\begin{defn}
A \emph{Tschirnhaus refinement of \eqref{wheqn}} is an asymptotic differential equation \eqref{eqnT} over $\wh K$ as above with $\wh f \sim f \in \wh K$ such that some solution $f_0 \in \wh K$ of \eqref{eqnDeltaf} over $\wh K$ best approximates $\wh f$ and satisfies $f_0-\wh f \sim f-\wh f$.
\end{defn}

\begin{defn}
Let $f, \wh g \in \wh K$ and $\f m$ satisfy
\[
\f m\ \prec\ f-\wh f\ \prece\ \wh g\ \prec\ \f f,
\]
so in particular $f \sim \wh f$.
With \eqref{eqnT} as above, but not necessarily a Tschirnhaus refinement of \eqref{wheqn}, we say that the refinement
\begin{equation}\tag{TC}\label{eqnTC}
P_{+f+\wh g}(Y)\ =\ 0, \hs Y \prece \f m
\end{equation}
of \eqref{eqnT} is \emph{compatible with \eqref{eqnT}} if it has dominant degree $d$ and $\wh g$ is not an approximate solution of \eqref{eqnDeltaT}.
\end{defn}

The next two lemmas are routine adaptations of \cite[Lemmas~14.4.7 and 14.4.8]{adamtt}.
\begin{lem}
Let $f, f_0, \wh g \in \wh K$ and $\f m$ be such that
\[
\f m\ \prec\ f_0-\wh f\ \sim\ f-\wh f\ \prece\ \wh g\ \prec\ \f f,
\]
and \eqref{eqnTC} has dominant degree $d$.
Then $\wh g$ is an approximate solution of \eqref{eqnT} and of
\begin{equation}\tag{T${}_0$}\label{eqnT0}
P_{+f_0}(Y)\ =\ 0, \hs Y \prec \f f.
\end{equation}
\end{lem}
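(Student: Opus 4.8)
The plan is to reduce the statement to two elementary observations about dominant degree and approximate zeros: that the hypothesis on \eqref{eqnTC} already forces $\wh g$ to be an approximate solution of \eqref{eqnT}, and that replacing $f$ by $f_0$ changes nothing, because $f-f_0$ is small relative to $\wh g$. Throughout, recall that "$y$ is an approximate solution" of one of these equations just means $y$ lies in the prescribed domain and is an approximate zero of the relevant $\d$-polynomial.

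First I would note that $\wh g\prec\f f$ is part of the hypothesis, so $\wh g$ lies in the domain of both \eqref{eqnT} and \eqref{eqnT0}, and it remains only to check that $\wh g$ is an approximate zero of $P_{+f}$ and of $P_{+f_0}$. Since $\f m\prec\wh g$, the $\prece$-closed set $\{y\in\wh K^{\x}:y\prece\f m\}$ is contained in $\{y\in\wh K^{\x}:y\prec\wh g\}$, so monotonicity of $\ddeg_{\ca E}$ in the $\prece$-closed set $\ca E$, together with the hypothesis that \eqref{eqnTC} has dominant degree $d$, gives
\[
\ddeg_{\prec\wh g}P_{+f+\wh g}\ \ges\ \ddeg_{\prece\f m}P_{+f+\wh g}\ =\ d\ \ges\ 1.
\]
By Corollary~\ref{approxsolnequiv}(i) this says exactly that $\wh g$ is an approximate solution of \eqref{eqnT}.

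To transfer this to \eqref{eqnT0}, set $\f n\coloneqq\f d_{\wh g}$. From $f_0-\wh f\sim f-\wh f$ I get $f-f_0\prec f-\wh f\prece\wh g\asymp\f n$, so $a\coloneqq\f n^{-1}(f-f_0)\prec 1$ and hence $\bar a=0$ in $\bm k$. A direct computation of conjugates gives $(P_{+f})_{\xf n}=\big((P_{+f_0})_{\xf n}\big)_{+a}$, whence Lemma~\ref{adh6.6.5}(i) yields
\[
D_{(P_{+f})_{\xf n}}\ =\ \big(D_{(P_{+f_0})_{\xf n}}\big)_{+\bar a}\ =\ D_{(P_{+f_0})_{\xf n}}.
\]
Evaluating both sides at $\overline{\wh g/\f n}\in\bm k$ shows that $\wh g$ is an approximate zero of $P_{+f}$ if and only if it is one of $P_{+f_0}$, so $\wh g$ is an approximate solution of \eqref{eqnT0} as well.

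There is no serious obstacle here; the only points requiring care are the bookkeeping with $\prec$ versus $\prece$ and with strictness. One must notice that it is $\{y\prece\f m\}$, not $\{y\prec\f m\}$, that sits inside $\{y\prec\wh g\}$ — which is precisely why the hypothesis reads $\f m\prec\wh g$ — and one must keep the additive conjugation in the second step strictly below $1$, so that $\bar a=0$ and the dominant part is genuinely unchanged; this strictness is exactly what the hypothesis $f_0-\wh f\sim f-\wh f$ (giving $f-f_0\prec f-\wh f\prece\wh g$) provides.
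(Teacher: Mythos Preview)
Your proof is correct and follows essentially the same approach as the paper. For \eqref{eqnT} you derive $\ddeg_{\prec\wh g}P_{+f+\wh g}\ges d$ from $\f m\prec\wh g$ and the dominant degree of \eqref{eqnTC}, exactly as in the paper; for \eqref{eqnT0} you transfer via $f-f_0\prec\wh g$, which the paper also does, though the paper invokes Lemma~\ref{adh6.6.10} to get $\ddeg_{\prec\wh g}P_{+f_0+\wh g}=\ddeg_{\prec\wh g}P_{+f+\wh g}$ directly, whereas you compare dominant parts via Lemma~\ref{adh6.6.5}(i) and then evaluate. These are two equivalent ways of using the same smallness of $f-f_0$; your route is marginally more hands-on but neither is genuinely different.
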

\begin{proof}
First, $\wh g$ is an approximate solution of \eqref{eqnT} by Lemma~\ref{approxsolncheck}, since $\f m \prec \wh g$ and
\[
\ddeg_{\prece \f m} P_{+f+\wh g}\ =\ d\ =\ \ddeg_{\prec \f f}P_{+f}.
\]
From $f_0-f \prec f- \wh f \prece \wh g$ and $\f m \prec \wh g$, we obtain, using Lemma~\ref{adh6.6.10} in the first equality,
\[
\ddeg_{\prec\wh g} P_{+f_0+\wh g}\ =\ \ddeg_{\prec \wh g} P_{+f+\wh g}\ \ges\ \ddeg_{\prece \f m} P_{+f+\wh g}\ =\ d\ \ges\ 1,
\]
so $\wh g$ is an approximate solution of \eqref{eqnT0} by Corollary~\ref{approxsolnequiv}.
\end{proof}

\begin{lem}\label{approxsolnT0}
Let $f, f_0, \wh g \in \wh K$ with
\[
f_0-\wh f\ \sim\ f-\wh f\ \prece\ \wh g\ \prec\ \f f.
\]
Then $\wh g$ is an approximate solution of \eqref{eqnDeltaT} if and only if $\wh g$ is an approximate solution of
\begin{equation}\tag{$\Delta$T${}_0$}\label{eqnDeltaT0}
\Delta P_{+f_0}(Y)\ =\ 0, \hs Y \prec \f f.
\end{equation}
\end{lem}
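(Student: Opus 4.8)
The plan is to peel off the definitions, reduce the biconditional to a single equality of dominant degrees, and then obtain that equality from an additive-conjugation invariance already recorded in the preliminaries.

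First I would note that, since $\wh g \prec \f f$ by hypothesis, $\wh g$ lies in the $\prece$-closed set $\{Y \prec \f f\}$ underlying both \eqref{eqnDeltaT} and \eqref{eqnDeltaT0}. Hence by Corollary~\ref{approxsolnequiv}(i), $\wh g$ is an approximate solution of \eqref{eqnDeltaT} if and only if $\ddeg_{\prec \wh g} \Delta P_{+f+\wh g} \ges 1$, and an approximate solution of \eqref{eqnDeltaT0} if and only if $\ddeg_{\prec \wh g} \Delta P_{+f_0+\wh g} \ges 1$. So it suffices to prove
\[
\ddeg_{\prec \wh g} \Delta P_{+f+\wh g}\ =\ \ddeg_{\prec \wh g} \Delta P_{+f_0+\wh g}.
\]

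For this I would observe that $\Delta P_{+f_0+\wh g} = (\Delta P_{+f+\wh g})_{+(f_0-f)}$, and that from $f_0-\wh f \sim f-\wh f$ together with $f-\wh f \prece \wh g$ one gets $f_0-f \prec \wh g$. Writing $\ddeg_{\prec \wh g}(\cdot)$ as $\ddeg_{\ca E_{\wh g}}(\cdot)$ for the $\prece$-closed set $\ca E_{\wh g} \coloneqq \{y \in \wh K^{\x} : y \prec \wh g\}$, we then have $f_0-f \in \ca E_{\wh g} \cup \{0\}$, so Lemma~\ref{adh6.6.10} yields the displayed equality at once (the case $f_0 = f$ being trivial).

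There is essentially no real obstacle here; the only point requiring care is the bookkeeping that places $f_0-f$ strictly inside $\ca E_{\wh g}$, which is precisely the purpose of the chain $f_0-\wh f \sim f-\wh f \prece \wh g \prec \f f$ in the hypothesis, together with recalling from the setup of this subsection that $\Delta P \in \wh K\{Y\}$ and that $\Delta P_{+a}$ abbreviates $(\Delta P)_{+a}$.
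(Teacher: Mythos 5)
Your proof is correct and follows essentially the same route as the paper's: you reduce via Corollary~\ref{approxsolnequiv} (using $\wh g \prec \f f$) to the equality $\ddeg_{\prec \wh g}\Delta P_{+f+\wh g}=\ddeg_{\prec \wh g}\Delta P_{+f_0+\wh g}$, and you obtain that from $f_0-f \prec \wh g$ (which the hypothesis gives) together with Lemma~\ref{adh6.6.10}, exactly as in the paper.
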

\begin{proof}
Again, since $f_0-f \prec \wh f-f \prece \wh g$, by Lemma~\ref{adh6.6.10} we have
\[
\ddeg_{\prec \wh g} \Delta P_{+f_0+\wh g}\ =\ \ddeg_{\prec\wh g} \Delta P_{+f+\wh g}.
\]
The result then follows from Corollary~\ref{approxsolnequiv}, since $\wh g \prec \f f$.
\end{proof}

Note that, for any $f_0 \sim f$, the equation \eqref{eqnDeltaT0} in the previous lemma is quasilinear by Lemma~\ref{adh6.6.10}, since \eqref{eqnDeltaT} is.
The next lemma gives compatible refinements of \eqref{eqnT} when $\f e \prec f-\wh f$ in the same way as \cite[Lemma~14.4.9]{adamtt}.
\begin{lem}\label{getcompatibleref}
Suppose that \eqref{eqnT} is a Tschirnhaus refinement of \eqref{wheqn} and $\f e\prec f-\wh f$.
Then, with $\wh g \coloneqq \wh f-f$ and $\f m \coloneqq \f e$, the refinement \eqref{eqnTC} of \eqref{eqnT} is compatible with \eqref{eqnT}.
\end{lem}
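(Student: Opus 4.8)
The plan is to check the two clauses in the definition of compatibility separately. Note first that with $\wh g := \wh f - f$ and $\f m := \f e$ the refinement \eqref{eqnTC} is literally $P_{+\wh f}(Y)=0$, $Y \prece \f e$, and the chain $\f m \prec f-\wh f \prece \wh g \prec \f f$ required for \eqref{eqnTC} to be defined does hold here: the first inequality is the hypothesis $\f e \prec f-\wh f$, the middle is $f-\wh f \asymp \wh f-f = \wh g$, and $\wh g = \wh f - f \prec \wh f \asymp \f f$ because $f \sim \wh f$ and $\f f = \f d_{\wh f}$. For the dominant-degree clause, \eqref{eqnTC} has dominant degree $\ddeg_{\prece \f e} P_{+\wh f} = \ddeg P_{+\wh f, \xf e} = d$, the last equality being the defining property of $\f e$ as the largest algebraic starting monomial for \eqref{wheqn'}, recorded at the start of \S\ref{unravelspecial}.

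The substantive clause is that $\wh g$ is not an approximate solution of \eqref{eqnDeltaT}. I would argue by contradiction: assume it is. Since \eqref{eqnT} is a Tschirnhaus refinement of \eqref{wheqn}, fix a solution $f_0 \in \wh K$ of \eqref{eqnDeltaf} that best approximates $\wh f$ with $f_0-\wh f \sim f-\wh f$. The inequalities $f_0-\wh f \sim f-\wh f \prece \wh g \prec \f f$ then hold, so Lemma~\ref{approxsolnT0} transfers the assumption: $\wh g$ is an approximate solution of \eqref{eqnDeltaT0}, which is quasilinear (as noted after Lemma~\ref{approxsolnT0}). By Lemma~\ref{approxtorealade} applied over $\wh K$ there is $y_0 \sim \wh g$ in $\wh K$ with $\Delta P(f_0+y_0)=0$ and $y_0 \prec \f f$. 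Since $f_0 \sim \wh f \asymp \f f$ while $y_0 \prec \f f$, we get $f_0+y_0 \asymp f_0 \prece \f f$ and $f_0+y_0 \neq 0$, so $f_0+y_0$ is a solution of \eqref{eqnDeltaf}. But unwinding $f_0-\wh f \sim f-\wh f$ (i.e. $f_0-f \prec f-\wh f$) and $y_0 \sim \wh g = \wh f-f$ (i.e. $y_0-(\wh f-f) \prec \wh f-f$),
\[
(f_0+y_0)-\wh f\ =\ (f_0-f) + \big(y_0-(\wh f-f)\big)\ \prec\ f-\wh f\ \asymp\ f_0-\wh f,
\]
so $f_0+y_0$ approximates $\wh f$ strictly better than $f_0$ does, contradicting the choice of $f_0$. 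Hence $\wh g$ is not an approximate solution of \eqref{eqnDeltaT}, and \eqref{eqnTC} is compatible with \eqref{eqnT}.

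The main obstacle here is not any single calculation — the valuation estimate above is short — but the bookkeeping: tracking which of the cluster of displayed equations \eqref{eqnDeltaf}, \eqref{eqnDeltaT}, \eqref{eqnDeltaT0}, \eqref{eqnT} is in play at each step, and verifying carefully that $f_0+y_0$ really lies in the asymptotic region of \eqref{eqnDeltaf} and is nonzero, so that it counts as a competitor against $f_0$ in the "best approximates'' relation. These checks rely on the exact placement of $\prec$ versus $\prece$ in the asymptotic conditions, so that is where I would be most careful.
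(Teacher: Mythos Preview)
Your proof is correct and follows essentially the same approach as the paper: verify the dominant-degree clause via the defining property of $\f e$, then argue by contradiction using Lemma~\ref{approxsolnT0} to transfer to \eqref{eqnDeltaT0} and Lemma~\ref{approxtorealade} to produce a solution that beats $f_0$. The only cosmetic difference is the final valuation estimate---the paper writes $f_0+y-\wh f = y-(\wh f-f_0)$ and uses $y \sim \wh g \sim \wh f-f_0$ directly, whereas you decompose as $(f_0-f)+(y_0-(\wh f-f))$; both yield the same contradiction.
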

\begin{proof}
Since $\f e$ is the largest algebraic starting monomial for \eqref{wheqn'}, 
\[
\ddeg_{\prece\f e} P_{+f+\wh g}\ =\ \ddeg_{\prece \f e} P_{+\wh f}\ =\ \ddeg P_{+\wh f, \xf e}\ =\ d,
\]
and so \eqref{eqnTC} has dominant degree $d$.

As \eqref{eqnT} is a Tschirnhaus refinement of \eqref{wheqn}, let $f_0 \in \wh K$ be a solution of \eqref{eqnDeltaf} that best approximates $\wh f$ and satisfies $f-\wh f \sim f_0-\wh f$.
Suppose towards a contradiction that $\wh g$ is an approximate solution of \eqref{eqnDeltaT}, so by Lemma~\ref{approxsolnT0}, $\wh g$ is also an approximate solution of \eqref{eqnDeltaT0}.
Then by Lemma~\ref{approxtorealade}, \eqref{eqnDeltaT0} has a solution $y \sim \wh g \sim \wh f-f_0$.
Thus $\Delta P(f_0+y)=0$, so $f_0+y$ is a solution of \eqref{eqnDeltaf}, since $f_0+y \prece \f f$.
But also
\[
f_0 + y - \wh f\ =\ y - (\wh f - f_0)\ \prec\ \wh f - f_0,
\]
contradicting that $f_0$ best approximates $\wh f$.
Hence $\wh g$ is not an approximate solution of \eqref{eqnDeltaT}, and so \eqref{eqnTC} is compatible with \eqref{eqnT}.
\end{proof}
In fact, the proof above shows that \eqref{eqnDeltaT0} has no approximate solution $h$ with $h \sim \wh f-f_0$.
We now consider the effect of multiplicative conjugation by $\f f$ on the asymptotic differential equations considered so far, as in \cite[Remark~14.4.10]{adamtt}.

\begin{lem}\label{tschirnmultconjade}
Consider the asymptotic differential equation
\begin{equation}\tag{$\f f^{-1}$\ref{eqn}}\label{eqnconj}
P_{\xf f}(Y)\ =\ 0, \hs Y \in \f f^{-1}\ca E
\end{equation}
over $K$.
Then $(\f f^{-1}\wh f, \f f^{-1}\wh{\ca E}')$ is an unraveller for
\begin{equation}\tag{$\f f^{-1}$\ref{wheqn}}\label{wheqnconj}
P_{\xf f}(Y)\ =\ 0, \hs Y \in \f f^{-1}\wh{\ca E}
\end{equation}
over $\wh K$, and
$\ddeg_{\prec 1}(P_{\xf f})_{+\f f^{-1}\wh f}=d=\ddeg_{\f f^{-1}\wh{\ca E}}P_{\xf f}$.
Moreover, if \eqref{eqnT} is a Tschirnhaus refinement of \eqref{wheqn}, then
\begin{equation}\tag{$\f f^{-1}$\ref{eqnT}}\label{eqnTconj}
\left(P_{\xf f}\right)_{+\f f^{-1}f}(Y)\ =\ 0, \hs Y \prec 1
\end{equation}
is a Tschirnhaus refinement of \eqref{wheqnconj}.
If \eqref{eqnTC} is a compatible refinement of \eqref{eqnT}, then
\begin{equation}\tag{$\f f^{-1}$\ref{eqnTC}}\label{eqnTCconj}
\left(P_{\xf f}\right)_{+\f f^{-1}(f+\wh g)}(Y)\ =\ 0, \hs Y \prece \f f^{-1}\f m
\end{equation}
is a compatible refinement of \eqref{eqnTconj}.
\end{lem}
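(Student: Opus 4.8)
The plan is to push everything through Lemma~\ref{multconjade} (with $a \coloneqq \f f$) together with the standard dictionary between a $\d$-polynomial and its multiplicative conjugate. First I would record the basic translations: by Lemma~\ref{multconjade}(i), the dominant degree of \eqref{wheqnconj} equals that of \eqref{wheqn}, so it is $d$; by Lemma~\ref{multconjade}(iii), since $(\wh f, \wh{\ca E}')$ is an unraveller for \eqref{wheqn} and $\f f \in \f M$, the pair $(\f f^{-1}\wh f, \f f^{-1}\wh{\ca E}')$ is an unraveller for \eqref{wheqnconj}. The equalities $\ddeg_{\prec 1}(P_{\xf f})_{+\f f^{-1}\wh f} = d = \ddeg_{\f f^{-1}\wh{\ca E}}P_{\xf f}$ then follow: the right-hand equality is Lemma~\ref{multconjade}(i) again, and for the left, note $(P_{\xf f})_{+\f f^{-1}\wh f} = (P_{+\wh f})_{\xf f}$ and $vg \prec 1$ in $\f f^{-1}\wh{\ca E}$ corresponds to $\f f g \prec \f f$ in $\wh{\ca E}$, so $\ddeg_{\prec 1}(P_{+\wh f})_{\xf f} = \ddeg_{\prec \f f}P_{+\wh f} = d$ by the definition of a Tschirnhaus refinement (i.e.\ the already-established fact that \eqref{eqnT} has dominant degree $d$, applied with $f = \wh f$, which is the hypothesis $\ddeg_{\prec \f f}P_{+\wh f}=d$ built into \eqref{wheqn'} being an unraveller).

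Next I would handle the statement about Tschirnhaus refinements. Unwinding the definition, \eqref{eqnTconj} is a refinement of \eqref{wheqnconj} by $\f f^{-1}f \sim \f f^{-1}\wh f$, and the associated ``$\Delta$'' equation is, by the multiplicative-conjugation identity $\Delta(Q_{\xf f}) = (\Delta_{\xf f}Q)_{\xf f}$ from \cite[Lemma~12.8.8]{adamtt}, exactly the conjugate of \eqref{eqnDeltaf}: the partial differential operator attached to \eqref{eqnTconj} is $(\partial^{\bm i})$, and $(\partial^{\bm i})_{\xf f} = \Delta$, so $\Delta' P_{\xf f}$ (the ``$\Delta$'' for the conjugated problem) is $(\Delta P)_{\xf f}$ up to the same conjugation bookkeeping. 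Hence a solution $f_0 \in \wh K$ of \eqref{eqnDeltaf} best approximating $\wh f$ with $f_0-\wh f \sim f-\wh f$ transforms into the solution $\f f^{-1}f_0$ of the $\Delta$-equation for \eqref{eqnTconj}; it best approximates $\f f^{-1}\wh f$ because multiplication by the monomial $\f f^{-1}$ is valuation-preserving up to $vf$, so $\f f^{-1}f_0 - \f f^{-1}\wh f = \f f^{-1}(f_0-\wh f)$ and the $\prece$-ordering among differences of solutions is preserved; likewise $\f f^{-1}f_0 - \f f^{-1}\wh f \sim \f f^{-1}f - \f f^{-1}\wh f$. So \eqref{eqnTconj} is a Tschirnhaus refinement of \eqref{wheqnconj}.

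Finally, for compatibility: \eqref{eqnTCconj} is the refinement of \eqref{eqnTconj} by $\f f^{-1}(f+\wh g)$ with asymptotic condition $Y \prece \f f^{-1}\f m$, and the chain $\f m \prec f-\wh f \prece \wh g \prec \f f$ conjugates to $\f f^{-1}\f m \prec \f f^{-1}f - \f f^{-1}\wh f \prece \f f^{-1}\wh g \prec 1$ (again using that $\f f^{-1}$ shifts all valuations by $-vf$ and so preserves $\prec$, $\prece$, $\sim$). That \eqref{eqnTCconj} has dominant degree $d$ is Lemma~\ref{adh6.6.10}/Lemma~\ref{multconjade}(i) applied to the fact that \eqref{eqnTC} has dominant degree $d$. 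That $\f f^{-1}\wh g$ is not an approximate solution of the conjugate of \eqref{eqnDeltaT} follows because ``being an approximate solution'' is, by Corollary~\ref{approxsolnequiv}, a statement about $\ddeg_{\prec Y}$ of an additive conjugate, and this is invariant under simultaneously multiplicatively conjugating the $\d$-polynomial and the point by a monomial — precisely the content of Lemma~\ref{multconjade} together with the conjugation identities. I expect the main obstacle to be purely notational: carefully matching up which partial differential operator ($\partial^{\bm i}$ versus $(\partial^{\bm i})_{\xf f}$) is the ``$\Delta$'' for each of the three problems, so that the identity $\Delta(Q_{\xf f}) = (\Delta_{\xf f}Q)_{\xf f}$ is invoked with the correct operator and the $\bm i$ chosen in \S\ref{tschirn} for $P_{\xf f}$ is seen to be the same $\bm i$ one would choose for the conjugated equation; once the bookkeeping is set up, every clause reduces to a one-line application of Lemma~\ref{multconjade} or Lemma~\ref{adh6.6.10}.
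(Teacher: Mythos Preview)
Your proposal is correct and follows exactly the approach the paper takes. The paper's own proof is a two-sentence sketch: the claims in the second sentence follow directly from Lemma~\ref{multconjade}, and the remaining claims are ``direct but tedious calculations,'' with the only point singled out being the identity $\big( (\partial^{\bm i})_{\xf f} P \big)_{\xf f} = \partial^{\bm i} P_{\xf f}$ from \cite[Lemma~12.8.8]{adamtt}; your write-up supplies precisely these calculations, correctly identifying that the $\Delta$ for the conjugated problem is $\partial^{\bm i}$ (since the new $\f f$ is $1$) and that the same $\bm i$ works.
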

\begin{proof}
The claims in the second sentence follow directly from Lemma~\ref{multconjade}.
The other claims are direct but tedious calculations; however, it is important to recall that $\Delta = ( \partial^{\bm i} )_{\xf f}$, so it depends on $\f f$, and, by \cite[Lemma~12.8.8]{adamtt},
\[
\big( ( \partial^{\bm i} )_{\xf f} P \big)_{\xf f}\ =\ \partial^{\bm i} P_{\xf f}.
\qedhere
\]
\end{proof}

\subsection{The Slowdown Lemma}\label{sec:redcomplex:slowdown}

In this subsection, we assume that \eqref{eqnT} is a Tschirnhaus refinement of \eqref{wheqn} and \eqref{eqnTC} is a compatible refinement of \eqref{eqnT}.
Set $\f g \coloneqq \f d_{\wh g}$, with $\wh g$ as in \eqref{eqnTC}.
The main result of this subsection is Lemma~\ref{slowdownlem}, called the Slowdown Lemma.
A consequence of this, Lemma~\ref{slowdownreducedeg}, gives the reduction to the special case of Proposition~\ref{maintechprop} considered in \S\ref{unravelspecial}.
We first prove the following preliminary lemma, which is based on \cite[Lemma~14.4.12]{adamtt} but has a different proof.
Two main differences between these settings play a role here: the change from newton polynomials to dominant parts, and the difference between the choices of $\Delta$.
Recall from \S\ref{prelim:arch} the coarsening $\prece_\phi$ of $\prece$ for $\phi \in K^{\x}$ with $\phi \not\asymp 1$.

\begin{lem}\label{slowdownaux}
Suppose that $\f f=1$.
Then
\[
\Delta P_{+f}(\wh g)\ \asymp_{\f g}\ \f g \, \Delta P_{+f}.
\]
\end{lem}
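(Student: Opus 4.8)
The plan is to evaluate the quasilinear $\d$-polynomial $\Delta P_{+f}$ at $\wh g$ by passing through the coarsened valuation $\prece_{\f g}$, exploiting that $\wh g \asymp \f g$ with $\f g \prec 1$, so $\f g \not\asymp 1$ and the coarsening $v_{\f g}$ is available. Write $\Delta P_{+f} = \sum_{\bm i} (\Delta P_{+f})_{\bm i} Y^{\bm i}$ in its homogeneous decomposition; since \eqref{eqnDeltaT} is quasilinear (recorded just before the definition of Tschirnhaus refinement) and $\wh g \prec 1 = \f f$, we know $\ddeg_{\prec \f g}(\Delta P_{+f})_{\xf g}$-type quantities are controlled. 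The key structural input is that by Lemma~\ref{noasm} (applied with $f_0$, or directly via Corollary~\ref{adh6.6.11} as stated), $(\Delta P)_{+f}$ has no algebraic starting monomial $\prece \f f = 1$, so in particular $\dmul (\Delta P_{+f})_{\xf g} = \ddeg (\Delta P_{+f})_{\xf g} = 1$. Thus the dominant part $D_{(\Delta P_{+f})_{\xf g}}$ is homogeneous of degree $1$.

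The main steps, in order: First I would reduce the evaluation $\Delta P_{+f}(\wh g)$ to a sum over homogeneous parts, $\Delta P_{+f}(\wh g) = \sum_d (\Delta P_{+f})_d(\wh g)$, and observe that $(\Delta P_{+f})_d(\wh g) \prece \wh g^d \cdot (\Delta P_{+f})_d$ with the dominant contribution coming from the degree-one part. Second, I would use Lemma~\ref{asympg}: since $d = \dmul \Delta P_{+f} = \mul \Delta P_{+f}$ would need checking — actually more carefully, apply Lemma~\ref{asympg} in the coarsened sense, using that $\f g \prec 1$ hence $\f g \prec_{\f g} 1$, to get $(\Delta P_{+f})_{\xf g} \asymp_{\f g} \f g^{\,e} (\Delta P_{+f})$ where $e = \dmul(\Delta P_{+f})$; but since the relevant approximate-zero multiplicity forces the degree-one homogeneous part to dominate after conjugation by $\f g$, the surviving term is $\f g^{\,1}(\Delta P_{+f})_1$. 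Third, since $\wh g = u \f g$ with $u \asymp 1$, I would use that evaluation at $\wh g$ of the conjugate $(\Delta P_{+f})_{\xf g}$ at $\bar u \asymp 1$ picks out $D_{(\Delta P_{+f})_{\xf g}}(\bar u)$, which is nonzero precisely because $\wh g$ is \emph{not} an approximate solution of \eqref{eqnDeltaT} (the compatibility hypothesis). This nonvanishing, transported back, gives $\Delta P_{+f}(\wh g) \asymp_{\f g} \f g \,(\Delta P_{+f})_1 \asymp_{\f g} \f g \, \Delta P_{+f}$, using Lemma~\ref{multconjhomog} and Lemma~\ref{twocoarsen} to pass between the plain and coarsened dominance relations on the homogeneous parts.

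The hard part will be bookkeeping the interplay between the ordinary valuation and the coarsening $v_{\f g}$: specifically, verifying that after multiplicative conjugation by $\f g$ the degree-$1$ homogeneous part of $\Delta P_{+f}$ strictly $\prec_{\f g}$-dominates all higher-degree parts (so that $D_{(\Delta P_{+f})_{\xf g}}$ is genuinely linear and the non-approximate-solution hypothesis yields a nonzero value), and that the lower-degree (constant) part, if present, is absorbed. Here the quasilinearity of \eqref{eqnDeltaT} together with $\wh g \prec 1$ is what rules out the constant term dominating, and Lemma~\ref{noasm} (no algebraic starting monomial $\prece 1$) is what rules out a higher-degree part dominating; the cleanest route is to invoke Corollary~\ref{adh6.6.11} directly, which in the quasilinear case gives $\mul (\Delta P)_{+f, \x g} = \dmul (\Delta P)_{+f, \x g} = \ddeg(\Delta P)_{+f,\x g} = 1$ for all $g \prece 1$, pinning down the dominant part on the nose. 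Once that is in hand, the asymptotic equivalence $\Delta P_{+f}(\wh g) \asymp_{\f g} \f g\, \Delta P_{+f}$ follows by translating $D_{(\Delta P_{+f})_{\xf g}}(\bar u) \neq 0$ through Lemmas~\ref{asympg} and~\ref{multconjhomog}.
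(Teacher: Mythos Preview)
Your overall strategy matches the paper's (show $\Delta P_{+f}(\wh g)\asymp(\Delta P_{+f})_{\xf g}$ via the non-approximate-solution hypothesis, then show $(\Delta P_{+f})_{\xf g}\asymp_{\f g}\f g\,\Delta P_{+f}$), but the second half has a genuine gap. You assert that Corollary~\ref{adh6.6.11} (equivalently Lemma~\ref{noasm}) gives $\mul(\Delta P)_{+f,\x g}=\dmul(\Delta P)_{+f,\x g}=1$. Both results require the base point to be a \emph{zero} of $\Delta P$: they apply to $f_0$, not to $f$. Nothing in the hypotheses forces $\Delta P(f)=0$, so $\mul(\Delta P_{+f})$ may well be $0$, and then Lemma~\ref{asympg} cannot be invoked with $d=1$ on $\Delta P_{+f}$ directly. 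Concretely, you have not ruled out that the constant term $\Delta P(f)$ dominates $(\Delta P_{+f})_{\xf g}$, in which case $\dmul(\Delta P_{+f})_{\xf g}=0$ and your chain collapses.

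The paper closes this gap by an explicit detour through $f_0$. From the Tschirnhaus condition $f-\wh f\sim f_0-\wh f$ together with the compatibility condition $f-\wh f\prece\wh g$, one gets $f-f_0\prec\f g$. Then Lemma~\ref{adh4.5.1}(i) yields $Q_{+f,\xf g}\sim Q_{+f_0,\xf g}$ (where $Q=\Delta P$); now Lemma~\ref{noasm} legitimately gives $\mul Q_{+f_0}=\dmul Q_{+f_0}=1$, so Lemma~\ref{asympg} applies to $Q_{+f_0}$, and a second use of Lemma~\ref{adh4.5.1}(i) transfers back to $Q_{+f}$. Your proposal mentions $f_0$ in passing but never uses the crucial inequality $f-f_0\prec\f g$, which is exactly what makes the transfer work. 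Add that step and your argument becomes the paper's.
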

\begin{proof}
Let $f_0 \in \wh K$ be a solution of \eqref{eqnDeltaf} that best approximates $\wh f$ and satisfies $f-\wh f \sim f_0-\wh f$; in particular, $f_0 \sim f \sim \wh f \asymp 1$.
For this proof, set $Q \coloneqq \Delta P$.

Since \eqref{eqnTC} is compatible with \eqref{eqnT}, $\wh g$ is not an approximate solution of \eqref{eqnDeltaT}, and thus, with $u \coloneqq \wh g/\f g$,
\[
D_{Q_{+f, \xf g}}(\bar u)\ \neq\ 0.
\]
This yields
\[
Q_{+f}(\wh g)\ =\ Q_{+f, \xf g}(u)\ \asymp\ Q_{+f, \xf g}.
\]
Now, since $f-f_0 \prec \f g$, Lemma~\ref{adh4.5.1}(i) gives
\[
%Q_{+f}(\wh g)\ \asymp\ 
Q_{+f, \xf g}\ =\ Q_{\xf g, +f/\f g}\ \sim\ Q_{\xf g, +f_0/\f g}\ =\ Q_{+f_0, \xf g}.
\]
As $f_0$ is a solution of \eqref{eqnDeltaf}, we have
\[
\mul Q_{+f_0}\ =\ \ddeg Q_{+f_0}=1
\]
by Lemma~\ref{noasm}.
Using Lemma~\ref{asympg} and Lemma~\ref{adh4.5.1}(i) again, we get
\[
Q_{+f_0, \xf g}\ \asymp_{\f g}\ \f g \, Q_{+f_0}\ \sim\ \f g \, Q_{+f}.
\]
Finally, we obtain the desired result by combining these steps:
\[
Q_{+f}(\wh g)\ \asymp_{\f g}\ \f g \, Q_{+f}.
\qedhere
\]
\end{proof}

Using this result, we now turn to the proof of the Slowdown Lemma, based on \cite[Lemma~14.4.11]{adamtt}.
In its statement and proof, the map $vg \mapsto [vg]$ replaces $vg \mapsto v(g'/g)$ as in Lemma~\ref{reducedegunravel}; this change consequently appears also in Corollary~\ref{slowdowncor}.
The idea, as Aschenbrenner, van den Dries, and van der Hoeven note, is that ``the step from \eqref{eqn} to \eqref{eqnT} is much larger than the step from \eqref{eqnT} to \eqref{eqnTC}'' \cite[p.\ 661 or \texttt{arXiv} p.\ 565]{adamtt}.
\begin{lem}[Slowdown Lemma]\label{slowdownlem}
With $\f m$ the monomial appearing in \eqref{eqnTC}, we have
\[
\left[ v \left( \frac{\f m}{\f g} \right) \right]\ <\ \left[ v \left( \frac{\f g}{\f f} \right) \right].
\]
\end{lem}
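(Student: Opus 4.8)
The plan is to adapt the argument of \cite[Lemma~14.4.11]{adamtt}, with $vg\mapsto[vg]$ replacing $vg\mapsto v(g'/g)$ throughout. First I would reduce to the case $\f f=1$: by Lemma~\ref{tschirnmultconjade}, replacing \eqref{wheqn}, \eqref{eqnT}, \eqref{eqnTC} by their multiplicative conjugates by $\f f^{-1}$ replaces $\wh f,f,\wh g,\f m,\f g$ by $\f f^{-1}\wh f,\f f^{-1}f,\f f^{-1}\wh g,\f f^{-1}\f m,\f f^{-1}\f g$, turns $\f f$ into $1$, and preserves being a Tschirnhaus refinement and being a compatible refinement. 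Since the asserted inequality is unchanged by this substitution, we may assume $\f f=1$; then $\wh f\asymp f\asymp 1$, $\Delta=\partial^{\bm i}$ with $|\bm i|=d-1$, $\ddeg P=d$, and $\partial^{\bm i}D_P\neq 0$ (as $\ddeg\partial^{\bm i}P=1$).

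Next I would record the homogeneity facts. Put $F\coloneqq P_{+f}$. Since $\wh f$ is an approximate zero of $P$ of multiplicity $d$ with $\f d_{\wh f}=1$ and $\bar f=\bar{\wh f}$, the dominant part $D_F=(D_P)_{+\bar f}$ has multiplicity $d$, and since $\deg D_F=\ddeg P=d$ it is homogeneous of degree $d$. Hence $\f d_F^{-1}\Delta F=\partial^{\bm i}(\f d_F^{-1}F)\in\ca O\{Y\}$ has residue $\partial^{\bm i}D_F=(\partial^{\bm i}D_P)_{+\bar f}\neq 0$, so $\Delta F\asymp F$, and $\partial^{\bm i}D_F$ is homogeneous of degree $1$. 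Similarly $f+\wh g\asymp 1$ with $\overline{f+\wh g}=\bar f$, so $D_{F_{+\wh g}}=D_F$ is homogeneous of degree $d$; thus $F_{+\wh g}$ has dominant degree and dominant multiplicity both $d$, and in particular its homogeneous part of degree $d$ satisfies $(F_{+\wh g})_d\asymp F$.

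Then comes the main estimate. By Taylor expansion the coefficient of $Y^{\bm i}$ in $F_{+\wh g}=P_{+f+\wh g}$ is $\tfrac1{\bm i!}\partial^{\bm i}F(\wh g)=\tfrac1{\bm i!}\Delta P_{+f}(\wh g)$, so Lemma~\ref{slowdownaux} and $\Delta F\asymp F$ give
\[
(F_{+\wh g})_{\bm i}\ \asymp\ \Delta P_{+f}(\wh g)\ \asymp_{\f g}\ \f g\,\Delta P_{+f}\ \asymp_{\f g}\ \f g\,F .
\]
In particular $(F_{+\wh g})_{\bm i}\neq 0$, so the homogeneous part $(F_{+\wh g})_{d-1}$ is nonzero and $\succe(F_{+\wh g})_{\bm i}$. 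As \eqref{eqnTC} has dominant degree $d$, we have $\ddeg(F_{+\wh g})_{\xf m}=d$, so the degree-$d$ part of $(F_{+\wh g})_{\xf m}$ is $\asymp(F_{+\wh g})_{\xf m}$ while the degree-$(d-1)$ part is $\prece(F_{+\wh g})_{\xf m}$; computing the valuations of these multiplicative conjugates of homogeneous $\d$-polynomials by Lemma~\ref{adh6.1.3}, and using $(F_{+\wh g})_d\asymp F$, yields
\[
v\big((F_{+\wh g})_{\bm i}\big)\ \ges\ v\big((F_{+\wh g})_{d-1}\big)\ \ges\ v(F)+v\f m+o(v\f m) .
\]
On the other hand $(F_{+\wh g})_{\bm i}\asymp_{\f g}\f g F$ gives $v\big((F_{+\wh g})_{\bm i}\big)=v(F)+v\f g+\delta$ with $[\delta]<[v\f g]$, and subtracting gives $v\f m-v\f g\les\delta+o(v\f m)$. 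Now $0<v\f g<v\f m$ forces $[v\f g]\les[v\f m]$; and if $[v(\f m/\f g)]=[v\f m-v\f g]\ges[v\f g]$, then $[v\f m]=[(v\f m-v\f g)+v\f g]\les[v\f m-v\f g]$ (the maximum of the two classes on the right being $[v\f m-v\f g]$) while $[v\f m-v\f g]\les[v\f m]$ always, so $[v\f m-v\f g]=[v\f m]$; hence $o(v\f m)$ is negligible against $v\f m-v\f g>0$, and $v\f m-v\f g+o(v\f m)$ is a positive element of archimedean class $[v\f m]$, which therefore exceeds $\delta$ (of class $[\delta]<[v\f g]\les[v\f m]$) — contradicting $v\f m-v\f g\les\delta+o(v\f m)$. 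Therefore $[v(\f m/\f g)]<[v\f g]=[v(\f g/\f f)]$.

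The step I expect to be the main obstacle is the valuation bookkeeping in the third paragraph: multiplicative conjugation by $\f m$ introduces $o(\cdot)$-errors via Lemma~\ref{adh6.1.3}, and the comparison $(F_{+\wh g})_{\bm i}\asymp_{\f g}\f g F$ holds only up to the coarsening $\prece_{\f g}$, so one must check that none of these errors can survive the final archimedean-class comparison. What makes this work is that the linearized coefficient is undistorted, $\Delta F\asymp F$ (a consequence of $D_F$ being homogeneous of degree $d$), together with the unconditional inequality $[v\f g]\les[v\f m]$, which forces $[v\f m-v\f g]=[v\f m]$ once the conclusion is negated and thereby renders every error term harmless.
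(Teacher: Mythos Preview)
Your proof is correct and follows essentially the same approach as the paper: reduce to $\f f=1$ via Lemma~\ref{tschirnmultconjade}, invoke Lemma~\ref{slowdownaux} to relate $\partial^{\bm i}F(\wh g)$ to $\f g F$ under $\asymp_{\f g}$, and then use $\ddeg(F_{+\wh g})_{\xf m}=d$ to force the archimedean-class inequality. The only organizational difference is that the paper packages the comparison into two claims stated in the coarsening language $\prece_{\f g},\prece_{\f n}$ (via Lemmas~\ref{precgprecf}, \ref{twocoarsen}, \ref{multconjhomog}), whereas you unwind Lemma~\ref{adh6.1.3} into explicit $o(v\f m)$-terms and finish by a direct archimedean-class contradiction; both routes express the same estimate.
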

\begin{proof}
By Lemma~\ref{tschirnmultconjade}, we may assume that $\f f=1$, so $\f m \prec f-\wh f \prece \f g \prec 1$ and $\Delta=\partial^{\bm i}$.
Set $F \coloneqq P_{+f}$ and note that $\ddeg F_{+\wh g}=\ddeg F=d$ by Lemma~\ref{adh6.6.5}(i).

\begin{claim}\label{slowdownclaim1}
$\displaystyle \f g \left(F_{+\wh g}\right)_d \prece_{\f g} \left(F_{+\wh g}\right)_{d-1}.$
\end{claim}
\begin{proof}[Proof of Claim~\theclaim]
\renewcommand{\qedsymbol}{\tiny$\blacksquare$}
By Lemma \ref{slowdownaux}, we have $\f g\, \partial^{\bm i} F \asymp_{\f g} \partial^{\bm i} F(\wh g)$,
and hence it suffices to show that
$(F_{+\wh g})_d \asymp \partial^{\bm i} F$ and
$\partial^{\bm i} F(\wh g) \prece (F_{+\wh g})_{d-1}$.

By the choice of $\bm i$, we have $\partial^{\bm i} P \asymp P$, so $\partial^{\bm i} F \asymp F_{+\wh g}$ by Lemma~\ref{adh4.5.1}(i).
As $\ddeg F_{+\wh g}=d$, we have $F_{+\wh g} \asymp (F_{+\wh g})_d$, and thus $(F_{+\wh g})_d \asymp \partial^{\bm i} F$.
By Taylor expansion, $\partial^{\bm i} F(\wh g)$ is, up to a factor from $\Q^{\x}$, the coefficient of $Y^{\bm i}$ in $F_{+\wh g}$.
Since $|\bm i|=d-1$, this yields $\partial^{\bm i} F(\wh g) \prece (F_{+\wh g})_{d-1}$.
\end{proof}

\begin{claim}
$\displaystyle \f{n \prec_n g} \implies \ddeg F_{+\wh g, \xf n} \les d-1.$
\end{claim}
\begin{proof}[Proof of Claim~\theclaim]
\renewcommand{\qedsymbol}{\tiny$\blacksquare$}
Suppose that $\f{n \prec_n g}$. Then $\f n \prec 1$, so by Corollary~\ref{adh6.6.7},
\[
\ddeg F_{+\wh g, \xf n}\ \les\ \ddeg F_{+\wh g}\ =\ d,
\]
and hence it suffices to show that $(F_{+\wh g, \xf n})_d \prec_{\f n} (F_{+\wh g, \xf n})_{d-1}$.
By Lemma~\ref{multconjhomog}, for all $i$,
\[
\left(F_{+\wh g, \xf n}\right)_i\ =\ \left(\left(F_{+\wh g}\right)_i\right)_{\xf n}\ \asymp_{\f n}\ \f{n}^i \left(F_{+\wh g}\right)_i,
\]
so we show that $\f n\, (F_{+\wh g})_d \prec_{\f n} (F_{+\wh g})_{d-1}$.
First, as $(F_{+\wh g})_d \neq 0$, we have $\f n\, (F_{+\wh g})_d \prec_{\f n} \f g\, (F_{+\wh g})_d$.
Second, $\f{n \prec g} \prec 1$ implies $[v\f g]\les[v\f n]$, so the first claim and Lemma~\ref{twocoarsen} yield $\f g\, (F_{+\wh g})_d \prece_{\f n} (F_{+\wh g})_{d-1}$.
Combining these two relations, we obtain $\f n\, (F_{+\wh g})_d \prec_{\f n} (F_{+\wh g})_{d-1}$, as desired.
\end{proof}

To finish the proof of the lemma, note that $\ddeg F_{+\wh g, \xf m}=d$, because \eqref{eqnTC} is compatible.
Then the second claim gives $\f{g \prece_m m}$, and so $\f{g \prece_g m}$ by Lemma~\ref{precgprecf}.
But since $\f{m \prec g}$, we must have $\f{m \asymp_g g}$, which means $[v\f m-v\f g]<[v\f g]$, as desired.
\end{proof}

\subsection{Consequences of the Slowdown Lemma}\label{sec:redcomplex:slowdowncor}

This first consequence, corresponding to \cite[Corollary~14.4.13]{adamtt}, follows immediately from Lemmas~\ref{getcompatibleref} and \ref{slowdownlem}.
\begin{cor}\label{slowdowncor}
If \eqref{eqnT} is a Tschirnhaus refinement of \eqref{wheqn}, then
\[
\f e \prec \wh f-f\ \implies\ \left[ v \left( \frac{\f e}{\wh f-f} \right) \right] < \left[ v \left( \frac{\wh f-f}{\wh f} \right) \right].
\]
\end{cor}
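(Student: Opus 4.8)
The plan is to read this off directly from Lemma~\ref{getcompatibleref} and the Slowdown Lemma (Lemma~\ref{slowdownlem}), with only a cosmetic translation between monomials and elements at the end. So I would suppose that \eqref{eqnT} is a Tschirnhaus refinement of \eqref{wheqn} and that $\f e \prec \wh f - f$. Since $\wh f - f \asymp f - \wh f$, this hypothesis is the same as $\f e \prec f - \wh f$, which is precisely what Lemma~\ref{getcompatibleref} requires; applying that lemma with the choices $\wh g \coloneqq \wh f - f$ and $\f m \coloneqq \f e$ shows that the refinement \eqref{eqnTC} of \eqref{eqnT} is compatible with \eqref{eqnT}.

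Next I would invoke Lemma~\ref{slowdownlem} for this particular compatible refinement. Writing $\f g \coloneqq \f d_{\wh g} = \f d_{\wh f - f}$ as in that lemma, it gives
\[
\left[ v \left( \frac{\f m}{\f g} \right) \right]\ <\ \left[ v \left( \frac{\f g}{\f f} \right) \right].
\]
To conclude, I would rewrite both sides in terms of the original elements: archimedean classes depend only on the valuation, and $\f g \asymp \wh f - f$ while $\f f = \f d_{\wh f} \asymp \wh f$, so that $v(\f m / \f g) = v\big(\f e / (\wh f - f)\big)$ and $v(\f g / \f f) = v\big((\wh f - f)/\wh f\big)$. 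Substituting gives exactly the claimed inequality.

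I do not expect a genuine obstacle here, since all of the real content is contained in Lemmas~\ref{getcompatibleref} and~\ref{slowdownlem}; the only points deserving (minor) care are verifying that the hypothesis $\f e \prec \wh f - f$ indeed matches the hypothesis $\f e \prec f - \wh f$ of Lemma~\ref{getcompatibleref} (legitimate because $\wh f - f \asymp f - \wh f$), and that the Slowdown Lemma's inequality of archimedean classes of monomials transfers verbatim to the corresponding elements $\wh f - f$ and $\wh f$.
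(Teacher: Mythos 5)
Your proposal is correct and is exactly the argument the paper intends: the paper states that Corollary~\ref{slowdowncor} ``follows immediately from Lemmas~\ref{getcompatibleref} and \ref{slowdownlem},'' and your write-up is precisely that derivation spelled out, including the correct translation from the monomials $\f g = \f d_{\wh f - f}$, $\f f = \f d_{\wh f}$ back to the elements $\wh f - f$, $\wh f$ via $\asymp$.
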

%\begin{proof}
%This follows immediately from Lemmas~\ref{getcompatibleref} and \ref{slowdownlem}.
%\end{proof}

This next consequence, corresponding to \cite[Lemma~14.4.14]{adamtt}, provides the reduction from Proposition~\ref{maintechprop} to Lemma~\ref{maintechpropspecial}.
\begin{lem}\label{slowdownreducedeg}
Suppose that \eqref{eqnT} is a Tschirnhaus refinement of \eqref{wheqn} and $\f e \prec \wh f-f$.
Let $F \coloneqq P_{+f}$, $\wh g \coloneqq \wh f-f,$ and $\f g \coloneqq \f d_{\wh g}$.
Then the asymptotic differential equation
\begin{equation}\tag{\ref{wheqn}${}_{\f g, \les d}$}\label{wheqnglesd}
F_{\les d}(Y)\ =\ 0, \hs Y \prece \f g
\end{equation}
has dominant degree $d$.
Moreover, with $\wh{\ca E}'_{\f g} \coloneqq \{y \in \wh{\ca E'} : y \prec \f g\}$, $(\wh g, \wh{\ca E}'_{\f g})$ is an unraveller for \eqref{wheqnglesd} and $\f e$ is the largest algebraic starting monomial for the unravelled asymptotic differential equation
\begin{equation}\tag{\ref{wheqn}${}'_{\f g, \les d}$}\label{wheqn'glesd}
\left(F_{\les d}\right)_{+\wh g}(Y)\ =\ 0, \hs Y \in \wh{\ca E}'_{\f g}
\end{equation}
over $\wh K$.
\end{lem}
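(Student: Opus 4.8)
The plan is to obtain this as a direct application of Lemma~\ref{reducedegunravel}, which is an abstract form of exactly this statement: given an unravelled refinement whose largest algebraic starting monomial is $\f e$, a monomial $g$ with $\f e \prec g \prec f$ satisfying an archimedean-class inequality, that lemma already produces the unraveller for the degree-$d$ truncation together with the fact that $\f e$ stays its largest algebraic starting monomial. So the content here is almost entirely a matching of variables plus a verification of the archimedean hypothesis, and the latter is precisely what the Slowdown Lemma was engineered to deliver.

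Concretely, I would work over $\wh K$ throughout, using $\f M$ as a monomial group (legitimate since $\Gamma_{\wh K}=\Gamma$ is divisible, hence has no least positive element), and apply Lemma~\ref{reducedegunravel} with $P$, $\wh{\ca E}$, $\wh f$, $\wh{\ca E}'$, and $\wh g := \wh f-f$ in the roles of $P$, $\ca E$, $f$, $\ca E'$, and $g$ there. Under this dictionary the $\d$-polynomial called $P_{+f-g}$ in that lemma is $P_{+\wh f-\wh g}=P_{+f}=F$, and the set called $\ca E'_g$ is $\{y\in\wh{\ca E}':y\prec\wh g\}=\wh{\ca E}'_{\f g}$ since $\f g \asymp \wh g$. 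The ambient hypotheses of \S\ref{sec:redcomplex} supply everything Lemma~\ref{reducedegunravel} requires of its input: $(\wh f,\wh{\ca E}')$ is an unraveller for \eqref{wheqn}, $d=\ddeg_{\wh{\ca E}}P\ges 2$, $d>\mul P_{+\wh f}$, and $\f e$ is the largest algebraic starting monomial of \eqref{wheqn'}. Only two checks remain: first, $\f e \prec \wh g \prec \wh f$, where $\f e \prec \wh g$ is the standing hypothesis $\f e \prec \wh f-f$ and $\wh g \prec \wh f$ follows from $f \sim \wh f$; second, the archimedean-class inequality $[v(\f e/\wh g)] < [v(\wh g/\wh f)]$, which is exactly Corollary~\ref{slowdowncor}, applicable because \eqref{eqnT} is a Tschirnhaus refinement of \eqref{wheqn} and $\f e \prec \wh f-f$ (using $\wh g \asymp \f g$ to rewrite the monomials). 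With both in hand, Lemma~\ref{reducedegunravel} yields that $(\wh g,\wh{\ca E}'_{\f g})$ is an unraveller for \eqref{wheqnglesd} and that $\f e$ is the largest algebraic starting monomial for \eqref{wheqn'glesd}. The remaining assertion that \eqref{wheqnglesd} has dominant degree $d$ I would get from Lemma~\ref{tildeunravel} applied over $\wh K$ (which gives dominant degree $d$ for $F(Y)=0$, $Y\prece\wh g$) together with the remark following that lemma that passing to the degree-$d$ truncation changes neither the dominant degree nor the algebraic starting monomials.

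I do not expect a genuine obstacle in this lemma itself: the substantive difficulty has already been paid for in the Slowdown Lemma and its Corollary~\ref{slowdowncor}, whose sole purpose is to furnish exactly the archimedean-class inequality demanded by Lemma~\ref{reducedegunravel}. The one thing to be careful about is bookkeeping — confirming that all the structural conditions built into the setup of \S\ref{sec:redcomplex} (unravelledness of \eqref{wheqn'}, $d\ges 2$, existence and maximality of $\f e$) transfer verbatim once everything is read as an asymptotic differential equation over $\wh K$ rather than $K$, and keeping the two-way variable dictionary straight.
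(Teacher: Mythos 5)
Your proposal matches the paper's proof exactly: both invoke Corollary~\ref{slowdowncor} to supply the archimedean-class hypothesis $[v(\f e/\wh g)]<[v(\wh g/\wh f)]$ and then apply Lemma~\ref{reducedegunravel} over $\wh K$ with the same dictionary ($\wh f$, $\wh g$, $\wh{\ca E}$, $\wh{\ca E}'$ playing the roles of $f$, $g$, $\ca E$, $\ca E'$). The extra verifications you spell out (that $\f e\prec\wh g\prec\wh f$, that $\f M$ serves as a monomial group for $\wh K$, and that the dominant-degree claim comes from Lemma~\ref{tildeunravel} plus the truncation remark) are all correct and simply make explicit what the paper's one-line proof leaves implicit.
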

\begin{proof}
This follows from Corollary~\ref{slowdowncor} by applying Lemma~\ref{reducedegunravel} with $\wh K$, $\wh f$, $f$, $\wh g$, $\wh{\ca E}$, $\wh{\ca E}'$, and $\wh{\ca E}'_{\f g}$ in the roles of $K$, $f$, $f-g$, $g$, $\ca E$, $\ca E'$, and $\ca E'_g$, respectively.
\end{proof}

\subsection{Proposition~\ref{maintechprop} and its consequence}
Finally, we return to the proof of the main proposition of this section.
Recall the statement:
\begin{maintechprop}
There exists $f \in \wh K$ such that one of the following holds:
\begin{enumerate}
\item\label{maintechprop-1} $\wh f-f \prece \f e$ and $A(f)=0$ for some $A \in K\{Y\}$ with $c(A)<c(P)$ and $\deg A=1$;
\item\label{maintechprop-2} $\wh f \sim f$, $\wh f-a \prece f-a$ for all $a \in K$, and $A(f)=0$ for some $A \in K\{Y\}$ with $c(A)<c(P)$ and $\ddeg A_{\x f}=1$.
\end{enumerate}
\end{maintechprop}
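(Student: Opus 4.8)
The plan is to reduce Proposition~\ref{maintechprop} to the special case $\deg P=d$ already established as Lemma~\ref{maintechpropspecial}, powered by the Tschirnhaus refinements of \S\ref{tschirn} together with the Slowdown Lemma. First, if there is $a\in K$ with $\widehat f-a\prece\f e$, then alternative~(i) holds immediately with $f:=a$ and $A:=Y-a$, since $\deg(Y-a)=1$ and $c(Y-a)=c(Y)=(0,1,1)<c(P)$ as $\deg P\ges d\ges 2$. So from now on I assume $\widehat f-a\succ\f e$ for every $a\in K$; in particular $\widehat f\notin K$. If $\deg P=d$ we are done by Lemma~\ref{maintechpropspecial}, so assume $\deg P>d$, and using Lemma~\ref{tschirnmultconjade} to pass to the multiplicative conjugate by $\f f:=\f d_{\widehat f}$ (translating the eventual conclusion back afterwards, which is routine since multiplicative conjugation by an element of $K^\x$ preserves $c(\cdot)$ and all the relations in sight), reduce to $\f f=1$, so $\widehat f\asymp 1$ and $\Delta=\partial^{\bm i}$; the case $a=0$ of the running assumption then forces $\f e\prec 1$.

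Next I would form a Tschirnhaus refinement and split. By Lemma~\ref{tschirnapproxsoln}, $\widehat f$ is an approximate solution of the quasilinear equation \eqref{eqnDeltaf}, so Lemmas~\ref{approxtorealade} and~\ref{bestapprox} (the latter using that $\widehat K$ has few constants) yield a solution $f_0\in\widehat K$ of \eqref{eqnDeltaf} that best approximates $\widehat f$, with $f_0\sim\widehat f$. Put $\delta:=v(f_0-\widehat f)$; since $\widehat K$ is an immediate extension of $K$, the set $v(f_0-K)$ has no largest element whenever $f_0\notin K$.

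Now distinguish two cases. If $v(f_0-K)$ contains no element exceeding $\delta$: then $v(f_0-a)<\delta$ for every $a\in K$, whence $v(\widehat f-a)=v(f_0-a)$ and so $\widehat f-a\prece f_0-a$ for all $a\in K$; together with $\widehat f\sim f_0$, with $A:=\Delta P\in K\{Y\}$ satisfying $A(f_0)=0$ and $c(A)<c(P)$, and with the identity $\ddeg A_{\x f_0}=\ddeg_{\prece 1}\Delta P=1$ (which follows from $\ddeg\Delta P=1$ and the quasilinearity of \eqref{eqnDeltaf}), this gives alternative~(ii) with $f:=f_0$. Otherwise there is $a_1\in K$ with $v(f_0-a_1)>\delta$; then $a_1-\widehat f\sim f_0-\widehat f$, so the refinement \eqref{eqnT} taken at the point $a_1\in K$ is a Tschirnhaus refinement of \eqref{wheqn}, and $\delta=v(\widehat f-a_1)<v(\f e)$ by the running assumption applied to $a_1$, i.e.\ $\f e\prec\widehat f-a_1$. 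Hence Lemma~\ref{slowdownreducedeg} applies — its hypothesis being supplied by Corollary~\ref{slowdowncor}, itself a consequence of the Slowdown Lemma~\ref{slowdownlem}: with $\widehat g:=\widehat f-a_1$ and $\f g:=\f d_{\widehat g}$, the truncated equation $(P_{+a_1})_{\les d}(Y)=0$, $Y\prece\f g$ has dominant degree $d$, is unravelled by $(\widehat g,\widehat{\ca E}'_{\f g})$, and has $\f e$ as its largest algebraic starting monomial. Crucially $(P_{+a_1})_{\les d}\in K\{Y\}$ because $a_1\in K$, and its degree equals $d$, so Lemma~\ref{maintechpropspecial} applies to it over the base field $K$ and produces $f'\in\widehat K$ and $A'\in K\{Y\}$ with $\widehat g-f'\prece\f e$, $A'(f')=0$, $\deg A'=1$, and $c(A')<c\big((P_{+a_1})_{\les d}\big)<c(P)$ (the last inequality because truncation drops the total degree below $\deg P$ without raising the order or the degree in the leading variable). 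Then $f:=a_1+f'\in\widehat K$ and $A:=A'_{-a_1}\in K\{Y\}$ realize alternative~(i): $\widehat f-f=\widehat g-f'\prece\f e$, $A(f)=A'(f')=0$, $\deg A=1$, $c(A)<c(P)$.

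The step I expect to be the main obstacle is arranging that $A$ lie in $K\{Y\}$ rather than merely in $\widehat K\{Y\}$: this is exactly why the dichotomy above is on whether $f_0$ can be approximated from $K$ more closely than $\widehat f$ is, so that in the only case where the degree-reducing truncation of Lemma~\ref{slowdownreducedeg} is needed we may center it at an element $a_1$ of $K$, keeping $(P_{+a_1})_{\les d}$ over $K$. The other essential ingredient, already packaged into the preceding subsections, is the Slowdown estimate $\big[v(\f e/\widehat g)\big]<\big[v(\widehat g/\widehat f)\big]$: it is precisely the hypothesis of Lemma~\ref{reducedegunravel} ensuring that the truncation preserves the unraveller and the largest algebraic starting monomial $\f e$, so that the reduced problem still ``remembers'' $\f e$ and falls squarely into the special case.
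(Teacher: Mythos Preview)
Your argument is correct and follows essentially the same route as the paper: obtain a best-approximating solution $f_0$ of \eqref{eqnDeltaf} via Lemmas~\ref{tschirnapproxsoln}, \ref{approxtorealade}, \ref{bestapprox}; split on whether $f_0$ can be better approximated from $K$ than $\wh f$ is (your condition on $v(f_0-K)$ versus $\delta$ is equivalent to the paper's ``$\wh f-a\prece f_0-a$ for all $a\in K$''); in the favourable case take $f=f_0$, $A=\Delta P$ for alternative~(ii); otherwise center a Tschirnhaus refinement at an element $a_1\in K$, invoke Lemma~\ref{slowdownreducedeg} to truncate to degree $d$ over $K$, and finish with Lemma~\ref{maintechpropspecial}. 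The only differences are cosmetic: you front-load the check ``$\exists\,a\in K$ with $\wh f-a\prece\f e$'' (absorbing both the paper's initial case $\f e\succe\f f$ and its later check $\wh f-f\prece\f e$), you handle $\deg P=d$ separately before the Tschirnhaus step (harmless but unnecessary, since the paper's argument covers it uniformly), and you reduce explicitly to $\f f=1$ via Lemma~\ref{tschirnmultconjade} whereas the paper works directly with $\Delta=(\partial^{\bm i})_{\xf f}$.
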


\begin{proof}
As noted already, if $\f{e \succe f}$, then case~\eqref{maintechprop-1} holds with $f \coloneqq 0$ and $A \coloneqq Y$, so suppose that $\f{e \prec f}$.
By Lemma~\ref{tschirnapproxsoln}, $\wh f$ is an approximate solution of \eqref{eqnDeltaf}, so by Lemmas~\ref{approxtorealade} and \ref{bestapprox}, we have a solution $f_0 \sim \wh f$ in $\wh K$ of \eqref{eqnDeltaf} that best approximates $\wh f$.
If $\wh f-a \prece f_0-a$ for all $a \in K$, then case~\eqref{maintechprop-2} holds with $f \coloneqq f_0$ and $A \coloneqq \Delta P$.
Now suppose to the contrary that we have $f \in K$ with $\wh f-f \succ f_0-f$.
That is, $f_0-\wh f \sim f-\wh f$, so in view of $f_0 \sim \wh f$, we have $f \sim \wh f$.
Hence \eqref{eqnT} is a Tschirnhaus refinement of \eqref{wheqn}.
We are going to show that then case~\eqref{maintechprop-1} holds.

If $\wh f-f \prece \f e$, then case~\eqref{maintechprop-1} holds with $A \coloneqq Y-f$, so for the rest of the proof, assume that $\f e \prec \wh f-f$, and set $F \coloneqq P_{+f}$, $\wh g \coloneqq \wh f-f$ and $\f g \coloneqq \f d_{\wh g}$.
This puts us in the situation of the previous lemma, so \eqref{wheqnglesd} has dominant degree $d$ and $(\wh g, \wh{\ca E}'_{\f g})$ is an unraveller for \eqref{wheqnglesd}.
In particular, $\ddeg_{\prec \wh g} (F_{\les d})_{+\wh g} = d$, since 
\[
\ddeg_{\prec \wh g} \left(F_{\les d}\right)_{+\wh g}\ \ges\ \ddeg_{\wh{\ca E}'_{\f g}} \left(F_{\les d}\right)_{+\wh g}\ =\ d.
\]
Also, $\f e$ is the largest algebraic starting monomial for \eqref{wheqn'glesd}.
Now since $f \in K$, we can view \eqref{wheqnglesd} as an asymptotic differential equation over $K$.
We also have $\deg F_{\les d} = d$ and $\mul (F_{\les d})_{+\wh g} < d$, since otherwise $(F_{\les d})_{+\wh g}$ would be homogeneous and so not have any algebraic starting monomials.
Thus with \eqref{wheqnglesd} in place of \eqref{eqn} and $(\wh g, \wh{\ca E}'_{\f g})$ in place of $(\wh f, \wh{\ca E}')$, Lemma~\ref{maintechpropspecial} applies.
Hence we have $g \in \wh K$ and $B \in K\{Y\}$ such that $\wh g-g \prece \f e$, $B(g)=0$, $c(B)<c(F_{\les d})$, and $\deg B=1$.
Finally, case~\eqref{maintechprop-1} holds with $f+g$ in place of $f$ and with $A \coloneqq B_{-f}$, completing the proof.
\end{proof}

In fact, if $K$ is $r$-$\d$-henselian, then the $f \in \wh{K}$ in Proposition~\ref{maintechprop} actually lies in $K$.
This follows easily from \cite[Proposition~7.5.6]{adamtt}, just as \cite[Corollary~14.4.16]{adamtt} follows from \cite[Lemma~14.1.8]{adamtt}.
We do not use Proposition~\ref{maintechprop} directly in the proof of Proposition~\ref{mainlemmadiv}, but rather this corollary concerning pc-sequences, corresponding to \cite[Corollary~14.4.15]{adamtt}.

\begin{cor}\label{maintechcor}
Suppose that $(a_\rho)$ is a divergent pc-sequence in $K$ with pseudolimit $\wh f \in \wh K$ and minimal $\d$-polynomial $P$ over $K$.
Then there exist $f \in \wh K$ and $A \in K\{Y\}$ such that $\wh f-f \prece \f e$, $A(f)=0$, $c(A)<c(P)$, and $\deg A=1$.
\end{cor}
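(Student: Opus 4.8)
The plan is to invoke Proposition~\ref{maintechprop} and then eliminate its second alternative using the minimality of $P$. Proposition~\ref{maintechprop} produces $f\in\wh K$ for which one of its two cases holds; case~(i) is literally the conclusion of the corollary, so it suffices to show that case~(ii) cannot occur here.

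So assume case~(ii): $\wh f\sim f$, $\wh f-a\prece f-a$ for all $a\in K$, and $A(f)=0$ for some $A\in K\{Y\}$ with $c(A)<c(P)$ and $\ddeg A_{\x f}=1$. I expect the crux to be showing that this approximation clause forces $v(f-\wh f)$ to be an upper bound for $v(\wh f-K)$. First, $f\notin K$: otherwise $a\coloneqq f$ is allowed in the clause and gives $\wh f-f\prece 0$, hence $f=\wh f\in K$, contradicting that $\wh f$ is the pseudolimit of a divergent pc-sequence. Now, if $v(f-\wh f)$ were not an upper bound for $v(\wh f-K)$, pick $a^*\in K$ with $v(\wh f-a^*)>v(f-\wh f)$; then $v(f-a^*)=v(f-\wh f)$, and since $\wh K$ is an immediate extension of $K$ and $f-a^*\notin K$, the set $v\bigl((f-a^*)-K\bigr)$ has no largest element, so there is $b\in K$ with $v\bigl((f-a^*)-b\bigr)>v(f-\wh f)$. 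With $a\coloneqq a^*+b\in K$ we get $v(f-a)>v(f-\wh f)=v(\wh f-f)$, whence $v(\wh f-a)=v(f-\wh f)<v(f-a)$, i.e.\ $\wh f-a\succ f-a$, contradicting case~(ii).

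Granting this, the rest is routine. Applying case~(ii) with $a=a_\rho$ gives $v(a_\rho-\wh f)\les v(f-\wh f)$ for all $\rho$, while $\bigl(v(a_\rho-\wh f)\bigr)$ is eventually strictly increasing with no largest value because $\wh f$ is a pseudolimit of the divergent pc-sequence $(a_\rho)$; hence $v(a_\rho-\wh f)<v(f-\wh f)$ eventually, so $a_\rho-f\asymp a_\rho-\wh f$ and $v(a_\rho-f)$ is eventually strictly increasing. Thus $a_\rho\pconv f$.

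Finally, since $\ddeg A_{\x f}=1$ the $\d$-polynomial $A$ has positive degree, so $A\notin\wh K$; Lemma~\ref{adh6.8.1}, applied with $L\coloneqq\wh K$, pseudolimit $f$, and $G\coloneqq A$, yields a pc-sequence $(b_\rho)$ in $K$ equivalent to $(a_\rho)$ with $A(b_\rho)\pconv A(f)=0$. Since $c(A)<c(P)$, this contradicts the minimality of $P$ as a minimal $\d$-polynomial of $(a_\rho)$ over $K$. Hence case~(ii) is impossible, and case~(i) of Proposition~\ref{maintechprop} furnishes the required $f$ and $A$. (Alternatively, the last step can be routed through $Z(K,\wh f)$: from $a_\rho\pconv f$ and $A(f)=0$ we get $\ddeg_{\bm a}A\ges 1$ by Lemma~\ref{ddegbasic}(vi), so $A\in Z(K,\wh f)$ by Lemma~\ref{ddegcutfo}, contradicting that $P$ has minimal complexity in $Z(K,\wh f)$ by Corollary~\ref{mindiffpolyfo}.)
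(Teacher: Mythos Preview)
Your proof is correct and follows the same overall arc as the paper: invoke Proposition~\ref{maintechprop}, observe case~(i) is the desired conclusion, and rule out case~(ii) by showing $a_\rho\pconv f$ and then applying Lemma~\ref{adh6.8.1} to contradict the minimality of $P$.

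The difference lies in how you establish $a_\rho\pconv f$. The paper does not prove the upper-bound statement $v(\wh f-K)\les v(f-\wh f)$; instead it takes a divergent pc-sequence $(b_\sigma)$ in $K$ with $b_\sigma\pconv f$ (using $f\notin K$ and immediacy), notes that the case~(ii) clause $\wh f-b_\sigma\prece f-b_\sigma$ gives $b_\sigma\pconv\wh f$, and then invokes \cite[Corollary~2.2.20]{adamtt} (two divergent pc-sequences with a common pseudolimit are equivalent) to conclude $(a_\rho)$ and $(b_\sigma)$ are equivalent, whence $a_\rho\pconv f$. Your route is more hands-on: you extract the sharper fact that $v(f-\wh f)$ dominates $v(\wh f-K)$ by a short argument exploiting that $v(f-K)$ has no largest element, and then compare $v(a_\rho-\wh f)$ to $v(f-\wh f)$ directly. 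Your approach avoids the black-box equivalence lemma at the cost of one extra elementary step; the paper's approach is terser once that lemma is available. One expository nit: where you write ``Applying case~(ii) with $a=a_\rho$,'' you are really applying the upper-bound claim you just proved, not the raw case~(ii) inequality---the latter gives $v(\wh f-a_\rho)\ges v(f-a_\rho)$, which is not the same thing.
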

\begin{proof}
Suppose towards a contradiction that there are no such $f$ and $A$.
Then Proposition~\ref{maintechprop} gives instead $f \in \wh K$ and $A \in K\{Y\}^{\neq}$ such that $\wh f-a \prece f-a$ for all $a \in K$, $A(f)=0$, and $c(A)<c(P)$.
Since $(a_\rho)$ has no pseudolimit in $K$, $\wh f \notin K$, and so $f \notin K$.
Hence we may take a divergent pc-sequence $(b_\sigma)$ in $K$ such that $b_\sigma \pconv f$.
Since $\wh f-b_\sigma \prece f-b_\sigma$ for all $\sigma$, we have $b_\sigma \pconv \wh f$.
The pc-sequences $(a_\rho)$ and $(b_\sigma)$ have no pseudolimit in $K$ but the common pseudolimit $\wh f \in \wh K$, and hence they are equivalent by \cite[Corollary~2.2.20]{adamtt}.
Thus $a_\rho \pconv f$, so applying Lemma~\ref{adh6.8.1} to $A$ and $f$ contradicts the minimality of $P$.
\end{proof}

\section{Proof of Proposition~\ref{mainlemmadiv}}\label{sec:mainlemmadiv}
In this section, we prove the main proposition, derived from the work of the previous sections, thus completing the proof of the main results.
Its proof is based on that of \cite[Proposition~14.5.1]{adamtt}.

\begin{mainlemmadiv}
Suppose that $K$ is asymptotic, $\Gamma$ is divisible, and $\bm k$ is $r$-linearly surjective.
Let $(a_\rho)$ be a pc-sequence in $K$ with minimal $\d$-polynomial $G$ over $K$ of order at most $r$.
Then $\ddeg_{\bm a} G=1$.
\end{mainlemmadiv}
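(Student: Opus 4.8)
The plan is to prove $\ddeg_{\bm a} G = 1$ by induction on the complexity $c(G)$, reducing to a $\d$-henselian setting via an auxiliary immediate extension. First I would dispose of the trivial cases: if $(a_\rho)$ has a pseudolimit in $K$ then $\ddeg_{\bm a} G \ges 1$ by Lemma~\ref{ddegbasic}(vi), and one shows directly (using minimality of $G$, e.g.\ via $G_{+\ell}$ having a linear part by Lemma~\ref{mul}) that it equals $1$; so assume $(a_\rho)$ is divergent in $K$. We always have $\ddeg_{\bm a} G \ges 1$ by Lemma~\ref{ddegbasic}(vi) applied in the immediate extension $K\langle a\rangle$ from Lemma~\ref{adh6.9.3} (which exists and forces $\ord G \ges 1$, the derivation on $\bm k$ being nontrivial). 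So the real content is the upper bound $\ddeg_{\bm a} G \les 1$, for which I would suppose $\ddeg_{\bm a} G \ges 2$ and derive a contradiction.

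The strategy is to pass to an immediate asymptotic $r$-$\d$-henselian extension $\wh K$ of $K$ — one exists by \cite[Corollary~9.4.11]{adamtt} after first replacing $K$ by an immediate $r$-$\d$-algebraically maximal $r$-$\d$-algebraic extension (using $r$-linear surjectivity of $\bm k$, which is preserved under immediate extensions), since such an extension is $r$-$\d$-henselian by the proof of \cite[Theorem~7.0.1]{adamtt}. By Lemma~\ref{dalgmaxplimroot}-style reasoning, or more directly since $(a_\rho)$ is of $\d$-algebraic type with minimal $\d$-polynomial $G$ of order at most $r$ and $\wh K$ is $\d$-algebraically maximal in the relevant sense, we obtain $\wh f \in \wh K$ with $a_\rho \pconv \wh f$; note $G$ remains a minimal $\d$-polynomial of $(a_\rho)$ over $K$ and $\ddeg_{\bm a} G$ is unchanged by Lemma~\ref{ddegbasic}(vii), so the dominant degree "in the cut" is still $\ges 2$. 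Now set up the asymptotic differential equation \eqref{eqn} for $G$ with $\ca E$ chosen so that the cut of $(a_\rho)$ is captured (via Lemma~\ref{ddegcutfo}, choosing $a, \f v$ with $\ddeg_{\prec \f v} G_{+a} = \ddeg_{\bm a} G$), and apply the machinery culminating in Corollary~\ref{maintechcor}: since \eqref{wheqn} fails to be unravelled (its dominant degree is $\ges 2$ and $\wh f$ is an approximate solution of multiplicity $d$), after unravelling over $\wh K$ we get an $\wh f$ which is a pseudolimit of $(a_\rho)$, and Corollary~\ref{maintechcor} produces $f \in \wh K$ and $A \in K\{Y\}$ with $\wh f - f \prece \f e$, $A(f) = 0$, $c(A) < c(G)$, and $\deg A = 1$.

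The contradiction then comes from the induction. Since $\wh f - f \prece \f e \prec \wh f - a_\rho$ eventually (because $\f e$ is a starting monomial lying strictly below the "approximate solution level" $d$, while the $a_\rho$ converge to $\wh f$ at the top level), we get $a_\rho \pconv f$ as well. As $f \notin K$ (else $(a_\rho)$ would have a pseudolimit in $K$), take a divergent pc-sequence $(b_\sigma)$ in $K$ with $b_\sigma \pconv f$; it is equivalent to $(a_\rho)$ by \cite[Corollary~2.2.20]{adamtt} since they share the external pseudolimit $f$ (equivalently $\wh f$). Applying Lemma~\ref{adh6.8.1} to $A$ and $f$ gives an equivalent pc-sequence $(c_\tau)$ in $K$ with $A(c_\tau) \pconv A(f) = 0$, so $(a_\rho)$ is of $\d$-algebraic type with a $\d$-polynomial $A$ of complexity $c(A) < c(G)$ — contradicting the minimality of $G$. (This last paragraph is essentially the proof of Corollary~\ref{maintechcor} itself, so in practice I would invoke that corollary and then just extract the contradiction with minimality of $G$.)

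The main obstacle, and the technical heart, is establishing Corollary~\ref{maintechcor} — i.e.\ Proposition~\ref{maintechprop} and the entire apparatus of \S\ref{sec:ade} and \S\ref{sec:redcomplex} that it rests on: constructing unravellers as pseudolimits (Proposition~\ref{unravellersexist}, Lemma~\ref{plimunravellersexist}), the Slowdown Lemma~\ref{slowdownlem} controlling the gap between successive refinements, and the degree-reduction Lemma~\ref{reducedegunravel}, all adapted from \cite[\S13.5, \S13.8, \S14.4]{adamtt} to the setting of dominant parts (rather than newton polynomials) and archimedean-class coarsenings (rather than the $v(g'/g)$ valuation). Granting that machinery, the proof above is a short assembly argument: set up \eqref{eqn}, pass to $\wh K$, invoke Corollary~\ref{maintechcor}, and contradict minimality of $G$.
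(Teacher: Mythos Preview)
Your overall strategy matches the paper's: assume $d \coloneqq \ddeg_{\bm a} G \ges 2$, pass to an immediate asymptotic $r$-$\d$-henselian extension $\wh K$, use Lemma~\ref{plimunravellersexist} to manufacture an unraveller $(\wh f,\wh{\ca E})$ whose first component is a pseudolimit of (a shift of) $(a_\rho)$, feed this into Corollary~\ref{maintechcor}, and contradict the minimality of $G$ via Lemma~\ref{adh6.8.1}. That is exactly how the paper proceeds.

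There is, however, one genuine gap. All of \S\ref{sec:ndiag}--\S\ref{sec:redcomplex}, and in particular Corollary~\ref{maintechcor}, carry the standing assumption that $K$ has a monomial group $\f M$; the notions of dominant part, algebraic starting monomial, and the largest such monomial $\f e$ are formulated relative to $\f M$. You never arrange this, and an arbitrary asymptotic $K$ need not have a monomial group. The paper handles this by viewing $\wh K$ as a structure with a predicate for $K$ and passing to an $\aleph_1$-saturated elementary extension, where the new $K$ acquires a monomial group by \cite[Lemma~3.3.39]{adamtt}. This move is not automatic: one must check that the relevant hypotheses are first-order (small derivation, $r$-linear surjectivity of $\bm k$, divisibility of $\Gamma$, asymptoticity, $r$-$\d$-henselianity of $\wh K$, and---crucially---the statement that $G$ has minimal complexity in $Z(K,\ell)$), possibly re-enlarge $\wh K$ to restore $\d$-algebraic maximality, and replace $(a_\rho)$ by a new divergent pc-sequence in the enlarged $K$ with pseudolimit $\ell$, since the original sequence may now converge in $K$. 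Lemma~\ref{ddegcutfo} and Corollary~\ref{mindiffpolyfo} are precisely what make this replacement harmless for both the minimality of $G$ and the value of $d$.

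Two smaller points. First, the ``induction on $c(G)$'' framing is misleading: no inductive hypothesis is ever invoked---the existence of $A$ with $c(A)<c(G)$ and $A(f)=0$ already contradicts minimality of $G$ directly via Lemma~\ref{adh6.8.1}. Second, your justification that $\f e \prec \wh f - a_\rho$ eventually is only heuristic; the paper obtains $a_\rho \pconv a+f$ from item~(iii) of Lemma~\ref{plimunravellersexist} ($a_\rho \pconv a+\wh f+g$ for all $g \in \wh{\ca E}\cup\{0\}$) together with $\wh f - f \prece \f e \in \wh{\ca E}$. You should also verify $\mul G_{+a+\wh f}<d$ explicitly (Lemma~\ref{mul}), as it is a standing hypothesis of \S\ref{sec:redcomplex} needed for $\f e$ to exist.
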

\begin{proof}
Let $d \coloneqq \ddeg_{\bm a} G$.
We may assume that $(a_\rho)$ has no pseudolimit in $K$, as otherwise, up to scaling, $G$ is of the form $Y-a$ for some pseudolimit $a$ of $(a_\rho)$, and hence $d=1$.
We may also assume that $r \ges 1$, since the case $r=0$ is handled by the analogous fact for valued fields of equicharacteristic $0$ (see \cite[Proposition~3.3.19]{adamtt}).
By Zorn's Lemma, we may take a $\d$-algebraically maximal immediate extension $\wh K$ of $K$.
By the proof of \cite[Theorem~7.0.1]{adamtt}, $\wh K$ is $r$-$\d$-henselian.
Note that as an immediate extension of $K$, $\wh K$ is also asymptotic by \cite[Lemmas~9.4.2 and 9.4.5]{adamtt}.

Now, take $\ell \in \wh K$ such that $a_\rho \pconv \ell$, so $G$ is an element of minimal complexity of $Z(K, \ell)$ by Corollary~\ref{mindiffpolyfo}.
Lemma~\ref{ddegcutfo} gives $d \ges 1$, as well as $a \in K$ and $\f v \in K^{\x}$ such that $a-\ell \prec \f v$ and $\ddeg_{\prec \f v} G_{+a}=d$.
Towards a contradiction, suppose that $d \ges 2$.
Lemma~\ref{plimunravellersexist} then yields an unraveller $(\wh f, \wh{\ca E})$ for the asymptotic differential equation
\begin{equation}\label{eqnGav}
G_{+a}(Y)\ =\ 0, \hs Y \prec \f v
\end{equation}
over $\wh K$ such that:
\begin{enumerate}
	\item $\wh f\neq 0$;
	\item $\ddeg_{\prec \wh f} G_{+a+\wh f}=d$;\label{ddegd}
	\item $a_\rho \pconv a+\wh f+g$ for all $g \in \wh{\ca E} \cup \{0\}$;\label{pconv}
	\item $\mul G_{+a+\wh f}<d$,\label{mullesd}
\end{enumerate}
where \eqref{mullesd} follows from \eqref{pconv} by Lemma~\ref{mul}.

Suppose first that $K$ has a monomial group.
Consider the pc-sequence $(a_\rho-a)$ with minimal $\d$-polynomial $P \coloneqq G_{+a}$ over $K$.
Since $(\wh{f}, \wh{\ca E})$ is an unraveller for \eqref{eqnGav}, $\ddeg_{\wh{\ca E}} P_{+\wh f} = d > \mul P_{+\wh f}$ by \eqref{mullesd}, so let $\f e$ be the largest algebraic starting monomial for the asymptotic differential equation
\begin{equation}\label{eqnPfhat}
P_{+\wh{f}}(Y)=0, \hs Y \in \wh{\ca E}
\end{equation}
over $\wh K$ by Proposition~\ref{ndiag}.
Hence all the assumptions of the previous section are satisfied (with \eqref{eqnGav} and \eqref{eqnPfhat} in the roles of \eqref{wheqn} and \eqref{wheqn'}, respectively), so applying Corollary~\ref{maintechcor} to $(a_\rho-a)$ and $P$ yields $f \in \wh K$ and $A \in K\{Y\}^{\neq}$ such that $\wh f - f \prece \f e$, $A(f)=0$, and $c(A)<c(P)$.
Since $\f e$ is an algebraic starting monomial for \eqref{eqnPfhat}, we have $\f e \in \wh{\ca E}$, and so $f - \wh f \in \wh{\ca E}\cup\{0\}$.
But then $a_\rho-a \pconv f$ by \eqref{pconv}, so applying Lemma~\ref{adh6.8.1} to $A$ and $f$ contradicts the minimality of $P$.

Finally, we reduce to the case that $K$ has a monomial group.
Consider $\wh K$ as a valued differential field with a predicate for $K$ and pass to an $\aleph_1$-saturated elementary extension of this structure.
In particular, the new $K$ has a monomial group \cite[Lemma~3.3.39]{adamtt}.
In doing this, we preserve all the relevant first order properties: small derivation, $r$-linearly surjective differential residue field, divisible value group, asymptoticity, $r$-$\d$-henselianity of $\wh K$, and that $G \in Z(K, \ell)$ but $H \notin Z(K, \ell)$ for all $H \in K\{Y\}$ with $c(H)<c(G)$.

However, it is possible that $\wh K$ is no longer $\d$-algebraically maximal, in which case we pass to a $\d$-algebraically maximal immediate extension of $\wh K$ (and hence of $K$).
It is also possible that $(a_\rho)$ is no longer divergent in $K$, in which case we replace $(a_\rho)$ with a divergent pc-sequence $(b_\sigma)$ in $K$ with $b_\sigma \pconv \ell$.
By Corollary~\ref{mindiffpolyfo}, $G$ is a minimal $\d$-polynomial of $(b_\sigma)$ over $K$, and by Lemma~\ref{ddegcutfo}, $\ddeg_{\bm b} G = d$, where $\bm b \coloneqq c_K(b_\sigma)$.
Then our assumption $d \ges 2$ leads to a contradiction as before.
\end{proof}

\section*{Acknowledgements}
This research was supported by an NSERC Postgraduate Scholarship.
Thanks are due to Lou van den Dries for many helpful discussions and for comments on a draft of this paper, as well as to the reviewer for their careful reading of the manuscript and numerous comments and suggestions that have improved the clarity and readability of the paper.

\end{document}